\numberwithin{equation}{section}
\newcommand{\malA}{\mathcal{A}}
\newcommand{\malB}{\mathcal{B}}
\newcommand{\malC}{\mathcal{C}}
\newcommand{\malD}{\mathcal{D}}
\newcommand{\malM}{\mathcal{M}}
\newcommand{\malE}{\mathcal{E}}
\newcommand{\mbfH}{\mathbf{H}}
\newcommand{\mbfL}{\mathbf{L}}
\newcommand{\mbfV}{\mathbf{V}}
\newcommand{\f}{\frac}
\newcommand{\p}{\partial}
\journalname{Arxiv}
\begin{document}
\title{Error estimates of linear decoupled structure-preserving incremental viscosity splitting methods for the Cahn--Hilliard--Navier--Stokes system} 
\titlerunning{Error estimates of linear decoupled structure-preserving incremental viscosity splitting methods for the Cahn--Hilliard--Navier--Stokes system}        

\author{
	Baolin Kuang    \and
	Hongfei Fu$^*$ \and 
	Xiaoli Li 
}
\authorrunning{ Baolin Kuang \and Hongfei Fu \and Xiaoli Li  } 

\institute{ 
	Baolin Kuang  \at
	School of Mathematical Sciences, Ocean University of China, Qingdao  266100, China\\
	\email{blkuang@stu.ouc.edu.cn} 
	\and 
	Hongfei Fu		\at
	Corresponding author. School of Mathematical Sciences \& Laboratory of Marine Mathematics, Ocean University of China, Qingdao  266100, China\\
	\email{fhf@ouc.edu.cn} 
	\and 
	Xiaoli Li		\at 
	School of Mathematics, Shandong University, Jinan, Shandong 250100, China.\\
	\email{xiaolimath@sdu.edu.cn} 
}

\date{Received: date / Accepted: date}

\maketitle
\begin{abstract} 
 We propose first- and second-order time discretization schemes for the coupled Cahn--Hilliard--Navier--Stokes model, leveraging the incremental viscosity splitting (IVS) method. The schemes combine the scalar auxiliary variable method and the zero-energy-contribution approach, resulting in a linear,  decoupled numerical framework. At each time step, they only require to solve a sequence of constant-coefficient equations, along with a linear equation with one unknown, making the algorithms computationally efficient and easy to implement. In addition, the proposed schemes are proven to be uniquely solvable, mass-conserving, and unconditional energy dissipation. Most importantly, leveraging the mathematical induction method and the regularity properties of the Stokes equation, we perform a rigorous error analysis for the first-order scheme in multiple space dimensions, establishing an unconditional and optimal convergence rate for all relevant variables under different norms. A user-defined, time-dependent parameter plays an important role in the error analysis of the proposed structure-preserving IVS methods. Ample numerical examples are carried out to verify the theoretical findings and to demonstrate the accuracy, effectiveness  and efficiency of the proposed schemes. 
\end{abstract}
%
%
%
%

\section{Introduction}\label{sec:Intro}
The phase field method, also known as the diffusive interface method,  has become a prominent mathematical framework for modeling interfacial dynamics in multi-component systems \cite{anderson1998diffuse, christlieb2014high}. The governing equations, namely the phase field model, are typically derived through a variational approach within an appropriate functional setting aimed at minimizing the total free energy.  Notably, the Cahn--Hilliard--Navier--Stokes (CHNS) system \cite{anderson1998diffuse,gurtin1996two,kay2007efficient} represents the fundamental building blocks of hydrodynamic phase-field models for multiphase fluid flow dynamics.

Despite being extensively studied over the past two decades \cite{chen2020novel,zhao2021second,liu2024simple,qian2023fully,han2015second,kay2007efficient},  numerical research on the CHNS system remains challenging due to its inherent complexities, including: (i) the presence of the stiffness terms linked to interfacial width parameters in the phase equation, the nonlinear convection term in the  momentum equation, and the nonlinear couplings between velocity and phase-field variables through convective transport and stress effects; (ii) the intrinsic velocity-pressure coupling imposed by the incompressibility constraint;   (iii) the requirement for numerical methods to preserve essential physical laws, such as energy dissipation law and mass conservation law; and (iv) the theoretical difficulty in deriving rigorous optimal-order error estimates for fully decoupled linear schemes.

Due to the inherent complexity of the CHNS system, considerable efforts have been devoted to establishing error estimates for efficient numerical schemes. For example, Diegel et al. \cite{diegel2017convergence} proposed a second-order convex splitting scheme using mixed finite elements, achieving optimal convergence rates for the velocity in the $\bm L^2$-norm and for the phase-field variable and the chemical potential in the $ H^1$-norm. However, no theoretical estimate was provided for the pressure. Cai and Shen \cite{cai2018error} conducted a rigorous error analysis for a linear, weakly coupled energy-stable scheme with mixed element discretization. Apart from error estimates for the phase-field variable, the chemical potential and the velocity, they also obtained the optimal-order error estimate for the pressure in the $\ell^2(0,T; L^2)$-norm. Xu et al. \cite{xu2020error} proposed a first-order in time semi-discrete scheme based on projection methods \cite{goda1979multistep}  and stabilization technique \cite{shen2010numerical}, which is fully decoupled and energy-stable. 
They proved the optimal-order accuracy for the phase-field variable in the $\ell^\infty(0,T;H^1)$-norm, for the chemical potential in the $\ell^2(0,T;H^1)$-norm, and for the velocity in the $\ell^\infty(0,T; \bm L^2) \cap \ell^2(0,T; \bm H^1)$-norm, and for the pressure also in the $\ell^2(0,T; L^2)$-norm. Recently, Li et al. \cite{li2022fully} proposed first- and second-order fully decoupled time discretization schemes based on the multiple scalar auxiliary variable (MSAV) approach for gradient systems, combined with (rotational) pressure-correction methods \cite{goda1979multistep,guermond2006overview} for the Navier--Stokes equations. They carried out a rigorous error analysis for the first-order scheme, and optimal convergence rates for the phase-field variable and chemical potential in the $H^1$-norm, and the velocity in the $\bm L^2$-norm were presented. Notably, they also provided a half-order error estimate for the pressure in the stronger $\ell^\infty(0,T; L^2)$-norm. However, extension of the error analysis to three-dimensional model remains unresolved.
Recent efforts by Cai et al. \cite{cai2023optimal} derived optimal $L^2$-norm error estimates for convex-splitting finite element methods using novel quasi-projection techniques, however, again no error estimate for the pressure was presented. Despite these great advancements, the construction of fully decoupled linear schemes with constant coefficients and rigorous error analysis in a stronger norm (especially for the pressure) remains limited. Moreover, most existing works rely on projection methods or mixed finite element discretization to handle the velocity-pressure coupling, which may cause artificial boundary conditions for the velocity or require special finite element space pairs for the velocity and pressure.

Apart from conventional projection methods, the viscosity-splitting (VS) method \cite{blasco1998fractional} provides an efficient alternative for solving the Navier--Stokes equations. This approach consists of two main steps: the first step involves solving a linear elliptic problem without incompressibility constraint and pressure term, and the second step preserves the viscosity and addresses a generalized Stokes problem. In contrast to the projection methods, a key advantage is that it preserves full original velocity boundary conditions throughout the time-stepping process, thereby avoiding artificial boundary conditions. Furthermore, El-Amrani et al. extended its application to incompressible non-Newtonian fluids \cite{el2024error} and to thermally coupled Navier–Stokes equations involving convection–conduction dynamics \cite{el2024time}, where the velocity was shown to achieve first-order accuracy in the $\ell^\infty(0,T; \bm L^2) \cap \ell^2(0,T; \bm H^1)$-norm, while only $1/2$-order accuracy in the $\ell^2(0,T;  L^2)$-norm was proved for the pressure. We point out that \cite{el2024error} also asserted that the pressure approximation can attain full first-order accuracy by employing the same arguments as in \cite{guillen2011new,yakoubi2023enhancing}.  
Inspired by the incremental pressure-projection scheme \cite{goda1979multistep}, Yakoubi \cite{yakoubi2023enhancing} enhanced the VS method by proposing an incremental viscosity-splitting (IVS) scheme for the Navier–Stokes equations, which further improves the accuracy and robustness of the VS method. They demonstrated that the scheme achieves first-order convergence for the velocity in the $\ell^\infty(0,T; \bm L^2) \cap \ell^2(0,T; \bm H^1)$-norm, and for the pressure also in the $\ell^2(0,T;  L^2)$-norm.  More recently, Obbadi et al. \cite{obbadi2025stable} proposed a second-order IVS method for the incompressible Navier–Stokes equations and provided the corresponding stability analysis.   However, the existing VS and IVS methods typically employ a semi-implicit treatment for the nonlinear convection terms, which results in variable-coefficient equations that must be updated at each time step. Recently, by introducing a time-dependent auxiliary variable, Sun et al. \cite{Yang2024error} proposed an efficient first-order auxiliary variable viscosity-splitting method for the incompressible fluid flows, which yields constant-coefficient Poisson-type equations at each time step. They also established an energy dissipation law based on kinetic energy rather than a modified energy involving velocity and additional pressure gradient, while, no error analysis has been conducted for this scheme.  
To the best of our knowledge, research on the IVS method remains limited. In particular, no existing work has considered its application to complex two-phase flow models such as the CHNS system, nor has any rigorous error analysis been carried out for such coupled systems.

To address this significant gap, we propose a novel linear, fully decoupled, and structure-preserving numerical framework for the CHNS system by integrating the incremental viscosity-splitting (IVS) method \cite{yakoubi2023enhancing} with the SAV-ZEC approach \cite{zgdsisc2024}.  The SAV-ZEC approach that integrates the SAV method with the zero energy contribution (ZEC) idea \cite{chen2021fully,yang2021novelSISC} is employed to systematically address the nonlinearities of the CHNS system and to ensure energy dissipation. Simultaneously, the IVS method efficiently resolves the velocity–pressure coupling without requiring the solution of saddle-point problems associated with mixed finite element formulations. In addition, to enhance the accuracy of both the velocity and pressure, a user-defined, time-dependent parameter $\gamma(t)$ shall be introduced ahead of the pressure term in the equivalent CHNS system, see \eqref{model:sav:phi}--\eqref{model:sav:r}. As mentioned in Remarks \ref{rem:ivs:gamma} and \ref{remk:sub_rate:1}, it plays an important role in the error analysis.
In summary, we develop a novel integrated numerical framework that offers the following major advantages:
\begin{itemize} 
	\item Computational efficiency: The proposed schemes avoid solving nonlinear systems. At each time step, only constant-coefficient linear equations and a single scalar linear equation need to be solved, significantly reducing the computational cost.
	
	\item Preservation of physical laws: In addition to mass conservation, the proposed schemes are well-posed and unconditionally energy dissipative. 
	Moreover, by incorporating the relaxation technique \cite{jiang2022improving}, the schemes can better guarantee consistency between the original energy and the modified energy.
	
	\item Rigorous error analysis: 
	Under mild regularity assumptions, we derive comprehensive error estimates for the first-order scheme in multiple space dimensions, encompassing all variables including pressure. Specifically, by virtue of the regularity properties of the Stokes equation, we obtain the optimal-order error estimates for the velocity in the $\ell^{\infty}(0,T; \bm H^1) \cap \ell^{2}(0,T; \bm H^2) $-norm and for the pressure in the $\ell^{2}(0,T; H^1)$-norm, thereby completing the induction process. To the best of our knowledge, these results represent the most accurate theoretical estimates available for the IVS method.  Notably, in contrast to projection-type methods for the incompressible Navier–Stokes equations, the proposed scheme avoids numerical inconsistencies caused by artificial boundary conditions, and thus achieves convergence of both velocity and pressure in stronger norms.
\end{itemize}

The remainder of the paper is organized as follows. In Section \ref{equivalent_eq}, we introduce the CHNS model and present some preliminaries. In Section \ref{sec:scheme}, we construct first- and second-order linear structure-preserving IVS schemes, outline an efficient implementation procedure, and prove that both schemes are unique solvable,  mass-conserving and unconditionally energy-dissipative. In Section \ref{sec:error}, we provide rigorous error estimates for the first-order scheme by dividing the proof into three main steps: the rough estimates, the improved estimates, and completion of the mathematical induction. In Section \ref{sec:test}, we present some numerical experiments to verify the accuracy and the reliability of the scheme. Some conclusions are addressed in Section \ref{conclusions}.

\section{Mathematical model and preliminary lemmas}\label{equivalent_eq}
Let $\Omega \subset \mathbb{R}^d, d=2,3$ be an open bounded convex polygonal or polyhedral domain, and let $\bm n$ denote the unit outward normal to the boundary $\p \Omega$. For simplicity, we denote $J:=(0,T]$. In this paper, we are interested in the construction and analysis of an efficient time discretization scheme for the following CHNS  model:
\begin{align}
	& \phi_t+ \nabla \cdot (\bm u  \phi)=M \Delta \mu 
    	&&\ \ \text { in } \Omega \times J, \label{model:phi} \\
	& \mu=- \lambda \Delta \phi+ \lambda G^{\prime}(\phi) 
	 &&\ \ \text { in } \Omega \times J,  \label{model:mu}\\
	&  \bm u_t+\bm u \cdot \nabla \bm u  -\nu \Delta \bm u+\nabla p =- \phi \nabla \mu
	 &&\ \  \text { in } \Omega \times J, \label{model:u} \\
	& \nabla \cdot \bm u=0  
	 &&\ \  \text { in } \Omega \times J, \label{model:div} 
\end{align}
enclosed with the following no-flux/no-flow boundary conditions and initial conditions:
\begin{align}
	& \p_{\bm n} \phi=\p_{\bm n} \mu=0,~~ \bm u=\bm{0}    &\ \  \text { on } \partial \Omega \times J, \label{model:boundary}\\        
	&\phi(\bm x ,0)=\phi^o(\bm x),  ~ \bm u(\bm x ,0)=\bm{u}^o(\bm x)  & \ \  \text { in } \Omega \times \{0\}. \label{model:ivp}
\end{align}
The CHNS phase field equation \eqref{model:phi}--\eqref{model:ivp} is frequently used as a diffuse interface model for the flow of two-phase incompressible fluids with a matched density \cite{gurtin1996two} and to model phase separation of a binary fluid after a deep quench \cite{anderson1998diffuse}. 

The unknown variables in the system \eqref{model:phi}--\eqref{model:ivp} include the  velocity $\bm u$, the  pressure $p$, the phase field function $\phi$, and the chemical potential $\mu$. Moreover, $M$, $\lambda$ and $\nu$ are positive physical constants, representing the mobility, the mixing coefficient, and the fluid viscosity, respectively. The Ginzburg--Landau potential function  potential $G(\phi):=\frac{1}{4 \epsilon^2}(1-\phi^2)^2$ with $\epsilon$ representing the interfacial width.    
It is well-known that under the mentioned boundary conditions \eqref{model:boundary}, the incompressible CHNS system satisfies the mass conservation law \cite{li2023consistency} and energy dissipation law \cite{li2022fully}, i.e.,
\begin{equation}\label{model:McEc:e1}
	\frac{d \malM(\phi)}{d t}=0, ~~ \frac{d \malE(\phi,  \bm u)}{d t} 	=-M\|\nabla \mu\|^2-\nu\|\nabla  \bm u\|^2 \le  0,
\end{equation}
where 
\begin{equation}\label{model:McEc:e2}
	\malM(\phi) :=\int_{\Omega} \phi d \bm x,~~  
	\malE(\phi,  \bm u) :=\int_{\Omega} \left\{\frac{1}{2}| \bm u|^2+\frac{\lambda}{2}|\nabla  \phi|^2+ \lambda G(\phi)\right\} d  \bm x. 
\end{equation}
Therefore, it is desirable to construct numerical schemes that can preserve such properties in a discrete version of \eqref{model:McEc:e1}.

We end this section by briefly stating the functional setting used throughout the paper, and also introduce some key lemmas. First, for any $m \geq 0$ and $p \ge 1$, we use $W^{m, p}(\Omega)$ to denote the standard Sobolev spaces, and we abbreviate $W^{m, 2}(\Omega)=H^m(\Omega)$ when $p=2$ and $W^{0, p}(\Omega)=L^p(\Omega)$ when $m=0$. The corresponding norms are denoted as $\|\cdot\|_{W^{m, p}}$, $\|\cdot\|_m$ and $\|\cdot\|_{L^p}$. Moreover, denote $(\cdot,\cdot)$ as the scalar or vector $L^2(\Omega)$ inner product, and define $\mbfH^m(\Omega) =(H^m(\Omega))^d$ and $\mbfL^p(\Omega) =(L^p(\Omega))^d$ with $d=2,3$. Furthermore, we introduce the following two vector spaces
\[
\mbfH_0^1(\Omega)=\{\bm v \in \mbfH^1(\Omega): \bm v =\bm 0 \text { on } \partial \Omega\}, ~\mathbf{V}=\{\bm v \in \mbfH_0^1(\Omega): \nabla \cdot  \bm v=0  \text { in } \Omega\}.
\]
We also denote by $L^s(I;W^{m,p}(\Omega))$ the Banach space of all $L^s$ integrable functions from $I:=[a,b]$ into $W^{m,
	p}(\Omega)$ with norm $\|v\|_{L^s(I;W^{m,p}(\Omega))}= \left(\int^b_a \|v\|_{
	W^{m,p}(\Omega)}dt\right)^{\frac{1}{s}}~ \textrm{for}~ s\in
[1,\infty)$ and the standard modification for $s = \infty$.
Similarly, one can define the spaces $W^{k,s}(I;W^{m,p}(\Omega))$. 
The details can be found in \cite{LM72,brenner2008mathematical}.
Next, we revisit the following useful lemmas.
\begin{lemma}[\cite{nochetto2005error}] \label{lem:poncare} 
	If $ \bm v \in \mbfH_0^1(\Omega)$, then    
	there is a positive constant  $c_p$, depending only on $\Omega$, such that
	$$
	\| \bm v\| \leq c_p \|\nabla  \bm v\|   \quad~\text{and}~\quad 	\|\nabla \cdot  \bm v\| \leq\|\nabla  \bm v\|. 
	$$
\end{lemma}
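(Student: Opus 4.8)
The plan is to treat the two inequalities separately, since the first is the classical Poincar\'e--Friedrichs inequality and the second is an elementary consequence of the homogeneous boundary condition. Both rely crucially on $\bm v \in \mbfH_0^1(\Omega)$; neither holds on all of $\mbfH^1(\Omega)$.

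For the estimate $\| \bm v\| \leq c_p \|\nabla \bm v\|$, I would use a compactness (Rellich--Kondrachov) argument. Suppose no such constant exists; then there is a sequence $\{\bm v_n\} \subset \mbfH_0^1(\Omega)$ with $\|\bm v_n\|=1$ and $\|\nabla \bm v_n\| \to 0$. The sequence is bounded in $\mbfH^1(\Omega)$, so by the compact embedding $\mbfH^1(\Omega) \hookrightarrow \mbfL^2(\Omega)$ on the bounded domain $\Omega$, a subsequence converges strongly in $\mbfL^2(\Omega)$ to some limit $\bm v$. Since $\|\nabla \bm v_n\| \to 0$, the limit satisfies $\nabla \bm v = \bm 0$, so $\bm v$ is constant on each connected component, and its vanishing trace on $\p\Omega$ forces $\bm v=\bm 0$, contradicting $\|\bm v\| = \lim \|\bm v_n\| = 1$. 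Alternatively, a constructive proof embeds $\Omega$ in a box, extends $\bm v$ by zero across $\p\Omega$, and integrates the fundamental theorem of calculus along one coordinate direction followed by Cauchy--Schwarz; this yields an explicit $c_p$ proportional to the diameter of $\Omega$, confirming the dependence on $\Omega$ only.

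For the second estimate, the key is a cross-term cancellation obtained by integrating by parts. Writing $\nabla \cdot \bm v = \sum_{i=1}^d \p_i v_i$ and expanding the square,
\[
\|\nabla \cdot \bm v\|^2 = \sum_{i,j=1}^d \int_\Omega \p_i v_i \, \p_j v_j \, d\bm x.
\]
For $\bm v \in (C_c^\infty(\Omega))^d$ I integrate by parts twice---first moving $\p_i$ off $v_i$, commuting the partials, and then moving $\p_j$ back onto $v_i$---to obtain the identity $\int_\Omega \p_i v_i \, \p_j v_j \, d\bm x = \int_\Omega \p_j v_i \, \p_i v_j \, d\bm x$. Both sides involve only first derivatives and are continuous in the $\mbfH^1$-norm, so the identity extends to all $\bm v \in \mbfH_0^1(\Omega)$ by density. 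Summing over $i,j$ and applying $\p_j v_i \, \p_i v_j \le \tfrac12(|\p_j v_i|^2 + |\p_i v_j|^2)$ gives
\[
\|\nabla \cdot \bm v\|^2 \le \sum_{i,j=1}^d \int_\Omega |\p_j v_i|^2 \, d\bm x = \|\nabla \bm v\|^2,
\]
which is exactly the claim with constant one.

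The only genuinely delicate point is the constant in the second inequality: a naive pointwise Cauchy--Schwarz applied directly to $\sum_i \p_i v_i$ would produce a spurious factor $\sqrt{d}$, and it is precisely the homogeneous boundary condition---invoked through the integration-by-parts identity above---that removes it. I expect this to be the main subtlety to get right; for the first inequality the essential ingredient is likewise the boundary condition, entering either through the vanishing trace of the weak limit or through the zero extension in the constructive argument. Neither step is technically hard, but both hinge on the same structural fact, namely that $\bm v$ vanishes on $\p\Omega$.
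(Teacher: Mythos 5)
The paper gives no proof of this lemma at all: it is quoted verbatim from the cited reference \cite{nochetto2005error} and used as a black box, so there is no internal argument to compare yours against. Judged on its own, your proposal is correct and complete. The first inequality is handled by the standard compactness contradiction: a normalized sequence with vanishing gradients converges (along a subsequence, via Rellich--Kondrachov) in $\mbfL^2$ to a limit which, being an $\mbfH^1$-limit of $\mbfH^1_0$ functions with zero gradient, is a constant with vanishing trace, hence zero --- contradicting the unit $L^2$ norm; the constructive zero-extension argument you sketch as an alternative is equally valid and gives the explicit diameter-dependent constant. The second inequality is the genuinely nonobvious part, and you identify the right mechanism: the identity $\int_\Omega \p_i v_i\, \p_j v_j\, d\bm x = \int_\Omega \p_j v_i\, \p_i v_j\, d\bm x$, proved by two integrations by parts on $(C_c^\infty(\Omega))^d$ and extended to $\mbfH_0^1(\Omega)$ by density and $H^1$-continuity of both bilinear forms, after which the pointwise Young inequality and index symmetry give the bound with constant exactly one. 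Your remark that a direct pointwise Cauchy--Schwarz would only yield $\sqrt{d}\,\|\nabla \bm v\|$, and that the homogeneous boundary condition is what removes this factor, is precisely the point; this is the standard argument (it is also how one shows $(\nabla\cdot\bm u,\nabla\cdot\bm v) \le (\nabla\bm u,\nabla\bm v)$-type bounds in the Navier--Stokes literature), and nothing in it conflicts with the setting of the paper, where $\Omega$ is a bounded convex polygonal/polyhedral domain.
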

\begin{lemma}[Property of the trilinear form \cite{constantin1988navier}] \label{lem:trilinear} 
	Define the trilinear form 
	$\bm b(\bm u,  \bm v,  \bm w):=(\bm u \cdot \nabla  \bm v,  \bm w)$.  Then 
	\begin{itemize} 
		\item[(i)]  it is skew-symmetric with respect to its last two arguments, i.e.,
		\begin{equation*}
			b(\bm u,  \bm v,  \bm w)=-b(\bm u,  \bm w,  \bm v), ~b(\bm u,  \bm v,  \bm v)=0, \quad  \forall \bm u \in \mbfV, ~  \bm v,  \bm w \in \mbfH^1(\Omega).
		\end{equation*}
		\item[(ii)]
		there is a positive constant  $c_t$, depending only on $\Omega$, such that for $d \le 4$,
		\begin{equation*}
			\bm b( \bm u,  \bm v,  \bm w)   \leq c_t
			\left\{
			\begin{aligned}
				&\|\bm u\|_1 \| \bm v\|_1 \| \bm w\|_1, && \quad \forall  \bm u, \bm v, \bm w \in \mbfH_0^1(\Omega), \\
				&\| \bm u\|_2\| \bm v\|_1\| \bm w\|, && \quad  \forall  \bm u \in \mbfH^2(\Omega) \cap \mbfH_0^1(\Omega), ~  \bm v, \bm w \in \mbfH_0^1(\Omega), \\
				&\| \bm u\|_1\| \bm v\|_2\| \bm w\|, && \quad \forall  \bm v \in \mbfH^2(\Omega) \cap \mbfH_0^1(\Omega), ~ \bm u, \bm w \in \mbfH_0^1(\Omega), \\
				&\| \bm u\|\| \bm v\|_2\| \bm w\|_1, && \quad \forall  \bm v \in \mbfH^2(\Omega) \cap \mbfH_0^1(\Omega), ~  \bm u, \bm w \in \mbfH_0^1(\Omega).
			\end{aligned}
			\right.
		\end{equation*}
 	\end{itemize}
\end{lemma}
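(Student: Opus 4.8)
The plan is to treat the two parts separately: part (i) is a pure integration-by-parts identity that uses only the boundary condition and the divergence-free constraint, while part (ii) is a sequence of Hölder estimates in which each factor is promoted to a Sobolev norm via the embeddings available for $d\le 4$. For (i), I would write the form in components (summation convention) as $\bm b(\bm u,\bm v,\bm w)=\int_\Omega u_j(\partial_j v_i)w_i\,d\bm x$ and integrate by parts in the $x_j$ variable. Since $\bm u\in\mbfV\subset\mbfH_0^1(\Omega)$ vanishes on $\partial\Omega$, the boundary integral drops, leaving $-\int_\Omega(\partial_j u_j)v_iw_i\,d\bm x-\int_\Omega u_j v_i(\partial_j w_i)\,d\bm x$. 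The first term vanishes because $\partial_j u_j=\nabla\cdot\bm u=0$ for $\bm u\in\mbfV$, and the second is precisely $-\bm b(\bm u,\bm w,\bm v)$; this is the asserted skew-symmetry. Taking $\bm w=\bm v$ then gives $\bm b(\bm u,\bm v,\bm v)=-\bm b(\bm u,\bm v,\bm v)$, hence $\bm b(\bm u,\bm v,\bm v)=0$.

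For the continuity bounds in (ii), the template is to estimate $|\bm b(\bm u,\bm v,\bm w)|\le\int_\Omega|\bm u|\,|\nabla\bm v|\,|\bm w|\,d\bm x$ by a three-factor Hölder inequality and then convert each $L^p$ factor into the appropriate Sobolev norm, the workhorse embedding being $\mbfH^1(\Omega)\hookrightarrow\mbfL^4(\Omega)$ valid for $d\le 4$. For the first estimate I would use the exponents $(4,2,4)$ to obtain $\|\bm u\|_{L^4}\|\nabla\bm v\|\,\|\bm w\|_{L^4}\le c_t\|\bm u\|_1\|\bm v\|_1\|\bm w\|_1$. For the third estimate, since $\bm v\in\mbfH^2(\Omega)$ gives $\nabla\bm v\in\mbfH^1\hookrightarrow\mbfL^4$, the exponents $(4,4,2)$ yield $\|\bm u\|_{L^4}\|\nabla\bm v\|_{L^4}\|\bm w\|\le c_t\|\bm u\|_1\|\bm v\|_2\|\bm w\|$, and the fourth estimate follows symmetrically with $(2,4,4)$, giving $\|\bm u\|\,\|\nabla\bm v\|_{L^4}\|\bm w\|_{L^4}\le c_t\|\bm u\|\,\|\bm v\|_2\|\bm w\|_1$. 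The second estimate is the outlier: it keeps both $\nabla\bm v$ and $\bm w$ in $L^2$, so the product $|\nabla\bm v|\,|\bm w|$ is controlled only in $L^1$ by Cauchy--Schwarz, forcing $\bm u$ into $L^\infty$; here I would invoke $\mbfH^2(\Omega)\hookrightarrow\mbfL^\infty(\Omega)$ to conclude $\|\bm u\|_{L^\infty}\|\nabla\bm v\|\,\|\bm w\|\le c_t\|\bm u\|_2\|\bm v\|_1\|\bm w\|$.

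The genuine subtlety, and the step I would watch most carefully, lies in the dimensional range of the embeddings rather than in the algebra. The third and fourth estimates are comfortable for all $d\le 4$ because they rely solely on $\mbfH^1\hookrightarrow\mbfL^4$, but the second estimate requires $\mbfH^2\hookrightarrow\mbfL^\infty$, which holds for $d\le 3$ and is exactly borderline (and fails) at $d=4$; for the present paper this is harmless since $\Omega$ is taken with $d=2,3$. A secondary point to verify is that all the Sobolev embedding constants depend only on $\Omega$, so that a single $c_t=c_t(\Omega)$ absorbs the four cases, as the statement requires. Finally, the integration by parts in (i) is fully justified because the arguments lie in $\mbfH_0^1(\Omega)$, so no extra trace hypotheses are needed beyond those already built into the spaces.
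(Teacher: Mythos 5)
Your proof is correct, and there is nothing in the paper to compare it against: the paper states this lemma as a cited result from Constantin--Foias \cite{constantin1988navier} and gives no proof, so your argument is simply the standard textbook one — integration by parts plus divergence-freeness for (i), and three-factor H\"older combined with $\mbfH^1\hookrightarrow\mbfL^4$ (and $\mbfH^2\hookrightarrow\mbfL^\infty$ for the second line) for (ii).

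One remark worth keeping: your caveat about the second estimate is a genuine observation, not just caution. That line forces the exponent pattern $(\infty,2,2)$, since $\|\bm v\|_1$ and $\|\bm w\|$ only control $L^2$ norms of $\nabla\bm v$ and $\bm w$, and $\mbfH^2\hookrightarrow\mbfL^\infty$ indeed fails at $d=4$; so the lemma's blanket ``for $d\le 4$'' is slightly too generous for that one inequality, while the other three hold for $d\le 4$ exactly as you argue. Since the paper works only with $d=2,3$, this imprecision in the cited statement has no consequence for any of its applications (e.g.\ in the estimates of ${\malC}_1$, $\malD_1$, and $\bm\chi_1$), but your version of the hypothesis is the correct one.
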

\begin{lemma}[Agmon's inequality \cite{temam2012infinite}]  \label{lem:Agmon}
	Define $\bar{v}:=\frac{1}{|\Omega|} \int_{\Omega} v d \bm x$, then there exists a constant $c_{A}$, depending only on $\Omega$, such that
	\begin{equation*}
		\|v-\bar{v}\|_{L^{\infty}} \leq c_{A}\|\nabla v\|^{1/2} \|\Delta v\|^{1/2}. 
	\end{equation*}
\end{lemma}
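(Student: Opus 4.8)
The plan is to reduce the estimate to a mean-zero function and then combine the classical Agmon interpolation inequality with $H^2$ elliptic regularity on the convex domain $\Omega$. First I would set $w := v - \bar v$, so that $\bar w = 0$, $\nabla w = \nabla v$, $\Delta w = \Delta v$, and $\|w\|_{L^\infty} = \|v-\bar v\|_{L^\infty}$; moreover $w$ inherits the no-flux boundary condition $\p_{\bm n} w = 0$ that is implicit in the no-flux setting \eqref{model:boundary} under which the inequality is applied to $\phi$ and $\mu$. It then suffices to prove $\|w\|_{L^{\infty}} \le c_A \|\nabla w\|^{1/2}\|\Delta w\|^{1/2}$.

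Next I would invoke the standard Agmon interpolation inequality valid for $d \le 3$, namely $\|w\|_{L^{\infty}} \le c\,\|w\|_1^{1/2}\|w\|_2^{1/2}$ (with the even weaker $\|w\|^{1/2}\|w\|_2^{1/2}$ available when $d=2$). This follows from the Gagliardo--Nirenberg--Sobolev inequality on $\mathbb{R}^d$: extend $w$ to $Ew \in H^2(\mathbb{R}^d)$ by a bounded Sobolev extension with $\|Ew\|_{H^k(\mathbb{R}^d)} \le c\|w\|_k$, and observe that the exponent count for the embedding $H^2 \hookrightarrow L^{\infty}$ closes precisely when $d \le 4$, which covers the cases $d = 2,3$ of interest.

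It remains to replace the full Sobolev norms by $\|\nabla v\|$ and $\|\Delta v\|$. For the $H^1$ factor I would use the Poincar\'e--Wirtinger inequality for the zero-mean function $w$, giving $\|w\|_1 \le c\|\nabla w\| = c\|\nabla v\|$. For the $H^2$ factor I would use elliptic regularity: since $\Omega$ is a bounded convex polygonal/polyhedral domain, the Hessian of a Neumann function is controlled by its Laplacian, $\|D^2 w\| \le \|\Delta w\| = \|\Delta v\|$ (Grisvard). Combining this with the integration-by-parts bound $\|\nabla w\|^2 = -(w,\Delta w) \le \|w\|\,\|\Delta w\| \le c_p\|\nabla w\|\,\|\Delta w\|$, i.e. $\|\nabla w\| \le c_p\|\Delta v\|$, yields $\|w\|_2 \le c\|\Delta v\|$. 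Substituting the two bounds into the interpolation inequality produces $\|v - \bar v\|_{L^{\infty}} \le c_A \|\nabla v\|^{1/2}\|\Delta v\|^{1/2}$.

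The main obstacle is the $H^2$-regularity step $\|D^2 w\| \le C\|\Delta w\|$, which is exactly where the hypotheses enter. Without a boundary condition the estimate is false (any nonconstant harmonic function has $\Delta v = 0$ yet $v - \bar v \ne 0$), and on a non-convex domain the constant would degenerate because of corner or edge singularities. Relying on the convexity of $\Omega$ together with the implicit no-flux condition is therefore essential; once this regularity is secured, the remaining interpolation and Poincar\'e estimates are routine.
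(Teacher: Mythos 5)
The paper never proves this lemma---it is imported verbatim from Temam's book \cite{temam2012infinite} as a preliminary---so there is no in-paper argument to compare yours against; judged on its own merits, your reconstruction is essentially correct and is exactly how one upgrades the textbook statement, which carries full $H^1$/$H^2$ norms, to the seminorm form quoted here: pass to $w=v-\bar v$, bound $\|w\|_1$ by $\|\nabla v\|$ via Poincar\'e--Wirtinger, bound $\|w\|_2$ by $\|\Delta v\|$ via the convex-domain estimate $\|D^2w\|\le\|\Delta w\|$ of Grisvard together with $\|\nabla w\|^2=-(w,\Delta w)$, and then invoke the standard Agmon interpolation $\|w\|_{L^\infty}\le c\|w\|_1^{1/2}\|w\|_2^{1/2}$. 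Your remark about boundary conditions is also the right one to make: as literally stated the inequality is false (take $v=x_1$, harmonic and nonconstant, so the right-hand side vanishes while the left-hand side does not), and both your integration by parts and the Grisvard bound require the homogeneous Neumann condition, which is available at the one place the paper uses the lemma, namely the derivation of the $L^\infty$ bound \eqref{bound_phi} for $\phi^n$.

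Two blemishes, neither fatal. First, your exponent count is off: $H^2(\mathbb{R}^d)\hookrightarrow L^\infty(\mathbb{R}^d)$ requires $2\cdot 2>d$, so it holds for $d\le 3$ and fails in the critical case $d=4$; accordingly the interpolation $\|w\|_{L^\infty}\le c\|w\|_1^{1/2}\|w\|_2^{1/2}$ is a $d\le 3$ statement, which still covers the paper's setting $d=2,3$, so nothing downstream breaks. Second, extension plus the embedding alone does not produce the multiplicative (product-of-square-roots) form; after extending, one either argues on the Fourier side---split $\widehat{Ew}$ at a frequency $R$, estimate the low-frequency part by $\|Ew\|_{H^1}$ and the high-frequency part by $\|Ew\|_{H^2}$, and optimize in $R$---or simply cites Agmon's inequality on $\mathbb{R}^d$ as a known result, which is in effect what the paper's citation of Temam does.
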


\section{A class of linear structure-preserving IVS methods} \label{sec:scheme}
In this section, we aim to develop first- and second-order, linear, mass-conserving, and unconditionally energy-stable numerical schemes for the system \eqref{model:phi}--\eqref{model:ivp}. To achieve this, three major challenges must be addressed:
(i) devising an effective strategy to linearize the nonlinear terms;
(ii) ensuring the decoupling of the variables for practical computation; 
and (iii) preserving both mass conservation law and unconditional energy dissipation law at the discrete level.

Given $\beta$ a positive constant, we define $F(\phi) := G(\phi)-\frac{\beta}{2}\phi^2$ and $E_1(\phi):=\int_{\Omega} F(\phi) d \bm x$. 
Moreover, assume that there is a positive constant $\delta_0$ such that $E_1(\phi)+\delta_0>0$. We employ the SAV-ZEC approach \cite{zgdsisc2024} to simultaneously address the nonlinear terms and  coupled terms present in \eqref{model:phi}--\eqref{model:ivp}. To this end, we introduce the scalar auxiliary variable 
$  R(t):= \sqrt{E_1(\phi)+\delta_0}$ and $ \xi(t):=\frac{R(t)}{\sqrt{E_1(\phi)+\delta_0}}\equiv 1$,
and reformulate the original system \eqref{model:phi}--\eqref{model:ivp} into the following equivalent form:
\begin{align}
	& \phi_t+ \xi(t) \nabla \cdot (\bm u \phi) =M \Delta \mu,  \label{model:sav:phi} \\
	& \mu=- \lambda\Delta \phi +\lambda  \beta \phi+ \lambda \xi(t) F^{\prime}(\phi),  \label{model:sav:mu}\\
	& \bm u_t +  \xi(t) \bm u\cdot\nabla \bm u-\nu \Delta \bm u +\nabla p=-  \xi(t) (\phi \nabla \mu + \gamma(t) \nabla p) + \gamma(t)  \nabla p,  \label{model:sav:u}\\
	& \nabla \cdot \bm u = 0,    \label{model:sav:divu} \\
	&  R_t =\frac{1}{2  \sqrt{E_1(\phi)+\delta_0}}   ( F^{\prime}(\phi), \phi_t)
	+  \frac{1}{2 \lambda  \sqrt{E_1(\phi)+\delta_0}} \big((\mu , \nabla \cdot (\bm u \phi) )+  ( {\bm u} ,  \phi  \nabla\mu ) \big) \notag\\
	&\qquad  + \frac{1}{2 \lambda  \sqrt{E_1(\phi)+\delta_0}} \bm b(\bm u, \bm u, \bm u) + \frac{ \gamma(t)}{2 \lambda  \sqrt{E_1(\phi)+\delta_0}}  (\nabla p, \bm u), 
	\label{model:sav:r}
\end{align}
enclosed with boundary and initial conditions \eqref{model:boundary}--\eqref{model:ivp} and $R(0)=\sqrt{E_1(\phi^o)+\delta_0}$.
Here, $\gamma(t)$ is a user-defined, time-dependent function to be given below, and its role and significance shall be illustrated in Remarks \ref{rem:ivs:gamma} and \ref{remk:sub_rate:1}. 

\begin{remark}
	We  point out that some exactly zero terms in the continuous level have been added to \eqref{model:sav:r}, namely, 
	\begin{equation*} 
		(\mu , \nabla \cdot (\bm u \phi) )+  ( {\bm u},  \phi  \nabla\mu ) =0, ~   \bm b(\bm u, \bm u, \bm u) =0, ~ (\nabla p, \bm u) =0.
	\end{equation*}
	This is called  ZEC  (zero energy contribution) in  \cite{chen2021fully,yang2021novelSISC}, which shall  play an important role in developing and analyzing the decoupled and energy stable numerical scheme for the CHNS system. Besides, it is worth noting that \eqref{model:sav:phi}--\eqref{model:sav:r} are more "algorithm-friendly" than the original system \eqref{model:phi}--\eqref{model:div}. This reformulation enables the construction of a linear and fully decoupled structure-preserving numerical scheme. The details of the discretization strategy will be provided in the next subsection.
\end{remark}

\subsection{A first-order numerical scheme} \label{subsec:scheme}
For given positive integer $N_t$, we define a uniform temporal partition $J=\cup_{n=0}^{N_t-1} J_n:=\cup_{n=0}^{N_t-1}[t_{n},t_{n+1}]$ with nodes $t_n=n \tau$ for $n=0, \ldots, N_t$ and $ \tau:=T / N_t$. Denote $\nabla_\tau  v^{n+1}:= v^{n+1}-v^{n}$ and $\p_\tau  v^{n+1}:= \nabla_\tau  v^{n+1}/\tau$.  Let $\psi^n$ be the numerical approximation to the exact function $\psi(t_n)$. With the above notations, a first-order structure-preserving IVS scheme for \eqref{model:sav:phi}--\eqref{model:sav:r} is proposed as the following two steps:

\paragraph{\bf Step 1}  Find $\big(\phi^{n+1}, \mu^{n+1}, \widehat{\bm u}^{n+1}\big)$ such that 
\begin{align}
	& \p_{\tau} \phi^{n+1} + \xi^{n+1}\nabla \cdot({\bm u}^n \phi^n  )=M \Delta \mu^{n+1}, \quad  
	\p_{\bm n} \phi^{n+1}|_{\partial \Omega}=0, \label{scheme:1th:phi}\\
	& \mu^{n+1}=- \lambda \Delta \phi^{n+1} + \lambda \beta \phi^{n+1} + \lambda  \xi^{n+1} F^{\prime}(\phi^n), \quad   
	\p_{\bm n} \mu^{n+1} |_{\partial \Omega}=0, \label{scheme:1th:mu} \\
	& \frac{\widehat {\bm u}^{n+1} -  \bm u^n}{\tau} + \xi^{n+1} {\bm u}^n\cdot\nabla {\bm u}^n - \nu\Delta   \widehat {\bm u}^{n+1} = -\xi^{n+1}  \big( \phi^n \nabla \mu^n+\gamma^n \nabla p^n \big), ~
	\widehat{\bm u}^{n+1}|_{\partial \Omega}=0, \label{scheme:1th:uhat}   
\end{align}
with
\begin{equation} \label{scheme:1th:xi}
	\xi^{n+1}=\frac{R^{n+1}}{\sqrt{E_1(\phi^n)+\delta_0}}.
\end{equation}

\paragraph{\bf Step 2}  Find the SAV $R^{n+1}$ such that 
\begin{equation}\label{scheme:1th:rhat}
	\begin{aligned}
		\p_{\tau}  R^{n+1} 
		&=\frac{1}{2\lambda \sqrt{E_1(\phi^n)+\delta_0}} \big( \lambda (F^{\prime}(\phi^n), \p_\tau  \phi^{n+1} ) + (\mu^{n+1},  \nabla \cdot({\bm u}^n \phi^n)) \big) \\
		&\qquad +\frac{1}{2\lambda \sqrt{E_1(\phi^n)+\delta_0}} \big( (\widehat{\bm u}^{n+1}, \phi^n \nabla \mu^n)
		+ \bm b({\bm u}^n, {\bm u}^n,  \widehat {\bm u}^{n+1}) \big)
		+\frac{\gamma^n}{2\lambda \sqrt{E_1(\phi^n)+\delta_0}}   (\nabla p^n,  \widehat {\bm u}^{n+1}).
	\end{aligned}
\end{equation}

\paragraph{\bf Step 3}  Update $ (\bm u^{n+1}, p^{n+1})$ by the following correction step:
\begin{align}
	& \frac{\bm u^{n+1}- \widehat {\bm u}^{n+1} }{\tau}-\nu \Delta ( \bm u^{n+1}- \widehat {\bm u}^{n+1})+\nabla p^{n+1} = \gamma^n  \nabla p^{n}, \quad  {\bm u}^{n+1}|_{\partial \Omega}=0, \label{scheme:1th:uplus1}   \\
	& \nabla \cdot  \bm u^{n+1} = 0. \label{scheme:1th:div}
\end{align}
Here the parameter $\gamma^n:=\frac{\theta}{\|\nabla p^n \|+1}$
with $\theta$ being a positive artificial parameter, for example, $\theta=1$.

Some remarks are in order.
\begin{remark}\label{rem:e1}   We first explain the strategy behind developing the above scheme.  The  IVS method is used to  decouple the computations of the pressure from the velocity field in the momentum equation. To construct a linear, decoupled, and energy stable scheme, all terms involving $\xi^{n+1}$ are treated  explicitly,  while $\xi^{n+1}$  itself is discretized semi-implicitly. For \eqref{scheme:1th:rhat}, we utilize a combination of implicit and explicit discretization to achieve unconditional energy stability. This approach enables efficient resolution of nonlinearities and coupled terms while preserving a fully decoupled computational framework.
\end{remark}
\begin{remark}  
	Unlike conventional projection methods \cite{chorin1968numerical, temam1969approximation, guermond2006overview}, the IVS method \cite{yakoubi2023enhancing} retains the viscosity term in \eqref{scheme:1th:uplus1}, which preserves the original no-flow boundary condition for the velocity. This is a key advantage over traditional approaches, and it is also crucial for establishing convergence, as discussed in subsection \ref{subsec:full_rate}.
\end{remark}

\begin{remark} \label{rem:ivs:gamma}
	The introduced consistency parameter $\gamma^n$, which only depends on the computed pressure $p^n$, can enhance the accuracy of both the velocity and pressure estimates compared to the traditional VS method. This improvement has been validated through extensive numerical experiments in \cite{yakoubi2023enhancing}. In fact, incorporating $\gamma^n$ in the discretization is a crucial step of our analytical framework, as it leads to the bounded result
	\begin{equation} \label{gamma:bound}
		| \gamma^n \nabla p^n | = \left| \frac{\nabla p^n}{|\nabla p^n| + 1} \right| \leq 1, 
	\end{equation} 
	and thus the optimal-order error estimate, as further detailed in Remark \ref{remk:sub_rate:1}.
\end{remark}

\subsection{ Efficient implementation} \label{subsec:implement}
As pointed out in Remark \ref{rem:e1}, the variables in \eqref{scheme:1th:phi}--\eqref{scheme:1th:uhat} are coupled with the auxiliary variable $R^{n+1}$ through $\xi^{n+1}$ defined in \eqref{scheme:1th:xi}, which may cause huge computational cost by enforcing to solve a large-scale coupled system. 
However, due to the explicit treatment of the nonlinear terms, we can split the unknown variables $\phi^{n+1}, \mu^{n+1}$ and $\widehat{\bm u}^{n+1}$ with respect to $\xi^{n+1}$  as follows:
\begin{equation} \label{scheme:1th:split:all}
	\phi^{n+1}=\phi_0^{n+1}+\xi^{n+1} \phi_1^{n+1}, ~
	\mu^{n+1}=\mu_0^{n+1}+\xi^{n+1} \mu_1^{n+1}, ~ 
	\widehat{\bm u}^{n+1}  = \widehat{\bm u}^{n+1}_{0}+\xi^{n+1} \widehat{\bm u}^{n+1}_{1}. 
\end{equation} 
Then, inserting \eqref{scheme:1th:split:all}  into \eqref{scheme:1th:phi}--\eqref{scheme:1th:uhat}, by collecting terms without and with $\xi^{n+1}$, respectively, the solutions to Step 1 are reduced to find $\{\phi_i^{n+1}, \mu_i^{n+1}, \widehat{\bm u}_i^{n+1}\}_{i=0}^1$ such that

\paragraph{\bf\underline{Step 1.1}} Compute $\{\phi_0^{n+1}, \mu_0^{n+1}\}$ and $\{\phi_1^{n+1}, \mu_1^{n+1}\}$ such that
\begin{equation} \label{scheme:1th:phi0}
	\left\{
	\begin{aligned}
		& \phi_0^{n+1}-\phi^n=M \tau \Delta \mu_0^{n+1}, \quad \mu_0^{n+1}=- \lambda\Delta \phi_0^{n+1} + \lambda \beta \phi_0^{n+1},  \\
		& \p_{\bm n} \phi_0^{n+1}|_{\partial \Omega}=\p_{\bm n} \mu_0^{n+1}|_{\partial \Omega}=0, 
	\end{aligned} \right.
\end{equation}
and
\begin{equation}  \label{scheme:1th:phi1}
	\left\{
	\begin{aligned}
		& \phi_1^{n+1}+\tau \nabla \cdot({\bm u}^n \phi^n )=M \tau \Delta \mu_1^{n+1}, \quad \mu_1^{n+1}=-  \lambda\Delta \phi_1^{n+1} +\lambda \beta \phi_1^{n+1} + \lambda  F^{\prime}(\phi^n), \\
		&   \p_{\bm n} \phi_1^{n+1}|_{\partial \Omega}=\p_{\bm n} \mu_1^{n+1}|_{\partial \Omega}=0.
	\end{aligned} \right.
\end{equation}

\paragraph{\bf \underline{Step 1.2}} Compute $\{\widehat{\bm u}_0^{n+1}, \widehat{\bm u}_1^{n+1}\} $ such that
\begin{align} 
	& \widehat{\bm u}_0^{n+1} - {\bm u}^n -\nu \tau \Delta  \widehat{\bm u}_0^{n+1} = 0,  \quad \widehat{\bm u}_0^{n+1}|_{\partial \Omega}=0, \label{scheme:1th:uhat0}\\
	& \widehat{\bm u}_1^{n+1} +\tau {\bm u}^n \cdot \nabla {\bm u}^n -\nu \tau \Delta \widehat{\bm u}_1^{n+1} =-\tau \phi^n \nabla \mu^n-\tau\gamma^n \nabla p^n,  \quad \widehat{\bm u}_1^{n+1} \big|_{\partial \Omega}=0. \label{scheme:1th:uhat1}
\end{align}

Furthermore, inserting \eqref{scheme:1th:xi} into \eqref{scheme:1th:rhat}, the solution to Step 2 is reduced to find $\xi^{n+1}$ such that
\paragraph{\bf\underline{Step 2.1}} Solve the linear equation about the unknown $\xi^{n+1}$:
\begin{equation} \label{scheme:1th:algebraic_system}
		A \xi^{n+1} =B, 
\end{equation}
where
\begin{equation*}  
	\left\{
	\begin{aligned}
		A &=\sqrt{E_1(\phi^n)+\delta_0}-\frac{1}{2 \lambda   \sqrt{E_1(\phi^n)+\delta_0}}
		\big(\lambda (F^{\prime}(\phi^n), \phi_1^{n+1})+ \tau(\mu_1^{n+1}, \nabla \cdot ( \bm u^n \phi^n))\big) \\
		&\qquad -\frac{\tau}{2\lambda \sqrt{E_1(\phi^n)+\delta_0}}\big( (\widehat{\bm u}_1^{n+1}, \phi^n \nabla \mu^n)+ \bm b({\bm u}^n, {\bm u}^n, \widehat{\bm u}_1^{n+1})+\gamma^n  (\nabla p^n, \widehat{\bm u}_1^{n+1}) \big), \\
		B &= R^n+\frac{1}{2\lambda \sqrt{E_1(\phi^n)+\delta_0}}\lambda( F^{\prime}(\phi^n), \phi_0^{n+1}-\phi^n)\\
		&\qquad + \frac{\tau}{2\lambda \sqrt{E_1(\phi^n)+\delta_0}} \big((\mu_0^{n+1}, \nabla \cdot (  \bm u^n \phi^n))+ ( \widehat{\bm u}_0^{n+1}, \phi^n \nabla \mu^n)\big) \\
		&\qquad +\frac{\tau}{2\lambda \sqrt{E_1(\phi^n)+\delta_0}}\big( \bm b({\bm u}^n, {\bm u}^n, \widehat{\bm u}_0^{n+1})+\gamma^n (\nabla p^n, \widehat{\bm u}_0^{n+1})\big).
	\end{aligned}\right.
\end{equation*}

\paragraph{\bf\underline{Step 2.2}}  Once $\{\phi_i^{n+1}, \mu_i^{n+1}, \widehat{\bm u}_i^{n+1}\}_{i=0}^1$ and $\xi^{n+1}$ are obtained, we first compute $R^{n+1}$ by \eqref{scheme:1th:xi} and then update the variables $\{\phi^{n+1}, \mu^{n+1}, \widehat{\bm u}^{n+1}\}$ via the splitting \eqref{scheme:1th:split:all}.

\paragraph{\bf\underline{Step 3}} With $\widehat{\bm u}^{n+1}$ being computed, we can solve $\bm u^{n+1}$ and $p^{n+1}$ through the correction step \eqref{scheme:1th:uplus1}--\eqref{scheme:1th:div}. 

\begin{remark} \label{rem:solve}
	We remark that combined with suitable spatial discretization methods, at each time level, the total computational cost of fully discrete scheme involves the following processes:
	\begin{itemize}
		\item[(i)] Phase field: Solve two second-order linear systems \eqref{scheme:1th:phi0}--\eqref{scheme:1th:phi1} with the same time-independent constant-coefficient matrix, which can be generated only once and in advance.
		
		\item[(ii)]  Intermediate velocity field: Solve $2d$ Poisson-type scalar equations in \eqref{scheme:1th:uhat0}--\eqref{scheme:1th:uhat1} with the identical time-independent constant-coefficient matrix, which can also be generated only once and in advance.
		
		\item[(iii)] Auxiliary variable: Solve one linear equation \eqref{scheme:1th:algebraic_system} with only one unknown $\xi^{n+1}$ and thus $R^{n+1}$ can be updated by \eqref{scheme:1th:xi} with negligible computational cost.
			
		\item[(iv)] Pressure field and final velocity field: Solve one linear generalized constant-coefficient Stokes problem, that can also be generated only once and in advance in Step 3.
	\end{itemize}
	Therefore, the computational efficiency is totally ensured in practice, especially in large-scale modeling and simulations.
\end{remark}

\begin{remark}
	Although the inclusion of the viscosity term in \eqref{scheme:1th:uplus1} results in a Stokes-like problem, the pressure can be decoupled from the incompressible condition in practical computation \cite{Yang2024error}.
	Specifically, we can compute $p^{n+1}$ via the Poisson equation
	\begin{equation*} 
		-\Delta p^{n+1} =-\gamma^n \Delta p^n - \frac{1}{\tau} \nabla \cdot \widehat{\bm u}^{n+1} 
		+ \nu \nabla \cdot \Delta \widehat{\bm u}^{n+1}, \quad  \p_{\bm n} p^{n+1}|_{\partial \Omega}=0,
	\end{equation*} 
	and then update each component of $\bm u^{n+1}$ through \eqref{scheme:1th:uplus1} by solving some constant-coefficient Poisson equations, which can further increase the computational efficiency. 
\end{remark}

\subsection{Unique solvability, mass conservation and energy stability}
In the following, we first demonstrate that the first-order scheme is uniquely solvable.
\begin{theorem}  \label{thm:solvable}
	The first-order structure-preserving IVS scheme \eqref{scheme:1th:phi}--\eqref{scheme:1th:div} is uniquely solvable.
\end{theorem}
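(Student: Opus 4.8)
The plan is to exploit the equivalent decoupled reformulation given in Section~\ref{subsec:implement}. Since the splitting \eqref{scheme:1th:split:all} is an exact algebraic identity and every resulting sub-problem is linear, the unique solvability of the coupled scheme \eqref{scheme:1th:phi}--\eqref{scheme:1th:div} is equivalent to the unique solvability of three pieces: (a) the two decoupled Cahn--Hilliard-type systems \eqref{scheme:1th:phi0}--\eqref{scheme:1th:phi1} and the two velocity systems \eqref{scheme:1th:uhat0}--\eqref{scheme:1th:uhat1} of Step 1; (b) the scalar linear equation \eqref{scheme:1th:algebraic_system} for $\xi^{n+1}$; and (c) the generalized Stokes problem \eqref{scheme:1th:uplus1}--\eqref{scheme:1th:div} of Step 3. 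I would establish these three pieces in turn, and then reassemble $\phi^{n+1},\mu^{n+1},\widehat{\bm u}^{n+1}$ via \eqref{scheme:1th:split:all}, recover $R^{n+1}$ from \eqref{scheme:1th:xi}, and finally obtain $(\bm u^{n+1},p^{n+1})$.

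For part (a), each problem is a standard coercive linear elliptic problem, so I would invoke the Lax--Milgram theorem. For the Cahn--Hilliard systems I would eliminate $\mu_i^{n+1}$ using the second relation to reduce to a single fourth-order problem in $\phi_i^{n+1}$; testing the first equation with $v\equiv 1$ shows the mean of $\phi_i^{n+1}$ is fixed, and the associated bilinear form $(\phi,\psi)+M\tau\lambda(\Delta\phi,\Delta\psi)+M\tau\lambda\beta(\nabla\phi,\nabla\psi)$ is coercive (all the higher-order terms are nonnegative and the zeroth-order term controls the mean), whence unique solvability follows. For the velocity equations \eqref{scheme:1th:uhat0}--\eqref{scheme:1th:uhat1}, the bilinear form $(\bm v,\bm w)+\nu\tau(\nabla\bm v,\nabla\bm w)$ is coercive on $\mbfH_0^1(\Omega)$ by Lemma~\ref{lem:poncare}, giving the result directly.

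The crux, part (b), is to show $A\neq 0$, since then $\xi^{n+1}=B/A$ is uniquely determined. Here I would use the observation that the first-index split solutions carry an explicit factor of $\tau$: from \eqref{scheme:1th:phi1} and \eqref{scheme:1th:uhat1} the right-hand side data are $O(\tau)$, so the a priori energy estimates give $\|\phi_1^{n+1}\|_1+\|\widehat{\bm u}_1^{n+1}\|_1\le C\tau$ with $C$ depending only on the known previous-step quantities. Substituting these bounds, together with the boundedness of the trilinear form (Lemma~\ref{lem:trilinear}) and of $\gamma^n\nabla p^n$ via \eqref{gamma:bound}, into the definition of $A$ shows that every correction term is $O(\tau)$, so that $A=\sqrt{E_1(\phi^n)+\delta_0}+O(\tau)$. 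Consequently $A>0$, and hence $A\neq 0$, for $\tau$ sufficiently small. This is the step I expect to be the main obstacle, because the correction involves indefinite couplings (the trilinear form and the pressure-gradient inner products) with no intrinsic sign, so positivity can only be forced through the $\tau$-smallness of the split solutions rather than by a direct sign argument.

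Finally, for part (c) I would write \eqref{scheme:1th:uplus1}--\eqref{scheme:1th:div} as a generalized Stokes problem for $(\bm u^{n+1},p^{n+1})$ with the symmetric, $\mbfH_0^1$-coercive bilinear form $\tau^{-1}(\bm u,\bm v)+\nu(\nabla\bm u,\nabla\bm v)$ and right-hand side $\tau^{-1}\widehat{\bm u}^{n+1}-\nu\Delta\widehat{\bm u}^{n+1}+\gamma^n\nabla p^n$. Restricting to the divergence-free space $\mbfV$ and applying Lax--Milgram yields a unique $\bm u^{n+1}\in\mbfV$; the pressure $p^{n+1}$ is then recovered uniquely, up to an additive constant fixed by the zero-mean normalization, through the inf-sup (LBB) condition, which holds on the convex polygonal/polyhedral domain $\Omega$. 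Assembling parts (a)--(c) then completes the proof of unique solvability.
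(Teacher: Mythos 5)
Your parts (a) and (c) are fine and correspond to what the paper dispatches in one sentence (the split sub-problems and the correction step are standard coercive linear problems). The genuine gap is in part (b), the very step you identify as the crux. First, your smallness claim is not justified: the right-hand side data of \eqref{scheme:1th:phi1} are \emph{not} $O(\tau)$, because the chemical-potential equation carries the source $\lambda F'(\phi^n)$ with no factor of $\tau$. Smallness of $\phi_1^{n+1}$ can only be recovered after eliminating $\mu_1^{n+1}$, so that the effective forcing becomes $-\tau\nabla\cdot(\bm u^n\phi^n)+M\tau\lambda\Delta F'(\phi^n)$; this requires the extra regularity $\Delta F'(\phi^n)\in L^2(\Omega)$, and even then the natural energy estimates give $\|\phi_1^{n+1}\|\le C\tau$, $\|\nabla\phi_1^{n+1}\|\le C\tau^{3/4}$, $\|\widehat{\bm u}_1^{n+1}\|\le C\tau$, $\|\nabla\widehat{\bm u}_1^{n+1}\|\le C\tau^{1/2}$ — not the $H^1$-bounds of order $\tau$ you assert. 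Second, and more fundamentally, even a repaired version of your argument only yields $A=\sqrt{E_1(\phi^n)+\delta_0}+o(1)$, where the implied constants depend on $\|\bm u^n\cdot\nabla\bm u^n\|$, $\|\phi^n\nabla\mu^n\|$, $\|\Delta F'(\phi^n)\|$, i.e.\ on the numerical solution at step $n$, and these norms are not uniformly controlled by the energy stability (which, in any case, presupposes solvability). You would therefore obtain only a conditional statement — for each $n$ there is a threshold on $\tau$ below which $A\neq 0$ — whereas the theorem, like the scheme's energy law, is unconditional in $\tau$.

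The idea you are missing is that a direct sign argument \emph{does} work, precisely the opposite of what you conjecture about the indefinite couplings. Test the two equations of \eqref{scheme:1th:phi1} with $\mu_1^{n+1}$ and $\phi_1^{n+1}$, and \eqref{scheme:1th:uhat1} with $\widehat{\bm u}_1^{n+1}$. These identities rewrite every indefinite inner product appearing in $A$ exactly as minus a sum of squares:
\begin{align*}
\lambda(F'(\phi^n),\phi_1^{n+1})+\tau(\mu_1^{n+1},\nabla\cdot(\bm u^n\phi^n))
&= -\lambda\|\nabla\phi_1^{n+1}\|^2-\lambda\beta\|\phi_1^{n+1}\|^2-M\tau\|\nabla\mu_1^{n+1}\|^2,\\
\tau\big((\widehat{\bm u}_1^{n+1},\phi^n\nabla\mu^n)+\bm b(\bm u^n,\bm u^n,\widehat{\bm u}_1^{n+1})+\gamma^n(\nabla p^n,\widehat{\bm u}_1^{n+1})\big)
&= -\|\widehat{\bm u}_1^{n+1}\|^2-\nu\tau\|\nabla\widehat{\bm u}_1^{n+1}\|^2,
\end{align*}
so that
\begin{equation*}
A=\sqrt{E_1(\phi^n)+\delta_0}
+\frac{\lambda\|\nabla\phi_1^{n+1}\|^2+\lambda\beta\|\phi_1^{n+1}\|^2+M\tau\|\nabla\mu_1^{n+1}\|^2+\|\widehat{\bm u}_1^{n+1}\|^2+\nu\tau\|\nabla\widehat{\bm u}_1^{n+1}\|^2}{2\lambda\sqrt{E_1(\phi^n)+\delta_0}}>0
\end{equation*}
for \emph{every} $\tau>0$, with no smallness condition and no regularity beyond what the sub-problems themselves provide. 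This is the paper's proof; it is both simpler and strictly stronger than the perturbative route you propose.
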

\begin{proof}
	As discussed in Remark \ref{rem:solve}, the linear algebraic equations in \eqref{scheme:1th:phi0}--\eqref{scheme:1th:uhat1} and Step 3 are uniquely solvable, due to the well-behaved coefficient matrix structures. Therefore, it remains to prove the unique solvability of the linear equation given in \eqref{scheme:1th:algebraic_system}, and it suffices to show that $A \neq 0$.
	
	Taking the inner products of the first equation in \eqref{scheme:1th:phi1}, the second equation in \eqref{scheme:1th:phi1} and \eqref{scheme:1th:uhat1}  with $\mu_1^{n+1}, \phi_1^{n+1}$ and $\widehat{\bm u}_1^{n+1}$, respectively, we obtain
	\begin{equation} \label{lem1:eq:1}
		\begin{aligned}
			& \tau(\nabla \cdot(\bm u^n  \phi^n ), \mu_1^{n+1})=-(\phi_1^{n+1}, \mu_1^{n+1})-M \tau\|\nabla \mu_1^{n+1}\|^2, \\
			& \lambda(F^{\prime}(\phi^n), \phi_1^{n+1})=(\mu_1^{n+1}, \phi_1^{n+1})-\lambda\|\nabla \phi_1^{n+1}\|^2- \lambda \beta\|\phi_1^{n+1}\|^2, \\
			& \tau ( \phi^n \nabla \mu^n +\gamma^n\nabla p^n + \bm u^n \cdot \nabla \bm u^n, \widehat{\bm u}_1^{n+1})=-\|\widehat{\bm u}_1^{n+1}\|^2-\nu \tau\|\nabla \widehat{\bm u}_1^{n+1}\|^2.
		\end{aligned}
	\end{equation} 
	Then, substituting \eqref{lem1:eq:1} into the formula for $A$ in \eqref{scheme:1th:algebraic_system}, we deduce
	\begin{equation*}
		\begin{aligned}
			A& = \sqrt{E_1(\phi^n)+\delta_0}+\frac{1}{2\lambda \sqrt{E_1(\phi^n)+\delta_0}}\big(\lambda\|\nabla \phi_1^{n+1}\|^2+\lambda \beta\|\phi_1^{n+1}\|^2\big) \\
			& \qquad +\frac{1}{2  \lambda \sqrt{E_1(\phi^n)+\delta_0}}\big(M\tau\|\nabla \mu_1^{n+1}\|^2+\|\widehat{\bm u}_1^{n+1}\|^2+\nu\tau\|\nabla \widehat{\bm u}_1^{n+1}\|^2\big) >0,
		\end{aligned}
	\end{equation*}
	which implies the desired result.
\end{proof}

Then, we show that the scheme is mass-conserving and unconditionally energy-dissipative at the discrete level, i.e., a discrete version of \eqref{model:McEc:e1} holds.
\begin{theorem} \label{thm:energy}
	The first-order  structure-preserving IVS scheme \eqref{scheme:1th:phi}--\eqref{scheme:1th:div}  satisfies the discrete mass conservation law:
	\begin{equation*}
		(\phi^{n+1},1)=(\phi^n,1), \quad \forall~  0 \le n \le N_t-1.
	\end{equation*}
	Moreover,   the discrete modified energy dissipation law also holds: 
	\begin{align*} 
		\begin{gathered}
			E^{n+1}(\phi,  \bm u, R)-E^n(\phi,  \bm u, R) \leq-  M  \tau\|\nabla \mu^{n+1}\|^2 
			- \frac{\nu \tau}{2}({\|\nabla \widehat{\bm u}^{n+1} \|^2}+\|\nabla  \bm u^{n+1} \|^2),
		\end{gathered}
	\end{align*}
	for $0 \le n \le N_t-1$, where
	\begin{align} \label{energy:mod:1st}
		E^{n+1}(\phi, \bm u, R):=\frac{1}{2} \|\bm u^{n+1}\|^2+ \frac{\lambda}{2}\|\nabla \phi^{n+1}\|^2+\frac{\lambda \beta}{2}\|\phi^{n+1}\|^2+ \lambda |R^{n+1}|^2.
	\end{align}
\end{theorem}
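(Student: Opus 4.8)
The plan is to treat the two assertions separately, dispatching mass conservation quickly and then building the energy law by a standard-but-delicate SAV--ZEC testing argument. For mass conservation I would simply take the $L^2$ inner product of the phase equation \eqref{scheme:1th:phi} with the constant $1$. The convective term vanishes since $(\nabla\cdot(\bm u^n\phi^n),1)=\int_{\partial\Omega}(\bm u^n\phi^n)\cdot\bm n\,ds=0$ by the no-flow condition $\bm u^n|_{\partial\Omega}=\bm 0$ in \eqref{model:boundary}, and the diffusion term vanishes since $(\Delta\mu^{n+1},1)=\int_{\partial\Omega}\p_{\bm n}\mu^{n+1}\,ds=0$ by the Neumann condition in \eqref{scheme:1th:phi}. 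What remains is $(\partial_\tau\phi^{n+1},1)=0$, i.e. $(\phi^{n+1},1)=(\phi^n,1)$.

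For the energy law the key is a quadruple of carefully matched test choices, all relying on the elementary identity $2(a-b,a)=\|a\|^2-\|b\|^2+\|a-b\|^2$. I would test \eqref{scheme:1th:phi} with $\mu^{n+1}$, test \eqref{scheme:1th:mu} with $\partial_\tau\phi^{n+1}$, test the prediction equation \eqref{scheme:1th:uhat} with $\widehat{\bm u}^{n+1}$, and multiply the SAV equation \eqref{scheme:1th:rhat} by $2\lambda R^{n+1}$. The first two produce $\frac{\lambda}{2}(\|\nabla\phi^{n+1}\|^2-\|\nabla\phi^n\|^2)+\frac{\lambda\beta}{2}(\|\phi^{n+1}\|^2-\|\phi^n\|^2)$ together with $M\tau\|\nabla\mu^{n+1}\|^2$ and nonnegative remainders $\frac{\lambda}{2}\|\nabla\nabla_\tau\phi^{n+1}\|^2+\frac{\lambda\beta}{2}\|\nabla_\tau\phi^{n+1}\|^2$; the third yields $\frac12(\|\widehat{\bm u}^{n+1}\|^2-\|\bm u^n\|^2+\|\widehat{\bm u}^{n+1}-\bm u^n\|^2)+\nu\tau\|\nabla\widehat{\bm u}^{n+1}\|^2$; and the SAV step, after using $\xi^{n+1}=R^{n+1}/\sqrt{E_1(\phi^n)+\delta_0}$ from \eqref{scheme:1th:xi}, yields $\lambda(|R^{n+1}|^2-|R^n|^2+|R^{n+1}-R^n|^2)$. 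The crucial point is that all the coupling contributions, namely $\xi^{n+1}(\mu^{n+1},\nabla\cdot(\bm u^n\phi^n))$, $\lambda\xi^{n+1}(F'(\phi^n),\partial_\tau\phi^{n+1})$, $\xi^{n+1}(\phi^n\nabla\mu^n,\widehat{\bm u}^{n+1})$, $\xi^{n+1}\bm b(\bm u^n,\bm u^n,\widehat{\bm u}^{n+1})$ and $\xi^{n+1}\gamma^n(\nabla p^n,\widehat{\bm u}^{n+1})$, appear once with a minus sign from the phase/momentum tests and once with a plus sign from the SAV test, so they cancel identically upon summation; this is precisely the zero-energy-contribution mechanism. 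Summing the four relations therefore gives an exact equality whose left side is the $\widehat{\bm u}$-version of the energy difference and whose right side is $-M\tau\|\nabla\mu^{n+1}\|^2-\nu\tau\|\nabla\widehat{\bm u}^{n+1}\|^2$ minus the collected nonnegative remainders.

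To replace $\|\widehat{\bm u}^{n+1}\|^2$ by $\|\bm u^{n+1}\|^2$ and recover the stated dissipation structure, I would test the correction step \eqref{scheme:1th:uplus1} with $\bm u^{n+1}$. Here incompressibility is decisive: since $\nabla\cdot\bm u^{n+1}=0$ and $\bm u^{n+1}|_{\partial\Omega}=\bm 0$ by \eqref{scheme:1th:div}, both pressure terms drop, $(\nabla p^{n+1},\bm u^{n+1})=0$ and $\gamma^n(\nabla p^n,\bm u^{n+1})=0$, so the whole $\gamma^n$-correction contributes nothing to the energy balance. The time-difference and viscous terms, handled again via the $2(a-b,a)$ identity (integrating the Laplacian by parts with the zero boundary trace of $\bm u^{n+1}$), give $\frac12(\|\bm u^{n+1}\|^2-\|\widehat{\bm u}^{n+1}\|^2+\|\bm u^{n+1}-\widehat{\bm u}^{n+1}\|^2)+\frac{\nu\tau}{2}(\|\nabla\bm u^{n+1}\|^2-\|\nabla\widehat{\bm u}^{n+1}\|^2+\|\nabla(\bm u^{n+1}-\widehat{\bm u}^{n+1})\|^2)$. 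Adding this to the previous identity cancels the spurious $\frac12\|\widehat{\bm u}^{n+1}\|^2$, converts $-\nu\tau\|\nabla\widehat{\bm u}^{n+1}\|^2$ into the symmetric $-\frac{\nu\tau}{2}(\|\nabla\widehat{\bm u}^{n+1}\|^2+\|\nabla\bm u^{n+1}\|^2)$, and produces $E^{n+1}-E^n$ exactly as defined in \eqref{energy:mod:1st}. Discarding all the manifestly nonnegative remainder terms ($\|\nabla\nabla_\tau\phi^{n+1}\|^2$, $\|\nabla_\tau\phi^{n+1}\|^2$, $\|\widehat{\bm u}^{n+1}-\bm u^n\|^2$, $\|\bm u^{n+1}-\widehat{\bm u}^{n+1}\|^2$, $\|\nabla(\bm u^{n+1}-\widehat{\bm u}^{n+1})\|^2$, $|R^{n+1}-R^n|^2$) then yields the desired inequality.

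I expect the main obstacle to be the exact bookkeeping of the ZEC cancellation in the second step: one must verify that every bilinear and trilinear coupling term generated by testing \eqref{scheme:1th:phi}--\eqref{scheme:1th:uhat} is reproduced, with matching sign and coefficient, by the single scaling $2\lambda R^{n+1}$ of \eqref{scheme:1th:rhat}, using the symmetry of the inner products and the definition of $\xi^{n+1}$. The second subtle point, specific to the IVS framework, is that the viscosity is retained in the correction step \eqref{scheme:1th:uplus1}; this is exactly what supplies the $\frac{\nu\tau}{2}\|\nabla\bm u^{n+1}\|^2$ term and hence the dissipation in \emph{both} $\widehat{\bm u}^{n+1}$ and $\bm u^{n+1}$, a feature unavailable to pure projection methods. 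Beyond these, the remaining manipulations are the routine discrete energy identities.
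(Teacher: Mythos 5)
Your proposal is correct and follows essentially the same argument as the paper: the same five test choices ($\mu^{n+1}$, $\p_\tau\phi^{n+1}$, $\widehat{\bm u}^{n+1}$, $\bm u^{n+1}$, and $2\lambda R^{n+1}$ for the SAV equation), the same identity $a(a-b)=\tfrac12(a^2-b^2)+\tfrac12(a-b)^2$, and the same ZEC cancellation mechanism; the only difference is that you organize the summation in two stages (prediction step first, correction step added afterwards) while the paper adds all five relations at once, which is purely presentational.
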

\begin{proof}
	First, multiplying  \eqref{scheme:1th:phi} with $\tau$, and then taking the inner product with 1 and applying the zero boundary conditions, we have
	\begin{equation*}
		(\nabla_\tau  \phi^{n+1},1)=0.  
	\end{equation*}
	Therefore, the discrete mass conservation law is proved.
	
	Next, we proceed to show that the scheme also satisfies the discrete energy dissipation law.
	Taking the inner products of \eqref{scheme:1th:phi}, \eqref{scheme:1th:mu}, \eqref{scheme:1th:uhat} and \eqref{scheme:1th:uplus1} with $\mu^{n+1}$, $\p_\tau \phi^{n+1}$, $ \widehat {\bm u}^{n+1}$ and $ \bm u^{n+1}$, respectively, and multiplying \eqref{scheme:1th:rhat} with $2\lambda  R^{n+1}$, and then adding these results together
	and multiplying it with $\tau$, we have
	\begin{equation*}
		\begin{aligned}
			& \frac{1}{2}(\| \bm u^{n+1}\|^2-\| \bm u^{n}\|^2) + \frac{\lambda}{2}(\|\nabla \phi^{n+1}\|^2- \|\nabla \phi^n\|^2 ) +\frac{\lambda \beta}{2}(\|\phi^{n+1}\|^2- \|\phi^n\|^2 ) + \lambda (| R^{n+1}|^2-|R^n|^2) \\
			&\quad = -M\tau\|\nabla \mu^{n+1}\|^2 - \frac{\nu \tau}{2}(\|\nabla  \bm u^{n+1} \|^2+\|\nabla \widehat {\bm u}^{n+1} \|^2)\\
			& \qquad - \frac{\nu \tau}{2} \|\nabla  (\bm u^{n+1}- \widehat {\bm u}^{n+1})\|^2 - \frac{1}{2}(\|\bm u^{n+1}- \widehat {\bm u}^{n+1}\|^2+\|\widehat {\bm u}^{n+1}- \bm u^n\|^2) \\
			& \qquad -\frac{\lambda}{2}\|\nabla_\tau  (\nabla \phi^{n+1})\|^2 -\frac{\lambda \beta}{2}\|\nabla_\tau  \phi^{n+1}\|^2-\lambda |\nabla_\tau R^{n+1}|^2,
		\end{aligned}
	\end{equation*}
	where  the equality $a(a-b)=\frac{1}{2}(a^2-b^2)+\frac{1}{2}(a-b)^2$  and the zero boundary conditions have been used. Then we complete the proof with the energy defined by \eqref{energy:mod:1st}.
\end{proof}

\begin{remark} \label{err:energy_estimate}
	Notably, in comparison to the discrete modified energy defined in \cite{li2022fully, qian2023fully, li2023consistency, xu2020error}, the developed scheme includes only contributions from the phase-field variable, the velocity, and the auxiliary variable, excluding extraneous terms such as the pressure gradient. 
	Additionally, from the stability result of Theorem \ref{thm:energy},  we immediately deduce that  there exists a positive constant $K_1$ independent of $\tau$ such that
	\begin{equation} \label{bound:solu}
		\|\bm u^{n+1}\|^2 +\|\phi^{n+1}\|_1^2 +|R^{n+1}|^2  +   {\nu \tau\sum_{k=0}^n\|\nabla {\bm u}^{k+1} \|^2}\leq K_1,
	\end{equation}
	which is crucial for performing the subsequent error analysis.
\end{remark}

\subsection{A second-order numerical scheme}\label{sec:scheme:2th}
In this subsection, we extend the developed methodology to second-order accurate method based on the BDF2 formula for the incompressible CHNS model, and demonstrate its mass conservation and unconditional energy stability. 
Denote $w^* :=2w^n-w^{n-1}$ as the second-order linear extrapolation and let $\malD_2 w^n := (3 w^{n+1}-4w^n +w^{n-1})/{2\tau}$ for $n \ge 2$. 

Then, the second-order  IVS scheme for the CHNS system is proposed as follows:
For $n=1$, we use the first-order scheme to compute $\big(\phi^{1}, \mu^{1}, \widehat{\bm {u}}^{1},  \bm u^{1}, p^{1},  R^{1}\big)$; and for $n \ge 2$, given $\big(\phi^{\ell}, \mu^{\ell}, \widehat{\bm {u}}^{\ell}, \bm u^{\ell}, p^{\ell}, R^{\ell}\big)$ ($\ell=n-1, n$), we solve $\big(\phi^{n+1}, \mu^{n+1}, \widehat{\bm {u}}^{n+1}, \bm u^{n+1}, p^{n+1}, R^{n+1} \big)$ via 
\paragraph{\bf Step 1}  Find $\big(\phi^{n+1}, \mu^{n+1}, \widehat{\bm u}^{n+1}\big)$ such that 
\begin{align}
	& \malD_2 \phi^{n+1}  + \xi^{n+1}\nabla \cdot({\bm u}^* \phi^*  )=M \Delta \mu^{n+1}, \quad  
	\p_{\bm n} \phi^{n+1}|_{\partial \Omega}=0, \label{scheme:phi:2th}\\
	& \mu^{n+1}=- \lambda \Delta \phi^{n+1} + \lambda \beta \phi^{n+1} + \lambda  \xi^{n+1} F^{\prime}(\phi^*), \quad   
	\p_{\bm n} \mu^{n+1} |_{\partial \Omega}=0, \label{scheme:mu:2th} \\
	& \widehat{\malD}_2 {\bm u}^{n+1}  + \xi^{n+1} {\bm u}^*\cdot\nabla {\bm u}^* - \nu\Delta   \widehat {\bm u}^{n+1} = -\xi^{n+1}  (\phi^* \nabla \mu^* + \gamma^* \nabla p^*)  , ~
	\widehat{\bm u}^{n+1}|_{\partial \Omega}=0, \label{scheme:uhat:2th}   
\end{align}
with
\begin{equation*} 
	\xi^{n+1}=\frac{R^{n+1}}{\sqrt{E_1(\phi^*)+\delta_0}}, \quad \widehat{\malD}_2 {\bm u}^{n+1} = \frac{3 \widehat{\bm u}^{n+1}-4{\bm u}^{n} +{\bm u}^{n-1}}{2\tau}.
\end{equation*}

\paragraph{\bf Step 2}  Find the SAV $R^{n+1}$ such that 
\begin{equation}\label{scheme:rhat:2th}
	\begin{aligned}
		\malD_2 R^{n+1}
		&=\frac{1}{2\lambda \sqrt{E_1(\phi^*)+\delta_0}} \big( \lambda (F^{\prime}(\phi^*), \malD_2 \phi^{n+1} ) + (\mu^{n+1},  \nabla \cdot({\bm u}^* \phi^*)) \big) \\
		&\qquad +\frac{1}{2\lambda \sqrt{E_1(\phi^*)+\delta_0}} \big( ( \widehat{\bm u}^{n+1}, \phi^* \nabla \mu^*)
		+ \bm b({\bm u}^*, {\bm u}^*,  \widehat{\bm u}^{n+1}) \big) 
		  +\frac{\gamma^*}{2\lambda \sqrt{E_1(\phi^*)+\delta_0}}   (\nabla p^*,  \widehat {\bm u}^{n+1}).
	\end{aligned}
\end{equation}

\paragraph{\bf Step 3}  Update $ (\bm u^{n+1}, p^{n+1})$ by the following correction step:
\begin{align}
	&  \frac{3}{2\tau}({\bm u}^{n+1}- \widehat {\bm u}^{n+1}) -\nu \Delta ( \bm u^{n+1}- \widehat {\bm u}^{n+1})+\nabla p^{n+1} = \gamma^*  \nabla p^{*}, \quad  {\bm u}^{n+1}|_{\partial \Omega}=0, \label{scheme:uplus1:2th}   \\
	& \nabla \cdot  \bm u^{n+1} = 0. \label{scheme:div:2th}
\end{align}

\begin{remark} Analogous to the first-order method, the second-order scheme also permits efficient implementation by solving only constant-coefficient linear systems. In fact, through substitution of \eqref{scheme:1th:split:all} into \eqref{scheme:phi:2th}–\eqref{scheme:uhat:2th}, segregating terms by their $\xi_1^{n+1}$- and $\xi_2^{n+1}$-dependence,  we can obtain constant-coefficient linear systems
	for $\{\phi_i^{n+1}, \mu_i^{n+1}\}_{i=0}^1$ that similar to \eqref{scheme:1th:phi0}--\eqref{scheme:1th:phi1}, for $\{\widehat{\bm u}_i^{n+1}\}_{i=0}^1$ that similar to \eqref{scheme:1th:uhat0}--\eqref{scheme:1th:uhat1}, and a linear equation for
	$\xi^{n+1}$ that similar to \eqref{scheme:1th:algebraic_system}.
\end{remark}
\begin{remark}  Following the same process as in Theorem \ref{thm:solvable}, one can also show that the second-order scheme \eqref{scheme:phi:2th}--\eqref{scheme:div:2th}  admits unique solutions. We omit the detailed proof for the sake of brevity.
\end{remark}

Next, we shall show that the second-order  scheme is also mass-conserving and unconditionally energy stable. 
\begin{theorem} \label{thm:energy:2th}
	The second-order  structure-preserving  scheme \eqref{scheme:phi:2th}--\eqref{scheme:div:2th}  satisfies the discrete mass conservation law:
	\begin{equation*}
		(\phi^{n+1},1)=(\phi^n,1), \quad \forall~  0 \le n \le N_t-1.
	\end{equation*}
	Moreover,   the discrete modified energy dissipation law also holds: 
	\begin{equation*}
		\widetilde{E}^{n+1}(\phi,  \bm u, R) - \widetilde{E}^{n}(\phi,  \bm u, R) \leq -M\tau \|\nabla \mu^{n+1}\|^2 -\f{\nu \tau}{6} \big(\|\nabla({\bm u}^{n+1}- \widehat{\bm u}^{n+1})\|^2 + \|\nabla \widehat{\bm u}^{n+1}\|^2\big),
	\end{equation*}
	for $0 \le n \le N_t-1$, where
	\begin{equation}\label{energy:mod:2th}
		\begin{aligned}
			\widetilde{E}^{n+1}(\phi,  \bm u, R) 
			& := \frac{1}{4}\left(\|\bm u^{n+1}\|^2+\|\bm u^{*}\|^2 \right)
			+\frac{\lambda}{4}\left(\|\nabla \phi^{n+1}\|^2+\|\nabla \phi^{*}\|^2 \right)
			+\frac{\lambda\beta}{4} \left(\|\phi^{n+1}\|^2+ \|\phi^{*}\|^2 \right)  \\
			&\qquad   + \frac{\lambda}{2}\left( |R^{n+1}|^2   +  |R^{*}|^2 \right)
			+  \frac{\nu \tau}{6} \left\|\nabla \bm u^{n+1}\right\|^2.
		\end{aligned}
	\end{equation}
\end{theorem}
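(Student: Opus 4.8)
Taking the $L^2$ inner product of \eqref{scheme:phi:2th} with the constant $1$, the diffusion term obeys $(M\Delta\mu^{n+1},1)=0$ and the convection term $(\xi^{n+1}\nabla\cdot(\bm u^*\phi^*),1)=0$ vanishes by the no-flux condition together with $\bm u^*|_{\partial\Omega}=\bm 0$, leaving $(3\phi^{n+1}-4\phi^n+\phi^{n-1},1)=0$. Combined with the first-order starting step, for which $(\phi^1,1)=(\phi^0,1)$ by Theorem \ref{thm:energy}, a one-line induction on $n$ then yields $(\phi^{n+1},1)=(\phi^n,1)$.

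\textbf{Energy: test functions and ZEC cancellation.} Mirroring the proof of Theorem \ref{thm:energy}, the plan is to test \eqref{scheme:phi:2th} with $\mu^{n+1}$, \eqref{scheme:mu:2th} with $\malD_2\phi^{n+1}$, the intermediate momentum step \eqref{scheme:uhat:2th} with $\widehat{\bm u}^{n+1}$, the correction step \eqref{scheme:uplus1:2th} with $\bm u^{n+1}$, and to multiply the SAV relation \eqref{scheme:rhat:2th} by $2\lambda R^{n+1}$; then add everything and multiply by $\tau$. Testing \eqref{scheme:uhat:2th} with $\widehat{\bm u}^{n+1}$ (rather than $\bm u^{n+1}$) is exactly what makes the trilinear term $\xi^{n+1}\bm b(\bm u^*,\bm u^*,\widehat{\bm u}^{n+1})$ and the coupling terms $\xi^{n+1}(\phi^*\nabla\mu^*,\widehat{\bm u}^{n+1})$, $\xi^{n+1}\gamma^*(\nabla p^*,\widehat{\bm u}^{n+1})$ match, with opposite sign, the terms generated by $2\lambda R^{n+1}\times$\eqref{scheme:rhat:2th} via $\xi^{n+1}=R^{n+1}/\sqrt{E_1(\phi^*)+\delta_0}$; consequently, exactly as in Theorem \ref{thm:energy}, every ZEC/coupling contribution cancels. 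The pressure terms $(\nabla p^{n+1},\bm u^{n+1})$ and $\gamma^*(\nabla p^*,\bm u^{n+1})$ drop since $\bm u^{n+1}\in\mbfV$ and $\bm u^{n+1}|_{\partial\Omega}=\bm 0$.

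\textbf{BDF2 telescoping for the phase and auxiliary variables.} For the surviving linear terms I would use the BDF2 (G-stability) identity $(3w^{n+1}-4w^n+w^{n-1},w^{n+1})=\frac12(\|w^{n+1}\|^2+\|2w^{n+1}-w^n\|^2-\|w^n\|^2-\|2w^n-w^{n-1}\|^2)+\frac12\|w^{n+1}-2w^n+w^{n-1}\|^2$ applied to the time differences coming from $\malD_2\phi^{n+1}$ (on $\nabla\phi^{n+1}$ and $\phi^{n+1}$) and from $\malD_2R^{n+1}$. This delivers precisely the telescoping energies $\frac\lambda4(\|\nabla\phi^{n+1}\|^2+\|\nabla\phi^*\|^2)$, $\frac{\lambda\beta}4(\|\phi^{n+1}\|^2+\|\phi^*\|^2)$ and $\frac\lambda2(|R^{n+1}|^2+|R^*|^2)$ of $\widetilde E^{n+1}$, the dissipation $-M\tau\|\nabla\mu^{n+1}\|^2$, and nonnegative numerical-dissipation remainders such as $\frac\lambda4\|\nabla(\phi^{n+1}-2\phi^n+\phi^{n-1})\|^2$, which are simply discarded.

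\textbf{Main obstacle: the velocity balance.} The crux is the viscous splitting. Because \eqref{scheme:uhat:2th} is tested with $\widehat{\bm u}^{n+1}$ while the old BDF2 slots carry the corrected $\bm u^n,\bm u^{n-1}$, the identity applied to $(3\widehat{\bm u}^{n+1}-4\bm u^n+\bm u^{n-1},\widehat{\bm u}^{n+1})$ yields $\|2\widehat{\bm u}^{n+1}-\bm u^n\|^2$ instead of the desired $\|2\bm u^{n+1}-\bm u^n\|^2$. Writing $2\widehat{\bm u}^{n+1}-\bm u^n=(2\bm u^{n+1}-\bm u^n)-2(\bm u^{n+1}-\widehat{\bm u}^{n+1})$ and $\widehat{\bm u}^{n+1}=\bm u^{n+1}-(\bm u^{n+1}-\widehat{\bm u}^{n+1})$, and combining with the correction-step contribution $\frac32(\bm u^{n+1}-\widehat{\bm u}^{n+1},\bm u^{n+1})$, I expect to recover the clean kinetic telescoping $E_{\mathrm{kin}}^{n+1}-E_{\mathrm{kin}}^{n}$ with $E_{\mathrm{kin}}^{n+1}:=\frac14(\|\bm u^{n+1}\|^2+\|2\bm u^{n+1}-\bm u^n\|^2)$, plus nonnegative terms, plus one stray cross term $-(\bm u^{n+1}-\bm u^n,\bm u^{n+1}-\widehat{\bm u}^{n+1})$. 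The decisive manoeuvre is to evaluate this cross term through the correction equation \eqref{scheme:uplus1:2th}: since $\bm u^{n+1}-\bm u^n\in\mbfV$ has zero trace, both pressure terms integrate away and $(\bm u^{n+1}-\bm u^n,\bm u^{n+1}-\widehat{\bm u}^{n+1})=-\frac{2\nu\tau}{3}(\nabla(\bm u^{n+1}-\bm u^n),\nabla(\bm u^{n+1}-\widehat{\bm u}^{n+1}))$, turning it into a viscous ($O(\tau)$) quantity. Inserting the book-keeping term $\frac{\nu\tau}6\|\nabla\bm u^{n+1}\|^2$ of $\widetilde E^{n+1}$ and the prescribed dissipation $-\frac{\nu\tau}6(\|\nabla\widehat{\bm u}^{n+1}\|^2+\|\nabla(\bm u^{n+1}-\widehat{\bm u}^{n+1})\|^2)$, the leftover viscous terms collapse to the quadratic form $8\|x\|^2+\|y\|^2+4\|z\|^2-8(x,z)-4(x,y)+4(y,z)$ in $(x,y,z)=(\nabla\bm u^{n+1},\nabla\bm u^n,\nabla\widehat{\bm u}^{n+1})$, whose matrix $\left(\begin{smallmatrix}8&-2&-4\\-2&1&2\\-4&2&4\end{smallmatrix}\right)$ is positive semidefinite (leading principal minors $8,4,0$). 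Hence the residual is nonnegative and the stated energy law follows. Checking this semidefiniteness, i.e.\ confirming that the coefficient $\tfrac16$ is exactly the one that absorbs the viscous cross term, is the only genuinely delicate computation; the initial transition $n=0\to1$ is governed by the first-order scheme and Theorem \ref{thm:energy}, and everything else parallels the first-order proof.
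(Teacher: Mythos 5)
Your proof is correct, and while its skeleton (the choice of test functions, the ZEC cancellation through $\xi^{n+1}=R^{n+1}/\sqrt{E_1(\phi^*)+\delta_0}$, the vanishing of the pressure terms against $\bm u^{n+1}\in\mbfV$, and the BDF2 G-stability identity) coincides with the paper's, you resolve the crux of the argument --- the mismatch between the $\|2\widehat{\bm u}^{n+1}-\bm u^n\|^2$ produced by the G-identity and the $\|2\bm u^{n+1}-\bm u^n\|^2$ required by $\widetilde E^{n+1}$ --- by a genuinely different mechanism. The paper rearranges the correction step into \eqref{stable:2th:u_uhat}, tests it against $2\bm u^{n+1}-\bm u^n$, and applies Young's inequality twice to obtain \eqref{stabel:2th:uplus:2}, thereby trading $\|2\widehat{\bm u}^{n+1}-\bm u^n\|^2$ for $\|2\bm u^{n+1}-\bm u^n\|^2$ at the cost of $\f{4\nu\tau}{3}\|\nabla\widehat{\bm u}^{n+1}\|^2+\f{\nu\tau}{3}\|\nabla\bm u^n\|^2$, which the surplus viscous dissipation absorbs. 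You instead perform the exact substitution $\widehat{\bm u}^{n+1}=\bm u^{n+1}-(\bm u^{n+1}-\widehat{\bm u}^{n+1})$, isolate a single kinetic cross term, and convert it into a viscous quantity through the correction equation tested with $\bm u^{n+1}-\bm u^n$ --- legitimate because $\bm u^n$ is discretely divergence-free with zero trace for $n\ge 1$, the $n=0$ transition being governed by Theorem \ref{thm:energy} exactly as in the paper. I checked your bookkeeping: the residual viscous form is indeed $8\|x\|^2+\|y\|^2+4\|z\|^2-8(x,z)-4(x,y)+4(y,z)$ with $(x,y,z)=(\nabla\bm u^{n+1},\nabla\bm u^n,\nabla\widehat{\bm u}^{n+1})$, and it admits the sum-of-squares representation
\begin{equation*}
8\|x\|^2+\|y\|^2+4\|z\|^2-8(x,z)-4(x,y)+4(y,z)=4\|x\|^2+\|y-2x+2z\|^2\ge 0,
\end{equation*}
which settles semidefiniteness more transparently than the minor computation (your criterion via leading principal minors $8,4,0$ does suffice here, but only because the $2\times2$ leading block is positive definite and the determinant vanishes; nonnegative leading minors alone do not imply semidefiniteness in general). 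The practical difference: your route is exact up to discarding manifestly nonnegative terms and thus confirms that $\nu\tau/6$ is precisely the admissible coefficient, whereas the paper's route is shorter but lossier, using Cauchy--Schwarz where you use identities; both yield the stated dissipation law.
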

\begin{proof}
	Thanks to Theorem \ref{thm:energy}, we have	$(\phi^1,1) = (\phi^0,1)$. Then for $n\geq 1$, 
	multiplying \eqref{scheme:phi:2th} with $2\tau$, and then taking the inner product with 1 and applying the zero boundary conditions and the incompressibility condition, we have
	\begin{equation*}
		(3 \phi^{n+1}-4\phi^{n}+\phi^{n-1},1)=0,
	\end{equation*}
	which implies the discrete mass conservation law.
	
	Next, we proceed to show that the scheme is unconditionally stable.
	Taking the inner products of \eqref{scheme:phi:2th}--\eqref{scheme:uhat:2th} and \eqref{scheme:uplus1:2th} with $2\tau \mu^{n+1}$, $2\tau \malD_2 \phi^{n+1}$, $2\tau \widehat{\bm u}^{n+1}$ and $2\tau  {\bm u}^{n+1}$ respectively, and multiplying \eqref{scheme:rhat:2th}  with $4\lambda R^{n+1}$, then adding these results together yields 
	\begin{equation} \label{stable:2th:CH}
		\begin{aligned}
			& \frac{\lambda}{2}\big(\|\nabla \phi^{n+1}\|^2 - \|\nabla \phi^{n}\|^2 + \|2\nabla \phi^{n+1}-\nabla \phi^n\|^2-\|2\nabla \phi^n-\nabla \phi^{n-1}\|^2 \big) + \frac{\lambda}{2} \|\nabla_\tau (\nabla \phi^{n+1} - \nabla \phi^n)\|^2 \\
			& \qquad + \frac{\lambda \beta}{2}\big(\|\phi^{n+1}\|^2  -\|\phi^{n}\|^2+ \|2 \phi^{n+1}-\phi^n\|^2-\|2\phi^n-\phi^{n-1}\|^2 \big) +  \frac{\lambda \beta }{2} \|\nabla_\tau (\phi^{n+1} -  \phi^n)\|^2 \\ 
			& \qquad + \frac{1}{2}\Big(\|{\bm u}^{n+1}\|^2-\|{\bm u}^{n}\|^2  + \|2\widehat{\bm u}^{n+1} - {\bm u}^{n}\|^2-\|2\bm u^{n} - {\bm u}^{n-1}\|^2 \Big) + \frac{1}{2} \|{\bm u}^{n+1}- \widehat{\bm u}^{n+1}\|^2  \\
			& \qquad + \frac{1}{2} \|\widehat{\bm u}^{n+1} - 2 {\bm u}^{n}+{\bm u}^{n-1}  \|^2  + \f{\nu \tau}{3} \big(\| \nabla {\bm u}^{n+1}\|^2   + \| \nabla  {\bm u}^{n+1} - \nabla \widehat{\bm u}^{n+1}\|^2  \big)   \\
			& \qquad + \lambda \big( |R^{n+1}|^2- |R^{n}|^2+|2R^{n+1}-R^n|^2 -|2R^{n}-R^{n-1}|^2 \big) + \lambda |\nabla_\tau (R^{n+1} -  R^n)|^2 \\
			& \quad = -2 M \tau \|\nabla \mu^{n+1}\|^2- \frac{5\nu \tau}{3}   \| \nabla \widehat{\bm u}^{n+1}\|^2, 
		\end{aligned}
	\end{equation}
	where we have utilized the equality $2a(3 a-4 b+c)=|a|^2+|2 a-b|^2-|b|^2-|2 b-c|^2+|a-2 b+c|^2.$

	Furthermore, thanks to \eqref{scheme:uplus1:2th}, we have
	\begin{equation}\label{stable:2th:u_uhat}
		2 {\bm u}^{n+1}-{\bm u}^n-\f{4\nu \tau}{3} \Delta {\bm u}^{n+1}=2 \widehat{\bm u}^{n+1}-{\bm u}^n-\f{4\nu \tau}{3} \Delta \widehat{\bm u}^{n+1}-\frac{4 \tau}{3} \nabla p^{n+1}+ \frac{4 \tau}{3} \gamma^{*}\nabla p^*.
	\end{equation}
	Taking the inner product of \eqref{stable:2th:u_uhat} with  $2 {\bm u}^{n+1}-{\bm u}^n$ and utilizing \eqref{scheme:div:2th} gives
	\begin{equation*}
		\begin{aligned}
			& \|2 {\bm u}^{n+1}-{\bm u}^n\|^2+\f{2\nu \tau}{3} \|2\nabla {\bm u}^{n+1}-\nabla {\bm u}^n\|^2 \\
			& \quad = (2 \widehat{{\bm u}}^{n+1}-{\bm u}^n, 2 {\bm u}^{n+1}-{\bm u}^n )+\f{2\nu \tau}{3} (\nabla (2\widehat{\bm u}^{n+1} -{\bm u}^{n} ), \nabla (2 {\bm u}^{n+1}-{\bm u}^n ))  \\
			& \quad \leq \frac{1}{2} \|2 \widehat{{\bm u}}^{n+1}-{\bm u}^n\|^2 + \frac{1}{2}\| 2 {\bm u}^{n+1}-{\bm u}^n \|^2 +\f{\nu \tau}{6} \|\nabla (2\widehat{\bm u}^{n+1} -{\bm u}^{n}) \|^2 + \f{2\nu \tau}{3} \|\nabla (2 {\bm u}^{n+1}-{\bm u}^n) \|^2,
		\end{aligned}
	\end{equation*}
	which can be simplified into
	\begin{equation}  \label{stabel:2th:uplus:2}
		\begin{aligned}
			\frac{1}{2}\|2 {\bm u}^{n+1}-{\bm u}^n\|^2  
			& \leq \frac{1}{2} \|2 \widehat{{\bm u}}^{n+1}-{\bm u}^n\|^2 +\f{\nu \tau}{6} \|\nabla (2\widehat{\bm u}^{n+1} -{\bm u}^{n}) \|^2 \\
			& \leq \frac{1}{2} \|2 \widehat{{\bm u}}^{n+1}-{\bm u}^n\|^2 + \f{4\nu \tau}{3} \|\nabla  \widehat{\bm u}^{n+1}\|^2 +\f{\nu \tau}{3}\|\nabla {\bm u}^{n} \|^2.
		\end{aligned}      
	\end{equation}
	
	Combing \eqref{stable:2th:CH} and \eqref{stabel:2th:uplus:2}, we have
	\begin{equation*}  
		\begin{aligned}
			& \frac{\lambda}{2}\big(\|\nabla \phi^{n+1}\|^2 -\|\nabla \phi^{n}\|^2+ \|2\nabla \phi^{n+1}-\nabla \phi^n\|^2-\|2\nabla \phi^n-\nabla \phi^{n-1}\|^2 \big) + \frac{\lambda}{2} \|\nabla_\tau (\nabla \phi^{n+1} - \nabla \phi^n)\|^2\\
			& \qquad + \frac{\lambda \beta}{2}\big(\|\phi^{n+1}\|^2-\|\phi^{n}\|^2 + \|2 \phi^{n+1}-\phi^n\|^2-\|2\phi^n-\phi^{n-1}\|^2 \big) +  \frac{\lambda \beta }{2} \|\nabla_\tau (\phi^{n+1} -  \phi^n)\|^2 \\ 
			& \qquad + \frac{1}{2}\Big(\|{\bm u}^{n+1}\|^2-\|{\bm u}^{n}\|^2  + \|2\bm u^{n+1} - {\bm u}^{n}\|^2-\|2\bm u^{n} - {\bm u}^{n-1}\|^2 \Big) + \frac{1}{2} \|{\bm u}^{n+1}- \widehat{\bm u}^{n+1}\|^2  \\
			& \qquad + \frac{1}{2} \|\widehat{\bm u}^{n+1} - 2 {\bm u}^{n}+{\bm u}^{n-1}  \|^2 + \f{\nu \tau}{3}\big( \|\nabla {\bm u}^{n+1} \|^2-\|\nabla {\bm u}^{n} \|^2\big)  \\
			& \qquad + \lambda \big( |R^{n+1}|^2- |R^{n}|^2+|2R^{n+1}-R^n|^2 -|2R^{n}-R^{n-1}|^2 \big)+ \lambda |\nabla_\tau (R^{n+1} -  R^n)|^2 \\
			& \quad = -2 M \tau \|\nabla \mu^{n+1}\|^2 -\f{\nu \tau}{3} \big(\|\nabla({\bm u}^{n+1}- \widehat{\bm u}^{n+1})\|^2 + \|\nabla \widehat{\bm u}^{n+1}\|^2\big),
		\end{aligned}
	\end{equation*}
	which completes the proof.
\end{proof}
	
\section{Error estimates}\label{sec:error}
In this section, we focus on the error estimates for the structure-preserving IVS scheme \eqref{scheme:1th:phi}--\eqref{scheme:1th:div} in multiple space dimensions. For simplicity, we shall drop the dependence on $\bm x$ for all functions when there is no confusion.

Let $\{ \phi(t_{n+1}), \mu(t_{n+1}), \bm u(t_{n+1}),p(t_{n+1}), R(t_{n+1})\}$ be the exact solutions of \eqref{model:sav:phi}--\eqref{model:sav:r} at $t=t_{n+1}$.
Denote
\begin{equation*}  
	\begin{aligned}
		& e_\phi^{n+1}= \phi(t_{n+1})-\phi^{n+1}, \quad 
		e_\mu^{n+1}=\mu(t_{n+1})-\mu^{n+1}, \quad e_R^{n+1}=R(t_{n+1})- R^{n+1},\\ 
		& \widehat e_{\bm u}^{n+1}=\bm u(t_{n+1})-\widehat {\bm 
			u}^{n+1}, \quad  
		e_{\bm u}^{n+1}= \bm u(t_{n+1})-\bm u^{n+1}, \quad 
		e_p^{n+1}=p(t_{n+1})-p^{n+1}. 
	\end{aligned}
\end{equation*}
Besides, we assume that the exact solutions  are smooth enough such that
\begin{equation} \label{assump:e1}
	\begin{aligned}
		& \phi \in H^{2}(0,T; L^2(\Omega)) \cap W^{1, \infty}(0,T; H^1(\Omega)) \cap L^{\infty}(0,T; H^2(\Omega)), ~  \mu \in W^{1, \infty}(0,T; H^2(\Omega)),\\
		& \bm u \in H^{2}(0,T; \mbfL^2(\Omega)) \cap W^{1, \infty}(0,T;\mbfH^2(\Omega)\cap   \mbfH_0^1(\Omega)), ~ p \in W^{1, \infty}(0,T; L^2(\Omega))\cap L^{\infty}(0,T;H^1(\Omega)).
	\end{aligned}  
\end{equation}
Meanwhile, we assume that there exists a positive constant $K_{\bm u}$ such that
\begin{equation} \label{assump:e2}
	\max_{t \in J} \|\bm u(t)\|_{\mbfH^2(\Omega)} \leq K_{\bm u}.
\end{equation}
Moreover, throughout the paper, we use $C$ with or without subscript, to denote a general positive constant that may depend on the norms of the exact solutions and have different values in different circumstances, but is independent of the mesh size.

Based on the stability result \eqref{bound:solu}, the following $H^2$-bound for $\phi^n$ and $L^2$-bound for $\mu^n$ can be similarly proved as Ref. \cite{li2023global}, without the Lipschitz continuous assumption on the nonlinear potential function $G(\phi)$. 

\begin{lemma} \label{lem:laplace_phi}
	Assume that $(\phi^{n+1}, \mu^{n+1})$ are the solutions of \eqref{scheme:1th:phi}--\eqref{scheme:1th:mu}. 
	Then, there exists a positive constant $K_2$ independent of  $\tau$ such that 
	\begin{equation*} 
		\|\Delta \phi^{n+1}\|+\|\mu^{n+1}\| \leq  K_2, \quad 0 \leq n \leq N_t-1.
	\end{equation*}
\end{lemma}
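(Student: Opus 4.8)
The plan is to reduce everything to a uniform bound on $\|\Delta\phi^{n+1}\|$. Once this is available, rearranging the chemical-potential relation \eqref{scheme:1th:mu} gives $\|\mu^{n+1}\|\le \lambda\|\Delta\phi^{n+1}\|+\lambda\beta\|\phi^{n+1}\|+\lambda|\xi^{n+1}|\,\|F'(\phi^n)\|$, so the $L^2$-bound on $\mu^{n+1}$ follows as soon as $\|\phi^{n+1}\|$, $|\xi^{n+1}|$ and $\|F'(\phi^n)\|$ are controlled. From the stability estimate \eqref{bound:solu} I already have $\|\phi^{n+1}\|_1\le C$, $\|\bm u^{n+1}\|\le C$ and $|R^{n+1}|\le C$, uniformly in $n$ and $\tau$, so the first task is to record two auxiliary bounds.

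First, since $F(\phi)=G(\phi)-\tfrac{\beta}{2}\phi^2$ is a quartic polynomial with positive leading coefficient, it is bounded below on $\mathbb{R}$; hence $E_1(\phi^n)+\delta_0=\int_\Omega F(\phi^n)\,d\bm x+\delta_0\ge c_0>0$ uniformly (this is exactly what the shift $\delta_0$ buys), and \eqref{scheme:1th:xi} together with $|R^{n+1}|\le C$ yields $|\xi^{n+1}|=|R^{n+1}|/\sqrt{E_1(\phi^n)+\delta_0}\le C$. Second, because $F'$ grows only cubically, the Sobolev embedding $H^1\hookrightarrow L^6$ (valid for $d\le 3$) and $\|\phi^n\|_1\le C$ give $\|F'(\phi^n)\|\le C(1+\|\phi^n\|_{L^6}^3)\le C$. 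This last step is precisely where the Lipschitz assumption on $G'$ used in \cite{li2023global} is dispensed with: the cubic growth is absorbed by the $H^1$-bound rather than by Lipschitz continuity.

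It remains to bound $\|\Delta\phi^{n+1}\|$. The obstruction is that \eqref{scheme:1th:mu} by itself controls only the combination $\mu^{n+1}+\lambda\Delta\phi^{n+1}$, so $\|\Delta\phi^{n+1}\|$ and $\|\mu^{n+1}\|$ cannot be separated by pointwise algebra, while Theorem \ref{thm:energy} supplies $\nabla\mu$ only in the summable ($\ell^2$-in-time) sense, not uniformly in $n$. Following \cite{li2023global}, I would instead establish a uniform bound on $\|\nabla\mu^{n+1}\|$ through a discrete higher-order energy estimate: testing the phase equation \eqref{scheme:1th:phi} with $\partial_\tau\mu^{n+1}$ produces the telescoping quantity $\tfrac{M}{2}\big(\|\nabla\mu^{n+1}\|^2-\|\nabla\mu^n\|^2\big)$ together with the nonnegative terms $\lambda\|\nabla\partial_\tau\phi^{n+1}\|^2+\lambda\beta\|\partial_\tau\phi^{n+1}\|^2$ arising from \eqref{scheme:1th:mu}, plus a remainder involving $\partial_\tau\big(\xi^{n+1}F'(\phi^n)\big)$ and the convective coupling $\xi^{n+1}\nabla\cdot(\bm u^n\phi^n)$; a discrete Gronwall inequality then closes the argument. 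With $\|\nabla\mu^{n+1}\|\le C$ and the bounded mean $\overline{\mu^{n+1}}$ (obtained by integrating \eqref{scheme:1th:mu}, using $\int_\Omega\Delta\phi^{n+1}\,d\bm x=0$ and mass conservation), the Poincar\'e--Wirtinger inequality gives $\|\mu^{n+1}\|\le C$, and rearranging \eqref{scheme:1th:mu} then delivers $\|\Delta\phi^{n+1}\|\le C$, which is the desired $K_2$.

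I expect the convective coupling $\xi^{n+1}\nabla\cdot(\bm u^n\phi^n)$ in the higher-order estimate to be the main obstacle: since \eqref{bound:solu} controls the discrete velocity only in $\ell^\infty(0,T;\bm L^2)\cap\ell^2(0,T;\bm H^1)$, this term must be treated delicately --- integrating by parts using $\nabla\cdot\bm u^n=0$ from \eqref{scheme:1th:div}, exploiting the $L^\infty$-control of $\phi^n$ furnished by the $H^2$-bound, and using the viscosity retained in the IVS correction step --- so that it can be absorbed into the dissipative terms and the discrete Gronwall argument applies uniformly in $\tau$. The nonlinear increment $\partial_\tau(\xi^{n+1}F'(\phi^n))$ is comparatively routine, being controlled by $\|\partial_\tau\phi^{n+1}\|$ and the bounds on $\xi^{n+1}$ and $\phi^n$ already in hand.
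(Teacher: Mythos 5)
The paper itself does not actually prove this lemma: its entire ``proof'' is the sentence preceding the statement, which asserts that the bound ``can be similarly proved as Ref.~\cite{li2023global}'' starting from the stability estimate \eqref{bound:solu}. So your proposal can only be judged on its own terms. Your reduction scaffolding is correct and would be part of any proof: $|\xi^{n+1}|\le C$ follows from $|R^{n+1}|\le C$ and a uniform positive lower bound on $E_1(\phi^n)+\delta_0$; $\|F'(\phi^n)\|\le C$ follows from the cubic growth of $F'$, $H^1\hookrightarrow L^6$ and \eqref{bound:solu}; the mean of $\mu^{n+1}$ is bounded by integrating \eqref{scheme:1th:mu}; and Poincar\'e--Wirtinger plus rearrangement of \eqref{scheme:1th:mu} then reduce everything to a \emph{uniform-in-$n$} bound on $\|\nabla\mu^{n+1}\|$, which is indeed the crux (Theorem \ref{thm:energy} only gives $\tau\sum_k\|\nabla\mu^k\|^2\le C$, i.e.\ $\|\nabla\mu^{n+1}\|\le C\tau^{-1/2}$).

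The gap is exactly at the step you flag as ``the main obstacle'' but do not resolve. Testing \eqref{scheme:1th:phi} with $\mu^{n+1}-\mu^n$ and using \eqref{scheme:1th:mu}, $\nabla\cdot\bm u^n=0$ and $\bm u^n|_{\partial\Omega}=\bm 0$ gives, per step,
\begin{align*}
&\frac{M}{2}\big(\|\nabla\mu^{n+1}\|^2-\|\nabla\mu^n\|^2+\|\nabla(\mu^{n+1}-\mu^n)\|^2\big)
+\lambda\tau\|\nabla\partial_\tau\phi^{n+1}\|^2+\lambda\beta\tau\|\partial_\tau\phi^{n+1}\|^2\\
&\qquad=-\lambda\big(\partial_\tau\phi^{n+1},\,\xi^{n+1}F'(\phi^n)-\xi^{n}F'(\phi^{n-1})\big)
+\xi^{n+1}\big(\bm u^n\phi^n,\,\nabla(\mu^{n+1}-\mu^n)\big).
\end{align*}
The only dissipative quantity that can absorb $\nabla(\mu^{n+1}-\mu^n)$ is $\frac{M}{2}\|\nabla(\mu^{n+1}-\mu^n)\|^2$, and Young's inequality then leaves the remainder $C\|\bm u^n\phi^n\|^2$. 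Even in the most favorable scenario ($\|\phi^n\|_{L^\infty}\le C$ and $\|\bm u^n\|\le C$), this remainder is $O(1)$ per time step, not $O(\tau)$; summing over $n$ produces $O(T/\tau)$, and you recover only the trivial bound $\|\nabla\mu^{n+1}\|^2\le C\tau^{-1}$ already known from Theorem \ref{thm:energy}. The tools you invoke cannot repair this: \eqref{bound:solu} controls $\bm u^n$ uniformly only in $\bm L^2$ (its $\bm H^1$ norm is merely $\ell^2$-summable in time, so $\|\nabla\bm u^n\|\le C\tau^{-1/2}$ pointwise), hence ``the viscosity retained in the IVS correction step'' supplies no per-step smallness, and a discrete Gronwall inequality does not apply ``uniformly in $\tau$'' as claimed. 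Moreover, invoking ``the $L^\infty$-control of $\phi^n$ furnished by the $H^2$-bound'' is circular as written: in the paper, \eqref{bound_phi} is deduced \emph{from} this lemma via Lemma \ref{lem:Agmon}, so inside the proof it may be used only at previous time levels, which requires an explicit induction hypothesis $\|\Delta\phi^m\|\le K_2$ for $m\le n$ together with a verification that the constant produced at level $n+1$ does not degrade with $n$. Closing the argument therefore needs a genuinely additional idea beyond your sketch --- for instance, summation by parts in time so that the discrete increment falls on $\bm u^n\phi^n$, whose increments \emph{are} square-summable by the dissipation terms in the proof of Theorem \ref{thm:energy}, combined with the induction just described --- and since this is precisely the nontrivial content of the lemma (everything else in your proposal is routine), the proposal does not yet constitute a proof.
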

Lemma \ref{lem:laplace_phi} combined with Theorem \ref{thm:energy} and Lemma \ref{lem:Agmon} implies that 
\begin{equation} \label{bound_phi}
	\|\phi^n\|_{L^\infty} \le C_{\phi}, \quad 0 \leq n \le N_t.
\end{equation}
This conclusion shall play a key role in the latter analysis. 

Due to the strong nonlinearity and coupling of the CHNS system,  it turns out to be difficult to directly present optimal-order error estimates. Instead, the analysis will be divided into several parts based upon the assumption that 
\begin{equation} \label{hypothesis:n:1}
	\|e_{\bm u}^{m}\|_2 \le 1 \quad \text{for sufficiently small } \tau,
\end{equation}     
which further means that  $\|\bm u^{m}\|_2 \le 1+K_{\bm u}$
for $m=0,1,\dots, N_t$, due to \eqref{assump:e2}.
This hypothesis shall be validated using an induction process based on a bootstrap argument. Firstly, it is straightforward to see that \eqref{hypothesis:n:1} holds for $m = 0$. Below we suppose \eqref{hypothesis:n:1} holds for all $m \le n \le N_t-1$,
and we shall complete the induction assumption in subsection \ref{subec:induct} for $m=n+1$.

\subsection{Rough error estimates} \label{subsec:reduce_rate}
In this subsection, we first present a sub-optimal convergence result for the phase-field variable, chemical potential, and velocity under the regularity assumption \eqref{assump:e1}. This intermediate result will serve as a foundation for realizing improved optimal-order error estimates in subsection \ref{subec:induct}.
\begin{theorem} \label{err:sub_rate}
	Under the assumption \eqref{assump:e1}, 
	there exists a sufficiently small constant $\tau_0$, such that for $\tau \le \tau_0$, we have
	\begin{equation*} 
		\|e_\phi^{n+1}\|_1^2+|e_R^{n+1}|^2 +\|e_{\bm u}^{n+1}\|^2+ \tau (\|e_\mu^{n+1}\|_1^2 + \nu \|\nabla e_{\bm u}^{n+1}\|^2) + \sum_{m=0}^{n} \|e_{\bm u}^{m+1}-\widehat{e}_{\bm u}^{m+1}\|^2 		\leq C  \tau,
	\end{equation*}
	where the constant $C$ is independent of $\tau$ and $n$, but depends on $T$.
\end{theorem}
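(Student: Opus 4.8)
The plan is to carry out a discrete energy estimate for the error system that mirrors the stability proof of Theorem~\ref{thm:energy}, and then close it with the discrete Gronwall inequality, all under the running induction hypothesis \eqref{hypothesis:n:1}. First I would write the error equations: evaluate the reformulated system \eqref{model:sav:phi}--\eqref{model:sav:r} at $t=t_{n+1}$ (where $\xi(t_{n+1})\equiv1$), subtract the scheme \eqref{scheme:1th:phi}--\eqref{scheme:1th:div}, and collect the consistency terms arising from replacing $\p_t$ by $\p_\tau$ and the implicit $(n+1)$-level data by the explicit $n$-level data. By the regularity \eqref{assump:e1} every such truncation term is $O(\tau)$ and obeys $\tau\sum_n\|T^{n+1}\|^2\le C\tau^2$. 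A recurring algebraic device is the identity $\xi^{n+1}=R^{n+1}/\sqrt{E_1(\phi^n)+\delta_0}$ together with the $L^\infty$-bound \eqref{bound_phi}: since $\sqrt{E_1(\cdot)+\delta_0}$ is bounded above and below, a mean-value argument yields $|1-\xi^{n+1}|\le C(|e_R^{n+1}|+\|e_\phi^n\|+\tau)$, so every factor $(1-\xi^{n+1})$ is converted into quantities already carried by the discrete energy.

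Next I would test the error equations exactly as in Theorem~\ref{thm:energy}: the phase error against $e_\mu^{n+1}$, the chemical-potential error against $\p_\tau e_\phi^{n+1}$, the Step-1 (intermediate velocity) error against $\widehat e_{\bm u}^{n+1}$, the Step-3 (correction) error against $e_{\bm u}^{n+1}$, and the SAV error against $2\lambda e_R^{n+1}$, then add. The left-hand side reproduces the modified energy \eqref{energy:mod:1st} of the errors and the dissipation $M\tau\|\nabla e_\mu^{n+1}\|^2$, $\tfrac{\nu\tau}{2}(\|\nabla\widehat e_{\bm u}^{n+1}\|^2+\|\nabla e_{\bm u}^{n+1}\|^2)$ and $\tfrac12\|e_{\bm u}^{n+1}-\widehat e_{\bm u}^{n+1}\|^2$. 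The $\gamma$-pressure contributions do not disappear through the ZEC cancellation alone, but since $\bm u^{n+1}$ is divergence-free with zero trace one has $(\nabla p^n,\widehat{\bm u}^{n+1})=-(\nabla p^n,\bm u^{n+1}-\widehat{\bm u}^{n+1})$; combining this with the uniform pointwise bound \eqref{gamma:bound}, i.e. $\|\gamma^n\nabla p^n\|\le|\Omega|^{1/2}$, turns them into $C|e_R^{n+1}|\,\|e_{\bm u}^{n+1}-\widehat e_{\bm u}^{n+1}\|$, absorbable by the dissipation $\tfrac12\|e_{\bm u}^{n+1}-\widehat e_{\bm u}^{n+1}\|^2$ up to a Gronwall-admissible $C\tau\,|e_R^{n+1}|^2$. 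This is precisely where the consistency parameter $\gamma^n$ and the viscosity-retaining correction step earn their keep.

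It then remains to estimate the nonlinear remainders. The convective error $\bm u(t_{n+1})\cdot\nabla\bm u(t_{n+1})-\xi^{n+1}\bm u^n\cdot\nabla\bm u^n$ I split into $(1-\xi^{n+1})\bm u^n\cdot\nabla\bm u^n$ and a telescoping difference built on $\bm u(t_{n+1})-\bm u^n=e_{\bm u}^n+O(\tau)$, estimated through the skew-symmetry and bounds of Lemma~\ref{lem:trilinear}, using \eqref{hypothesis:n:1} to supply $\|\bm u^n\|_2\le1+K_{\bm u}$; the $\mbfH^1$-factor of $e_{\bm u}^{n+1}$ produced is matched against the viscous dissipation, while the $\ell^2(\mbfH^1)$-in-time factor of $e_{\bm u}^n$ is fed into Gronwall. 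The phase coupling $\phi(t_{n+1})\nabla\mu(t_{n+1})-\xi^{n+1}\phi^n\nabla\mu^n$ is treated analogously, invoking \eqref{bound_phi} for $\phi^n$ and the $L^2$-bound for $\mu^n$ from Lemma~\ref{lem:laplace_phi}. After Young's inequality, all remainders are dominated by $\varepsilon$ times the left-hand dissipation, plus $C\tau\,(\text{energy})$, plus a truncation contribution.

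The main obstacle, and the reason the estimate is only sub-optimal, lies in the pressure. Because the IVS Step~1 carries no pressure, the exact gradient $\nabla p(t_{n+1})$ survives in the Step-1 error equation, and testing against the non-solenoidal $\widehat e_{\bm u}^{n+1}$ leaves $(\nabla p(t_{n+1}),\widehat e_{\bm u}^{n+1})=(\nabla p(t_{n+1}),\bm u^{n+1}-\widehat{\bm u}^{n+1})$, which I can only control by $\tfrac14\|\bm u^{n+1}-\widehat{\bm u}^{n+1}\|^2+\tau^2\|\nabla p(t_{n+1})\|^2$. Since $\sum_n\tau^2\|\nabla p(t_{n+1})\|^2\approx\tau\,\|\nabla p\|^2_{L^2(0,T;L^2)}=O(\tau)$ under \eqref{assump:e1}, this splitting error forces the right-hand side to be $O(\tau)$ rather than $O(\tau^2)$, whence $\|e_{\bm u}^{n+1}\|^2\le C\tau$. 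Collecting everything, multiplying by $\tau$, summing over $n$, and applying the discrete Gronwall inequality (valid for $\tau\le\tau_0$ once the $\varepsilon$-terms are absorbed) yields the claimed bound $\le C\tau$. Recovering the optimal order and simultaneously controlling the pressure is the task of the refined/induction step of subsection~\ref{subec:induct}, which exploits the Stokes regularity.
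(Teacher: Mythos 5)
Your proposal reproduces the paper's proof almost exactly: the same error equations with the same bound \eqref{err:Er} on $1-\xi^{n+1}$, the same energy-style pairings mirroring Theorem \ref{thm:energy}, the same ZEC-driven cancellation of the SAV coupling terms, the same use of \eqref{gamma:bound} together with $(\nabla p^n,\bm u^{n+1})=0$ to reduce the $\gamma$-pressure contributions to $C|e_R^{n+1}|\,\|e_{\bm u}^{n+1}-\widehat e_{\bm u}^{n+1}\|$, the same treatment of the convective and capillary remainders via Lemma \ref{lem:trilinear} and the induction hypothesis \eqref{hypothesis:n:1}, and the same identification of the Step-1 pressure mismatch as the source of the $O(\tau)$ right-hand side, closed by discrete Gronwall. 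The only immaterial deviations are that the paper pairs the pressure consistency term with $\widehat e_{\bm u}^{n+1}-e_{\bm u}^{n}$ rather than with $\bm u^{n+1}-\widehat{\bm u}^{n+1}$ (both differences are available as dissipation), and that Step 1 does in fact carry a pressure term, namely $\xi^{n+1}\gamma^n\nabla p^n$, though this does not affect your estimate.

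There is, however, one concrete omission. Your set of test functions is exactly that of the stability proof, so your left-hand side produces only the dissipation $M\tau\|\nabla e_\mu^{n+1}\|^2$, and the $L^2$ part of $e_\phi^{n+1}$ only with the factor $\lambda\beta$; but the theorem asserts control of the full norms $\|e_\phi^{n+1}\|_1^2$ and $\tau\|e_\mu^{n+1}\|_1^2$. Since $e_\mu^{n+1}$ has no a priori controlled mean, its $L^2$ part cannot be recovered from the seminorm by a Poincar\'e argument alone. The paper resolves this with two additional pairings that you omit: the phase error equation \eqref{err:ephi} is also tested against $e_\phi^{n+1}$, and the chemical-potential error equation \eqref{err:emu} against $M e_\mu^{n+1}$, which places $M\|e_\mu^{n+1}\|^2$ and $\frac12\p_\tau\|e_\phi^{n+1}\|^2$ on the left at the cost of cross terms such as $M\lambda(\nabla e_\mu^{n+1},\nabla e_\phi^{n+1})$ that are absorbed by Cauchy--Schwarz and the existing dissipation. (Alternatively, one can control the mean of $e_\mu^{n+1}$ by integrating \eqref{err:emu} over $\Omega$, since the Neumann condition annihilates $\Delta e_\phi^{n+1}$ and the remaining terms are already carried by your energy.) This is easily repaired, but as written your argument does not deliver the stated bound for $e_\mu$.
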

The proof of Theorem \ref{err:sub_rate} is split into several parts. We will address each part step by step.

\paragraph{\bf \indent Part I. Estimates for phase-field equation}
In the first part, we present an estimate for the phase-field numerical scheme \eqref{scheme:1th:phi}--\eqref{scheme:1th:mu}.

\begin{lemma} \label{lem:err:CH}
	Under the assumption \eqref{assump:e1}, we have
	\begin{equation*}  
		\begin{aligned}
			&\frac{\lambda }{2}\p_\tau \|\nabla e_\phi^{n+1}\|^2+\frac{1+\beta}{2}\p_\tau \|e_\phi^{n+1}\|^2
			+\frac{3M}{4}\|e_\mu^{n+1}\|_1^2 \\
			& \leq  -\lambda  \frac{e_R^{n+1}}{\sqrt{E_1(\phi^{n})+\delta_0}} \big(F^{\prime}(\phi^{n}),\p_\tau  e_\phi^{n+1}\big)
			+C|e_R^{n+1}|^2 + C\|e_\phi^{n+1}\|_1^2 + C\|e_\phi^n\|_1^2			\\
			& \quad    +C\|e_{\bm u}^n\|^2+C \tau \big(\|\bm u_t\|_{L^2(J_n ; \mbfL^2(\Omega))}^2+\|\phi_t\|_{L^2(J_n ; H^1(\Omega))}^2+\|\phi_{tt}\|_{L^2(J_n ; L^2(\Omega))}^2\big).
		\end{aligned}
	\end{equation*}
\end{lemma}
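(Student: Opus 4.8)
The plan is to run a discrete Cahn--Hilliard energy argument on the residual equations for \eqref{scheme:1th:phi}--\eqref{scheme:1th:mu}, while carefully isolating the single $e_R^{n+1}$-weighted term that the statement keeps explicit on the right (this term is designed to cancel later against the auxiliary-variable error equation). First I would subtract the scheme from \eqref{model:sav:phi}--\eqref{model:sav:mu} evaluated at $t_{n+1}$, recalling $\xi(t)\equiv1$, to obtain
\[
\p_\tau e_\phi^{n+1} + \mathcal{N}_\phi^{n+1} = M\Delta e_\mu^{n+1} + \mathcal{T}_\phi^{n+1}, \qquad e_\mu^{n+1} = -\lambda\Delta e_\phi^{n+1} + \lambda\beta e_\phi^{n+1} + \lambda\mathcal{N}_\mu^{n+1},
\]
where $\mathcal{T}_\phi^{n+1}=\p_\tau\phi(t_{n+1})-\phi_t(t_{n+1})$ is the temporal residual, $\mathcal{N}_\phi^{n+1}=\nabla\cdot(\bm u(t_{n+1})\phi(t_{n+1}))-\xi^{n+1}\nabla\cdot(\bm u^n\phi^n)$, and $\mathcal{N}_\mu^{n+1}=F'(\phi(t_{n+1}))-\xi^{n+1}F'(\phi^n)$. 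The crucial algebraic step is to expand $\mathcal{N}_\mu^{n+1}$ using $\xi^{n+1}=R^{n+1}/\sqrt{E_1(\phi^n)+\delta_0}$ together with the exact identity $R(t_{n+1})=\sqrt{E_1(\phi(t_{n+1}))+\delta_0}$, which yields
\[
\mathcal{N}_\mu^{n+1} = \big[F'(\phi(t_{n+1}))-F'(\phi^n)\big] - \frac{\sqrt{E_1(\phi(t_{n+1}))+\delta_0}-\sqrt{E_1(\phi^n)+\delta_0}}{\sqrt{E_1(\phi^n)+\delta_0}}F'(\phi^n) + \frac{e_R^{n+1}}{\sqrt{E_1(\phi^n)+\delta_0}}F'(\phi^n).
\]
The last term, once paired with $\p_\tau e_\phi^{n+1}$, reproduces exactly the retained right-hand term; the first two are remainders to be absorbed.

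Next I would test the phase residual against $e_\mu^{n+1}$ and $e_\phi^{n+1}$, and the chemical-potential residual against $\p_\tau e_\phi^{n+1}$, then add the results so that the cross term $(\p_\tau e_\phi^{n+1},e_\mu^{n+1})$ cancels. The diffusion contributes $-M\|\nabla e_\mu^{n+1}\|^2$, while the discrete product rule $a(a-b)=\tfrac12(a^2-b^2)+\tfrac12(a-b)^2$ applied to $\lambda(\nabla e_\phi^{n+1},\nabla\p_\tau e_\phi^{n+1})$ and to the $\beta$- and mass-type pairings produces the monotone left-hand quantities $\tfrac{\lambda}{2}\p_\tau\|\nabla e_\phi^{n+1}\|^2$ and the $\p_\tau\|e_\phi^{n+1}\|^2$ contribution with coefficient $\tfrac{1+\beta}{2}$, plus nonnegative squared increments that are simply discarded. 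To produce the full $H^1$-norm $\|e_\mu^{n+1}\|_1^2$ rather than only $\|\nabla e_\mu^{n+1}\|^2$, I would additionally test the chemical-potential residual with $e_\mu^{n+1}$ itself: using $\lambda(\nabla e_\phi^{n+1},\nabla e_\mu^{n+1})\le\epsilon\|\nabla e_\mu^{n+1}\|^2+C\|\nabla e_\phi^{n+1}\|^2$ gives $\|e_\mu^{n+1}\|^2\le 2\epsilon\|\nabla e_\mu^{n+1}\|^2+C\|e_\phi^{n+1}\|_1^2+C\|\mathcal{N}_\mu^{n+1}\|^2$, so that a small fraction of the available $M\|\nabla e_\mu^{n+1}\|^2$ upgrades the dissipation to $\tfrac{3M}{4}\|e_\mu^{n+1}\|_1^2$; the loss of $M/4$ is precisely the source of the fractional constant.

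The remaining work is to bound the nonlinear remainders. For the convective residual I would integrate by parts, $(\mathcal{N}_\phi^{n+1},e_\mu^{n+1})=-(\bm u(t_{n+1})\phi(t_{n+1})-\xi^{n+1}\bm u^n\phi^n,\nabla e_\mu^{n+1})$, decompose the argument into $e_{\bm u}^n$, $e_\phi^n$, the $(1-\xi^{n+1})$ factor, and temporal increments, and close each piece with the uniform bounds \eqref{bound_phi}, the induction hypothesis \eqref{hypothesis:n:1}, and Young's inequality, routing a controllable $\epsilon\|\nabla e_\mu^{n+1}\|^2$ into the dissipation. The square-root difference is handled by Lipschitz continuity of $\sqrt{\cdot}$ together with $|E_1(\phi(t_{n+1}))-E_1(\phi^n)|\le C(\|e_\phi^n\|+\tau^{1/2}\|\phi_t\|_{L^2(J_n;L^2(\Omega))})$, again using \eqref{bound_phi}.

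The genuinely delicate term, which I expect to be the main obstacle, is $\lambda(F'(\phi(t_{n+1}))-F'(\phi^n),\p_\tau e_\phi^{n+1})$: pairing a bounded quantity directly against $\p_\tau e_\phi^{n+1}$ loses a power of $\tau$. I would instead eliminate $\p_\tau e_\phi^{n+1}$ through the phase residual, $\p_\tau e_\phi^{n+1}=M\Delta e_\mu^{n+1}-\mathcal{N}_\phi^{n+1}+\mathcal{T}_\phi^{n+1}$, and integrate by parts in the resulting term $\lambda M(\nabla(F'(\phi(t_{n+1}))-F'(\phi^n)),\nabla e_\mu^{n+1})$, which is controlled by the $H^2$-regularity of $\phi$ in \eqref{assump:e1} and the $L^\infty$-bound \eqref{bound_phi}; the companion factors against $\mathcal{N}_\phi^{n+1}$ and $\mathcal{T}_\phi^{n+1}$ are products of error/consistency quantities and are absorbed similarly. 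This substitution, and the precise bookkeeping that keeps only the $e_R^{n+1}$ term explicit, is the crux. Finally, the temporal residual $\mathcal{T}_\phi^{n+1}$ and the increments are estimated by Taylor expansion and the Cauchy--Schwarz inequality to yield the stated $C\tau(\|\bm u_t\|_{L^2(J_n;\mbfL^2(\Omega))}^2+\|\phi_t\|_{L^2(J_n;H^1(\Omega))}^2+\|\phi_{tt}\|_{L^2(J_n;L^2(\Omega))}^2)$ contributions, and collecting all terms gives the claimed inequality.
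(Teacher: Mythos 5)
Your proposal is correct and follows essentially the same route as the paper's own proof: the same two error equations, the same splitting of the chemical-potential residual (the first two terms of your $\mathcal{N}_\mu^{n+1}$ are exactly the paper's consistency term $E_F^{n+1}/\lambda$, so that only the $e_R^{n+1}$-weighted piece is kept explicit), the same four test pairings producing the cancellation of $(\p_\tau e_\phi^{n+1}, e_\mu^{n+1})$ and the full $\|e_\mu^{n+1}\|_1^2$ dissipation, and, crucially, the same key trick of eliminating $\p_\tau e_\phi^{n+1}$ through the phase error equation and integrating by parts to control the delicate consistency term. The only cosmetic difference is that you absorb the cross term $\lambda(\nabla e_\phi^{n+1},\nabla e_\mu^{n+1})$ by Young's inequality, yielding the harmless extra $C\|e_\phi^{n+1}\|_1^2$ already allowed by the statement, whereas the paper arranges an exact cancellation of that term.
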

\begin{proof} First, subtracting \eqref{scheme:1th:phi} from \eqref{model:sav:phi} at $t=t_{n+1}$, we obtain the error equation
	\begin{equation} \label{err:ephi}
		\p_\tau  e_\phi^{n+1}-M \Delta e_\mu^{n+1}=R_\phi^{n+1}+E_N^{n+1},
	\end{equation}
	where  
	\begin{equation*} 
		\begin{aligned}
			& R_\phi^{n+1}:=\p_\tau \phi(t_{n+1})- \phi_t(t_{n+1})=\frac{1}{\tau} \int_{t_n}^{t_{n+1}}(t-t_n) \phi_{tt} d t, \\
			& E_N^{n+1}:=\frac{R^{n+1}}{\sqrt{E_1(\phi^n)+\delta_0}} \nabla \cdot({\bm u}^n \phi^n) -\frac{R(t_{n+1})}{\sqrt{E_1(\phi(t_{n+1}))+\delta_0}}\nabla \cdot  (\bm u(t_{n+1})\phi(t_{n+1})).
		\end{aligned} 
	\end{equation*}
	Taking the inner products of \eqref{err:ephi} with $e_\mu^{n+1}$ and $e_\phi^{n+1}$, respectively, we have
	\begin{equation} \label{err:ephi:1}
		\big(\p_\tau  e_\phi^{n+1}, e_\mu^{n+1}\big)+M\|\nabla e_\mu^{n+1}\|^2= (R_\phi^{n+1}, e_\mu^{n+1})+(E_N^{n+1}, e_\mu^{n+1}),
	\end{equation}
	\begin{equation} \label{err:ephi:2}
		\frac{1}{2}\p_\tau \| e_\phi^{n+1}\|^2+ \frac{\tau}{2}\|\p_\tau  e_\phi^{n+1}\|^2=(R_\phi^{n+1}, e_\phi^{n+1})+(E_N^{n+1}, e_\phi^{n+1})-M(\nabla e_\mu^{n+1}, \nabla e_\phi^{n+1}).
	\end{equation}
	
	Next, subtracting \eqref{scheme:1th:mu} from \eqref{model:sav:mu} at $t=t_{n+1}$, we get
	\begin{equation}  \label{err:emu}
		e_\mu^{n+1}=- \lambda\Delta e_\phi^{n+1}+ \lambda \beta e_\phi^{n+1} + \lambda \frac{e_R^{n+1}}{\sqrt{E_1(\phi^{n}))+\delta_0}} F^{\prime}(\phi^{n})+E_F^{n+1},
	\end{equation} 
	where  
	\begin{equation*}  
		E_F^{n+1}:=\lambda  \Big(\frac{F^{\prime}(\phi(t_{n+1}))}{\sqrt{E_1(\phi(t_{n+1}))+\delta_0}}-\frac{F^{\prime}(\phi^n)}{\sqrt{E_1(\phi^n)+\delta_0}}\Big) R(t_{n+1}).
	\end{equation*} 
	Taking the inner products of \eqref{err:emu} with $M e_\mu^{n+1}$ and $\p_\tau e_\phi^{n+1}$, respectively, we obtain
	\begin{equation} \label{err:emu:1}
		\begin{aligned}
			M\|e_\mu^{n+1}\|^2 
			& =    M \lambda(\nabla e_\mu^{n+1}, \nabla e_\phi^{n+1}) +M  \lambda 
			\beta(e_\phi^{n+1},e_\mu^{n+1}) \\
			&\qquad +M \lambda  \frac{ e_R^{n+1}}{\sqrt{E_1(\phi^{n})+\delta_0}}(F^{\prime}(\phi^{n}), e_\mu^{n+1})+M(E_F^{n+1}, e_\mu^{n+1}),
		\end{aligned}
	\end{equation}
	\begin{equation} \label{err:emu:2}
		\begin{aligned}
			\big( e_\mu^{n+1},\p_\tau  e_\phi^{n+1}\big)
			& =  \frac{\lambda }{2}\big( \p_\tau \|\nabla e_\phi^{n+1}\|^2 + \tau\|\p_\tau  \nabla e_\phi^{n+1}\|^2\big)  
			+  \frac{\lambda \beta}{2}\big( \p_\tau\| e_\phi^{n+1}\|^2 + \tau\|\p_\tau   e_\phi^{n+1}\|^2\big)\\
			& \qquad  +\big(E_F^{n+1}, \p_\tau  e_\phi^{n+1}\big) + \lambda \frac{e_R^{n+1}}{\sqrt{E_1(\phi^{n})+\delta_0}} \big(F^{\prime}(\phi^{n}),\p_\tau  e_\phi^{n+1}\big)   .
		\end{aligned}
	\end{equation}

	Now, adding \eqref{err:ephi:1}--\eqref{err:ephi:2} and \eqref{err:emu:1}--\eqref{err:emu:2} together, we obtain
	\begin{equation} \label{err:CH:all:e1}
		\begin{aligned}
			&\frac{\lambda }{2}  \p_\tau \|\nabla e_\phi^{n+1}\|^2 
			+\frac{\lambda (1+\beta)}{2}  \p_\tau\| e_\phi^{n+1}\|^2 
			+ M\| e_\mu^{n+1}\|_1^2 \\
			&  \le - \lambda \frac{e_R^{n+1}}{\sqrt{E_1(\phi^{n})+\delta_0}} \big(F^{\prime}(\phi^{n}),\p_\tau  e_\phi^{n+1}\big)+\lambda M   \frac{e_R^{n+1}}{\sqrt{E_1(\phi^{n})+\delta_0}}(F^{\prime}(\phi^{n}), e_\mu^{n+1}) \\
			& \quad +\big(E_F^{n+1}, M e_\mu^{n+1}-\p_\tau  e_\phi^{n+1}\big)+ (E_N^{n+1}, e_\mu^{n+1}+e_\phi^{n+1}) \\
			& \quad  +(R_\phi^{n+1}, e_\mu^{n+1})+(R_\phi^{n+1}, e_\phi^{n+1})+ \lambda M\beta(e_\phi^{n+1},e_\mu^{n+1})
			=:\sum\limits_{i=1}^7\malA_i.
		\end{aligned}
	\end{equation}
	
	We pay attention to the estimates of the right-hand side of \eqref{err:CH:all:e1}. First, the second term can be bounded by
	\begin{equation*} 
		\malA_2\leq   \frac{M}{16}\|e_\mu^{n+1}\|^2+C|e_R^{n+1}|^2,
	\end{equation*}
	where the uniform boundedness of $\phi^n$ is utilized. Second, the last three terms can be estimated in a standard way that 
	\begin{equation*} 
		\malA_{5} + \malA_{6} +  \malA_{7}
		\le \frac{M}{16}\|e_\mu^{n+1}\|^2+ C\| e_\phi^{n+1}\|^2+ C\tau \|\phi_{tt}\|_{L^2(J_n ; L^2(\Omega))}^2,
	\end{equation*}
	where	$\|R_\phi^{n+1}\|^2 \le \tau \|\phi_{tt}\|_{L^2(J_n ; L^2(\Omega))}^2$ 	is applied.
	
	Third, owing to \eqref{err:ephi}, the third term can be rewritten as
	\begin{equation*}
		\begin{aligned}
			\malA_3
			&=\big(E_F^{n+1}, M e_\mu^{n+1}-M \Delta e_\mu^{n+1}-R_\phi^{n+1}-E_N^{n+1}\big) \\
			& =\big(E_F^{n+1}, M e_\mu^{n+1}-R_\phi^{n+1}-E_N^{n+1}\big)+M\big(\nabla E_F^{n+1}, \nabla e_\mu^{n+1}\big). 
		\end{aligned}
	\end{equation*}
	Note that $E_N^{n+1}$ is equivalent to  
	\begin{equation} \label{err:EN:all}
		\begin{aligned}
			E_N^{n+1} 
			& =  -\frac{R(t_{n+1})}{\sqrt{E_1(\phi(t_{n+1}))+\delta_0}}  
			\nabla \cdot\Big( \phi(t_n)\int_{t_n}^{t_{n+1}} \bm u_t dt+\bm u (t_{n+1}) \int_{t_n}^{t_{n+1}} \phi_t dt \Big) \\
			& \qquad  + \malE_R^{n+1} \nabla \cdot{\left(\bm u (t_{n}) \phi(t_{n})\right)} -\frac{R^{n+1}}{\sqrt{E_1(\phi^n)+\delta_0}} \nabla \cdot\left(\bm u (t_{n}) e_\phi^n+e_{\bm u }^n \phi^n\right),
		\end{aligned}
	\end{equation}
	where 
	\begin{equation} \label{err:Er}
		\malE_R^{n+1}:=\frac{R^{n+1}}{\sqrt{E_1(\phi^n)+\delta_0}}-\frac{R(t_{n+1})}{\sqrt{E_1(\phi(t_{n+1}))+\delta_0}} 
		\le C|e_R^{n+1}|+ C|e_\phi^n|  +C\int_{t_n}^{t_{n+1}}|\phi_t| dt.
	\end{equation}
	Besides, the boundedness of $\phi(t_{n+1})$ and $\phi^n$ imply that
	\begin{equation}  \label{err:E_F}
		\|E_F^{n+1}\|_1^2 \le		C\|e_\phi^n\|_1^2  +C \tau \|\phi_t\|^2_{L^2(J_n ; H^1(\Omega))}.
	\end{equation}
	Therefore, using the integration by parts formula and \eqref{err:EN:all}--\eqref{err:E_F}, we see 
	\begin{equation} \label{err:EN:EF}
		\begin{aligned}
			 (E_F^{n+1}, E_N^{n+1}) 	& = \frac{R(t_{n+1})}{\sqrt{E_1(\phi(t_{n+1}))+\delta_0}}
			\big(\nabla E_F^{n+1},  \phi(t_n)\int_{t_n}^{t_{n+1}} \bm u_t dt+\bm u (t_{n+1}) \int_{t_n}^{t_{n+1}} \phi_t dt\big) \\
			& \quad - \malE_R^{n+1} \big(\nabla E_F^{n+1},  \bm u(t_{n}) \phi(t_{n})\big) 
			+\frac{R^{n+1}}{\sqrt{E_1(\phi^n)+\delta_0}}\big(\nabla E_F^{n+1}, \bm u(t_{n}) e_\phi^n +e_{\bm u}^n \phi^n\big)\\
			&  \leq C|e_R^{n+1}|^2 +C\|e_\phi^n\|_1^2+C\|e_{\bm u}^n\|^2
			+C\tau\big( \|\phi_t\|_{L^2(J_n ; H^1(\Omega))}^2 + \|\bm u_t\|_{L^2(J_n; \mbfL^2(\Omega))}^2 \big),
		\end{aligned}
	\end{equation}
	where the boundedness of \eqref{bound:solu}  and \eqref{bound_phi} are also applied in the last step.
	
	Thus, combing \eqref{err:E_F} and \eqref{err:EN:EF} together and applying the Cauchy-Schwarz inequality, it yields
	\begin{equation*} 
		\begin{aligned}
			\malA_{3} 
			&  \leq C|e_R^{n+1}|^2+\frac{M}{8}\|e_\mu^{n+1}\|_1^2
			+C\|e_{\bm u}^n\|^2+C\|e_\phi^n\|_1^2\\ 
			& \quad +C \tau \big(\|\phi_t\|_{L^2(J_n; H^1(\Omega))}^2
			+\|\phi_{tt}\|_{L^2(J_n; L^2(\Omega))}^2+\|\bm u_t\|_{L^2(J_n; \mbfL^2(\Omega))}^2\big).
		\end{aligned}
	\end{equation*}
	
	Finally, by using integration by parts formula again and similar to the estimate of \eqref{err:EN:EF}, we have
	\begin{equation*} 
		\begin{aligned}
			\malA_{4} 
			& \leq C|e_R^{n+1}|^2+C\|\nabla e_\phi^{n+1}\|^2+ \frac{M}{8}\|\nabla e_\mu^{n+1}\|^2 +C\|e_\phi^n\|^2+C\|e_{\bm u}^n\|^2\\
			& \quad +C \tau \big(\|\phi_t\|_{L^2(J_n ; L^2(\Omega))}^2+\|\bm u_t\|_{L^2(J_n ; \mbfL^2(\Omega))}^2\big).
		\end{aligned}
	\end{equation*}
	
Now, inserting the estimates of $\malA_2-\malA_{7}$ into \eqref{err:CH:all:e1}, we obtain the desired result.
\end{proof}

\paragraph{\bf \indent Part II. Estimates for auxiliary variable}
Next, we continue with the estimates for the auxiliary variable $R^{n+1}$ defined by \eqref{scheme:1th:rhat}. The SAV-ZEC approach and the introduced $\gamma^n$ shall play a key role in the error analysis. 
\begin{lemma} \label{lem:err:r}
	Under the assumption \eqref{assump:e1}, we have
	\begin{equation*}  
		\begin{aligned}
			\p_\tau |e_R^{n+1}|^2 
			& \leq  \frac{e_R^{n+1}}{\sqrt{E_1(\phi^n)+\delta_0}} \big(\lambda(F^{\prime}(\phi^n), \p_\tau   e_\phi^{n+1})+(\widehat{e}_{\bm u}^{n+1}, \phi^n \nabla \mu^n) +  \bm b \big(\bm u^n, \bm u^n, \widehat{e}_{\bm u}^{n+1})\big) \\
			& \quad  + \frac{M}{4}\|\nabla e_{\mu}^{n+1}\|^2  
			+  \frac{M}{4}\|\nabla e_{\mu}^{n}\|^2
			+ C\|\widehat{e}_{\bm u}^{n+1}-e_{\bm u}^{n+1}\|^2
			+ \frac{\nu}{4}\|\nabla e_{\bm u}^n\|^2 + C|e_R^{n+1}|^2  +C\| e_{\phi}^n\|^2  \\
			& \quad+  C \tau \big(
			\|\phi_{t}\|_{L^2(J_n ; L^2(\Omega))}^2 +\|\phi_{tt}\|_{L^2(J_n ; L^2(\Omega))}^2 +\|\mu_t\|_{L^2(J_n ; H^1(\Omega))}^2 
			    +  |R_{tt}|_{L^2(J_n)}^2+\|\bm u_t\|_{L^2(J_n ; \mbfH^1(\Omega))}^2\big).
		\end{aligned}
	\end{equation*}
\end{lemma}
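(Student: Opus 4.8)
The plan is to mirror the energy argument of Theorem \ref{thm:energy}, but at the level of errors. First I would write the continuous identity \eqref{model:sav:r} at $t=t_{n+1}$, introduce the temporal truncation $R_R^{n+1}:=\p_\tau R(t_{n+1})-R_t(t_{n+1})=\frac1\tau\int_{t_n}^{t_{n+1}}(t-t_n)R_{tt}\,dt$ (so that $\|R_R^{n+1}\|^2\le\tau|R_{tt}|_{L^2(J_n)}^2$), and subtract the discrete relation \eqref{scheme:1th:rhat} to obtain an error equation for $\p_\tau e_R^{n+1}$. Keeping every continuous zero-energy-contribution group in its \emph{unsimplified} form is essential here: although $(\mu,\nabla\cdot(\bm u\phi))+(\bm u,\phi\nabla\mu)$, $\bm b(\bm u,\bm u,\bm u)$ and $(\nabla p,\bm u)$ all vanish at $t_{n+1}$, retaining them lets me match each discrete inner product with an exact continuous counterpart term by term. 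I would then multiply the error equation by $2e_R^{n+1}$ and use $a(a-b)=\tfrac12(a^2-b^2)+\tfrac12(a-b)^2$ to get $\p_\tau|e_R^{n+1}|^2\le 2e_R^{n+1}\p_\tau e_R^{n+1}$, so that (up to the harmless constant $\lambda$) the SAV prefactor $\tfrac1{2\lambda\sqrt{E_1(\phi^n)+\delta_0}}$ turns into the weight $\tfrac{e_R^{n+1}}{\sqrt{E_1(\phi^n)+\delta_0}}$ appearing in the statement.

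The core is a term-by-term decomposition of each discrete contribution into a leading coupling error plus a consistency remainder. For the potential term I write $(F'(\phi^n),\p_\tau\phi^{n+1})=(F'(\phi^n),\p_\tau\phi(t_{n+1}))-(F'(\phi^n),\p_\tau e_\phi^{n+1})$ and keep $\lambda(F'(\phi^n),\p_\tau e_\phi^{n+1})$ explicit, precisely so that it cancels the matching term in Lemma \ref{lem:err:CH} once the two estimates are added. For the two velocity couplings I split $\widehat{\bm u}^{n+1}=\bm u(t_{n+1})-\widehat e_{\bm u}^{n+1}$ and $\bm u^n=\bm u(t_n)-e_{\bm u}^n$ with $\bm u(t_n)-\bm u(t_{n+1})=-\int_{t_n}^{t_{n+1}}\bm u_t\,dt$, which isolates the explicit terms $(\widehat e_{\bm u}^{n+1},\phi^n\nabla\mu^n)$ and $\bm b(\bm u^n,\bm u^n,\widehat e_{\bm u}^{n+1})$ (to be cancelled against the momentum estimate) and leaves remainders governed by the increments $\phi(t_{n+1})-\phi^n$, $\mu(t_{n+1})-\mu^n$, $\bm u(t_{n+1})-\bm u^n$ together with the denominator discrepancy $\malE_R^{n+1}$ from \eqref{err:Er}. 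These are controlled by Cauchy--Schwarz, Lemma \ref{lem:trilinear}, the uniform bounds \eqref{bound:solu}, \eqref{bound_phi}, the induction bound $\|\bm u^n\|_2\le 1+K_{\bm u}$, and the $e_R^{n+1}$ weight, yielding exactly the $C|e_R^{n+1}|^2+C\|e_\phi^n\|^2+\tfrac{M}4\|\nabla e_\mu^n\|^2+\tfrac\nu4\|\nabla e_{\bm u}^n\|^2$ and $\tau$-truncation terms. The term $(\mu^{n+1},\nabla\cdot(\bm u^n\phi^n))$, which has no counterpart in the stated leading bracket, I handle by replacing $\mu^{n+1}$ by $e_\mu^{n+1}$ up to a consistency remainder, integrating by parts to move the derivative onto $\bm u^n\phi^n$, and then using the $e_R^{n+1}$ weight with Young's inequality to absorb it into $\tfrac M4\|\nabla e_\mu^{n+1}\|^2+C|e_R^{n+1}|^2$.

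The crux, and the step I expect to be the main obstacle, is the pressure/$\gamma$ contribution $-\tfrac{e_R^{n+1}\gamma^n}{\lambda\sqrt{E_1(\phi^n)+\delta_0}}(\nabla p^n,\widehat{\bm u}^{n+1})$, which has no continuous counterpart (since $(\nabla p(t_{n+1}),\bm u(t_{n+1}))=0$) and cannot be bounded directly, because $\widehat e_{\bm u}^{n+1}$ is not yet controlled. My plan is to exploit two structural facts at once. First, $(\nabla p^n,\bm u(t_{n+1}))=-(p^n,\nabla\cdot\bm u(t_{n+1}))=0$, and since $\bm u^{n+1}$ and $\bm u(t_{n+1})$ are both divergence-free and vanish on $\p\Omega$ by \eqref{scheme:1th:div}, also $(\nabla p^n,e_{\bm u}^{n+1})=0$; hence $(\nabla p^n,\widehat{\bm u}^{n+1})=(\nabla p^n,\widehat e_{\bm u}^{n+1}-e_{\bm u}^{n+1})$. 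Second, the user-defined weight satisfies the pointwise bound $|\gamma^n\nabla p^n|\le 1$ from \eqref{gamma:bound}, so $(\gamma^n\nabla p^n,\widehat e_{\bm u}^{n+1}-e_{\bm u}^{n+1})\le C\|\widehat e_{\bm u}^{n+1}-e_{\bm u}^{n+1}\|$, which after the $e_R^{n+1}$ weighting and Young's inequality produces exactly $C|e_R^{n+1}|^2+C\|\widehat e_{\bm u}^{n+1}-e_{\bm u}^{n+1}\|^2$. This is where the ZEC reformulation and the consistency parameter $\gamma^n$ are indispensable: without the divergence-free splitting of $\widehat{\bm u}^{n+1}$ and the bound \eqref{gamma:bound}, this term would be out of reach. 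Collecting the leading couplings and all remainders then gives the asserted inequality.
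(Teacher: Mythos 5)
Your proposal is correct and follows essentially the same route as the paper's proof: subtract the discrete SAV relation from \eqref{model:sav:r} at $t_{n+1}$, test with $2\lambda e_R^{n+1}$, and decompose each of the six resulting terms into the explicit coupling errors (kept for cancellation against Lemmas \ref{lem:err:CH} and \ref{lem:NS:E1}) plus remainders, using integration by parts for the $\mu$-term, skew-symmetry of $\bm b$, the orthogonality $(\nabla p^n, e_{\bm u}^{n+1})=(\nabla p^n,\bm u(t_{n+1}))=0$, and the bound \eqref{gamma:bound} for the pressure/$\gamma$ term, exactly as in the paper's estimates of $\malB_1$--$\malB_6$. The only blemish is a harmless sign slip in the pressure identity — one actually gets $(\nabla p^n,\widehat{\bm u}^{n+1})=(\nabla p^n, e_{\bm u}^{n+1}-\widehat e_{\bm u}^{n+1})$ — which is immaterial since the term is then bounded in absolute value by Cauchy--Schwarz and Young's inequality.
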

\begin{proof}
	Subtracting \eqref{scheme:1th:rhat} from \eqref{model:sav:r} at $t=t_{n+1}$ and multiplying the resulting equation by $2 \lambda  e_R^{n+1}$ leads to
	\begin{equation} \label{err:r:all}
		\begin{aligned}
			&	\p_\tau |e_R^{n+1}|^2 + \tau |\p_\tau   e_R^{n+1}|^2 \\
			& =   \lambda \frac{e_R^{n+1}}{\sqrt{E_1(\phi^n)+\delta_0}}\big( (F^{\prime}(\phi(t_{n+1})), \phi_t(t_{n+1})) - (F^{\prime}(\phi^n), \p_\tau   \phi^{n+1})\big) \\
			&\quad  +  \frac{e_R^{n+1}}{\sqrt{E_1(\phi^n)+\delta_0}}\big( (\mu(t_{n+1}), \nabla \cdot (\bm u(t_{n+1}) \phi(t_{n+1}))) - (\mu^{n+1},  \nabla \cdot( \bm u^n \phi^n))\big)\\
			&\quad  +  \frac{e_R^{n+1}}{\sqrt{E_1(\phi^n)+\delta_0}}\big((\bm u(t_{n+1}), \phi(t_{n+1}) \nabla \mu(t_{n+1}))-(\widehat{\bm u}^{n+1}, \phi^n \nabla \mu^n)\big) \\
			&\quad  +  \frac{e_R^{n+1}} {\sqrt{E_1(\phi^n)+\delta_0}}
			\big( \bm b \big(\bm u(t_{n+1}), \bm u(t_{n+1}), \bm u(t_{n+1}))-\bm b(\bm u^n, \bm u^n, \widehat{\bm u}^{n+1})\big) \\
			& \quad + \frac{e_R^{n+1}}
			{\sqrt{E_1(\phi^n)+\delta_0}}\gamma^n \big( (\nabla p(t_{n+1}), \bm u(t_{n+1}))- ( \nabla p^n, \widehat{\bm u}^{n+1})\big)
			+2 e_R^{n+1}  R_r^{n+1}, 
		\end{aligned}
	\end{equation}
	where 
	\begin{equation*} 
		R_r^{n+1}:=\p_\tau R(t_{n+1}) -R_t(t_{n+1})=\frac{1}{\tau} \int_{t_n}^{t_{n+1}}(t-t_n) R_{tt} d t. 
	\end{equation*}
	
	We denote the right-hand side terms of \eqref{err:r:all} by $\malB_i$ ($1\le i \le 6$). Then, by using the Cauchy-Schwarz inequality and the boundedness of $\phi(t_{n+1})$ and $\phi^n$, the first term can be recast into
	\begin{align*} 
		\begin{aligned}
			\malB_1 & = \lambda {\frac{e_R^{n+1}}{\sqrt{E_1(\phi^{n})+\delta_0}} \big(F^{\prime}(\phi^{n}),\p_\tau   e_\phi^{n+1}\big)} - \lambda \frac{e_R^{n+1}}{\sqrt{E_1(\phi^n)+\delta_0}}\big(F^{\prime}(\phi^n), R_\phi^{n+1}\big) \\
			& \qquad + \lambda \frac{e_R^{n+1}}{\sqrt{E_1(\phi^n)+\delta_0}}\big(F^{\prime}(\phi(t_{n+1}))-F^{\prime}(\phi^n), \phi_t(t_{n+1})\big) \\
			&  \leq  \lambda \frac{e_R^{n+1}}{\sqrt{E_1(\phi^{n})+\delta_0}} \big(F^{\prime}(\phi^{n}),\p_\tau   e_\phi^{n+1}\big)+ C|e_R^{n+1}|^2+C\|e_\phi^n\|^2
			+C\tau\|\phi_{tt}\|_{L^2(J_n ; L^2(\Omega))}^2. 
		\end{aligned}
	\end{align*}
	
	Note that by integration by parts formula, we have
	\begin{equation} \label{err:zec1}
		\begin{aligned}
			& (\mu(t_{n+1}), \nabla \cdot (\bm u(t_{n+1}) \phi(t_{n+1}))) - (\mu^{n+1},  \nabla \cdot( \bm u^n \phi^n)) \\
			&  = -\big( \nabla e_\mu^{n+1},  \bm u^{n}\phi^n\big) 
			-\big(\nabla{\mu(t_{n+1})}, (e^{n}_{\bm u} + \int_{t_n}^{t_{n+1}}{\bm u}_t dt) \phi^n \big) 
		  -  \big(\nabla {\mu(t_{n+1})},  \bm u(t_{n+1})(e^{n}_\phi +\int_{t_n}^{t_{n+1}}\phi_t dt ) \big).
		\end{aligned}
	\end{equation}
	Then, combining  \eqref{bound:solu} and  \eqref{bound_phi} with \eqref{err:zec1}, the second term on the right-hand side of  \eqref{err:r:all} can be bounded by
	\begin{equation*} 
		\begin{aligned}
			\malB_2       & \le 
			\frac{ M}{4} \|\nabla e_{\mu}^{n+1}\|^2 +  C|e_R^{n+1}|^2 + C \| e_{\phi}^n\|^2 + C\| e_{\bm u}^{n}\|^2 + C\tau\big( \|\bm u_t\|_{L^2(J_n ; \mbfL^2(\Omega))}^2  +  \|\phi_t\|_{L^2(J_n ; L^2(\Omega))}^2\big).
		\end{aligned}
	\end{equation*}
	
	Furthermore, we have
	\begin{equation} \label{err:zec2}
		\begin{aligned} 
			& (\bm u(t_{n+1}), \phi(t_{n+1}) \nabla \mu(t_{n+1}))-(\widehat{\bm u}^{n+1}, \phi^n \nabla \mu^n) \\
			&  =   (\widehat{e}_{\bm u}^{n+1}, \phi^n \nabla \mu^n) 
			+ \big(\bm u(t_{n+1}),\big(e_\phi^n+\int_{t_n}^{t_{n+1}} \phi_t d t\big)\nabla \mu(t_{n+1})  \big)  
			 +  \big(\bm u(t_{n+1}), \phi^n \nabla \big(e_\mu^n+\int_{t_n}^{t_{n+1}} \mu_t d t\big) \big).
		\end{aligned}
	\end{equation}
	Then, similar to the estimate of $ \malB_2 $, it follows from \eqref{err:zec2} that
	\begin{equation*}
			\malB_3       \le \frac{e_R^{n+1}}  {\sqrt{E_1(\phi^n)+\delta_0}} (\widehat{e}_{\bm u}^{n+1}, \phi^n \nabla \mu^n) + C|e_R^{n+1}|^2+  \frac{M}{4}\|\nabla e_{\mu}^{n}\|^2 + C\|e_{\phi}^n\|^2
			    +C\tau \big(\|\phi_t\|_{L^2(J_n ; L^2(\Omega))}^2 + \|\mu_t\|_{L^2(J_n ; H^1(\Omega))}^2\big).
	\end{equation*}
	
	Additionally, using (i) of Lemma \ref{lem:trilinear}, we obtain 
	$$\bm b( \bm u^n,  \bm u(t_{n+1}), \bm u(t_{n+1}) )= \bm b \big(\bm u(t_{n+1}), \bm u(t_{n+1}), \bm u(t_{n+1}))=0,$$
	which yields
	\begin{equation*}  
		\begin{aligned}
			 \bm b \big(\bm u(t_{n+1}), \bm u(t_{n+1}), \bm u(t_{n+1}))-\bm b(\bm u^n, \bm u^n, \widehat{\bm u}^{n+1}) 
			  =  \bm b \big(\bm u^n, \bm u^n, \widehat{e}_{\bm u}^{n+1})
			+ \bm b\big(\bm u^n, e_{\bm u}^{n}+\int_{t_n}^{t_{n+1}} \bm u_t d t,\bm u(t_{n+1}) \big),
		\end{aligned}
	\end{equation*}
	and thus, by (ii) of Lemma \ref{lem:trilinear} and the stability result \eqref{bound:solu}, the fourth term 
	can be estimated by
	\begin{equation*}
			\malB_4     \le \frac{e_R^{n+1}} {\sqrt{E_1(\phi^n)+\delta_0}} \bm b \big(\bm u^n, \bm u^n, \widehat{e}_{\bm u}^{n+1}) +\frac{\nu}{4}\|\nabla e_{\bm u}^{n}\|^2  + C|e_R^{n+1}|^2 +C\tau\|\bm u_t\|_{L^2(J_n ; \mbfH^1(\Omega))}^2.
	\end{equation*}
	
	Due to \eqref{model:sav:divu} and \eqref{scheme:1th:div}, we see
	\begin{equation}\label{zec:e2}
		(\nabla p(t_{n+1}), \bm u(t_{n+1}))		= \big(\nabla p^n, e_{\bm u}^{n+1}\big)=0.
	\end{equation}
	Thanks to \eqref{gamma:bound} and \eqref{zec:e2}, the fifth term on the right-hand side of \eqref{err:r:all} can be bounded by
	\begin{equation*} 
			\malB_5     = \frac{e_R^{n+1}}  {\sqrt{E_1(\phi^n)+\delta_0}} \gamma^n\big(\nabla p^n, \widehat{e}_{\bm u}^{n+1}-e_{\bm u}^{n+1}\big)
			\le C|e_R^{n+1}|^2 + C\|\widehat{e}_{\bm u}^{n+1}-e_{\bm u}^{n+1}\|^2.
	\end{equation*}
	
	Finally, for the last term on the right-hand side of \eqref{err:r:all}, we have
	\begin{equation*}
		\malB_6 \le C|e_R^{n+1}|^2+C \tau |R_{tt}|_{L^2(J_n)}^2.
	\end{equation*}
	
	In summary, substituting the estimates of $\malB_1-\malB_6$ into  \eqref{err:r:all}, we then complete the proof.
\end{proof}
\paragraph{\bf \indent Part III. Estimates for velocity variable}
In this part, we consider the error analysis of the velocity field defined by \eqref{scheme:1th:uhat} and \eqref{scheme:1th:uplus1}--\eqref{scheme:1th:div}. For clarity, we firstly present three error equations that will be utilized in the proof.

First, subtracting \eqref{scheme:1th:uhat} from \eqref{model:sav:u} at $t=t_{n+1}$ and recalling $\xi(t_{n+1})=\frac{R(t_{n+1})}{\sqrt{E_1(\phi(t_{n+1}))+\delta_0}}\equiv 1$, we obtain
\begin{equation*} 
	(E_1): \quad  \left\{
	\begin{aligned}
		& \frac{\widehat{e}_{\bm u}^{n+1}-e_{\bm u}^{n}}{\tau}-\nu \Delta \widehat{e}_{\bm u}^{n+1}  = \bm \chi_1 +\bm \chi_2 + \bm \chi_3 +R_{\bm u}^{n+1}, 
		\quad \text { in } \Omega, \\ 
		& \left. \widehat{e}_{\bm u}^{n+1}\right|_{\partial \Omega}=\mathbf{0},
	\end{aligned}\right. 
\end{equation*}
where  
\begin{equation*}
	\begin{aligned}
		\bm \chi_1 & := \xi^{n+1}\bm u^n \cdot \nabla \bm u^n - \bm u(t_{n + 1}) \cdot \nabla \bm u(t_{n+1}), \\ 
		& = -\frac{e_R^{n+1}}{\sqrt{E_1(\phi^n)+\delta_0}} \bm u^n \cdot \nabla \bm u^n- \frac{R(t_{n+1})}{\sqrt{E_1(\phi^n)+\delta_0}} \big(e_{\bm u}^n +\int_{t_n}^{t_{n+1}} \bm u_t d t\big) \cdot \nabla \bm u(t_{n + 1})  \\
		& \quad - \frac{R(t_{n+1})}{\sqrt{E_1(\phi^n)+\delta_0}} \bm u^n \cdot \nabla \big(e_{\bm u}^n +\int_{t_n}^{t_{n+1}} \bm u_t d t\big) \\
		& \quad + \big(\frac{R(t_{n+1})}{\sqrt{E_1(\phi^n)+\delta_0}} - \frac{R(t_{n+1})}{\sqrt{E_1(\phi(t_{n+1})))+\delta_0}} \big) \bm u(t_{n + 1}) \cdot \nabla \bm u(t_{n+1}), \\
		\bm \chi_2  &:= \xi^{n+1} \phi^n  \nabla  \mu^n - \phi(t_{n+1}) \nabla \mu(t_{n+1}), \\
		& = - \frac{e_R^{n+1}}{\sqrt{E_1(\phi^n)+\delta_0}} \phi^n \nabla \mu^n  -\frac{R(t_{n+1}) }{\sqrt{E_1(\phi^n)+\delta_0}}  \phi^n \big(\nabla  e_\mu^n+ \int_{t_n}^{t_{n+1}} \nabla \mu_t dt\big) \\
		& \quad -\frac{R(t_{n+1}) }{\sqrt{E_1(\phi^n)+\delta_0}} \big(e_\phi^n +\int_{t_n}^{t_{n+1}}\phi_t dt \big)\nabla \mu(t_{n+1})  \\
		& \quad  +  \Big( \frac{R(t_{n+1})}{\sqrt{E_1(\phi^n)+\delta_0}}-\frac{R(t_{n+1})}{\sqrt{E_1(\phi(t_{n+1}))+\delta_0}}\Big) \phi(t_{n+1}) \nabla \mu(t_{n+1}),\\
		\bm \chi_3 &:=  \nabla \big(\xi^{n+1} \gamma^n p^n-p(t_{n+1})\big),\
		R_{\bm u}^{n+1}:=\p_{\tau} \bm u(t_{n+1}) - \bm u_t(t_{n+1})=\frac{1}{\tau} \int_{t_n}^{t_{n+1}}\left(t-t_n\right) \bm u_{tt} d t.
	\end{aligned}
\end{equation*}

Next, we derive from \eqref{scheme:1th:uplus1}--\eqref{scheme:1th:div} that 
\begin{equation*} 
	(E_2): \quad   \left\{
	\begin{aligned}
		& \frac{e_{\bm u}^{n+1}-\widehat{e}_{\bm u}^{n+1}}{\tau} -\nu \Delta(e_{\bm u}^{n+1}-\widehat{e}_{\bm u}^{n+1})-\nabla p^{n+1}=-\gamma^n \nabla p^n, \quad \text { in } \Omega, \\ 
		& \nabla \cdot e_{\bm u}^{n+1}=0 \quad \text { in } \Omega,\quad e_{\bm u}^{n+1}|_{\partial \Omega}=\mathbf{0}. 
	\end{aligned}
	\right. 
\end{equation*}

Finally, adding ($E_1$) and ($E_2$) together, we arrive at the following equation
\begin{equation*} 
	(E_3): \quad   \left\{
	\begin{aligned}
		& \p_\tau e_{\bm u}^{n+1}-\nu\Delta e_{\bm u}^{n+1}+\nabla e_p^{n+1}=\bm \chi_1 +\bm \chi_2 + \bm \chi_4 +R_{\bm u}^{n+1},  \quad \text { in } \Omega, \\
		& \nabla \cdot e_{\bm u}^{n+1}=0 \quad \text { in } \Omega, \quad e_{\bm u}^{n+1}|_{\partial \Omega}=\mathbf{0}, 
	\end{aligned}
	\right. 
\end{equation*}
where 
$
\bm \chi_4 := \big(\xi^{n+1}-1)\gamma^n \nabla p^n= \malE_R^{n+1}\gamma^n \nabla p^n.
$

Next, we shall provide a rough estimate for $e_{\bm u}^{n+1}$ and $e_{\bm u}^{n+1}-\widehat{e}_{\bm u}^{n+1}$ using $(E_1)$ and $(E_2)$ under the assumptions stated in \eqref{hypothesis:n:1}.
\begin{lemma} \label{lem:NS:E1}
	Under the assumption \eqref{assump:e1}, for $\tau$ sufficiently small, we have
	\begin{equation*}  
		\begin{aligned}
			& \frac{1}{2}\p_\tau \|e_{\bm u}^{n+1}\|^2 
			+\frac{1}{2\tau}\|e_{\bm u}^{n+1}-\widehat{e}_{\bm u}^{n+1}\|^2 + \frac{\nu}{2} \|\nabla e_{\bm u}^{n+1}\|^2 \\
			&  \le  - \frac{e_R^{n+1}}{\sqrt{E_1(\phi^n)+\delta_0 } }\big( \phi^n \nabla \mu^n,\widehat e_{\bm u}^{n+1}) + \bm b(\bm u^n,  \bm u^n, \widehat{e}_{\bm u}^{n+1})  \big) + \frac{\nu}{4} \|\nabla e_{\bm u}^n\|^2 +\frac{M}{4}\|\nabla e_\mu^n\|^2 \\
			& \quad  +   C\|\widehat e_{\bm u}^{n+1}-e_{\bm u}^{n+1}\|^2 + C\|e_{\bm u}^{n+1}\|^2  + C|e_R^{n+1}|^2 +C\|e_\phi^n\|_1^2+ C \|e_{\bm u}^n\|^2  \\
			& \quad +  C\tau \big(1+ \|\phi_t\|_{L^2(J_n ; L^6(\Omega))}^2    +\|\mu_t\|_{L^2(J_n ; H^1(\Omega))}^2 
			+ \|\bm u_t\|_{L^2(J_n; \mbfL^2(\Omega))}^2 +\|\bm u_{tt}\|_{L^2(J_n; \mbfL^2(\Omega))}^2\big).
		\end{aligned}
	\end{equation*}
\end{lemma}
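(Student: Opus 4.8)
The plan is to test the two splitting error equations against the natural velocity errors and add them so that the intermediate error $\widehat{e}_{\bm u}^{n+1}$ is eliminated in favor of $e_{\bm u}^{n+1}$. Concretely, I would take the $\mbfL^2$ inner product of $(E_1)$ with $\widehat{e}_{\bm u}^{n+1}$ and of $(E_2)$ with $e_{\bm u}^{n+1}$, integrating by parts in the viscous terms using $\widehat{e}_{\bm u}^{n+1}|_{\partial\Omega}=e_{\bm u}^{n+1}|_{\partial\Omega}=\bm 0$. Applying the identity $(a,a-b)=\frac12(\|a\|^2-\|b\|^2+\|a-b\|^2)$ to both discrete time-differences and adding, the $\|\widehat{e}_{\bm u}^{n+1}\|^2$ and $\|\nabla\widehat{e}_{\bm u}^{n+1}\|^2$ contributions telescope, producing on the left exactly $\frac12\p_\tau\|e_{\bm u}^{n+1}\|^2+\frac1{2\tau}\|e_{\bm u}^{n+1}-\widehat{e}_{\bm u}^{n+1}\|^2+\frac{\nu}{2}\|\nabla e_{\bm u}^{n+1}\|^2$ plus three nonnegative remainders $\frac1{2\tau}\|\widehat{e}_{\bm u}^{n+1}-e_{\bm u}^{n}\|^2$, $\frac{\nu}{2}\|\nabla\widehat{e}_{\bm u}^{n+1}\|^2$, $\frac{\nu}{2}\|\nabla(e_{\bm u}^{n+1}-\widehat{e}_{\bm u}^{n+1})\|^2$; the pressure pairings $(\nabla p^{n+1},e_{\bm u}^{n+1})$ and $\gamma^n(\nabla p^n,e_{\bm u}^{n+1})$ drop out since $\nabla\cdot e_{\bm u}^{n+1}=0$. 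This leaves $(\bm\chi_1+\bm\chi_2+\bm\chi_3+R_{\bm u}^{n+1},\widehat{e}_{\bm u}^{n+1})$ on the right, and I would retain the $\frac{\nu}{2}\|\nabla\widehat{e}_{\bm u}^{n+1}\|^2$ remainder as a reservoir to absorb gradient contributions of $\widehat{e}_{\bm u}^{n+1}$.

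Next I would bound $(\bm\chi_1,\widehat{e}_{\bm u}^{n+1})$ and $(\bm\chi_2,\widehat{e}_{\bm u}^{n+1})$ piece by piece along the decompositions already recorded. The two leading terms $-\frac{e_R^{n+1}}{\sqrt{E_1(\phi^n)+\delta_0}}\bm u^n\cdot\nabla\bm u^n$ and $-\frac{e_R^{n+1}}{\sqrt{E_1(\phi^n)+\delta_0}}\phi^n\nabla\mu^n$ are not estimated but kept verbatim, giving exactly the first term on the right of the claim via $(\bm u^n\cdot\nabla\bm u^n,\widehat{e}_{\bm u}^{n+1})=\bm b(\bm u^n,\bm u^n,\widehat{e}_{\bm u}^{n+1})$. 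For the remaining convective pieces I would invoke Lemma \ref{lem:trilinear}(ii) with $\|\bm u^n\|_2\le 1+K_{\bm u}$ from \eqref{hypothesis:n:1} and $\|\bm u(t_{n+1})\|_2\le K_{\bm u}$ from \eqref{assump:e2}; the forms $\|\bm u\|_1\|\bm v\|_2\|\bm w\|$ and $\|\bm u\|_2\|\bm v\|_1\|\bm w\|$ (using skew-symmetry $\bm b(\bm u^n,\cdot,\widehat{e}_{\bm u}^{n+1})=-\bm b(\bm u^n,\widehat{e}_{\bm u}^{n+1},\cdot)$ where convenient) produce, after Young with weight $\nu/4$, the term $\frac{\nu}{4}\|\nabla e_{\bm u}^n\|^2+C\|e_{\bm u}^n\|^2$ and, for the increment $\int_{t_n}^{t_{n+1}}\bm u_t\,dt$, a term $C\tau\|\bm u_t\|_{L^2(J_n;\mbfL^2(\Omega))}^2$ by Cauchy--Schwarz in time. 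For $\bm\chi_2$ I would use $\|\phi^n\|_{L^\infty}\le C_\phi$ from \eqref{bound_phi}: the $\nabla e_\mu^n$ piece gives $\frac{M}{4}\|\nabla e_\mu^n\|^2$, while the increments $\int\nabla\mu_t$ and $\int\phi_t$ give $C\tau(\|\mu_t\|_{L^2(J_n;H^1(\Omega))}^2+\|\phi_t\|_{L^2(J_n;L^6(\Omega))}^2)$, the $L^6$ exponent arising from the Hölder triple $L^6\times L^3\times L^2$ when pairing $\int\phi_t$ with $\nabla\mu(t_{n+1})\in\mbfL^6(\Omega)$ and $\widehat{e}_{\bm u}^{n+1}$; the $\malE_R^{n+1}$ factors are controlled through \eqref{err:Er} by $C(|e_R^{n+1}|^2+\|e_\phi^n\|^2+\tau\|\phi_t\|^2)$. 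Every leftover factor $\|\widehat{e}_{\bm u}^{n+1}\|$ is split by $\|\widehat{e}_{\bm u}^{n+1}\|^2\le 2\|\widehat{e}_{\bm u}^{n+1}-e_{\bm u}^{n+1}\|^2+2\|e_{\bm u}^{n+1}\|^2$, which is precisely the source of the $C\|\widehat{e}_{\bm u}^{n+1}-e_{\bm u}^{n+1}\|^2$ and $C\|e_{\bm u}^{n+1}\|^2$ terms, and any stray $\|\nabla\widehat{e}_{\bm u}^{n+1}\|^2$ is absorbed into the reservoir by Young with a small weight.

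The delicate step, which I expect to be the main obstacle, is the pressure-splitting term $(\bm\chi_3,\widehat{e}_{\bm u}^{n+1})$ with $\bm\chi_3=\nabla(\xi^{n+1}\gamma^n p^n-p(t_{n+1}))$, because $\widehat{e}_{\bm u}^{n+1}$ is \emph{not} divergence-free and a naive bound would leave an $O(1)$ consistency error. The resolution is to write $\widehat{e}_{\bm u}^{n+1}=e_{\bm u}^{n+1}+(\widehat{e}_{\bm u}^{n+1}-e_{\bm u}^{n+1})$ and use $\nabla\cdot e_{\bm u}^{n+1}=0$ with $e_{\bm u}^{n+1}|_{\partial\Omega}=\bm 0$ so that $(\bm\chi_3,e_{\bm u}^{n+1})=0$, giving $(\bm\chi_3,\widehat{e}_{\bm u}^{n+1})=(\bm\chi_3,\widehat{e}_{\bm u}^{n+1}-e_{\bm u}^{n+1})$. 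Here the consistency parameter $\gamma^n$ is decisive: by \eqref{gamma:bound} one has $\|\gamma^n\nabla p^n\|\le\theta$, and together with the boundedness of $\xi^{n+1}$ (from \eqref{bound:solu} and the positive lower bound of $E_1(\phi^n)+\delta_0$ guaranteed by \eqref{bound_phi}) and $\|\nabla p(t_{n+1})\|\le\|p\|_{L^\infty(0,T;H^1(\Omega))}$, this yields $\|\bm\chi_3\|\le C$ uniformly. Then the weighted Young inequality $(\bm\chi_3,\widehat{e}_{\bm u}^{n+1}-e_{\bm u}^{n+1})\le\frac1{4\tau}\|\widehat{e}_{\bm u}^{n+1}-e_{\bm u}^{n+1}\|^2+C\tau\|\bm\chi_3\|^2$ lets the first term be absorbed into the left-hand $\frac1{2\tau}\|e_{\bm u}^{n+1}-\widehat{e}_{\bm u}^{n+1}\|^2$ and leaves exactly the lone constant inside $C\tau(1+\cdots)$. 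The truncation term is handled by $(R_{\bm u}^{n+1},\widehat{e}_{\bm u}^{n+1})\le C\|R_{\bm u}^{n+1}\|^2+C\|\widehat{e}_{\bm u}^{n+1}\|^2$ with $\|R_{\bm u}^{n+1}\|^2\le\tau\|\bm u_{tt}\|_{L^2(J_n;\mbfL^2(\Omega))}^2$ and the same splitting of $\|\widehat{e}_{\bm u}^{n+1}\|^2$. Collecting all contributions, discarding the remaining nonnegative reservoir terms on the left, and taking $\tau$ small enough that every absorption is valid yields the stated inequality.
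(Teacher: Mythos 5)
Your proposal reproduces the paper's own argument almost step for step: the same testing of $(E_1)$ against $\widehat{e}_{\bm u}^{n+1}$ and of $(E_2)$ against $e_{\bm u}^{n+1}$, the same cancellation of the pressure pairings via \eqref{zec:e2}, the same verbatim retention of the two $e_R^{n+1}$-weighted terms (which is essential, since they must later cancel against their counterparts in Lemma \ref{lem:err:r}), and the same trilinear/H\"older/embedding estimates for the remaining pieces of $\bm\chi_1$ and $\bm\chi_2$, including the $L^6\times L^3\times L^2$ splitting that produces the $\|\phi_t\|_{L^2(J_n;L^6(\Omega))}$ term.

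The one place you genuinely deviate is the pressure-splitting term $\malC_3=(\bm\chi_3,\widehat{e}_{\bm u}^{n+1})$, and the deviation has a quantitative cost. You use $\nabla\cdot e_{\bm u}^{n+1}=0$ to write $\malC_3=(\bm\chi_3,\widehat{e}_{\bm u}^{n+1}-e_{\bm u}^{n+1})$ and then absorb $\frac{1}{4\tau}\|\widehat{e}_{\bm u}^{n+1}-e_{\bm u}^{n+1}\|^2$ into the left-hand term $\frac{1}{2\tau}\|e_{\bm u}^{n+1}-\widehat{e}_{\bm u}^{n+1}\|^2$. But that is exactly the term the lemma retains in full on its left-hand side: after your absorption only $\frac{1}{4\tau}\|e_{\bm u}^{n+1}-\widehat{e}_{\bm u}^{n+1}\|^2$ survives, and with your pairing no choice of Young weights recovers the full $\frac{1}{2\tau}$, since keeping the right-hand contribution at size $O(\tau)$ forces a nondegenerate $\tau^{-1}$-weighted bite out of that term. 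The paper avoids this by using $\nabla\cdot e_{\bm u}^{n}=0$ instead, writing $\malC_3=(\bm\chi_3,\widehat{e}_{\bm u}^{n+1}-e_{\bm u}^{n})$ and absorbing $\frac{1}{4\tau}\|\widehat{e}_{\bm u}^{n+1}-e_{\bm u}^{n}\|^2$ into the \emph{other} left-hand term $\frac{1}{2\tau}\|\widehat{e}_{\bm u}^{n+1}-e_{\bm u}^{n}\|^2$, which is discarded anyway; this preserves the stated coefficient. Your version is harmless downstream --- the Gronwall argument of Theorem \ref{err:sub_rate} and the key bound \eqref{err:ehat_e} only require some fixed positive multiple of $\tau^{-1}\|e_{\bm u}^{n+1}-\widehat{e}_{\bm u}^{n+1}\|^2$ --- but to obtain the lemma exactly as stated you should switch the pairing to $\widehat{e}_{\bm u}^{n+1}-e_{\bm u}^{n}$. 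Everything else (the uniform bound $\|\bm\chi_3\|\le C$ from \eqref{gamma:bound} together with the boundedness of $\xi^{n+1}$, the lone $O(\tau)$ constant, and the skew-symmetry device of Lemma \ref{lem:trilinear}(i) for $\bm b(\bm u^n,e_{\bm u}^n,\widehat{e}_{\bm u}^{n+1})$, which in your variant even dispenses with the paper's smallness condition $C_1\tau\le 1/4$ for that term) is sound.
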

\begin{proof}
	Taking the inner products of ($E_1$) with $\widehat{e}_{\bm u}^{n+1}$, ($E_2$) with $e_{\bm u}^{n+1}$ and using \eqref{zec:e2}, and then adding the resulting equations together, we obtain
	\begin{equation} \label{err:euhat:1}
		\begin{aligned}
			& \frac{1}{2}\p_\tau \|e_{\bm u}^{n+1}\|^2 +\frac{1}{2\tau}(\|\widehat{e}_{\bm u}^{n+1}-e_{\bm u}^n\|^2+\|e_{\bm u}^{n+1}-\widehat{e}_{\bm u}^{n+1}\|^2) \\
			& \quad +\frac{\nu}{2}\big(\|\nabla e_{\bm u}^{n+1}\|^2+\|\nabla \widehat{e}_{\bm u}^{n+1}\|+ \|\nabla(e_{\bm u}^{n+1}-\widehat{e}_{\bm u}^{n+1})\|^2\big) \\
			&  = (\bm \chi_1, \widehat{e}_{\bm u}^{n+1}) +(\bm \chi_2, \widehat{e}_{\bm u}^{n+1}) + (\bm \chi_3, \widehat{e}_{\bm u}^{n+1}) +(R_{\bm u}^{n+1}, \widehat{e}_{\bm u}^{n+1})
			=:\sum\limits_{i=1}^4 {\malC}_i.
		\end{aligned}
	\end{equation}
	
	For the first term on the right-hand side of \eqref{err:euhat:1}, utilizing Lemmas \ref{lem:poncare}--\ref{lem:trilinear}, \eqref{bound:solu}  and \eqref{hypothesis:n:1}, we have
	\begin{equation*} 
		\begin{aligned}
			{\malC}_1 
			& = -\frac{e_R^{n+1}}{\sqrt{E_1(\phi^n)+\delta_0}} \bm b(\bm u^n,  \bm u^n, \widehat{e}_{\bm u}^{n+1} ) \\
			& \quad - \frac{R(t_{n+1})}{\sqrt{E_1(\phi^n)+\delta_0}} \Big[ \bm b(e_{\bm u}^n +\int_{t_n}^{t_{n+1}} \bm u_t d t, \bm u(t_{n + 1}), \widehat{e}_{\bm u}^{n+1}) +  \bm b (\bm u^n, e_{\bm u}^n +\int_{t_n}^{t_{n+1}} \bm u_t d t, \widehat{e}_{\bm u}^{n+1})  \Big]\\
			& \quad + \big(\frac{R(t_{n+1})}{\sqrt{E_1(\phi^n)+\delta_0}} - \frac{R(t_{n+1})}{\sqrt{E_1(\phi(t_{n+1})))+\delta_0}} \big) \bm b(\bm u(t_{n + 1}), \bm u(t_{n+1}),\widehat{e}_{\bm u}^{n+1})  \\ 
			& \leq -\frac{e_R^{n+1}}{\sqrt{E_1(\phi^n)+\delta_0}} \bm b(\bm u^n,  \bm u^n, \widehat{e}_{\bm u}^{n+1} ) + C  \|\bm u(t_{n+1})\|_2   \big(\|e_{\bm u}^n\|+\big\|\int_{t_n}^{t_{n+1}} \bm u_t d t\big\| \big) \|\nabla \widehat{e}_{\bm u}^{n+1}\|  \\
			& \quad + C\|{\bm u}^n\|_2\, \big\|\int_{t_n}^{t_{n+1}} \bm u_t d t\big\| \,  
			\|\nabla \widehat{e}_{\bm u}^{n+1}\|+   C\|{\bm u}^n\|_2 \|\nabla e_{\bm u}^n\| \|\widehat{e}_{\bm u}^{n+1}-e_{\bm u}^n\|   \\
			& \quad + \big|\frac{R(t_{n+1})}{\sqrt{E_1(\phi^n)+\delta_0}} - \frac{R(t_{n+1})}{\sqrt{E_1(\phi(t_{n+1})))+\delta_0}} \big|  \|\bm u(t_{n + 1})\|_1^2 \|\nabla \widehat{e}_{\bm u}^{n+1}\| \\
			& \leq -\frac{e_R^{n+1}}{\sqrt{E_1(\phi^n)+\delta_0}} \bm b(\bm u^n,  \bm u^n, \widehat{e}_{\bm u}^{n+1} ) +  \frac{\nu}{4}\|\nabla \widehat{e}_{\bm u}^{n+1}\|^2 + \frac{\nu}{4} \|\nabla e_{\bm u}^n\|^2 + C_1 \|\widehat{e}_{\bm u}^{n+1}-e_{\bm u}^n\|^2\\ 
			& \quad  +C\|e_{\phi}^n\|^2 + C\|e_{\bm u}^n\|^2
			+ C \tau \big( \|\phi_t\|_{L^2(J_n ; L^2(\Omega))}^2  + \|\bm u_t\|_{L^2(J_n;\mbfL^2(\Omega))}^2 \big),
		\end{aligned}
	\end{equation*}
	where the conclusion that
	\begin{equation*}
		\bm b (\bm u^n, e_{\bm u}^n, \widehat{e}_{\bm u}^{n+1}) = \bm b (\bm u^n, e_{\bm u}^n,\widehat{e}_{\bm u}^{n+1}-e_{\bm u}^n) 
	\end{equation*}
	due to $\bm b (\bm u^n, e_{\bm u}^n,e_{\bm u}^n)=0$, see (i) of Lemma  \ref{lem:trilinear}, has been applied. 
	
	By using \eqref{bound_phi},  the second term  on the right-hand side of \eqref{err:euhat:1} can be bounded by
	\begin{equation*}  \label{err:chi2}
		\begin{aligned}
			{\malC}_2  
			& \leq   -\frac{e_R^{n+1}}{\sqrt{E_1(\phi^n)+\delta_0 } } ( \phi^n \nabla \mu^n ,\widehat e_{\bm u}^{n+1})  + \frac{M}{4}\|\nabla e_\mu^n\|^2 + C\|\widehat{e}_{\bm u}^{n+1}-e_{\bm u}^{n+1}\|^2 +C\|e_{\bm u}^{n+1}\|^2\\
			&  \quad   +C\|e_\phi^n\|_1^2+C\tau \big( \|\phi_t\|_{L^2(J_n ; L^6(\Omega))}^2+\|\mu_t\|_{L^2(J_n ; H^1(\Omega))}^2\big).
		\end{aligned}
	\end{equation*}
	Here we have used the fact $\mu(t_{n+1}) \in  H^2(\Omega)$ and
	\begin{equation*}
		\begin{aligned}
			\big( \big(e_\phi^n +\int_{t_n}^{t_{n+1}}\phi_t dt \big)\nabla \mu(t_{n+1}), \widehat e_{\bm u}^{n+1} \big)  
			&\le  C \|e_\phi^n + \int_{t_n}^{t_{n+1}} \phi_t dt\|_{L^6} \|\nabla \mu(t_{n+1})\|_{L^3} \|\widehat e_{\bm u}^{n+1}\|\\
			& \le   C\|\widehat{e}_{\bm u}^{n+1}-e_{\bm u}^{n+1}\|^2 +C\|e_{\bm u}^{n+1}\|^2 
			+C\|e_\phi^n\|_1^2  +C\tau  \|\phi_t\|_{L^2(J_n ; L^6(\Omega))}^2,
		\end{aligned}
	\end{equation*}
	where, the Sobolev embedding $H^1(\Omega) \hookrightarrow L^6(\Omega)$ is utilized in the last step.
	
	Noticing \eqref{gamma:bound} and $\nabla \cdot e_{\bm u}^{n} =0$ with no-flow zero boundary condition, the third term  on the right-hand side of \eqref{err:euhat:1} can be estimated by
	\begin{equation*}  
		\begin{aligned}
			{\malC}_3
			& = (\nabla(\xi^{n+1} \gamma^n  p^n-p(t_{n+1})),  \widehat{e}_{\bm u}^{n+1}-e_{\bm u}^n) \\
			& \leq C\tau(\|p\|_{L^{\infty}(J;  H^1(\Omega))}^2+1) 
			+ \frac{1}{4\tau}\|\widehat{e}_{\bm u}^{n+1}-e_{\bm u}^n\|^2.
		\end{aligned}
	\end{equation*}
	
	For the last term on the right-hand side of \eqref{err:euhat:1}, it follows from Cauchy-Schwarz inequality and triangle inequality that
	\begin{equation*}  
		{\malC}_4 \le C\|\widehat{e}_{\bm u}^{n+1}-e_{\bm u}^{n+1}\|^2 +C\|e_{\bm u}^{n+1}\|^2  +C\tau\| \bm u_{tt}\|_{L^2(J_n; \mbfL^2(\Omega))}^2.
	\end{equation*}
	
	Therefore, inserting the estimates of ${\malC}_1-{\malC}_4$ into \eqref{err:euhat:1}, for $\tau$ sufficiently small such as
	$C_1\tau \le 1/4$, we arrive at the conclusion of the lemma.
\end{proof}
\paragraph{\bf \indent Part IV. Proof of Theorem \ref{err:sub_rate}}
Denote
\[
W^n:= \lambda \|\nabla e_\phi^n\|^2 + (1+\beta)\| e_\phi^n\|^2 + 2|e_R^n|^2+ \|e_{\bm u}^n\|^2 + M\tau\|e_\mu^n\|_1^2 + \nu \tau \|\nabla e_{\bm u}^n\|^2.
\]
It then follows from Lemmas \ref{lem:err:CH}--\ref{lem:NS:E1} that 
\begin{equation} \label{err:gronwall:1}
	\begin{aligned}
		& W^{n+1} - W^n +  \|e_{\bm u}^{n+1}-\widehat{e}_{\bm u}^{n+1}\|^2   \\
		& \le  C_2 \tau \big(\|\widehat{e}_{\bm u}^{n+1}-e_{\bm u}^{n+1}\|^2+  W^{n+1}+ W^n\big)   +C \tau^2   \\
		& \qquad  
		 +  C \tau^2 \big(\|\phi_t\|_{L^2(J_n ; H^1(\Omega))}^2  + \|\phi_{tt}\|_{L^2(J_n ; L^2(\Omega))}^2 + \|\mu_t\|_{L^2(J_n ; H^1(\Omega))}^2\big)\\
		&  \qquad  +  C \tau^2 \big(    \|\bm u_t\|_{L^2(J_n ; \mbfH^1(\Omega))}^2 + \|\bm u_{tt}\|_{L^2(J_n;  \mbfL^2(\Omega))}^2+|R_{tt}|_{L^2(J_n)}^2\big).
	\end{aligned}
\end{equation} 

Replacing  $n$ with  $m$ in \eqref{err:gronwall:1}, and summing it from 0 to $n$ ($n \ge 0$),    for $\tau$ sufficiently small such that  $\tau \le  \tau_0:= \min\{\frac{1}{4C_1}, \frac{1}{2C_2}\}$,  the application of  discrete Gronwall inequality leads to 
\begin{equation*}
	W^{n+1} + \frac{1}{2}\sum_{m=0}^{n} ( \|e_{\bm u}^{n+1}-\widehat{e}_{\bm u}^{n+1}\|^2 ) \leq \tau \exp(2C_2T),
\end{equation*}
which completes the proof. \qed

\begin{remark} \label{remk:sub_rate:1}
	It can be observed that the loss of convergence rate arises from the estimate of the pressure term in ${\malC}_3$. Besides, as seen, the parameter $\gamma$ is introduced to ensure that \eqref{gamma:bound} holds; without it, even a suboptimal estimate would not be attainable. In the next subsection, we will leverage the error equation ($E_3$) and the property $(\nabla p^n, \bm u^{n+1}) = 0$ to derive an optimal-order error estimate. This approach is based on the suboptimal results, which are crucial for providing uniform estimates in the proofs of Lemmas \ref{err:full:eu} and \ref{err:full:stokes:NS}.
	In fact, Theorem \ref{err:sub_rate} shows that $\|e_\mu^k\|_1 + \| e_{\bm u}^k \|_1 \le  C$ holds  uniformly for all $0\leq k\leq n+1$, 
	which further implies that there exists  positive constants $ C_{\mu}$ and $C_{\bm u}$ such that 
	\begin{equation}\label{bound:mu}
		\|\mu^k\|_1 \le  C_{\mu}, ~~ \|\bm u^k\|_1 \le  C_{\bm u}, \quad 0\leq k\leq n+1.
	\end{equation}
\end{remark}

\subsection{Improved error estimates } \label{subsec:full_rate}
Based on the estimates established in Theorem \ref{err:sub_rate} and the assumption \eqref{hypothesis:n:1}, we will give an improved error estimate in this subsection.

\begin{lemma}\label{err:full:eu}
	Under the assumption \eqref{assump:e1}, 	for $\tau$ sufficiently small, we have
	\begin{equation*}   
		\begin{aligned}
			& \frac{1}{2}\p_\tau \|e_{\bm u}^{n+1}\|^2  +\frac{\nu}{2}\|\nabla e_{\bm u}^{n+1}\|^2 \\
			&  \le  - \frac{e_R^{n+1}}{\sqrt{E_1(\phi^n)+\delta_0 } } \big( (\phi^n \nabla \mu^n,\widehat e_{\bm u}^{n+1}) + \bm b(\bm u^n,  \bm u^n, \widehat e_{\bm u}^{n+1})\big) + \frac{M}{8}\|\nabla e_\mu^n\|^2 \\
			& \quad + \frac{\nu}{4}\|\nabla e_{\bm u}^{n}\|^2 +  C|e_R^{n+1}|^2 + C\|e_{\bm u}^{n+1}\|^2  + C\|e_{\bm u}^n\|^2+C\|e_\phi^n\|_1^2 + C\|e_{\bm u}^{n+1}-\widehat{e}_{\bm u}^{n+1}\|^2 \\
			& \quad  +C\tau \big(\|\phi_t\|_{L^2(J_n ; L^6(\Omega))}^2 + \|\mu_t\|_{L^2(J_n ; H^1(\Omega))}^2 +\|\bm u_t\|_{L^2(J_n ; \mbfL^2(\Omega))}^2+\|\bm u_{tt}\|_{L^2(J_n; \mbfL^2(\Omega))}^2\big).
		\end{aligned}
	\end{equation*}
\end{lemma}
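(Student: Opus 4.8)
The plan is to test the combined error equation $(E_3)$ against the divergence-free velocity error $e_{\bm u}^{n+1}$, rather than testing $(E_1)$ against $\widehat e_{\bm u}^{n+1}$ as was done for the rough estimate in Lemma \ref{lem:NS:E1}. Taking the $L^2$ inner product of the momentum equation in $(E_3)$ with $e_{\bm u}^{n+1}$, the discrete time-derivative yields $\frac{1}{2}\p_\tau\|e_{\bm u}^{n+1}\|^2+\frac{\tau}{2}\|\p_\tau e_{\bm u}^{n+1}\|^2$, and integrating the viscous term by parts gives $\nu\|\nabla e_{\bm u}^{n+1}\|^2$ since $e_{\bm u}^{n+1}|_{\p\Omega}=\bm 0$. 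The decisive point is that, because $\nabla\cdot e_{\bm u}^{n+1}=0$ with zero trace, the pressure gradient is annihilated, $(\nabla e_p^{n+1},e_{\bm u}^{n+1})=0$, and by \eqref{zec:e2} the troublesome consistency term also vanishes, $(\bm\chi_4,e_{\bm u}^{n+1})=\malE_R^{n+1}\gamma^n(\nabla p^n,e_{\bm u}^{n+1})=0$. This simultaneous cancellation of $\nabla e_p^{n+1}$ and $\bm\chi_4$ is exactly what eliminates the spurious $O(\tau)$ source (the isolated ``$1+$'' term in Lemma \ref{lem:NS:E1}) flagged in Remark \ref{remk:sub_rate:1}, and it is the heart of the improvement. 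Dropping the nonnegative $\frac{\tau}{2}\|\p_\tau e_{\bm u}^{n+1}\|^2$, it remains to bound $(\bm\chi_1,e_{\bm u}^{n+1})+(\bm\chi_2,e_{\bm u}^{n+1})+(R_{\bm u}^{n+1},e_{\bm u}^{n+1})$.

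For the convective term $(\bm\chi_1,e_{\bm u}^{n+1})$ I would first isolate the leading contribution $-\frac{e_R^{n+1}}{\sqrt{E_1(\phi^n)+\delta_0}}\bm b(\bm u^n,\bm u^n,e_{\bm u}^{n+1})$ and split $e_{\bm u}^{n+1}=\widehat e_{\bm u}^{n+1}+(e_{\bm u}^{n+1}-\widehat e_{\bm u}^{n+1})$. The part carrying $\widehat e_{\bm u}^{n+1}$ is kept verbatim, since it later cancels against the matching term produced by Lemma \ref{lem:err:r} (the SAV--ZEC cancellation), while the remainder is controlled by Lemma \ref{lem:trilinear}(ii) together with the uniform bounds $\|\bm u^n\|_2\le 1+K_{\bm u}$ from \eqref{hypothesis:n:1} and $\|\bm u^n\|_1\le C_{\bm u}$ from \eqref{bound:mu}, yielding $C|e_R^{n+1}|^2+C\|e_{\bm u}^{n+1}-\widehat e_{\bm u}^{n+1}\|^2$. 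The remaining pieces of $\bm\chi_1$ carry the factors $e_{\bm u}^n+\int_{t_n}^{t_{n+1}}\bm u_t\,dt$ and the consistency factor $\malE_R^{n+1}$; these I would estimate through the various cases of Lemma \ref{lem:trilinear}(ii) and Poincaré (Lemma \ref{lem:poncare}). For instance, $\bm b(\bm u^n,e_{\bm u}^n,e_{\bm u}^{n+1})$ treated via $\|\bm u\|_2\|\bm v\|_1\|\bm w\|$ gives $\frac{\nu}{4}\|\nabla e_{\bm u}^n\|^2+C\|e_{\bm u}^{n+1}\|^2$, while $\bm b(e_{\bm u}^n,\bm u(t_{n+1}),e_{\bm u}^{n+1})$ treated via $\|\bm u\|\,\|\bm v\|_2\,\|\bm w\|_1$ gives $C\|e_{\bm u}^n\|^2$ plus a controllable fraction of $\|\nabla e_{\bm u}^{n+1}\|^2$. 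Using \eqref{err:Er} for $\malE_R^{n+1}$ and collecting, the leftovers produce the stated $C(|e_R^{n+1}|^2+\|e_{\bm u}^n\|^2+\|e_\phi^n\|_1^2)+\frac{\nu}{4}\|\nabla e_{\bm u}^n\|^2$ and the $O(\tau)$ truncation terms.

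The coupling term $(\bm\chi_2,e_{\bm u}^{n+1})$ is handled analogously: I isolate $-\frac{e_R^{n+1}}{\sqrt{E_1(\phi^n)+\delta_0}}(\phi^n\nabla\mu^n,\widehat e_{\bm u}^{n+1})$ for the later cancellation, bound the difference $(\phi^n\nabla\mu^n,e_{\bm u}^{n+1}-\widehat e_{\bm u}^{n+1})$ by $C|e_R^{n+1}|^2+C\|e_{\bm u}^{n+1}-\widehat e_{\bm u}^{n+1}\|^2$ using $\|\phi^n\|_{L^\infty}\le C_\phi$ from \eqref{bound_phi} and $\|\nabla\mu^n\|\le C_\mu$ from \eqref{bound:mu}, and treat the remaining pieces by H\"older together with the embedding $H^1(\Omega)\hookrightarrow L^6(\Omega)$. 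In particular the factor $\phi^n\nabla e_\mu^n$ produces $\frac{M}{8}\|\nabla e_\mu^n\|^2+C\|e_{\bm u}^{n+1}\|^2$, which supplies the $\frac{M}{8}\|\nabla e_\mu^n\|^2$ appearing in the statement. Finally $(R_{\bm u}^{n+1},e_{\bm u}^{n+1})\le C\|e_{\bm u}^{n+1}\|^2+C\tau\|\bm u_{tt}\|_{L^2(J_n;\mbfL^2(\Omega))}^2$ by Cauchy--Schwarz and the integral representation of $R_{\bm u}^{n+1}$. Collecting all contributions, keeping $\frac{\nu}{2}\|\nabla e_{\bm u}^{n+1}\|^2$ on the left after absorbing at most $\frac{\nu}{2}\|\nabla e_{\bm u}^{n+1}\|^2$ from the right, gives the claim.

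I expect the main obstacle to be the bookkeeping of $\bm\chi_1$: one must route every trilinear factor through the correct case of Lemma \ref{lem:trilinear} so that the norm falling on $e_{\bm u}^{n+1}$ is only its $L^2$ norm or a controllable fraction of $\|\nabla e_{\bm u}^{n+1}\|^2$, and one must be sure the a priori bounds $\|\bm u^n\|_2\le 1+K_{\bm u}$ and $\|\bm u^n\|_1\le C_{\bm u}$ are genuinely in hand—this is precisely why the suboptimal Theorem \ref{err:sub_rate}, hence \eqref{bound:mu}, must precede the present lemma. By contrast the conceptual crux is light: it is simply the recognition that testing the \emph{combined} equation $(E_3)$ against the \emph{divergence-free} $e_{\bm u}^{n+1}$ kills both $\nabla e_p^{n+1}$ and $\bm\chi_4$ at once, something unattainable when $(E_1)$ is tested against the non-solenoidal $\widehat e_{\bm u}^{n+1}$.
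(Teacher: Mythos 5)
Your proposal follows essentially the same route as the paper's proof: take the inner product of $(E_3)$ with the divergence-free $e_{\bm u}^{n+1}$ (so that both $\nabla e_p^{n+1}$ and $\bm\chi_4$ vanish via \eqref{zec:e2}), keep the two SAV-weighted terms with $\widehat e_{\bm u}^{n+1}$ intact for the later cancellation against Lemma \ref{lem:err:r}, and estimate the remaining pieces of $\bm\chi_1$, $\bm\chi_2$ and $R_{\bm u}^{n+1}$ through Lemma \ref{lem:trilinear}, the bounds \eqref{bound_phi} and \eqref{bound:mu}, and Young's inequality. The only (harmless) deviation is cosmetic: the paper retains $\frac{1}{2\tau}\|\nabla_\tau e_{\bm u}^{n+1}\|^2$ on the left to absorb $\bm b(\bm u^n,e_{\bm u}^n,e_{\bm u}^{n+1})=\bm b(\bm u^n,e_{\bm u}^n,e_{\bm u}^{n+1}-e_{\bm u}^n)\le C_3\|\nabla_\tau e_{\bm u}^{n+1}\|^2+\frac{\nu}{4}\|\nabla e_{\bm u}^n\|^2$ (whence part of the $\tau$-smallness condition), whereas you drop that term and bound $\bm b(\bm u^n,e_{\bm u}^n,e_{\bm u}^{n+1})$ directly by $\frac{\nu}{4}\|\nabla e_{\bm u}^n\|^2+C\|e_{\bm u}^{n+1}\|^2$ using the case $\|\bm u\|_2\|\bm v\|_1\|\bm w\|$ of Lemma \ref{lem:trilinear} and the induction hypothesis \eqref{hypothesis:n:1} --- both choices yield the stated inequality.
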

\begin{proof}
	Taking inner product of $({E_3})$ of $e_{\bm u}^{n+1}$, we have
	\begin{equation} \label{err:eu:eq}
			\frac{1}{2}\p_\tau \|e_{\bm u}^{n+1}\|^2 + \frac{1}{2\tau} \|\nabla_\tau e_{\bm u}^{n+1}\|^2 +\nu \|\nabla e_{\bm u}^{n+1}\|^2   =(\bm \chi_1,  e_{\bm u}^{n+1})+(\bm \chi_2, e_{\bm u}^{n+1})+(R_{\bm u}^{n+1}, e_{\bm u}^{n+1}). 
	\end{equation}
	We denote the right-hand side terms of \eqref{err:eu:eq} by $\malD_i$ ($1\le i \le 3$).
	Next, similar  to the estimates of ${\malC}_1-{\malC}_3$,	we estimate the right-hand side terms sequentially as follows:
	\begin{equation*} 
		\begin{aligned}
			\malD_1  
			& \leq -\frac{e_R^{n+1}}{\sqrt{E_1(\phi^n)+\delta_0}} \bm b(\bm u^n,  \bm u^n, e_{\bm u}^{n+1} ) + C  \|\bm u(t_{n+1})\|_2   \big(\|e_{\bm u}^n\|+\big\|\int_{t_n}^{t_{n+1}} \bm u_t d t\big\| \big) \|\nabla e_{\bm u}^{n+1}\|  \\
			& \quad + C\|{\bm u}^n\|_1 \big\|\int_{t_n}^{t_{n+1}} \bm u_t d t\big\|_1   \|\nabla e_{\bm u}^{n+1}\|
			         +   C\|{\bm u}^n\|_2 \|\nabla e_{\bm u}^n\|\|e_{\bm u}^{n+1}-e_{\bm u}^n\|   \\
			& \quad + \big|\frac{R(t_{n+1})}{\sqrt{E_1(\phi^n)+\delta_0}} - \frac{R(t_{n+1})}{\sqrt{E_1(\phi(t_{n+1})))+\delta_0}} \big|  \|\bm u(t_{n + 1})\|_1^2  
			         \|\nabla e_{\bm u}^{n+1}\| \\
			& \leq -\frac{e_R^{n+1}}{\sqrt{E_1(\phi^n)+\delta_0}} \bm b(\bm u^n,  \bm u^n, \widehat e_{\bm u}^{n+1} )  + \frac{\nu}{2} \|\nabla e_{\bm u}^{n+1}\|^2    
			        +C_3\|\nabla_\tau e_{\bm u}^{n+1}\|^2 +  \frac{\nu}{4}\|\nabla e_{\bm u}^{n}\|^2\\
			& \quad  + C|e_R^{n+1}|^2 + C\|\widehat{e}_{\bm u}^{n+1}-e_{\bm u}^{n+1}\| + C\|e_{\bm u}^{n}\|^2  
			  +C\|e_{\phi}^n\|^2  +C\tau \big(\|\bm u_t\|_{L^2(J_n;\mbfH^1(\Omega))}^2
		      	+\|\phi_t\|_{L^2(J_n ; L^2(\Omega))}^2\big),
		\end{aligned}
	\end{equation*}
	where the estimates that derived from (i)-(ii) of Lemma  \ref{lem:trilinear} and \eqref{bound:mu} that
	\begin{equation*}
		\begin{aligned}
			&  -\frac{e_R^{n+1}}{\sqrt{E_1(\phi^n)+\delta_0}} \bm b(\bm u^n,  \bm u^n, e_{\bm u}^{n+1} ) \\
			& = -\frac{e_R^{n+1}}{\sqrt{E_1(\phi^n)+\delta_0}} \bm b(\bm u^n,  \bm u^n, \widehat e_{\bm u}^{n+1}) 
			-\frac{e_R^{n+1}}{\sqrt{E_1(\phi^n)+\delta_0}} \bm b(\bm u^n,  \bm u^n, e_{\bm u}^{n+1}-\widehat e_{\bm u}^{n+1})\\
			& \leq -\frac{e_R^{n+1}}{\sqrt{E_1(\phi^n)+\delta_0}} \bm b(\bm u^n,  \bm u^n, \widehat e_{\bm u}^{n+1} ) + C|e_R^{n+1}|^2 
			     + C\|\widehat{e}_{\bm u}^{n+1}-e_{\bm u}^{n+1}\|, 
		\end{aligned}
	\end{equation*}
	and
	\begin{equation*}
			\bm b(\bm u^n, e_{\bm u}^n,  e_{\bm u}^{n+1} )  
			=   \bm b(\bm u^n,  e_{\bm u}^n,   e_{\bm u}^{n+1}-e_{\bm u}^n)  
			\leq   C_3\|\nabla_\tau e_{\bm u}^{n+1}\|^2 + \frac{\nu}{4} \|\nabla e_{\bm u}^n\|^2.
	\end{equation*}
	have been applied. 
	
	Furthermore, due to \eqref{bound_phi} and \eqref{bound:mu}, we have
	\begin{equation*}
			 -\frac{e_R^{n+1}}{\sqrt{E_1(\phi^n)+\delta_0 } }(\phi^n \nabla \mu^n,  e_{\bm u}^{n+1}) 
			 		 \le -\frac{e_R^{n+1}}{\sqrt{E_1(\phi^n)+\delta_0 } }(\phi^n \nabla \mu^n, \widehat e_{\bm u}^{n+1}) + C\|e_{\bm u}^{n+1}-\widehat{e}_{\bm u}^{n+1}\|^2+C|e_R^{n+1}|^2,
	\end{equation*}
	and using the Sobolev embedding $H^1(\Omega) \hookrightarrow L^6(\Omega)$, we further obtain
	\begin{equation*}   
		\begin{aligned}
			\malD_2 
			& \leq  -\frac{e_R^{n+1}}{\sqrt{E_1(\phi^n)+\delta_0 } } ( \phi^n \nabla \mu^n ,\widehat e_{\bm u}^{n+1}) 
			+ \frac{M}{8}\|\nabla e_\mu^n\|^2
			+   C\|e_{\bm u}^{n+1}-\widehat{e}_{\bm u}^{n+1}\|^2 \\
			&  \quad  +C\|e_\phi^n\|_1^2 +  C|e_R^{n+1}|^2 + C\|e_{\bm u}^{n+1}\|^2 +C\tau(\|\mu_t\|_{L^2(J_n ; H^1(\Omega))}^2 +\|\phi_t\|_{L^2(J_n ; L^6(\Omega))}^2).
		\end{aligned}
	\end{equation*}
	
	Finally, for the last term of the right-hand side of \eqref{err:eu:eq}, we have  
	\[
	\malD_3  \le  C\|e_{\bm u}^{n+1}\|^2    +C\tau\| \bm u_{tt}\|_{L^2(J_n; \mbfL^2(\Omega))}^2.
	\]
	
Therefore, inserting the estimates of $\malD_1-\malD_3$ into \eqref{err:eu:eq}, for $\tau$ sufficiently small such that  $\tau \leq \min\{\tau_0, \frac{1}{2C_3}\}$, we arrive at the conclusion.
\end{proof}  

Now, based on Lemmas \ref{lem:err:CH}--\ref{lem:err:r} and \ref{err:full:eu}, we refine the error estimates established in Theorem \ref{err:sub_rate} as below.
\begin{theorem} \label{thm:error}
	Under the assumption \eqref{assump:e1},  there exists a sufficiently small constant $\tau_1$, such that for $\tau \le \tau_1$ we have
	\begin{equation*}   
		\|e_\phi^{n+1}\|_1^2+|e_R^{n+1}|^2 +\|e_{\bm u}^{n+1}\|^2 +\nu \tau \|\nabla e_{\bm u}^{n+1}\|^2+  \tau \sum_{m=0}^{n} \|e_\mu^{m+1}\|_1^2  
		\le C\tau^2,
	\end{equation*}
	where the constant $C$ is independent of $\tau$ and $n$, but depends on $T$.
\end{theorem}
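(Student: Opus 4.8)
The plan is to fuse the three single-step estimates proved above---Lemma \ref{lem:err:CH} for the phase field, Lemma \ref{lem:err:r} for the auxiliary variable, and the \emph{improved} velocity estimate Lemma \ref{err:full:eu}---into one discrete energy inequality, and then close it by a $\tau$-weighted summation followed by the discrete Gronwall inequality; throughout, the standing induction hypothesis \eqref{hypothesis:n:1} for $m\le n$ (hence the bounds \eqref{bound:mu}) is in force. The decisive structural feature is that the three coupling terms carrying the factor $e_R^{n+1}/\sqrt{E_1(\phi^n)+\delta_0}$, namely $\lambda(F'(\phi^n),\p_\tau e_\phi^{n+1})$, $(\widehat e_{\bm u}^{n+1},\phi^n\nabla\mu^n)$ and $\bm b(\bm u^n,\bm u^n,\widehat e_{\bm u}^{n+1})$, occur with exactly opposite signs across the three lemmas and hence cancel identically when the estimates are added. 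This cancellation---the discrete footprint of the SAV--ZEC construction---removes the only terms that resist absorption, and leaves, with $\widetilde W^{n}:=\lambda\|\nabla e_\phi^{n}\|^2+(1+\beta)\|e_\phi^{n}\|^2+2|e_R^{n}|^2+\|e_{\bm u}^{n}\|^2$, an inequality of the shape
\begin{equation*}
\p_\tau \widetilde W^{n+1} + \tfrac{\nu}{2}\|\nabla e_{\bm u}^{n+1}\|^2 + \tfrac{M}{2}\|e_\mu^{n+1}\|_1^2
\le \tfrac{\nu}{2}\|\nabla e_{\bm u}^{n}\|^2 + \tfrac{3M}{8}\|e_\mu^{n}\|_1^2 + C\big(\widetilde W^{n+1}+\widetilde W^{n}\big) + C\|e_{\bm u}^{n+1}-\widehat e_{\bm u}^{n+1}\|^2 + C\tau\,\mathcal{R}_n,
\end{equation*}
where $\mathcal{R}_n$ gathers the local truncation norms of $\phi_t,\phi_{tt},\mu_t,\bm u_t,\bm u_{tt},R_{tt}$ over $J_n$.

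The upgrade of the rate from $O(\tau^{1/2})$ (Theorem \ref{err:sub_rate}) to $O(\tau)$ rests entirely on using Lemma \ref{err:full:eu} in place of Lemma \ref{lem:NS:E1}. Testing the error equation $(E_3)$ with the divergence-free field $e_{\bm u}^{n+1}$, both the pressure-error gradient $\nabla e_p^{n+1}$ and the term $\bm\chi_4=\malE_R^{n+1}\gamma^n\nabla p^n$ drop out, since $(\nabla e_p^{n+1},e_{\bm u}^{n+1})=0$ and $(\nabla p^n,e_{\bm u}^{n+1})=0$ by \eqref{zec:e2}; consequently the offending term $\malC_3$ of the rough estimate---whose bound carried the non-telescoping constant $C\tau(\|p\|_{L^\infty(J;H^1)}^2+1)$---is simply absent here. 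Thus $\mathcal{R}_n$ contains no $O(1)$ summand, and after multiplying the combined inequality by $\tau$ and summing from $0$ to $n$, the truncation contribution sums to $C\tau^2\big(\|\phi_t\|_{L^2(0,T;\cdot)}^2+\cdots+|R_{tt}|_{L^2(0,T)}^2\big)=O(\tau^2)$ by the regularity \eqref{assump:e1}.

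The one summand not yet of order $\tau^2$ is $C\|e_{\bm u}^{n+1}-\widehat e_{\bm u}^{n+1}\|^2$, and here the bootstrap is essential: rather than re-estimate it, I would invoke the rough bound $\sum_{m=0}^{n}\|e_{\bm u}^{m+1}-\widehat e_{\bm u}^{m+1}\|^2\le C\tau$ already furnished by Theorem \ref{err:sub_rate}, so that after the $\tau$-weighting this term contributes only $C\tau\sum_{m=0}^{n}\|e_{\bm u}^{m+1}-\widehat e_{\bm u}^{m+1}\|^2\le C\tau^2$, precisely the required order. With the exact initial data $e_\phi^0=e_R^0=e_{\bm u}^0=e_\mu^0=0$, the telescoped $\p_\tau$ terms reproduce $\widetilde W^{n+1}$; the consecutive-level gradient terms collapse to the single survivor $\tfrac{\nu\tau}{2}\|\nabla e_{\bm u}^{n+1}\|^2$, while the $\mu$-dissipation (using $\tfrac{M}{2}>\tfrac{3M}{8}$) leaves $\tfrac{M\tau}{8}\sum_{m=0}^{n}\|e_\mu^{m+1}\|_1^2$. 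Choosing $\tau\le\tau_1$ small enough to absorb $C\tau\widetilde W^{n+1}$ into the left-hand side and applying the discrete Gronwall inequality then yields $\widetilde W^{n+1}+\nu\tau\|\nabla e_{\bm u}^{n+1}\|^2+\tau\sum_{m=0}^{n}\|e_\mu^{m+1}\|_1^2\le C\tau^2$, which is the assertion.

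The principal obstacle is conceptual rather than computational. One must recognize, first, that the SAV--ZEC cancellation annihilates every $e_R$-coupling term that would otherwise obstruct the energy argument, and second, that the merely suboptimal estimate of Theorem \ref{err:sub_rate} is promoted to optimal order here solely through the extra factor $\tau$ supplied by the time-weighted summation. The delicate verification is that no residual term of size $O(1)\cdot\tau$ survives---in particular that the pressure has genuinely disappeared through the incompressibility of $e_{\bm u}^{n+1}$ together with the boundedness \eqref{gamma:bound} of $\gamma^n\nabla p^n$---since it is exactly such a term that caused the half-order loss in the rough estimate.
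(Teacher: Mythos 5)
Your proposal follows essentially the same route as the paper's own proof: it combines Lemmas \ref{lem:err:CH}, \ref{lem:err:r} and the improved velocity estimate of Lemma \ref{err:full:eu} (whose testing of $(E_3)$ with the divergence-free $e_{\bm u}^{n+1}$ eliminates both $\nabla e_p^{n+1}$ and $\bm\chi_4$ via \eqref{zec:e2}, exactly as you note), exploits the identical cancellation of the $e_R^{n+1}$-coupling terms, absorbs $C\tau\sum_{m}\|e_{\bm u}^{m+1}-\widehat e_{\bm u}^{m+1}\|^2 \le C\tau^2$ using the rough bound of Theorem \ref{err:sub_rate}, and closes with the discrete Gronwall inequality. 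Apart from cosmetic differences in how the dissipation constants and the $\tau$-weighted terms are bookkept (your $\widetilde W^n$ versus the paper's $Z^n$), the argument is the same and is correct.
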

\begin{proof}
	Denote 
	\[
	Z^n:= \lambda \|\nabla e_\phi^n\|^2 + (1+\beta)\| e_\phi^n\|^2 + 2|e_R^n|^2+  \|e_{\bm u}^n\|^2 +\nu \tau \|\nabla e_{\bm u}^{n}\|^2 
	+ \frac{3M\tau}{4}\| e_\mu^n\|_1^2.
	\]
	Then, combing  Lemmas \ref {lem:err:CH}--\ref{lem:err:r} and Lemma \ref{err:full:eu}, it yields  
	\begin{equation}  \label{err:full:all}
		\begin{aligned}
			 Z^{n+1} - Z^n + \frac{M\tau}{4} \| e_\mu^{n+1}\|_1^2 
			& \leq C_4\tau \big(Z^{n+1}+   Z^{n}\big) + C\tau \|e_{\bm u}^{n+1}-\widehat{e}_{\bm u}^{n+1}\|^2    \\
			& \quad + C\tau^2\big(\|\phi_{tt}\|_{L^2(J_n ; L^2(\Omega))}^2+\|\phi_t\|_{L^2(J_n ; H^1(\Omega))}^2  +  \|\mu_t\|_{L^2(J_n ; H^1(\Omega))}^2\big)\\
			&  \quad + C\tau^2 \big(
			\|\bm u_t\|_{L^2(J_n ; \mbfH^1(\Omega))}^2+\|\bm u_{tt}\|_{L^2(J_n; \mbfL^2(\Omega))}^2+ |R_{tt}|_{L^2(J_n)}^2 \big).
		\end{aligned}
	\end{equation} 
	
	Replacing  $n$ with  $m$ in \eqref{err:full:all}, summing it from 0 to $n$ ($n \ge 0$),  for $\tau$ sufficiently small such that  $\tau \le  \tau_1:=\min\{\tau_0,\frac{1}{2C_3},\frac{1}{2C_4}\}$,  by using the discrete Gronwall inequality, we obtain 
	\begin{equation*}
			Z^{n+1} + \frac{M\tau}{4}\sum_{m=0}^{n}  \| e_\mu^{n+1}\|_1^2 
			\le C\exp(2C_4T) \big(\tau \sum_{m=0}^{n}  \|e_{\bm u}^{m+1}-\widehat{e}_{\bm u}^{m+1}\|^2 + \tau^2 \big) \le C \tau^2,
	\end{equation*}
	where the result established in Theorem \ref{err:sub_rate} that
	\begin{equation} \label{err:ehat_e}
		\tau\sum_{m=0}^{n} \|\widehat{e}_{\bm u}^{m+1}-{e}_{\bm u}^{m+1}\|^2  \leq C  \tau^2,
	\end{equation}
	has been used. Thus, we complete the proof of Theorem \ref{thm:error}.
\end{proof}
\begin{remark} It is clear that \eqref{err:ehat_e} 	plays a crucial role in obtaining the improved convergence results. Without this estimate, we may only be able to prove the boundedness of the errors. On the other hand, the proven result, $\tau \sum_{m=0}^{n} \|e_\mu^{m+1}\|_1^2 \le C \tau^2$, is also essential to resolve our induction hypothesis \eqref{hypothesis:n:1}.
\end{remark}
\subsection{Completion of the mathematical induction} \label{subec:induct}
In this subsection, we shall prove that $\|e_{\bm u}^{n+1}\|_2 \le 1$ to complete the proof of the entire error analysis.
We first revisit the regularity of the Stokes equation.
\begin{lemma} [Regularity of $(\bm w, r)$  \cite{temam2001navier}] \label{lem:stokes}
	There exist unique solution pairs $(\bm w, r) \in \mbfH_0^1(\Omega) \times$ $L_0^2(\Omega)$ for the stationary Stokes equation 
	\begin{equation*} 
		\left\{
		\begin{aligned}
			& -\nu \Delta \bm w+\nabla r=\mathbf{g}, \quad\quad {\rm in }~ \Omega, \\
			& \nabla \cdot \bm w=0, \qquad \qquad \quad ~~ {\rm in }~ \Omega, \\
			& \bm w=\mathbf{0},  \qquad \qquad \qquad \quad  {\rm on }~ \partial \Omega,
		\end{aligned}
		\right.
	\end{equation*}
	such that  
	\begin{equation*}
		\|\bm w\|_2+\|r\|_1 \leq C_r\|\mathbf{g}\|,
	\end{equation*}
	where  $C_r$ is a positive constant depending on $\Omega$ and $\nu$. 
\end{lemma}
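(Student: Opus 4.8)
The plan is to proceed in three stages: first solve for the velocity in the divergence-free subspace, then recover the pressure, and finally bootstrap from weak regularity to the claimed $\mbfH^2 \times H^1$ estimate. I would begin by reformulating the system weakly on the solenoidal space $\mathbf{V}$. Testing the momentum equation against $\bm v \in \mathbf{V}$ eliminates the pressure, since $(\nabla r, \bm v) = -(r, \nabla \cdot \bm v) = 0$, and yields the variational problem: find $\bm w \in \mathbf{V}$ with $\nu(\nabla \bm w, \nabla \bm v) = (\mathbf{g}, \bm v)$ for all $\bm v \in \mathbf{V}$. The bilinear form $a(\bm w, \bm v) := \nu(\nabla \bm w, \nabla \bm v)$ is bounded and, by the Poincar\'e inequality of Lemma \ref{lem:poncare}, coercive on $\mathbf{V}$, while $\bm v \mapsto (\mathbf{g}, \bm v)$ is a bounded linear functional. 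The Lax--Milgram theorem then furnishes a unique $\bm w \in \mathbf{V}$, together with the energy bound $\nu\|\nabla \bm w\|^2 = (\mathbf{g}, \bm w) \le \|\mathbf{g}\|\,\|\bm w\| \le c_p\|\mathbf{g}\|\,\|\nabla \bm w\|$, so that $\|\nabla \bm w\| \le (c_p/\nu)\|\mathbf{g}\|$.

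Second, I would recover the pressure. The functional $\bm v \mapsto (\mathbf{g}, \bm v) - \nu(\nabla \bm w, \nabla \bm v)$ on $\mbfH_0^1(\Omega)$ vanishes on $\mathbf{V}$ by construction, so by the de Rham theorem---equivalently, by surjectivity of $\nabla \cdot : \mbfH_0^1(\Omega) \to L_0^2(\Omega)$, expressed through the inf--sup (LBB) condition valid on the convex domain $\Omega$---there exists a unique $r \in L_0^2(\Omega)$ with $(\nabla r, \bm v) = (\mathbf{g}, \bm v) - \nu(\nabla \bm w, \nabla \bm v)$ for all $\bm v \in \mbfH_0^1(\Omega)$; that is, $-\nu\Delta \bm w + \nabla r = \mathbf{g}$ holds distributionally. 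The inf--sup condition also supplies $\|r\| \le C(\|\mathbf{g}\| + \nu\|\nabla \bm w\|) \le C\|\mathbf{g}\|$. Uniqueness of the pair $(\bm w, r)$ follows from the uniqueness of $\bm w$ in $\mathbf{V}$ together with the normalization $r \in L_0^2(\Omega)$, which fixes the otherwise arbitrary additive constant in the pressure.

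The hard part is the third stage: promoting $\bm w \in \mbfH_0^1$ to $\bm w \in \mbfH^2$ and $r \in L^2_0$ to $r \in H^1$ with the sharp bound $\|\bm w\|_2 + \|r\|_1 \le C_r\|\mathbf{g}\|$. This is the Stokes shift theorem. On a smooth domain one would argue by the Agmon--Douglis--Nirenberg theory for elliptic systems, or by difference quotients: tangential differencing near $\partial\Omega$ controls all second derivatives of $\bm w$ except the purely normal one, which is then recovered algebraically from $-\nu\Delta\bm w = \mathbf{g} - \nabla r$ together with $\nabla\cdot\bm w = 0$, and a bootstrap returns $\nabla r \in \mbfL^2$. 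The genuine obstacle here is that $\Omega$ is only a convex polygon or polyhedron, so corner and edge singularities could in principle degrade the regularity below $\mbfH^2$; convexity is precisely what rules this out, since it guarantees that the leading singular exponents of the Stokes operator exceed one, leaving no $\mbfH^2$-obstruction. I would therefore invoke the regularity result for the stationary Stokes system on convex domains from \cite{temam2001navier} to close the estimate with a constant $C_r = C_r(\Omega, \nu)$, which completes the proof.
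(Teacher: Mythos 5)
The paper offers no proof of Lemma \ref{lem:stokes}: it is stated as a known result imported directly from \cite{temam2001navier}, so there is no internal argument to compare yours against line by line. Your first two stages are correct and standard --- Lax--Milgram on the solenoidal space $\mathbf{V}$ (coercivity via Lemma \ref{lem:poncare}) gives a unique velocity with the $\mbfH^1$ bound, and the de Rham/inf--sup argument recovers a unique $r \in L_0^2(\Omega)$ with $\|r\| \le C\|\mathbf{g}\|$ --- and they genuinely supply the existence/uniqueness content that the paper leaves entirely to the citation. The one caveat is your third stage: the $\mbfH^2 \times H^1$ shift estimate is the actual substance of the lemma, and you close it by invoking ``the regularity result for the stationary Stokes system on convex domains from \cite{temam2001navier}'', which, read literally, is circular --- citing the lemma to prove the lemma. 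Read charitably as an attribution rather than a proof, it matches exactly what the paper does. One further point worth knowing: the Stokes regularity theory in \cite{temam2001navier} is developed for domains with smooth boundary via the difference-quotient/ADN machinery you sketch; for the convex polygonal or polyhedral domains actually assumed in this paper, the $\mbfH^2 \times H^1$ estimate is due to Kellogg and Osborn (two dimensions) and Dauge (convex polyhedra), and your remark that convexity pushes the corner singular exponents above one is precisely the mechanism of those proofs. That imprecision in the citation is inherited from the paper itself, so it is not a defect of your argument relative to the paper's treatment.
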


Next, using the assumption \eqref{hypothesis:n:1}, Lemma \ref{lem:stokes} and the estimates of $\|\nabla e_\mu^{n+1}\|$ in Theorem \ref{thm:error}, we shall give an estimate for $\|e_{\bm u}^{n+1}\|_2$ and subsequently complete the induction process \eqref{hypothesis:n:1}.
\begin{lemma} \label{err:full:stokes:NS}
	Under the assumption \eqref{assump:e1},  for $\tau \le \tau_0$  we have
	\begin{equation*}  
		\begin{aligned}
			&   C_r^2\nu\,  \p_\tau \|\nabla e_{\bm u}^{n+1}\|^2   +  \|e_{\bm u}^{n+1}\|_2^2+\|e_p^{n+1}\|_1^2\\
			&\le  C \|\nabla e_{\bm u}^{n+1}\|^2+  C |e_R^{n+1}|^2+ C\|e_{\phi}^n\|_1^2   + C\|\nabla e_\mu^n\|^2 + C\|\nabla e_{\bm u}^{n}\|^2\\
			& \quad +   C\tau \big(\|\phi_t\|_{L^2(J_n ; L^6(\Omega))}^2 + \|\mu_t\|_{L^2(J_n ; H^2(\Omega))}^2+\|\bm u_t\|_{L^2(J_n ; \mbfH^2(\Omega))}^2
			                  +\|\bm u_{tt}\|_{L^2(J_n; \mbfL^2(\Omega))}^2\big).
		\end{aligned}
	\end{equation*}
\end{lemma}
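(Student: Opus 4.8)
The plan is to recast the coupled error equation $(E_3)$ as a \emph{stationary} generalized Stokes problem and then invoke the elliptic regularity of Lemma~\ref{lem:stokes}. Moving the discrete time-difference to the right-hand side, $(E_3)$ reads $-\nu\Delta e_{\bm u}^{n+1}+\nabla e_p^{n+1}=\mathbf g$ with load $\mathbf g:=\bm\chi_1+\bm\chi_2+\bm\chi_4+R_{\bm u}^{n+1}-\p_\tau e_{\bm u}^{n+1}$, together with $\nabla\cdot e_{\bm u}^{n+1}=0$ and $e_{\bm u}^{n+1}|_{\p\Omega}=\bm 0$. Since $(e_{\bm u}^{n+1},e_p^{n+1})$ is exactly the Stokes pair associated with $\mathbf g$, Lemma~\ref{lem:stokes} gives $\|e_{\bm u}^{n+1}\|_2^2+\|e_p^{n+1}\|_1^2\le C_r^2\|\mathbf g\|^2$, so the whole difficulty reduces to estimating $\|\mathbf g\|$, and in particular to taming $\p_\tau e_{\bm u}^{n+1}$, whose naive $\mbfL^2$ bound carries an inadmissible factor $\tau^{-1}$.

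The key device is to evaluate $\|\mathbf g\|^2$ by pairing $\mathbf g$ against itself in its two equivalent forms: write $\|\mathbf g\|^2=(\mathbf g,\bm\chi_1+\bm\chi_2+\bm\chi_4+R_{\bm u}^{n+1})-(\mathbf g,\p_\tau e_{\bm u}^{n+1})$ and, in the last pairing, replace $\mathbf g$ by $-\nu\Delta e_{\bm u}^{n+1}+\nabla e_p^{n+1}$. The pressure contributes nothing, since $(\nabla e_p^{n+1},\p_\tau e_{\bm u}^{n+1})=-(e_p^{n+1},\nabla\cdot\p_\tau e_{\bm u}^{n+1})=0$ by the discrete incompressibility of both $e_{\bm u}^{n+1}$ and $e_{\bm u}^{n}$, while integrating the viscous term by parts and using $a(a-b)=\tfrac12(a^2-b^2)+\tfrac12(a-b)^2$ produces $(\mathbf g,\p_\tau e_{\bm u}^{n+1})=\tfrac{\nu}{2}\p_\tau\|\nabla e_{\bm u}^{n+1}\|^2+\tfrac{\nu\tau}{2}\|\p_\tau\nabla e_{\bm u}^{n+1}\|^2$. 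Applying Young's inequality to the remaining pairing, absorbing $\tfrac12\|\mathbf g\|^2$, and discarding the nonnegative term $\tfrac{\nu\tau}{2}\|\p_\tau\nabla e_{\bm u}^{n+1}\|^2$ leaves $\|\mathbf g\|^2+\nu\,\p_\tau\|\nabla e_{\bm u}^{n+1}\|^2\le\|\bm\chi_1+\bm\chi_2+\bm\chi_4+R_{\bm u}^{n+1}\|^2$. Multiplying by $C_r^2$ and inserting the Stokes bound yields precisely the left-hand side of the lemma, with the telltale coefficient $C_r^2\nu$ in front of $\p_\tau\|\nabla e_{\bm u}^{n+1}\|^2$.

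It then remains to bound $\|\bm\chi_1+\bm\chi_2+\bm\chi_4+R_{\bm u}^{n+1}\|$ in $\mbfL^2$ by the quantities on the right-hand side. For $R_{\bm u}^{n+1}$ the Taylor remainder gives $\|R_{\bm u}^{n+1}\|^2\le\tau\|\bm u_{tt}\|_{L^2(J_n;\mbfL^2(\Omega))}^2$; for $\bm\chi_4=\malE_R^{n+1}\gamma^n\nabla p^n$ the uniform bound \eqref{gamma:bound} together with \eqref{err:Er} for $\malE_R^{n+1}$ produces $C|e_R^{n+1}|^2+C\|e_\phi^n\|^2$ plus a $\tau$-remainder; for $\bm\chi_2$ one uses the $L^\infty$-bound \eqref{bound_phi} of $\phi^n$, the now-required $H^2$-regularity of $\mu$ (so that $\nabla\mu(t_{n+1})\in\mbfL^6$ via $H^1\hookrightarrow L^6$) and $\nabla e_\mu^n\in\mbfL^2$, producing $C|e_R^{n+1}|^2+C\|e_\phi^n\|_1^2+C\|\nabla e_\mu^n\|^2$ and a consistency remainder. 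The decisive estimate is that of the convective term $\bm\chi_1$: controlling $\|\bm u^n\cdot\nabla e_{\bm u}^n\|$ and $\|e_{\bm u}^n\cdot\nabla\bm u(t_{n+1})\|$ directly in $\mbfL^2$ forces an $L^\infty$-type control of $\bm u^n$, supplied exactly by the bootstrap hypothesis \eqref{hypothesis:n:1} through $\|\bm u^n\|_2\le1+K_{\bm u}$ and the embedding $\mbfH^2\hookrightarrow\mbfL^\infty$, while the skew-symmetry identities of Lemma~\ref{lem:trilinear} and the sub-optimal bounds \eqref{bound:mu} of Theorem~\ref{err:sub_rate} reorganize the remaining pieces into $C|e_R^{n+1}|^2+C\|\nabla e_{\bm u}^n\|^2$ (together with the $\|\nabla e_{\bm u}^{n+1}\|^2$ term that is later absorbed via $\tau\sum\|\nabla e_{\bm u}^{m+1}\|^2\le C\tau^2$) plus $\tau$-remainders requiring $\bm u\in\mbfH^2$.

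I expect the main obstacle to be this last $\mbfL^2$ estimate of the nonlinear convection inside $\mathbf g$. Unlike the earlier lemmas, where the convective terms were tested against $\widehat e_{\bm u}^{n+1}$ or $e_{\bm u}^{n+1}$ and could be handled with only $\mbfH^1$-type norms via Lemma~\ref{lem:trilinear}, here they must be measured in $\mbfL^2$, which is precisely why the full strength of the induction hypothesis \eqref{hypothesis:n:1} (the $\mbfH^2$-bound on $\bm u^n$) and the upgraded regularity $\mu,\bm u\in\mbfH^2$ in \eqref{assump:e1} become indispensable; keeping careful track of which error contributions sit at level $n$ versus $n+1$, so that the resulting right-hand side closes under the subsequent discrete Gronwall argument, is the delicate bookkeeping.
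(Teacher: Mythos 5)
Your proposal is correct and follows essentially the same route as the paper: apply the Stokes regularity of Lemma \ref{lem:stokes} to $(E_3)$, neutralize the $\p_\tau e_{\bm u}^{n+1}$ contribution by pairing the equation with $\p_\tau e_{\bm u}^{n+1}$ (where the pressure gradient and $\bm\chi_4$ drop out by discrete incompressibility, producing exactly the $C_r^2\nu\,\p_\tau\|\nabla e_{\bm u}^{n+1}\|^2$ term), and bound $\bm\chi_1,\bm\chi_2,\bm\chi_4,R_{\bm u}^{n+1}$ in $\mbfL^2$ via the bootstrap hypothesis \eqref{hypothesis:n:1}, \eqref{bound_phi} and \eqref{bound:mu}. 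The only difference is packaging: the paper derives the elliptic bound \eqref{err:stokes:2} and the energy estimate \eqref{err:eq:dte} separately and adds $2C_r^2$ times the latter to the former, whereas you fold $\p_\tau e_{\bm u}^{n+1}$ into the Stokes load $\mathbf g$ and absorb it in a single pass --- the same cancellation, expressed more compactly.
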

\begin{proof}
	The key idea for the proof is the $ \mbfH^2\times H^1$-regularity of ($E_3$) and the estimates of $(E_3, \p_\tau e_{\bm u}^{n+1})$. 
	Indeed, due to the  regularity theory stated in Lemma \ref{lem:stokes}, we have
	\begin{equation} \label{err:stokes}
		\|e_{\bm u}^{n+1}\|_2^2+\|e_p^{n+1}\|_1^2\leq  C_r^2 \big(\|\p_\tau e_{\bm u}^{n+1}\|^2+\|\bm \chi_1\|^2 +\|\bm \chi_2\|^2 +\|R_{\bm u}^{n+1}\|^2+ \|\bm \chi_4\|^2 \big),
	\end{equation}
	where, it is easy to see $\|R_{\bm u}^{n+1}\|^2 \le \tau\| \bm u_{tt}\|_{L^2(J_n; \mbfL^2(\Omega))}^2$, and moreover, by \eqref{gamma:bound} we have
	\begin{equation*} 
			\|\bm \chi_4\|^2  \le C|e_R^{n+1}|^2+C\|e_\phi^{n}\|^2+ C\tau\|\phi_t\|_{L^2(J_n; L^2(\Omega))}^2.
	\end{equation*}
	Furthermore, using Lemmas \ref{lem:poncare}--\ref{lem:trilinear}, \eqref{assump:e1}--\eqref{bound_phi}, \eqref{bound:mu} and Sobolev embedding $H^1(\Omega) \hookrightarrow L^6(\Omega)$, we obtain
	\begin{equation*}  
		\begin{aligned}
			\|\bm \chi_1\|^2 
			& \le  C|e_R^{n+1}|^2 + C\|e_{\phi}^n\|^2 +  C \|\nabla e_{\bm u}^{n}\|^2 + C\tau \big(\|\phi_t\|_{L^2(J_n ; L^2(\Omega))}^2+\|\bm u_t\|_{L^2(J_n ; \mbfH^1(\Omega))}^2\big),\\
			\|\bm \chi_2\|^2
			& \le   C|e_R^{n+1}|^2 + C\|e_{\phi}^n\|^2  + C \|\nabla e_\mu^n\|^2  +  C\tau \big( \|\phi_t\|_{L^2(J_n ; L^6(\Omega))}^2+\|\mu_t\|_{L^2(J_n ; H^1(\Omega))}^2\big),
		\end{aligned}
	\end{equation*}
	Thus, inserting the above estimates into \eqref{err:stokes}, we arrive at 
	\begin{equation} \label{err:stokes:2}
		\begin{aligned}
			&  \|e_{\bm u}^{n+1}\|_2^2+\|e_p^{n+1}\|_1^2  \\
			& \leq  C_r^2\|\p_\tau e_{\bm u}^{n+1}\|^2 +  C|e_R^{n+1}|^2+ C\|e_{\phi}^n\|_1^2 +   C\|\nabla e_\mu^n\|^2  + C\|\nabla e_{\bm u}^{n}\|^2 \\
			& \quad    +  C\tau \big(\|\phi_t\|_{L^2(J_n ; L^6(\Omega))}^2+\|\mu_t\|_{L^2(J_n ; H^1(\Omega))}^2+\|\bm u_t\|_{L^2(J_n ; \mbfH^1(\Omega))}^2 +\| \bm u_{tt}\|_{L^2(J_n; \mbfL^2(\Omega))}^2\big).
		\end{aligned}
	\end{equation}
	
	On the other hand, taking the inner product of $(E_3)$  with $\p_\tau e_{\bm u}^{n+1}$, and using the above estimates for $\bm \chi_1$, $\bm \chi_2$ and $R_{\bm u}^{n+1}$, we obtain
	\begin{equation} \label{err:eq:dte}
		\begin{aligned}
			& \|\p_\tau e_{\bm u}^{n+1}\|^2 +\frac{\nu}{2}  \p_\tau\|\nabla e_{\bm u}^{n+1}\|^2
			\le (\bm \chi_1 + \bm \chi_2 + R_{\bm u}^{n+1},  \p_\tau e_{\bm u}^{n+1})\\
			& \le \frac{1}{2}\|\p_\tau e_{\bm u}^{n+1}\|^2 +   C |e_R^{n+1}|^2 + C \|\nabla e_{\bm u}^{n+1}\|^2 + C\|e_{\phi}^n\|_1^2+ C \|\nabla e_\mu^n\|^2+  C\|\nabla e_{\bm u}^{n}\|^2  \\ %
			& \quad + C\tau(\|\phi_t\|_{L^2(J_n ; L^6(\Omega))}^2 +\|\mu_t\|_{L^2(J_n ; H^1(\Omega))}^2+\|\bm u_t\|_{L^2(J_n ; \mbfH^2(\Omega))}^2+\|\bm u_{tt}\|_{L^2(J_n; \mbfL^2(\Omega))}^2).
		\end{aligned}
	\end{equation}
	
	Finally, we multiply \eqref{err:eq:dte}  by $2C_r^2$ and add the result to \eqref{err:stokes:2}. Then, the lemma is proved.
\end{proof}

Now, based on Lemmas \ref{lem:err:CH}--\ref{lem:err:r} and Lemma \ref{err:full:stokes:NS}, our aim is to complete the induction process by establishing an error bound that $\|e_{\bm u}^{n+1}\|_2 \le 1$, and meanwhile, the first-order error estimates for $\|e_{\bm u}^{n+1}\|_1$  and $\tau \sum_{m=0}^n  \|e_p^{m+1}\|_1^2$ are also proved.
\begin{theorem} \label{err:full:stokes:all}
	Under the assumption \eqref{assump:e1},   there exists a sufficiently small constant $\tau_2$, such that for $\tau \le \tau_2$ we have
	\begin{equation*}
		\begin{aligned}
			\|e_\phi^{n+1}\|_1^2 + \|e_{\bm u}^{n+1}\|_1^2 +|e_R^{n+1}|^2  +\tau \sum_{m=0}^n (\|e_{\bm u}^{m+1}\|_2^2+\|e_p^{m+1}\|_1^2+\|e_\mu^{m+1}\|_1^2)   \le C \tau^2,
		\end{aligned}
	\end{equation*}
where the constant $C$ is independent of $\tau$ and $n$, but depends on $T$.
\end{theorem}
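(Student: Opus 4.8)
The plan is to run a discrete Gronwall argument on a combined energy that, unlike the functional $Z^n$ used in Theorem \ref{thm:error}, measures the velocity error in the full $\mbfH^1$-norm instead of the weak $\ell^2(0,T;\mbfH^1)$-norm. Concretely, I would set
\[
\mathcal{Z}^n := \lambda\|\nabla e_\phi^n\|^2 + (1+\beta)\|e_\phi^n\|^2 + 2|e_R^n|^2 + C_r^2\nu\|\nabla e_{\bm u}^n\|^2,
\]
and add the phase-field estimate (Lemma \ref{lem:err:CH}), the auxiliary-variable estimate (Lemma \ref{lem:err:r}) and the Stokes-regularity estimate (Lemma \ref{err:full:stokes:NS}), the last supplying the dissipation $C_r^2\nu\,\p_\tau\|\nabla e_{\bm u}^{n+1}\|^2$ together with the good terms $\|e_{\bm u}^{n+1}\|_2^2 + \|e_p^{n+1}\|_1^2$ on the left. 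Exactly as in Theorem \ref{thm:error}, the ZEC coupling term $\lambda\tfrac{e_R^{n+1}}{\sqrt{E_1(\phi^n)+\delta_0}}(F'(\phi^n),\p_\tau e_\phi^{n+1})$ occurs with opposite signs in Lemmas \ref{lem:err:CH} and \ref{lem:err:r} and cancels, while the $\tfrac{3M}{4}\|e_\mu^{n+1}\|_1^2$ produced by Lemma \ref{lem:err:CH} absorbs the $\tfrac{M}{4}\|\nabla e_\mu^{n+1}\|^2$ fed back by Lemma \ref{lem:err:r}, leaving a genuinely positive $\tfrac{M}{2}\|e_\mu^{n+1}\|_1^2$ on the left.

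The decisive difference from Theorem \ref{thm:error} is that Lemma \ref{err:full:stokes:NS} no longer cancels the residual coupling terms $\tfrac{e_R^{n+1}}{\sqrt{E_1(\phi^n)+\delta_0}}\big((\widehat e_{\bm u}^{n+1},\phi^n\nabla\mu^n)+\bm b(\bm u^n,\bm u^n,\widehat e_{\bm u}^{n+1})\big)$ left over from Lemma \ref{lem:err:r}. I would therefore bound them directly: writing $\widehat e_{\bm u}^{n+1}=e_{\bm u}^{n+1}+(\widehat e_{\bm u}^{n+1}-e_{\bm u}^{n+1})$ and using $\|\phi^n\nabla\mu^n\|\le C$ from \eqref{bound_phi}--\eqref{bound:mu}, together with (ii) of Lemma \ref{lem:trilinear} and the bound $\|\bm u^n\|_2\le 1+K_{\bm u}$ supplied by \eqref{hypothesis:n:1}, these are controlled by $C|e_R^{n+1}|^2+C\|e_{\bm u}^{n+1}\|^2+C\|\widehat e_{\bm u}^{n+1}-e_{\bm u}^{n+1}\|^2$. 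Collecting everything produces a differential inequality of the form
\[
\p_\tau\mathcal{Z}^{n+1} + \tfrac{M}{2}\|e_\mu^{n+1}\|_1^2 + \|e_{\bm u}^{n+1}\|_2^2 + \|e_p^{n+1}\|_1^2 \le C(\mathcal{Z}^{n+1}+\mathcal{Z}^n) + C\|\widehat e_{\bm u}^{n+1}-e_{\bm u}^{n+1}\|^2 + C\|\nabla e_\mu^n\|^2 + C\tau\,\mathcal{R}_n,
\]
where the stray $C\|e_{\bm u}^{n+1}\|^2$ has been absorbed into $\mathcal{Z}^{n+1}$ via Poincaré (Lemma \ref{lem:poncare}) and $\mathcal{R}_n$ gathers the truncation quantities of \eqref{assump:e1}.

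I would then multiply by $\tau$, sum from $0$ to $n$, and apply the discrete Gronwall inequality after absorbing $C\tau\mathcal{Z}^{n+1}$ on the left for $\tau$ small. The extra right-hand sides are all $O(\tau^2)$ once summed: $\tau\sum\|\nabla e_\mu^m\|^2\le C\tau^2$ comes from Theorem \ref{thm:error}, $\tau\sum\|\widehat e_{\bm u}^{m+1}-e_{\bm u}^{m+1}\|^2\le C\tau^2$ is exactly \eqref{err:ehat_e}, and $\tau^2\sum\mathcal{R}_m\le C\tau^2$ because the $L^2(J_m)$ integrals telescope into the bounded global norms of \eqref{assump:e1}. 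This yields $\mathcal{Z}^{n+1}\le C\tau^2$ — hence $\|\nabla e_{\bm u}^{n+1}\|^2\le C\tau^2$, which combined with the $\ell^\infty(0,T;\mbfL^2)$-bound of Theorem \ref{thm:error} gives $\|e_{\bm u}^{n+1}\|_1^2\le C\tau^2$ — together with $\tau\sum_{m=0}^n(\|e_\mu^{m+1}\|_1^2+\|e_{\bm u}^{m+1}\|_2^2+\|e_p^{m+1}\|_1^2)\le C\tau^2$; the $e_\phi$ and $e_R$ contributions are read off from $\mathcal{Z}^{n+1}$ (or inherited from Theorem \ref{thm:error}).

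Finally, to close the bootstrap I would apply Lemma \ref{err:full:stokes:NS} pointwise at level $n+1$. Every term on its right-hand side is now $O(\tau^2)$ except $C\|\nabla e_\mu^n\|^2$ and the consistency piece $C\tau\,\mathcal{R}_n$; the former is only $O(\tau)$ pointwise, since the summed bound $\tau\sum\|e_\mu^{m+1}\|_1^2\le C\tau^2$ yields merely $\|e_\mu^n\|_1^2\le C\tau$ for a single index. Hence $\|e_{\bm u}^{n+1}\|_2^2\le C\tau$, so $\|e_{\bm u}^{n+1}\|_2\le 1$ for $\tau\le\tau_2$ small enough, verifying \eqref{hypothesis:n:1} at $m=n+1$ and completing the induction. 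I expect the main obstacle to be precisely this asymmetry: the optimal $O(\tau^2)$ rate for the $\mbfH^2$-velocity and $H^1$-pressure errors is reachable only in the time-summed $\ell^2$ norm, driven by the summed control of $\|\nabla e_\mu^m\|$, whereas pointwise in time one only controls $\|e_{\bm u}^{n+1}\|_2$ at order $O(\sqrt{\tau})$. The induction must be arranged so that this weaker pointwise bound still keeps $\|e_{\bm u}^{n+1}\|_2\le 1$, thereby licensing the $\mbfH^2$-bound on $\bm u^n$ and the trilinear estimates of Lemma \ref{lem:trilinear} at the next step.
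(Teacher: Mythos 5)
Your proposal is correct and follows essentially the same route as the paper: combining Lemmas \ref{lem:err:CH}, \ref{lem:err:r} and \ref{err:full:stokes:NS}, bounding the residual SAV coupling terms directly by $C|e_R^{n+1}|^2+C\|e_{\bm u}^{n+1}\|^2+C\|\widehat e_{\bm u}^{n+1}-e_{\bm u}^{n+1}\|^2$ (the paper's estimate \eqref{err:sav:1}), then applying the discrete Gronwall inequality with the two crucial prior inputs \eqref{err:ehat_e} and $\tau\sum_{m}\|e_\mu^{m+1}\|_1^2\le C\tau^2$ from Theorems \ref{err:sub_rate} and \ref{thm:error}, and finally closing the induction via the $O(\sqrt{\tau})$ bound on $\|e_{\bm u}^{n+1}\|_2$ exactly as in Remark \ref{remk:full_rate:1}. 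The only deviations are cosmetic bookkeeping: your functional $\mathcal{Z}^n$ keeps the chemical-potential dissipation on the left instead of carrying $M\tau\|e_\mu^n\|_1^2$ inside the Gronwall functional $V^n$, and your induction closure reapplies Lemma \ref{err:full:stokes:NS} pointwise rather than simply reading $\tau\|e_{\bm u}^{n+1}\|_2^2\le C\tau^2$ off the last summand.
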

\begin{proof}
	To begin with, we estimate the SAV terms in Lemma \ref{lem:err:r}. Recalling the boundedness results of \eqref{bound_phi}--\eqref{hypothesis:n:1}, \eqref{bound:mu}  and using (ii) of Lemma \ref{lem:trilinear} and triangle inequality, we obtain
	\begin{equation}  \label{err:sav:1} 
	  \frac{e_R^{n+1}}{\sqrt{E_1(\phi^n)+\delta_0}} \big( (\widehat{e}_{\bm u}^{n+1}, \phi^n \nabla \mu^n)+ \bm b(\bm u^n,  \bm u^n, \widehat e_{\bm u}^{n+1} ) \big)
			 \le C |e_R^{n+1}|^2 + C(\|\widehat{e}_{\bm u}^{n+1}-e_{\bm u}^{n+1}\|^2 + \|e_{\bm u}^{n+1}\|^2).
	\end{equation}
	
	Denote 
	\[
	V^n:= \lambda \|\nabla e_\phi^n\|^2 + (1+\beta)\| e_\phi^n\|^2 + 2|e_R^n|^2+ 2C_r^2\nu \|\nabla  
	e_{\bm u}^n\|^2 +  M\tau \|e_\mu^n\|_1^2.
	\]
	Then, combining \eqref{err:sav:1} with Lemmas \ref{lem:err:CH}--\ref{lem:err:r} and Lemma \ref{err:full:stokes:NS},  we have
	\begin{equation*} \label{err:full_rate:1}
		\begin{aligned}
			& V^{n+1} -V^n + 2\tau(\|e_{\bm u}^{n+1}\|_2^2+ \|e_p^{n+1}\|_1^2)\\
			& \leq C_5\tau (V^{n+1} + V^n) +C\tau( \|\widehat{e}_{\bm u}^{n+1}-e_{\bm u}^{n+1}\|^2 + \|\nabla e_\mu^{n}\|^2)\\
			& \quad + C\tau^2 \big(\|\phi_t\|_{L^2(J_n ; H^1(\Omega))}^2 +\|\phi_{tt}\|_{L^2(J_n ; L^2(\Omega))}^2+  \|\mu_t\|_{L^2(J_n ; H^1(\Omega))}^2\big)\\
			& \quad +C\tau^2 \big( \|\bm u_t\|_{L^2(J_n ; \mbfH^2(\Omega))}^2+\|\bm u_{tt}\|_{L^2(J_n; \mbfL^2(\Omega))}^2+ |R_t|_{L^2(J_n)}^2\big).
		\end{aligned}
	\end{equation*}
	Therefore, similar to the proof of Theorem \ref{thm:error}, for $\tau$ sufficiently small such that  $\tau \le  \tau_2:=\min \{\tau_1, \frac{1}{2C_{5}}\}$, an application of the discrete Gronwall inequality leads to 
	\begin{equation} \label{err:reasult:1}
		\begin{aligned}
			& V^{n+1} +2\tau \sum_{m=0}^n (\|e_{\bm u}^{m+1}\|_2^2+\|e_p^{m+1}\|_1^2) 
			  \le C \tau \sum_{m=0}^n\big(\|\widehat{e}_{\bm u}^{m+1}-{e}_{\bm u}^{m+1}\|^2  + \|e_\mu^{m+1}\|_1^2 \big) + C \tau^2  \le  C\tau^2,
		\end{aligned}
	\end{equation}
	where   the results in Theorems \ref{err:sub_rate} and \ref{thm:error} that
	\begin{equation*} 
		\tau \sum_{m=0}^n\big(\|\widehat{e}_{\bm u}^{m+1}-{e}_{\bm u}^{m+1}\|^2  + \|e_\mu^{m+1}\|_1^2 \big) 	\leq  C \tau^2,
	\end{equation*}
	have been employed. Now, combing \eqref{err:reasult:1} with Theorem  \ref{thm:error}, we complete the proof.
\end{proof} 

\begin{remark} \label{remk:full_rate:1}
	Theorem \ref{err:full:stokes:all} implies that there exists a positive constant $C_{6}$ independent of $\tau$ and $n$ such that  
	\begin{equation*}
		\|e_{\bm u}^{n+1} \|_{2} \le C_{6} \tau^{\frac{1}{2}} \le 1,
	\end{equation*}
	if $\tau \leq \min\{\tau_2, 1/C_{6}^2\}$.
	Then, we complete the induction process \eqref{hypothesis:n:1} and thus Theorem \ref{err:full:stokes:all} holds for all $n=0,1,\ldots, N_t-1$.
\end{remark}

\section{Numerical experiments}\label{sec:test}
In this section, we present several numerical examples to verify both the convergence and reliability of the structure-preserving IVS scheme. 
Note that although the equality $R(t)=\sqrt{E_1(\phi)+\delta_0}$ holds at the continuous level,  $R^{n+1}$ is usually not equal to the discrete value $\sqrt{E_1(\phi^{n+1})+\delta_0}$, due to the possible truncation errors introduced in the scheme. This discrepancy may cause the modified energy dissipation law to lose consistency, particularly for long-term simulations.
To address this issue, we adopt the relaxation approach \cite{jiang2022improving} that strengthens the consistency between the modified and the original energy. Specifically, we rename the auxiliary variable computed in \eqref{scheme:1th:rhat} as $\widehat{R}^{n+1}$, and introduce another correction step to update $\widehat{R}^{n+1}$ in subsection \ref{subsec:scheme} below Step 3 as follows:
\paragraph{\bf Step 4}
Update the scalar auxiliary variable $\widehat R^{n+1}$  via a relaxation step as
\begin{equation*}
	R^{n+1}=\kappa_0 \widehat{R}^{n+1}+\left(1-\kappa_0\right) \sqrt{E_1(\phi^{n+1})+\delta_0},
\end{equation*}
where $\kappa_0$ is determined by the following optimization problem
\begin{equation*} 
	\kappa_0=\min _{\kappa \in[0,1]} \kappa, \quad \text { s.t. } |R^{n+1}|^2  - |\widehat R^{n+1}|^2 \leq \tau \eta M\|\nabla \mu^{n+1}\|^2,
\end{equation*}
with $\eta \in (0,1]$ an artificial relaxation parameter that can be manually assigned.  In fact,
the solution to the quadratic optimization problem is represented as 
\begin{equation*} 
	\kappa_0=\max \big\{0, (-b-\sqrt{b^2-4 a c}) / (2 a)\big\},
\end{equation*}
where the coefficients are
\begin{equation*} 
	\left\{
	\begin{aligned}
		& a= \big(\widehat{R}^{n+1}-\sqrt{E_1(\phi^{n+1})+\delta_0}\big)^2, \\
		& b=  2\sqrt{E_1(\phi^{n+1})+\delta_0}\big(\widehat{R}^{n+1}-\sqrt{E_1(\phi^{n+1})+\delta_0}\big), \\
		& c=  E_1(\phi^{n+1})+\delta_0  - |\widehat{R}^{n+1} |^2  -\tau \eta M \|\nabla \mu^{n+1} \|^2.
	\end{aligned}
	\right.
\end{equation*}
For the second-order scheme  in subsection \ref{sec:scheme:2th} below Step 3,  the corresponding relaxation steps can be formulated in a similar manner following Step 3. For more details on the relaxation method, we refer the readers to \cite{jiang2022improving}, see also  \cite{liu2024novel}  for the energy-optimized SAV method and \cite{zhang2024novel} for the generalized energy-optimized exponential SAV method.   

Our time discretization schemes are designed to be compatible with a wide range of spatial discretization methods, enabling the development of practical fully discrete numerical schemes. For simplicity, we employ the well-established Marker-And-Cell (MAC) spatial discretization approach \cite{li2022fully,harlow1965numerical} on staggered spatial grids. In this configuration, scalar variables - including the phase-field variable, chemical potential, and pressure - are discretized at cell centers, while velocity components are approximated at cell faces, ensuring a robust and stable numerical framework. In the simulation, we always set $N_x=N_y\equiv N$ (i.e., $h_x=h_y\equiv h$) for uniform spatial partitions, and the errors are measured in the following norms
\begin{equation*}
		\|e_{\bm g}\|_{\ell^2(X)}:=\big(\sum_{n=1}^{N_t} \tau \|\bm g^{n}-\bm g(t_{n}) \|_{X}^2\big)^{1 / 2}, \quad 
		\|e_{\bm g}\|_{\ell^\infty(X)}:=\max _{1 \leq n \leq {N_t}} \|\bm g^{n}-\bm g(t_{n})\|_{X}, \quad 
		|e_R|_{\ell^\infty}:=\max _{1 \leq n \leq N_t} |R^{n}-R(t_{n})|.
\end{equation*}

\subsection{Convergence tests}\label{subsec:e1}
In this example, we consider the accuracy  of the structure-preserving IVS schemes \eqref{scheme:1th:phi}--\eqref{scheme:1th:div} and \eqref{scheme:phi:2th}--\eqref{scheme:div:2th}. By adding two artificial source terms  to the right-hand side of the Navier--Stokes equation  \eqref{model:u} and  the Cahn--Hilliard equation \eqref{model:phi}, we construct the following manufactured exact solutions for the CHNS system as
\begin{equation*}
	\begin{aligned}
		& \phi(x, y, t)=\sin (t) \cos (\pi x) \cos (\pi y), \\
		& \bm u(x, y, t)=\cos (t)(\sin ^2(\pi x) \sin (2\pi y),-\sin (2\pi x) \sin ^2(\pi y)), \\
		& p(x, y, t)=\sin (t) \cos (\pi x) \sin (\pi y),
	\end{aligned}
\end{equation*}
where the parameters are chosen as
$M=  10^{-3}$, $\lambda = 1$, $\nu=10^{-2}$, $\epsilon= 9 \times 10^{-2}$,  and $\delta_0 = \beta =0$.
The computational domain $\Omega=[0,1]^2$ is discretized using $h=\sqrt{\tau/2}$ for the first-order scheme and $h=\tau/2$ for the second-order scheme with different time steps, ensuring that the spatial discretization error is comparable to the temporal discretization error.
\begin{table}[!htbp] 
	\caption{\small Errors and convergence rates of the first-order scheme.}   \label{tab:rate:1th:1th}
	{\footnotesize\begin{tabular*}{\columnwidth}{@{\extracolsep\fill}ccccc@{\extracolsep\fill}}
		\toprule 
		$\tau $   & $\frac{1}{32}$  &  $\frac{1}{64}$   & $\frac{1}{128}$  &  $\frac{1}{256}$\\  
		\midrule 
		 {$\|e_{\phi}\|_{\ell^\infty(L^2)}$} 		&  1.5506 E-3   & 7.7828 E-4  &  3.8988 E-4   &  1.9513 E-4 \\
		Rate & 		---         & 0.99       & 1.00        & 1.00 \\
		{$\| e_{\phi}\|_{\ell^\infty(H^1)}$} 		&  8.0993 E-3  & 4.0721 E-3   &  2.0419 E-3   & 1.0220 E-3 \\
		Rate 		& ---        & 0.99        & 1.00        & 1.00 \\ 
		{$\| e_{\bm u}\|_{\ell^\infty(H^{1})}$} 		&   1.5453 E-2  &  7.7321 E-3   & 3.8165 E-3   &   1.8749 E-3 \\
		Rate 		& ---        & 1.00        & 1.02        & 1.03 \\
		{ $\|e_{\bm u}\|_{\ell^{2}(H^{2})} $ } 		& 8.1322 E-2  & 4.4169 E-2   & 2.3055 E-2   & 1.1759 E-2 \\
		Rate		& ---      & 0.88        & 0.94        & 0.97 \\ 
		{ $\|e_{\bm u}\|_{\ell^{\infty}(L^{\infty})} $ } 		& 1.4096 E-3  & 7.2485 E-4   & 3.6672 E-4   & 1.8432 E-4 \\
		Rate		& ---      & 0.96        & 0.98        & 0.99 \\ 
		$\|e_{p}\|_{\ell^{\infty}(L^{2})}$		& 3.3944 E-2  &  1.7080 E-2  & 8.5676 E-3  &   4.2888 E-3 \\
		Rate		& ---      & 0.99        & 1.00        & 1.00 \\ 
		$\|e_{p}\|_{\ell^2(H^1)}$		& 6.2117 E-2  & 3.1233 E-2  & 1.5662 E-2  &  7.8391 E-3 \\
		Rate		& ---      & 0.99        & 1.00        & 1.00 \\ 
		{$|e_R|_{\ell^{\infty}}$} 		& 2.1119 E-4  &  1.0708 E-4   & 5.3905 E-5   & 2.7043 E-5 \\
		Rate 		& ---       & 0.98        & 0.99       & 1.00 \\  
		\bottomrule
	\end{tabular*}}
\end{table}

In Tables \ref{tab:rate:1th:1th}--\ref{tab:rate:1th:2th}, we list the errors and convergence rates of the two schemes at $T=0.2$. The results confirm that each scheme achieves the anticipated temporal accuracy, in full agreement with the theoretical estimates established in Theorem \ref{err:full:stokes:all}. In particular, Tables \ref{tab:rate:1th:1th}--\ref{tab:rate:1th:2th} demonstrate that the velocity $\bm u$ achieves optimal accuracy in the $\ell^\infty(0, T;\bm L^{\infty}) \cap \ell^2(0, T;\bm H^{2})$-norm, and the pressure $p$ converges optimally in the $\ell^\infty(0, T;L^{2}) \cap \ell^2(0, T;H^{1})$-norm. Notably, the classical pressure-correction methods \cite{guermond2006overview,li2022fully} often suffer from order reduction, particularly for the velocity in the $\ell^{2}(0, T; \bm H^1)$-norm and for the pressure in the $\ell^{2}(0, T; L^2)$-norm. Moreover, these methods rarely yield optimal convergence rates in stronger norms, such as the $\ell^2(0, T;\bm H^{2})$-norm for velocity or the $\ell^{2}(0, T; H^1)$-norm for pressure. In contrast, the proposed schemes circumvent the requirement for artificial boundary conditions and fully preserve the original velocity boundary conditions, thereby attaining optimal first- or second-order temporal accuracy.
\begin{table} [!htbp] 	 
	\caption{\small Errors and convergence rates of the second-order scheme.}   \label{tab:rate:1th:2th}
	{\footnotesize\begin{tabular*}{\columnwidth}{@{\extracolsep\fill}ccccc@{\extracolsep\fill}}
		\toprule 
		$\tau $   & $\frac{1}{20}$  &  $\frac{1}{40}$   & $\frac{1}{80}$  &  $\frac{1}{160}$ \\
		\midrule 
		 {$\|e_{\phi}\|_{\ell^\infty(L^2)}$} 		&  8.0353 E-3   & 2.1979 E-3  &  5.6626 E-4   &  1.4302e E-4 \\
		Rate & 		---         & 1.87       & 1.96        & 1.99 \\
		{$\| e_{\phi}\|_{\ell^\infty(H^1)}$} 		&  5.1221 E-2  & 1.4802 E-2   &  3.8982 E-4   & 9.9678 E-4 \\
		Rate 		& ---        & 1.79        & 1.93        & 1.97 \\ 
		{$\| e_{\bm u}\|_{\ell^\infty(H^{1})}$} 		&   7.9893 E-2  &  2.2071 E-2   & 5.9266 E-3   &   1.5215 E-3 \\
		Rate 		& ---        & 1.86        & 1.90        & 1.96 \\
		{ $\|e_{\bm u}\|_{\ell^{2}(H^{2})} $ } 		& 6.1627 E-1  & 1.5842 E-1   & 4.0338e E-2   & 1.0189 E-2 \\
		Rate		& ---      & 1.96        & 1.97        & 1.99 \\ 
		{ $\|e_{\bm u}\|_{\ell^{\infty}(L^{\infty})} $ } 		& 7.4744 E-3  & 2.6066 E-3   & 6.6708 E-4   & 1.7156 E-4 \\
		Rate		& ---      & 1.52        & 1.97        & 1.96 \\ 
		$\|e_{p}\|_{\ell^{\infty}(L^{2})}$		& 1.6444 E-2  &  4.2393 E-2  & 1.0734 E-2  &   2.6987 E-3 \\
		Rate		& ---      & 1.96        & 1.98        & 1.99 \\ 
		$\|e_{p}\|_{\ell^2(H^1)}$		& 3.0631 E-1  & 6.3874 E-2  & 1.3115 E-2  &  2.7450 E-3 \\
		Rate		& ---      & 2.26        & 2.28        & 2.26 \\ 
		{$|e_R|_{\ell^{\infty}}$} 		& 8.6334 E-3  &  2.3125 E-3   & 5.9549 E-4   & 1.4881 E-4 \\
		Rate 		& ---       & 1.90        & 1.96       & 2.00 \\   
		\bottomrule
	\end{tabular*}}
	  \vspace{-8pt}
\end{table}

\begin{figure}  [!htbp] \small
	\centering 
	\subfigure[{$\phi$ at $t$ = 0, 0.2, 0.4, 0.8}]
	{
		\includegraphics[width=0.23\textwidth]{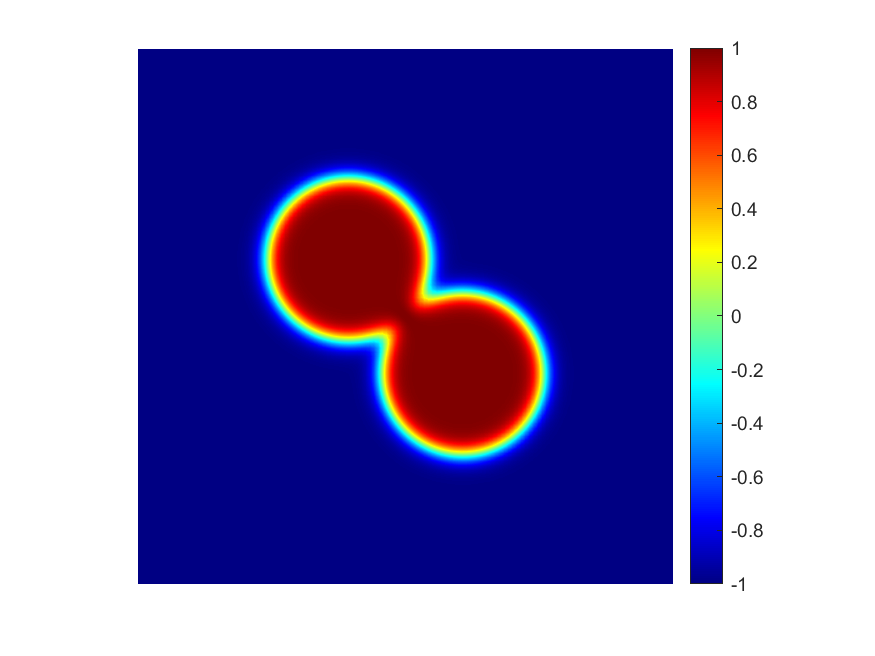}
		\includegraphics[width=0.23\textwidth]{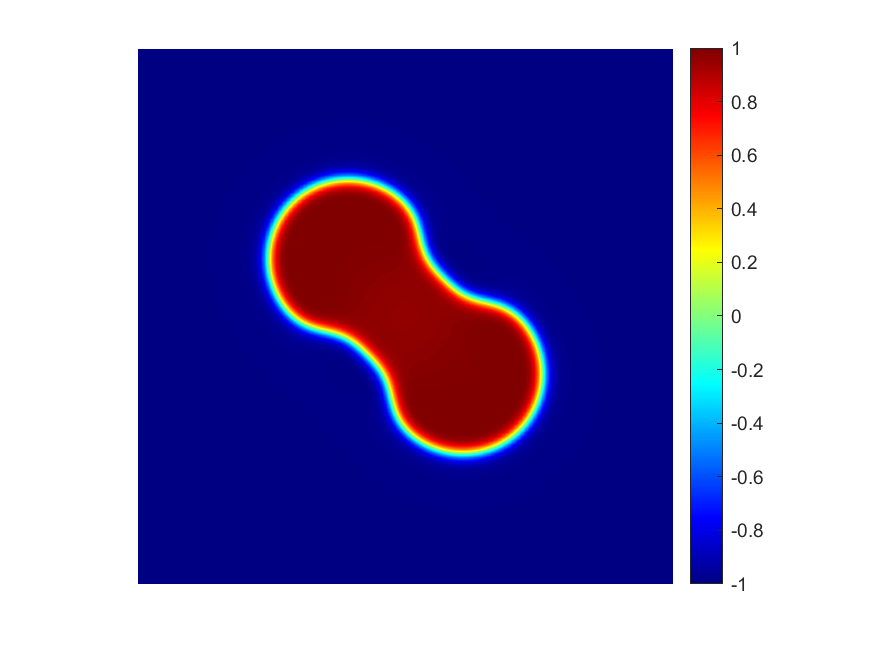}
		\includegraphics[width=0.23\textwidth]{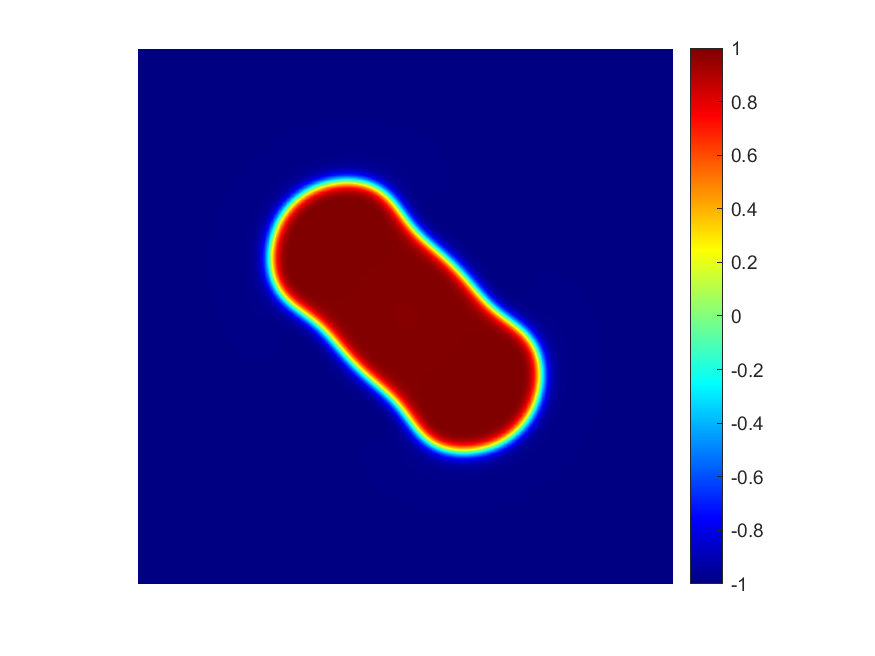}
		\includegraphics[width=0.23\textwidth]{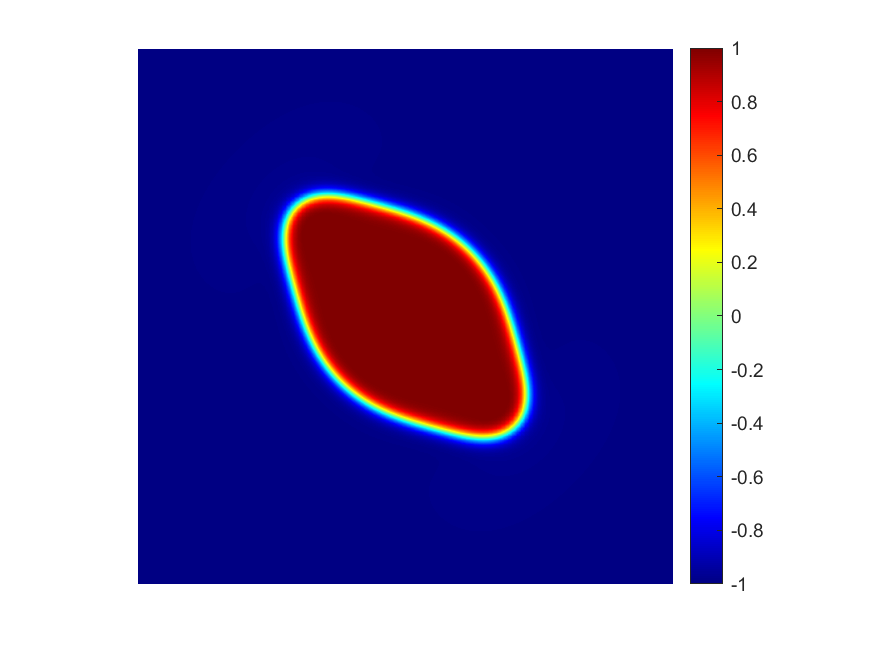}
	}
	\subfigure[{$\phi$ at $t$ = 1.2, 1.5, 2, 3 }]
	{
		\includegraphics[width=0.23\textwidth]{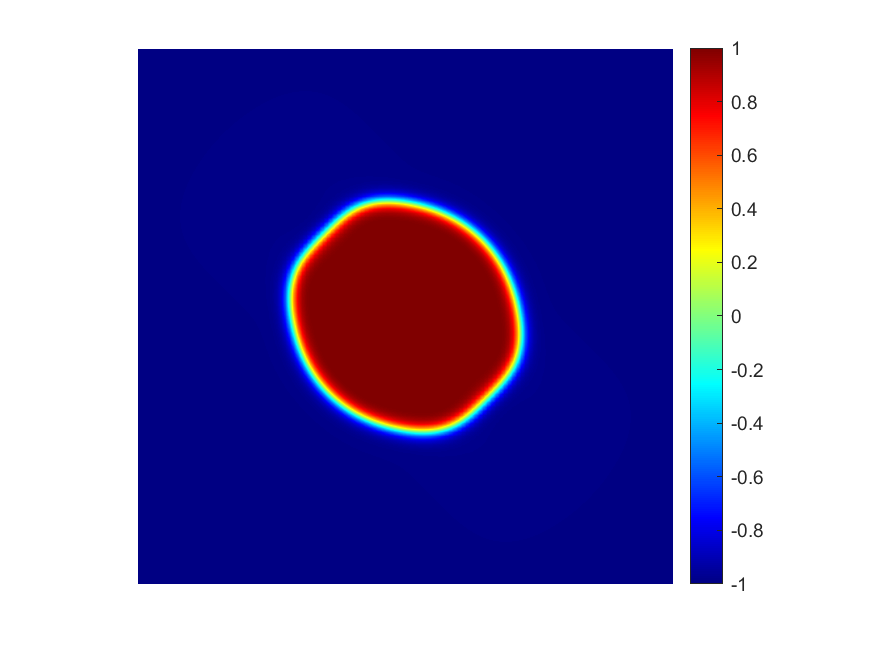}
		\includegraphics[width=0.23\textwidth]{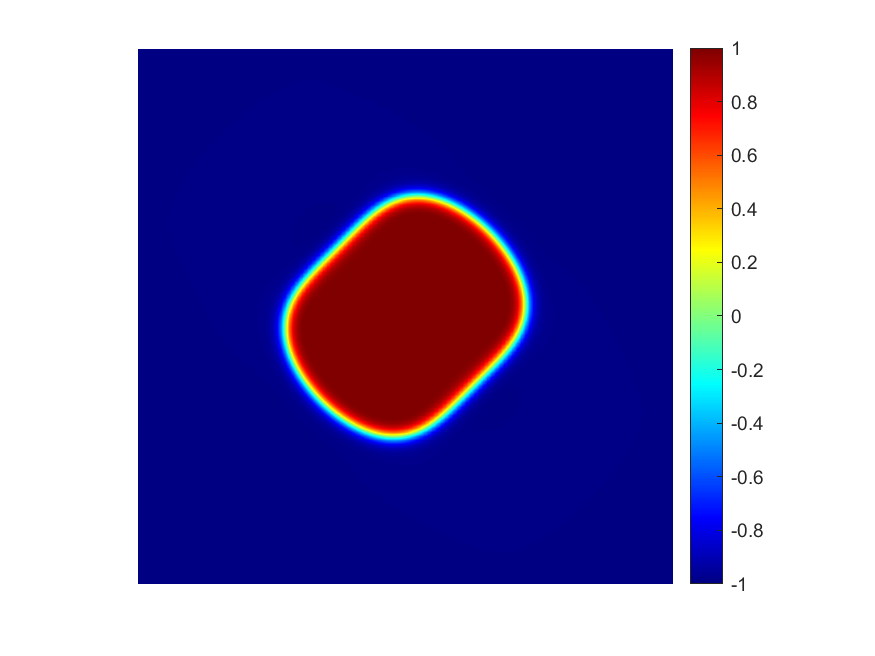}
		\includegraphics[width=0.23\textwidth]{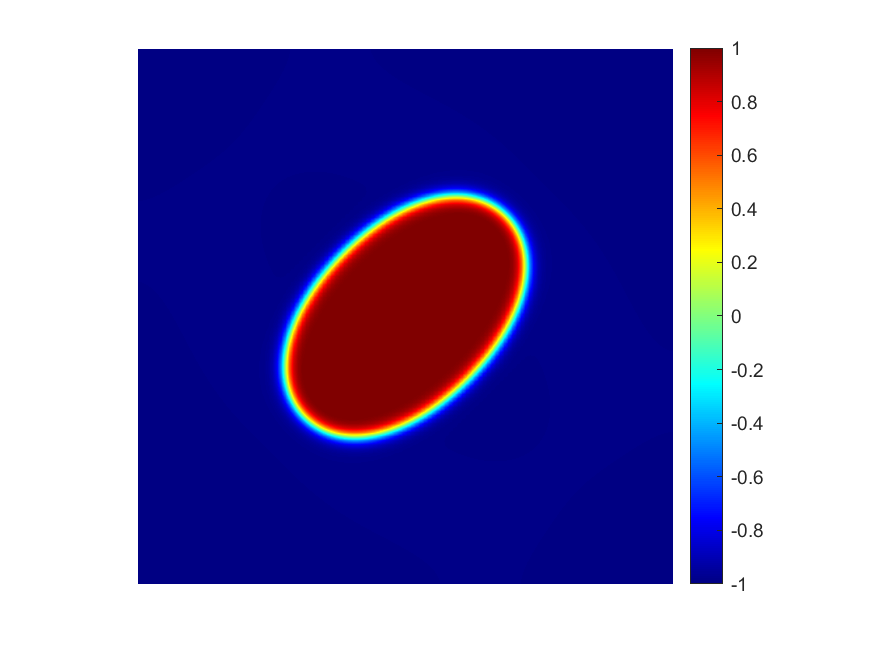}
		\includegraphics[width=0.23\textwidth]{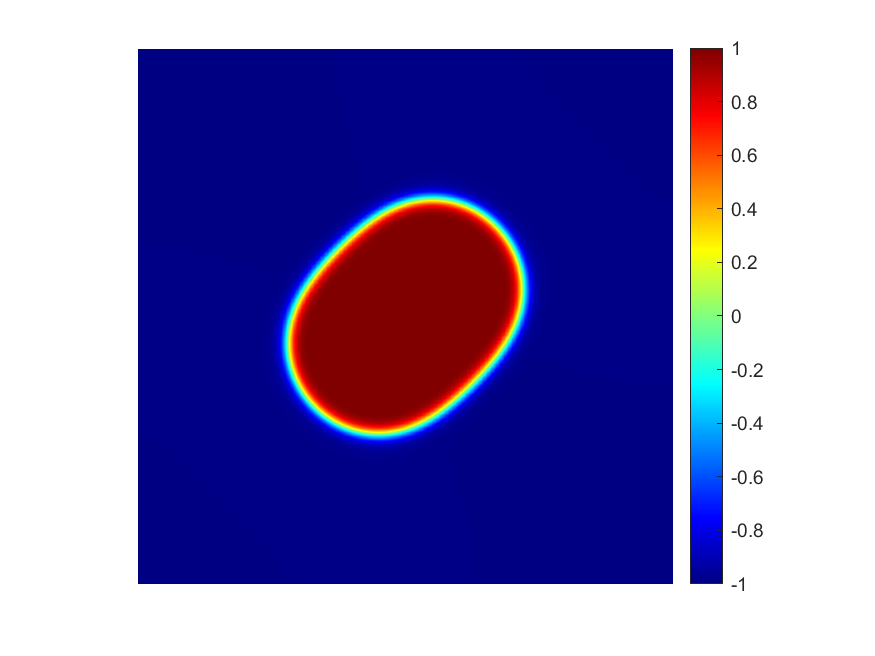}
	}
	\subfigure[{$\phi$ at $t$ =  3.4, 5, 8, 10}]
	{
		\includegraphics[width=0.23\textwidth]{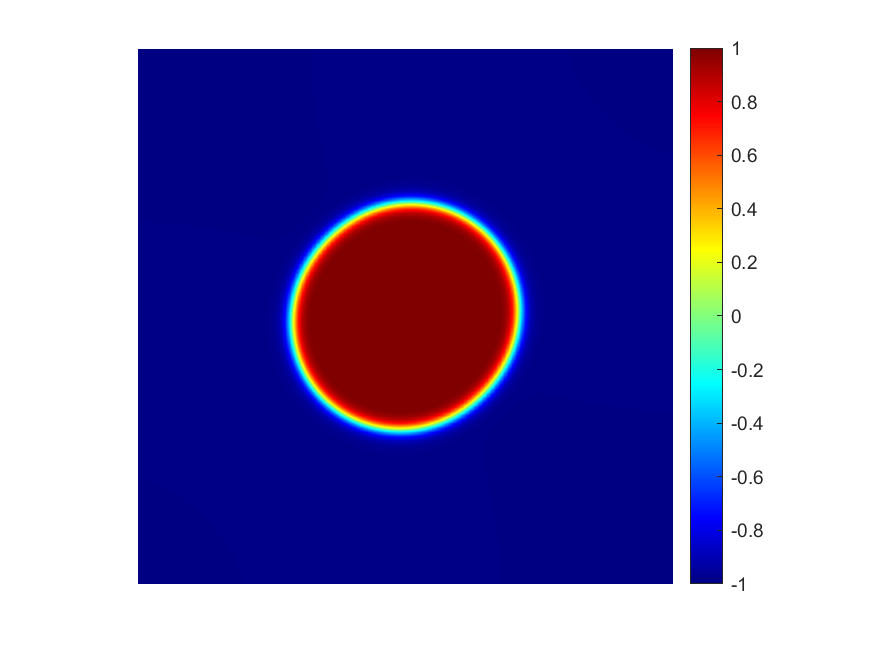}
		\includegraphics[width=0.23\textwidth]{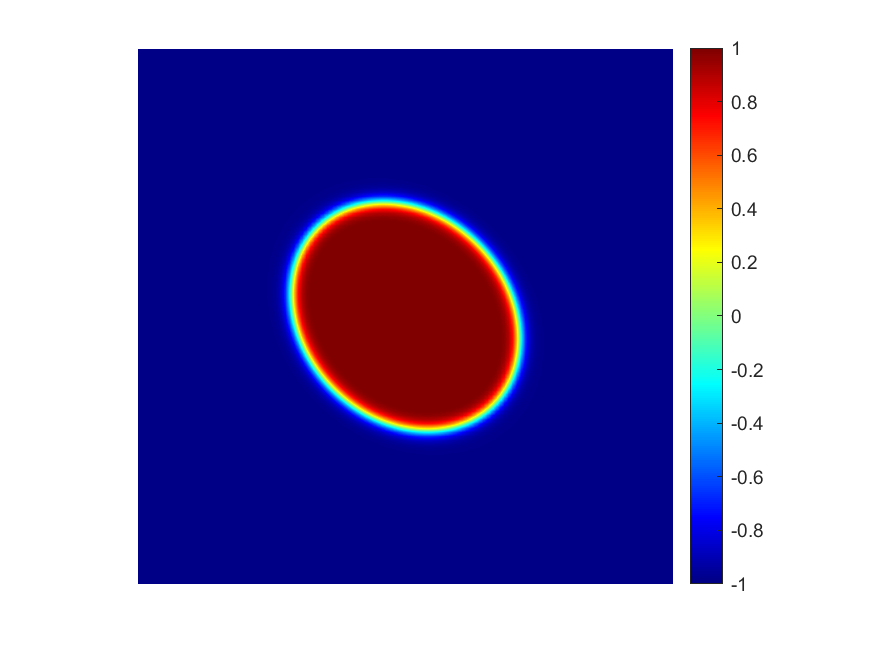}
		\includegraphics[width=0.23\textwidth]{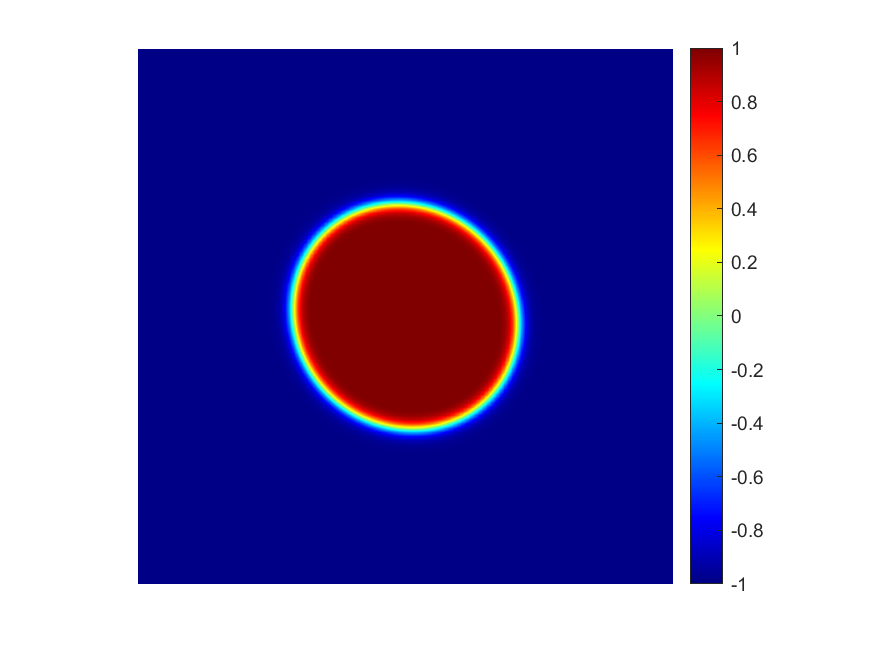}
		\includegraphics[width=0.23\textwidth]{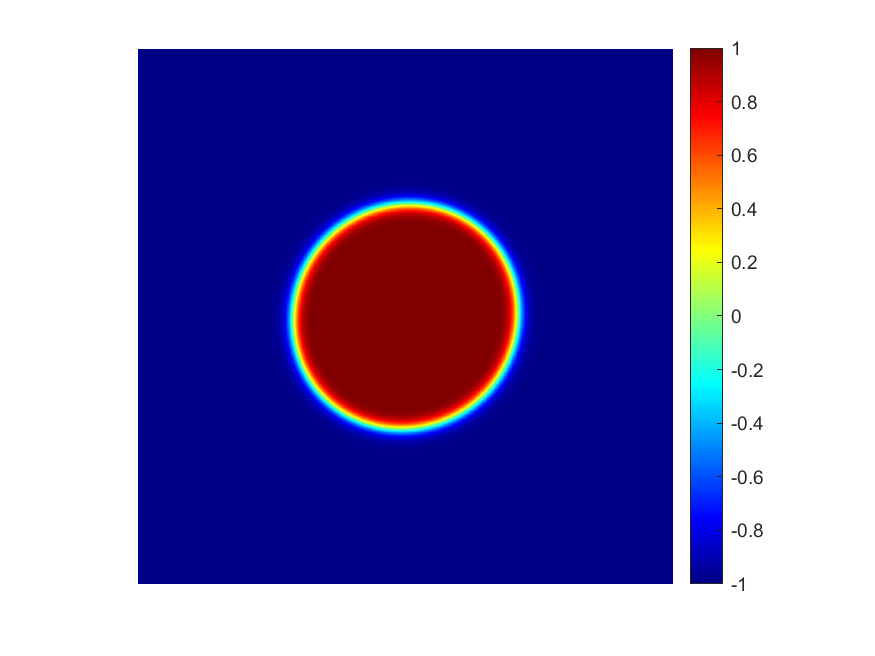}
	}
	\setlength{\abovecaptionskip}{0.01cm} 
	\setlength{\belowcaptionskip}{0.0cm}
	\caption{\small Snapshots of the phase-field variable for the dynamics of bubble merging  at different time instants.}
	\label{fig:tanh:phi}
	\vspace{-10pt}
\end{figure}
\subsection{Bubbles merging driven by surface tension}\label{subsec:e2}
In this example, we simulate the merging of two bubbles driven by surface tension in the domain $\Omega=\left[0, 1\right]^2$, following a setup similar to \cite{guo2014numerical, zhao2021second}. The initial conditions are provided as $\bm{u}(x, y, 0) =\bm{0}$, $p(x, y, 0)={0}$ and
\begin{equation*}
	\phi(x, y, 0) =1-\tanh \frac{-r+\sqrt{(x-x_a)^2+(y-y_a)^2}}{2 s}  -\tanh \frac{-r+\sqrt{(x-x_b)^2+(y-y_b)^2}}{2 s}  
\end{equation*}
with $x_a=0.5 -\frac{r}{\sqrt{2}}$, $y_a=0.5 +\frac{r}{\sqrt{2}}$, $x_b=0.5 +\frac{r}{\sqrt{2}}$, $y_b=0.5 -\frac{r}{\sqrt{2}}$, $r=0.15$ and $s = 10^{-2}$. The parameters are chosen as $M=10^{-2}$,  $\nu=10^{-3}$, $\lambda=10^{-4}$, $\epsilon=10^{-2}$, with uniform meshes $N=128$ and time step $\tau=10^{-3}$.  

The dynamics of bubble merging in a hydrodynamic environment are illustrated in Figs. \ref{fig:tanh:phi}--\ref{fig:tanh:u} at various time instants, which presents the evolution of the phase-field variable $\phi$ and associated velocity fields at different time instants. As expected, the two bubbles eventually merge into a single droplet, which stabilizes into a round shape. Initially, the droplet undergoes a squeezing phase, followed by stretching, with back-and-forth damping oscillations due to the exchange between kinetic energy and Helmholtz free energy. The two bubbles eventually merge into a big elliptical bubble, which subsequently deforms and contorts into a steady state circular bubble. The results computed by the first-order scheme \eqref{scheme:1th:phi}--\eqref{scheme:1th:div} are basically the same as those of the second-order scheme in \cite{zhao2021second}, confirming the reliability of the proposed method. Furthermore, the time evolutions of the energy and mass, shown in Fig. \ref{fig:tanh:E_M}, confirms the energy dissipation and mass conservation properties established in Theorem \ref{thm:energy}.
\begin{figure}  [!htbp] \small
	\centering 
	\subfigure[{$\phi$ at $t$ = 0, 0.2, 0.4, 0.8}]
	{
		\includegraphics[width=0.23\textwidth]{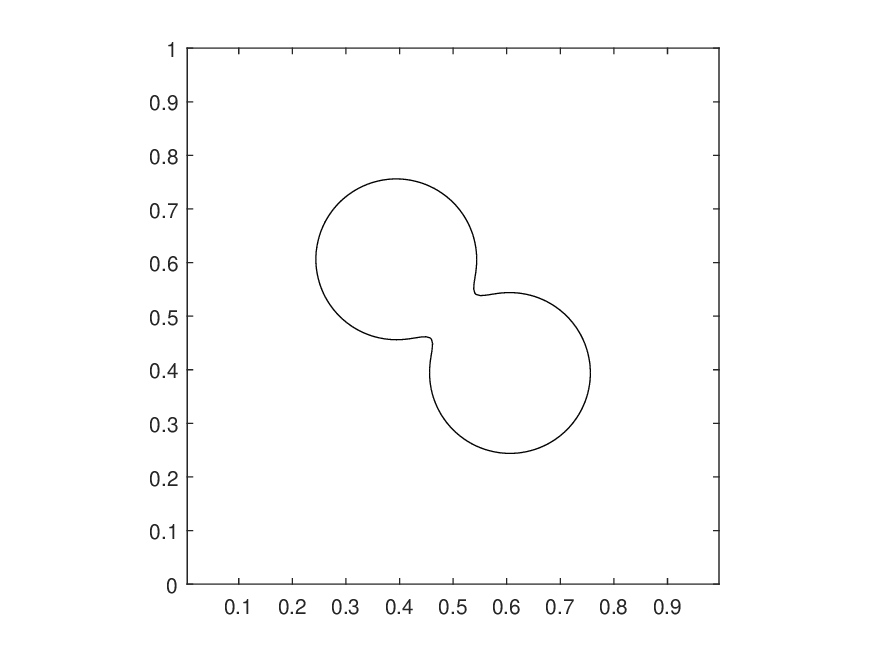}
		\includegraphics[width=0.23\textwidth]{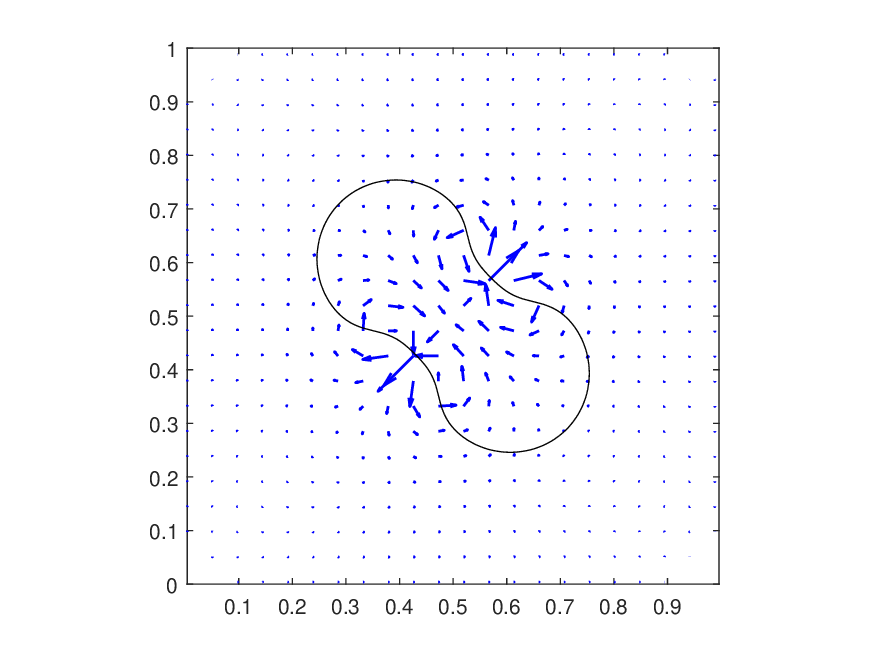}
		\includegraphics[width=0.23\textwidth]{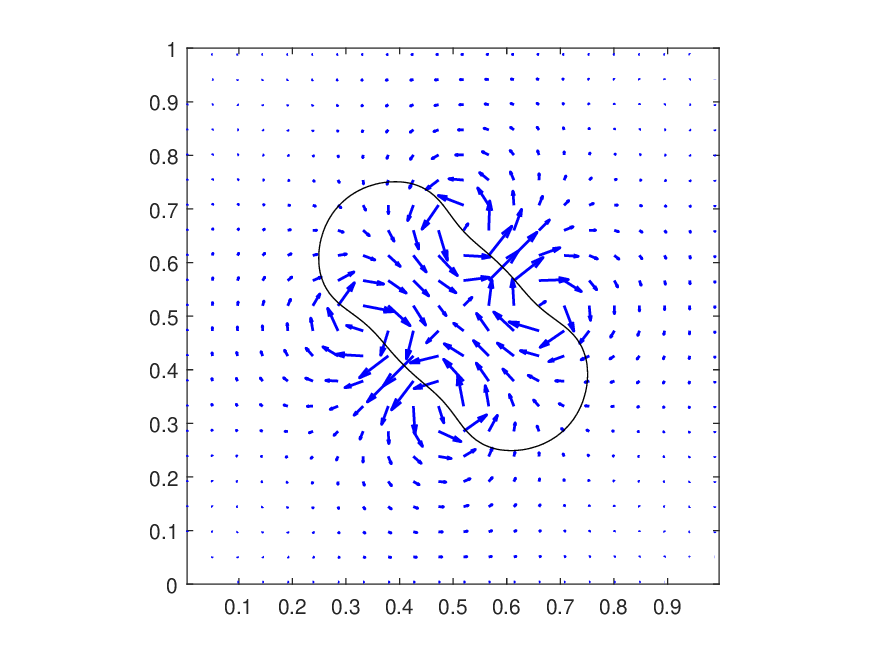}
		\includegraphics[width=0.23\textwidth]{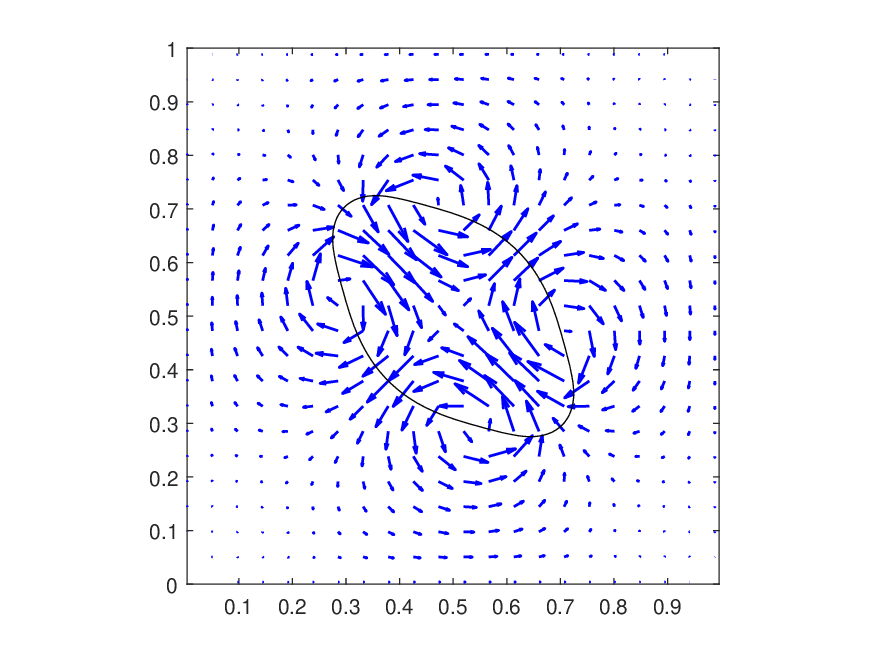}
	}
	\subfigure[{$\phi$ at $t$ = 1.2, 1.5, 2, 3 }]
	{
		\includegraphics[width=0.23\textwidth]{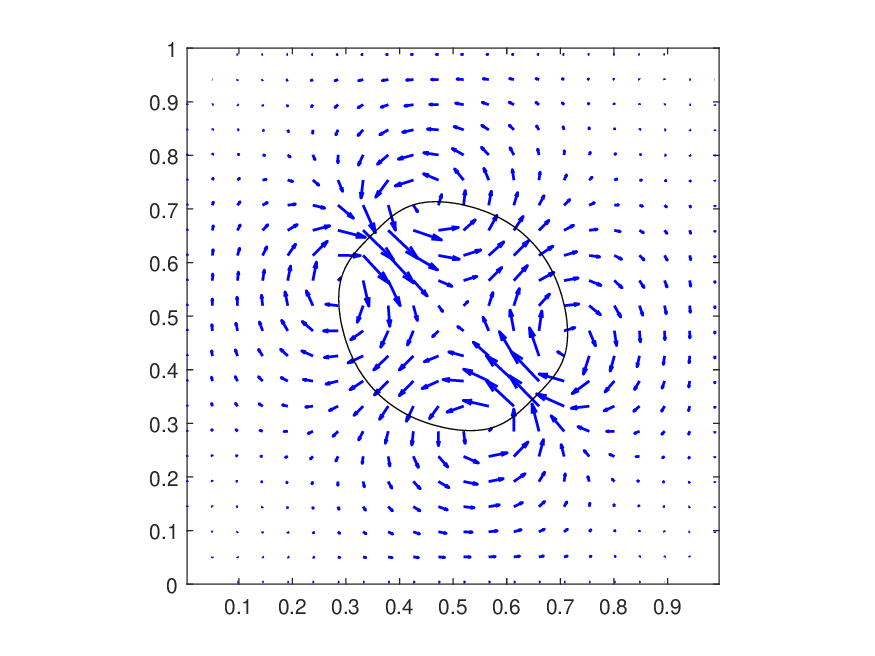}
		\includegraphics[width=0.23\textwidth]{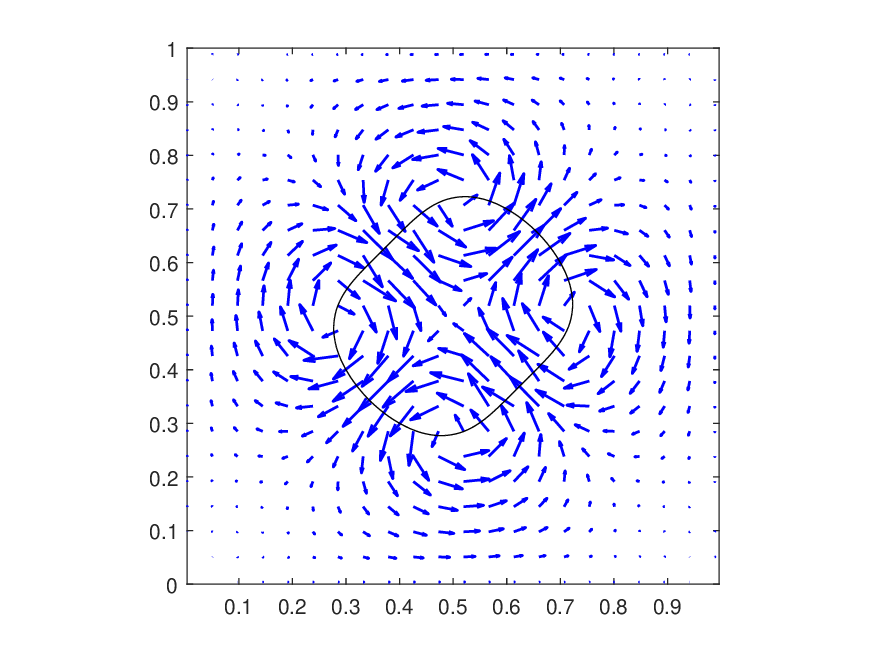}
		\includegraphics[width=0.23\textwidth]{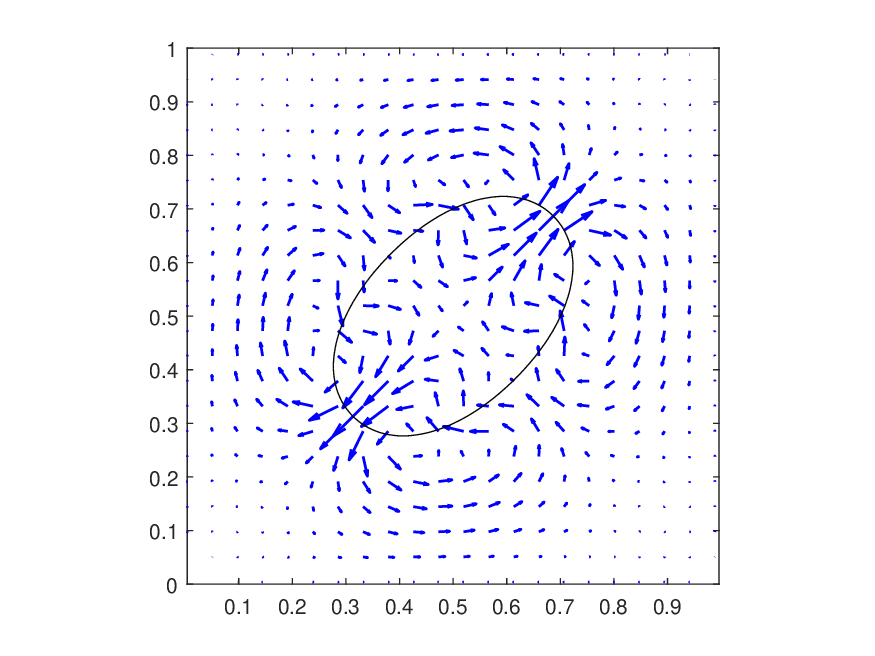}
		\includegraphics[width=0.23\textwidth]{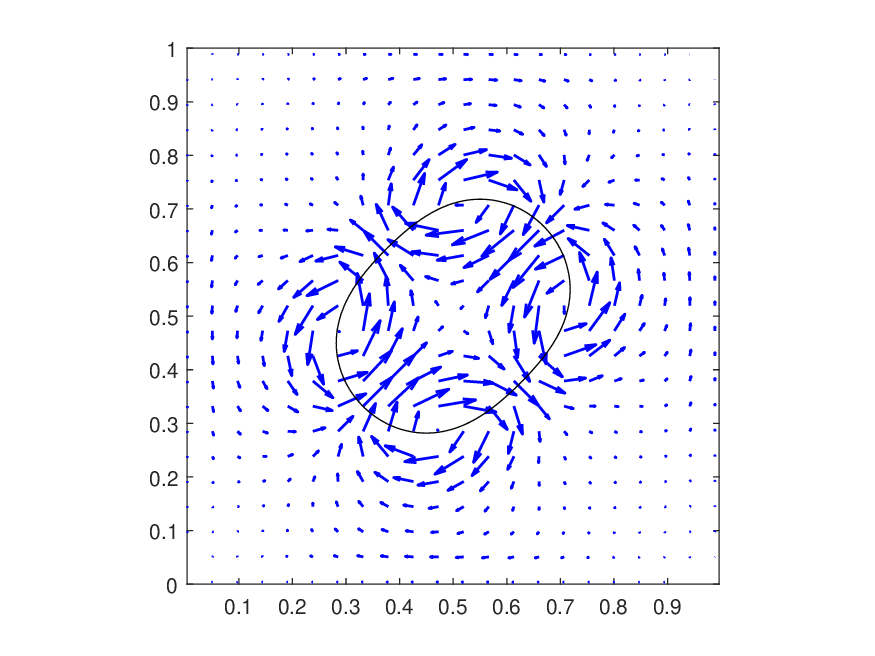}
	}
	\subfigure[{$\phi$ at $t$ = 3.4, 5, 8, 10}]
	{
		\includegraphics[width=0.23\textwidth]{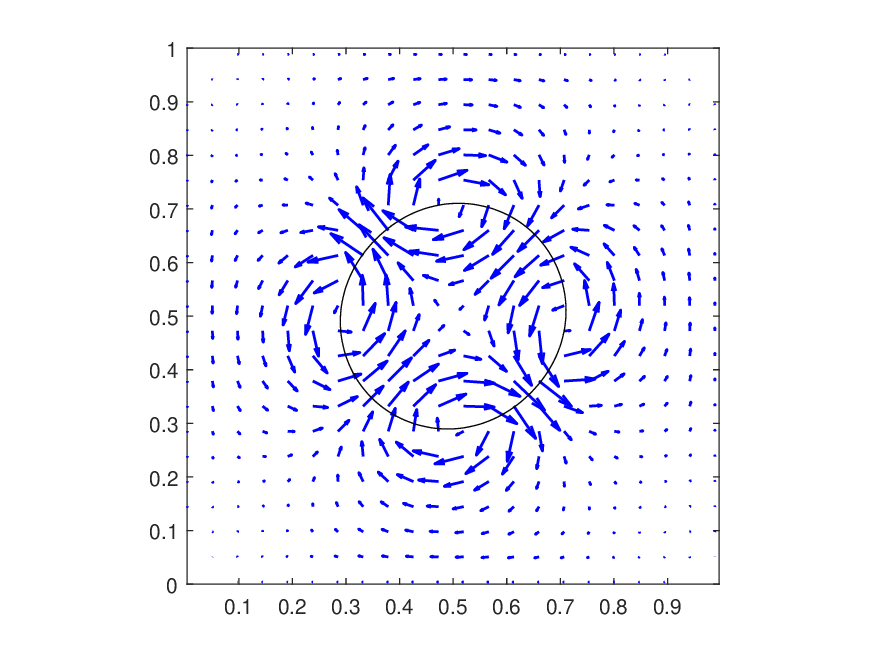}
		\includegraphics[width=0.23\textwidth]{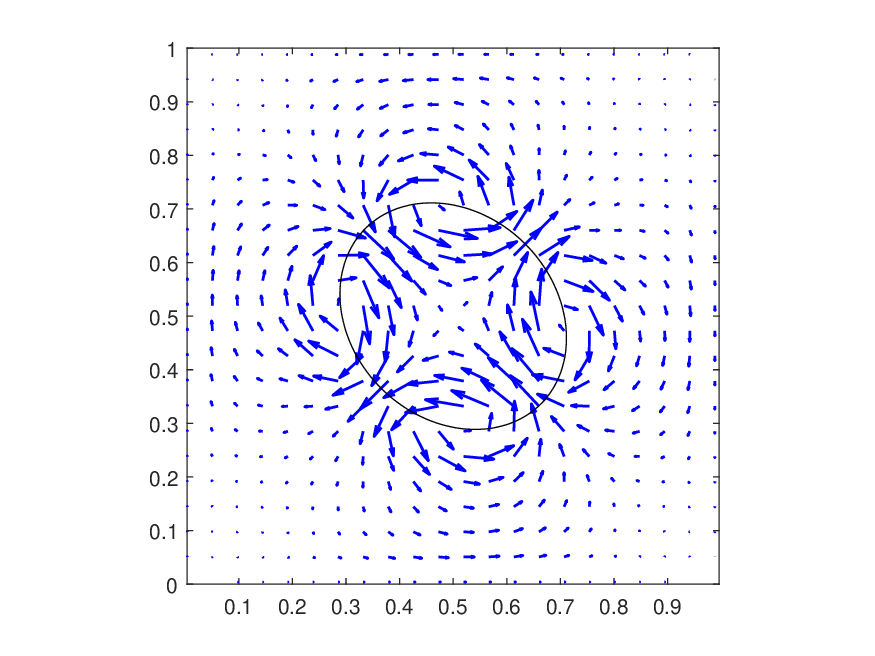}
		\includegraphics[width=0.23\textwidth]{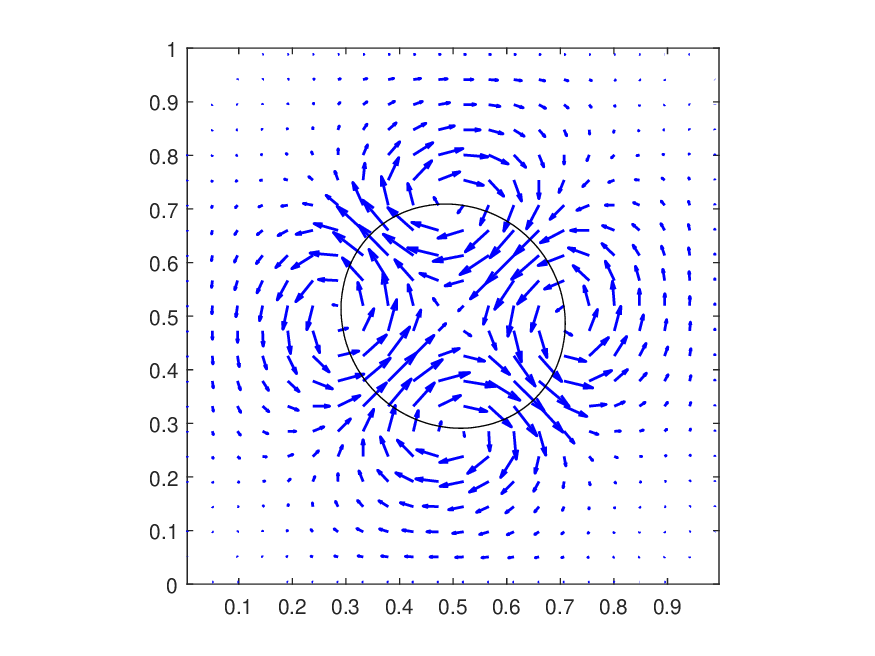}
		\includegraphics[width=0.23\textwidth]{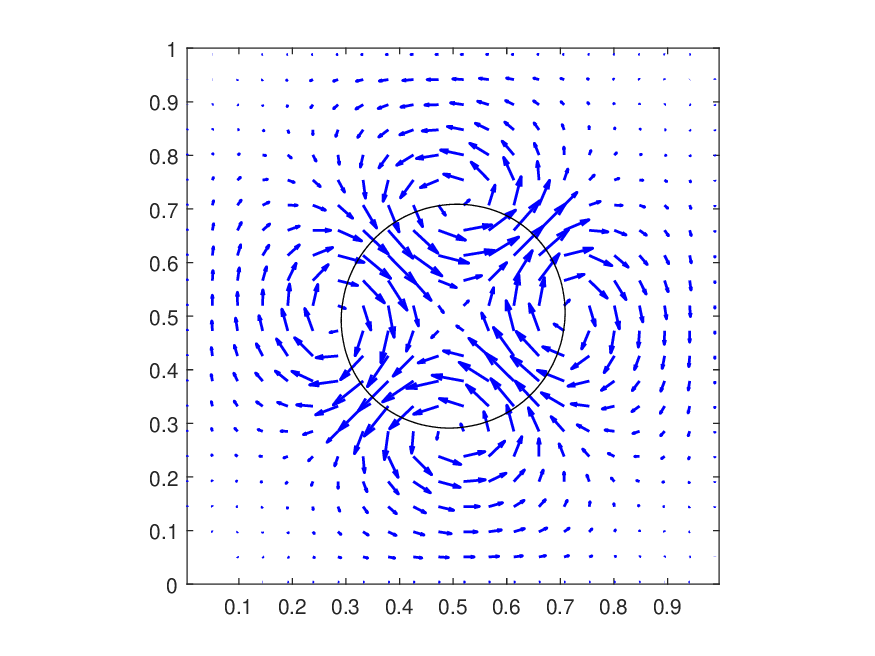}
	}
	\setlength{\abovecaptionskip}{0.01cm} 
	\setlength{\belowcaptionskip}{0.0cm}
	\caption{\small  Visualization of the velocity field  for the dynamics of bubble merging at different time instants.}
	\label{fig:tanh:u}
\end{figure}
 \begin{figure}  [!thbp] \small
	  \vspace{-8pt}
	  \centering 
	  \subfigure
	 	{
		            \includegraphics[width=0.38\textwidth]{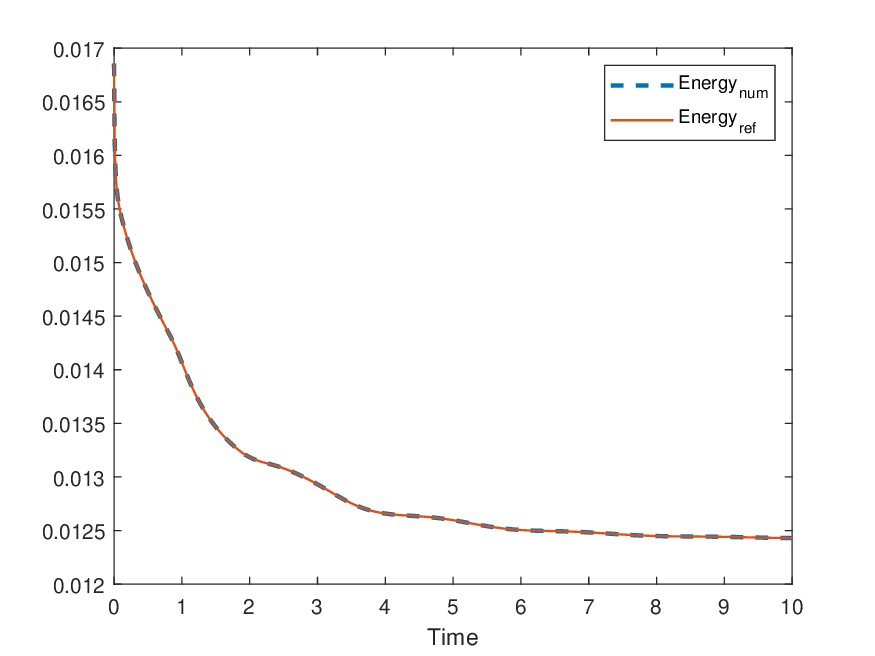}
		            \includegraphics[width=0.38\textwidth]{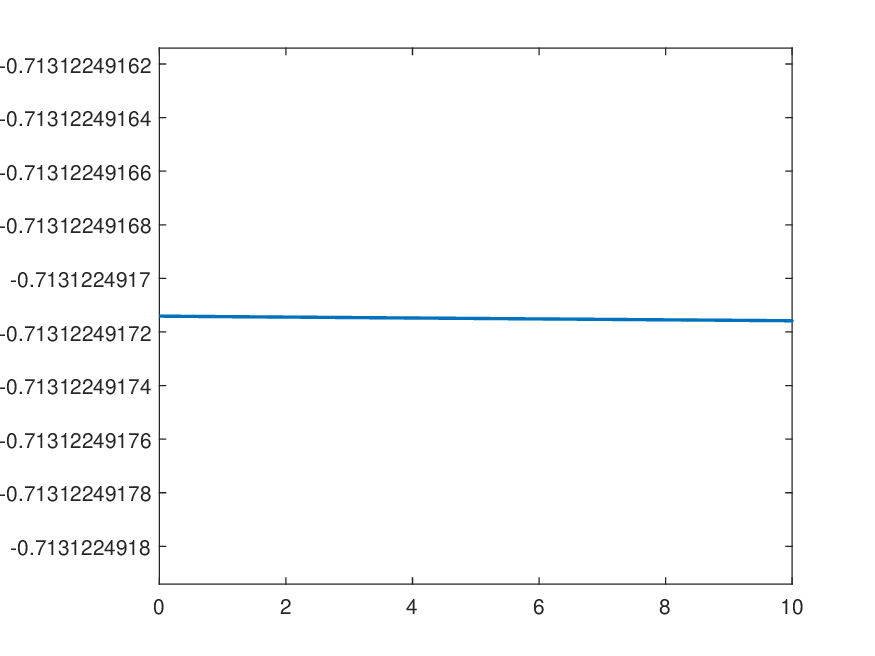}
		 }
	  \setlength{\abovecaptionskip}{0.01cm} 
	 \setlength{\belowcaptionskip}{0.0cm}
	  \caption{\small  Evolutions of energy and mass at different time instants.}	  \label{fig:tanh:E_M}
	  	  \vspace{-8pt}
	 \end{figure}

\subsection{Flow-coupled phase separation}\label{subsec:e3}
In this example, we simulate the flow-coupled nucleation process in the computational domain $\Omega=[0,1]^2$ via the second-order IVS scheme \eqref{scheme:phi:2th}--\eqref{scheme:div:2th}, following the initial conditions given in \cite{li2023global}:
\[
\bm u(x, y,0)=\mathbf{0}, ~ p(x, y, 0)={0}, ~ \phi(x, y,0)=2y-1+ \phi_0,
\]
where $\phi_0$ represents the random distribution values between $-0.01$ and $0.01$.
The phase variable $\phi$ is initially larger near the upper and lower boundaries and smaller at the center of the domain, as shown in Fig. \ref{fig:rand:phi0}. In this simulation, we fix the physical parameters $M=10^{-1}$, $\lambda=10^{-5}$ and $\epsilon=10^{-2}$, and choose the computational parameters $\delta_0 = \beta=0$, $\tau=10^{-3}$, $N=100$.

\begin{figure}  [!htbp] \small
	\centering 
	\subfigure
	{
		\includegraphics[width=0.38\textwidth] {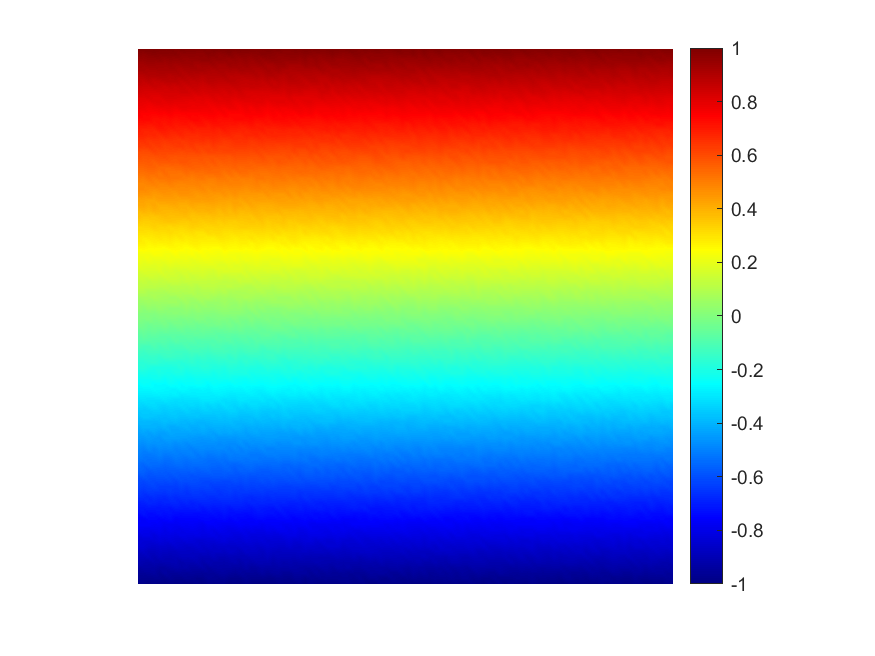}
	}
	\setlength{\abovecaptionskip}{0.01cm} 
	\setlength{\belowcaptionskip}{0.0cm}
	\caption{\small Snapshot of the initial phase-field variable.}
	\label{fig:rand:phi0}
\end{figure}
\begin{figure}[!htbp] 
	\centering
	\includegraphics[width=0.24\textwidth]{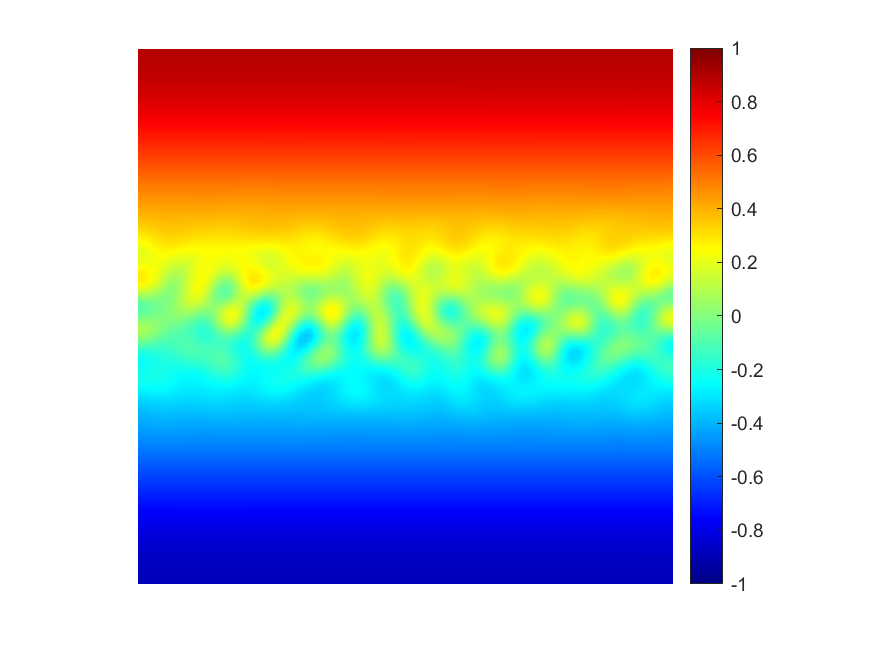}
	\includegraphics[width=0.24\textwidth]{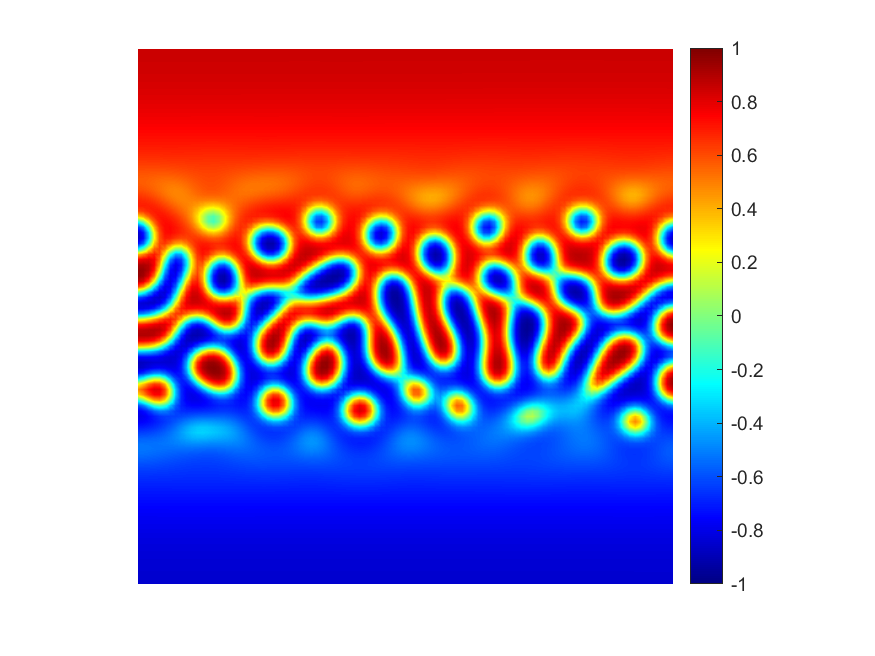}
	\includegraphics[width=0.24\textwidth]{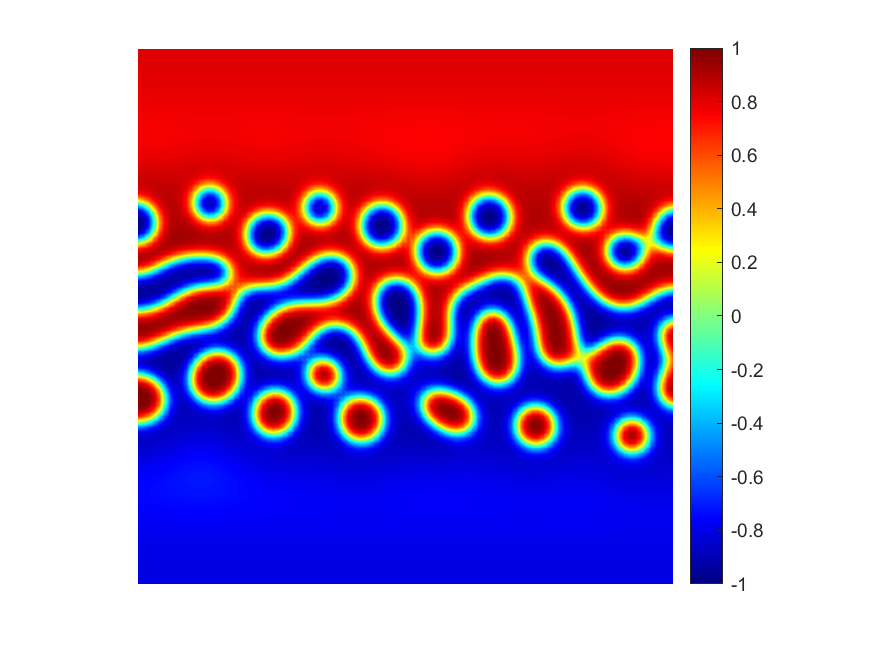}
	\includegraphics[width=0.24\textwidth]{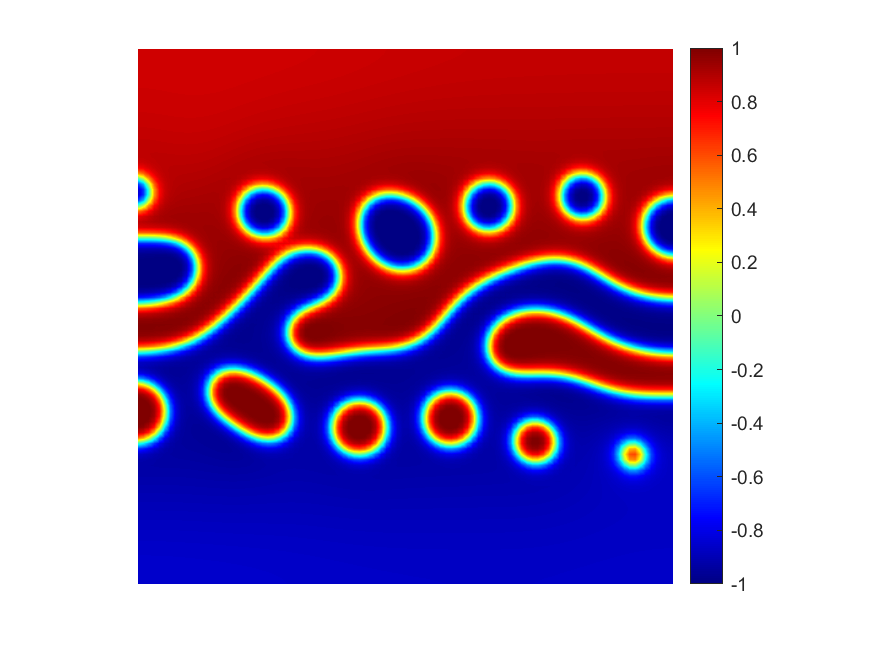}
	\includegraphics[width=0.24\textwidth]{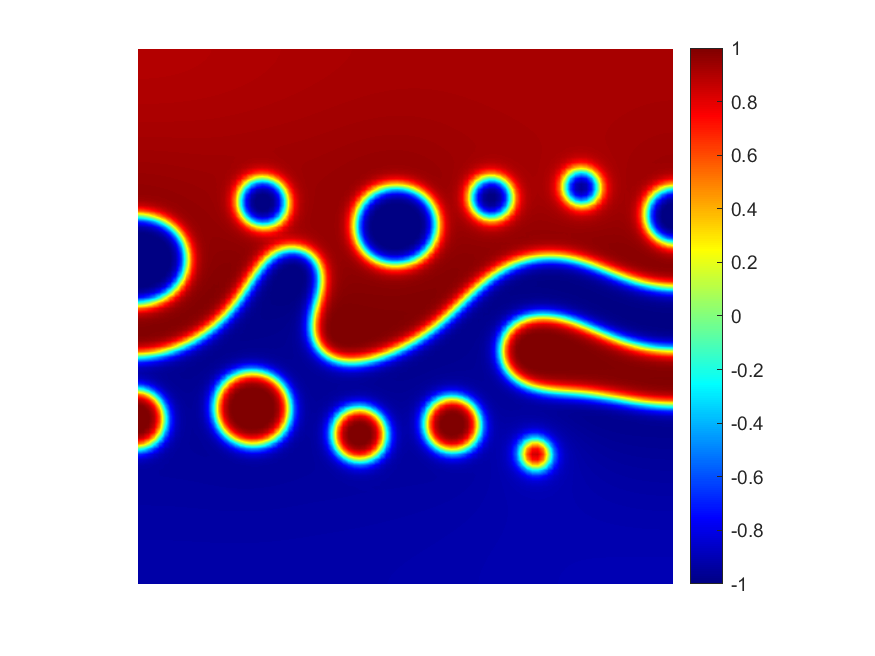}
	\includegraphics[width=0.24\textwidth]{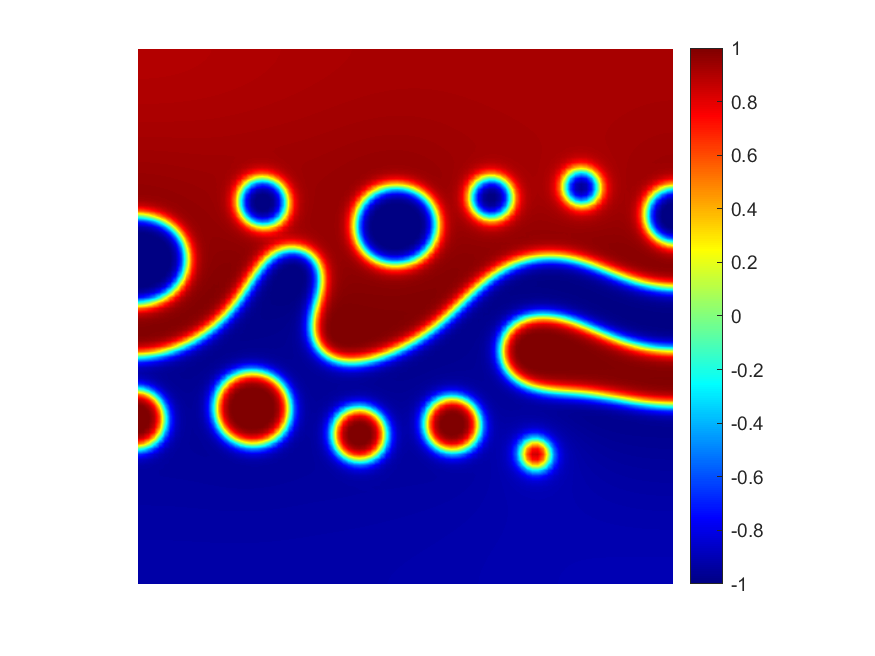}
	\includegraphics[width=0.24\textwidth]{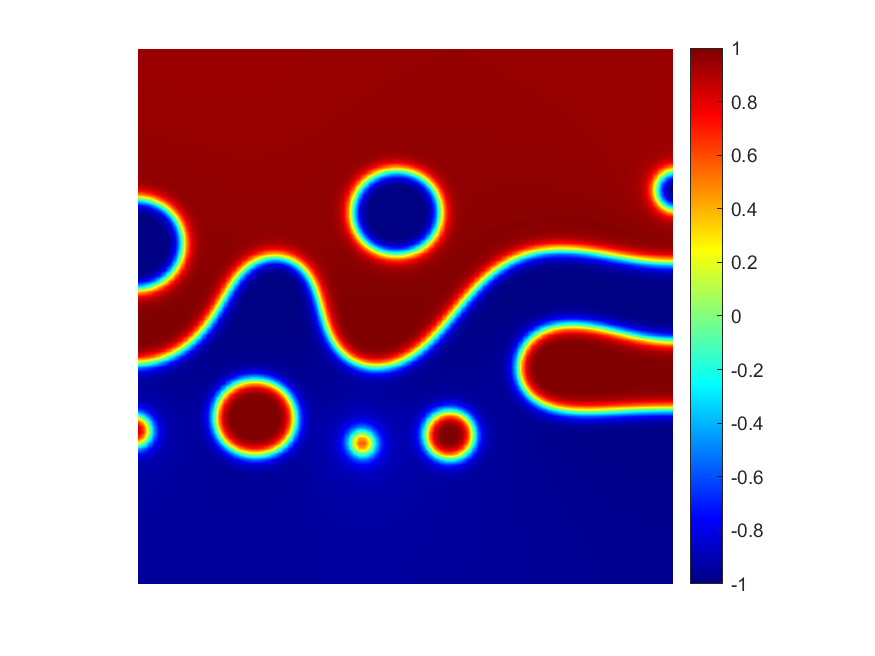}
	\includegraphics[width=0.24\textwidth]{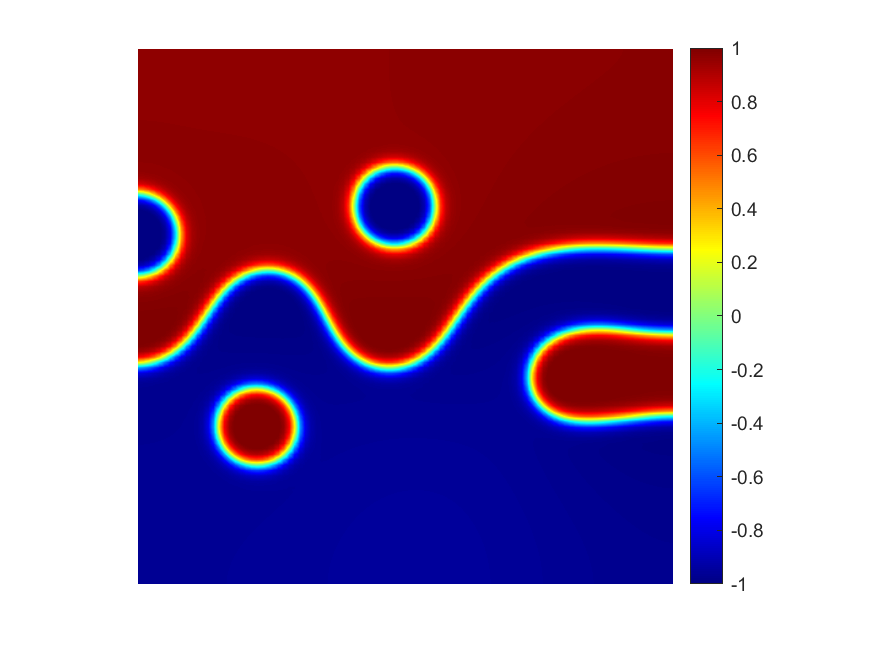}
	\caption{\small  Snapshots of the phase-field variable for the flow-coupled nucleation process with $\nu=1$ at $t= 0.2, 0.5, 1, 3, 5, 10, 15$ and $20$, respectively.}
	\label{fig:rand:phi:nu1}
\end{figure}
\begin{figure}[!htbp] 
	\centering
	\includegraphics[width=0.24\textwidth]{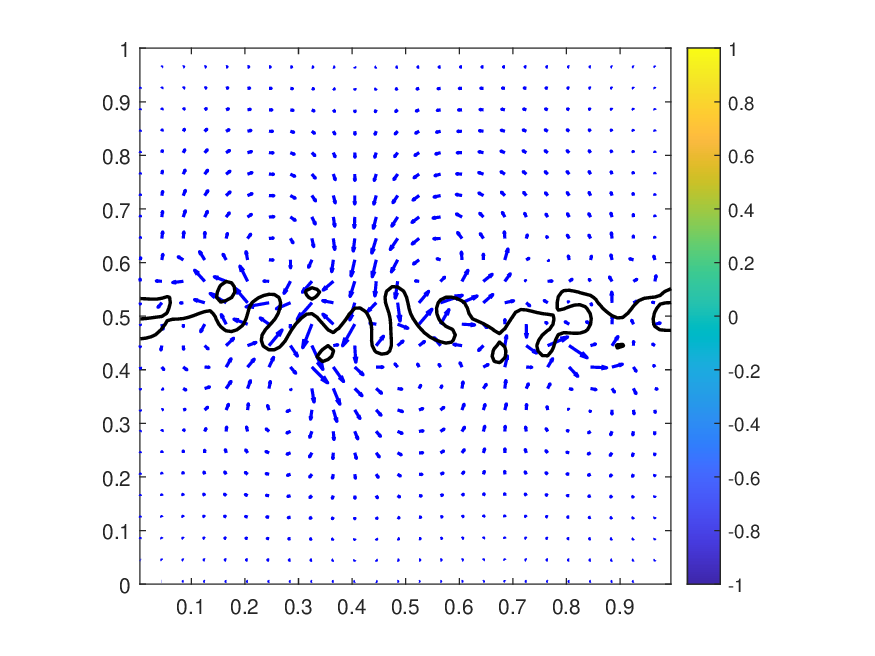}
	\includegraphics[width=0.24\textwidth]{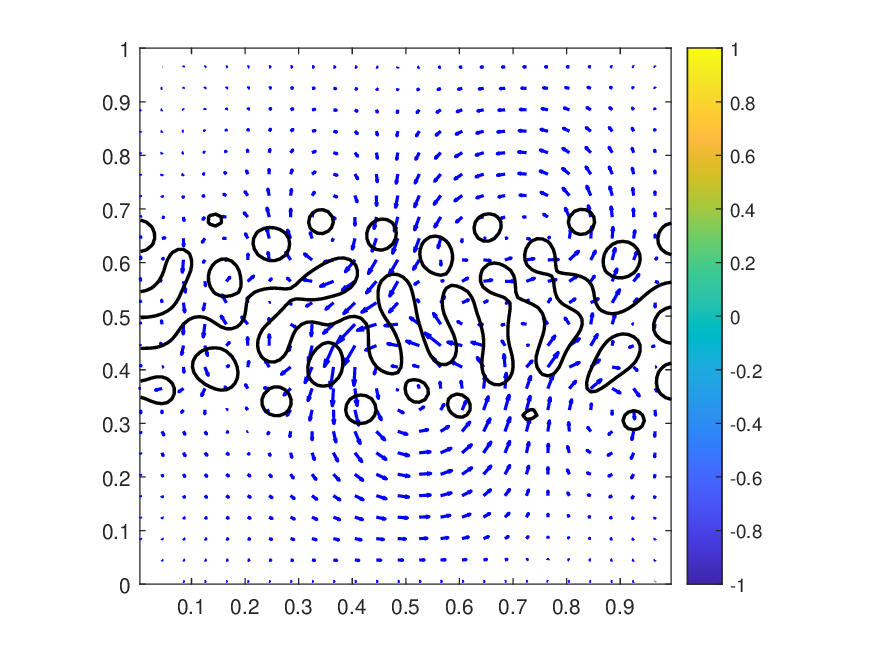}
	\includegraphics[width=0.24\textwidth]{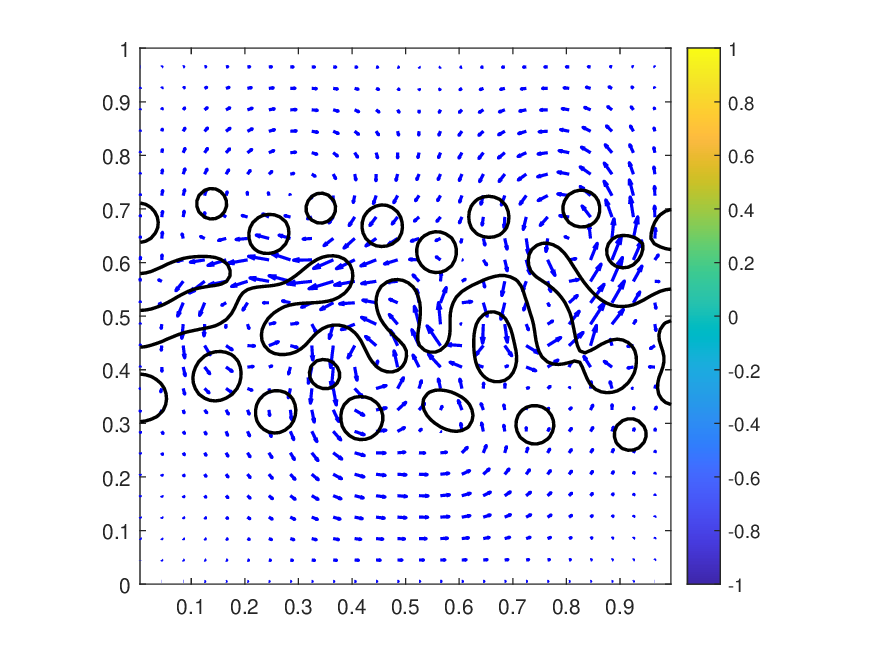}
	\includegraphics[width=0.24\textwidth]{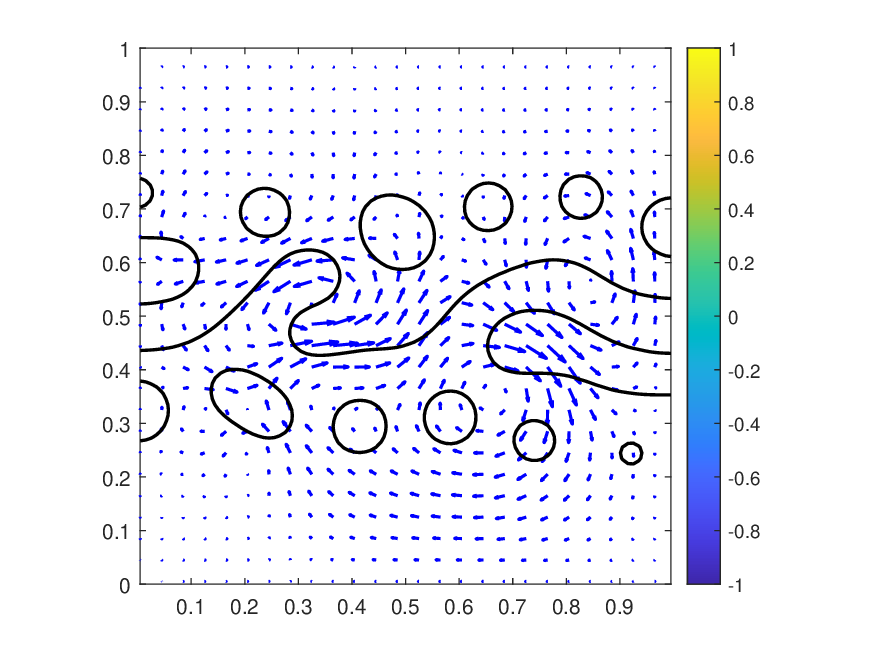}
	\includegraphics[width=0.24\textwidth]{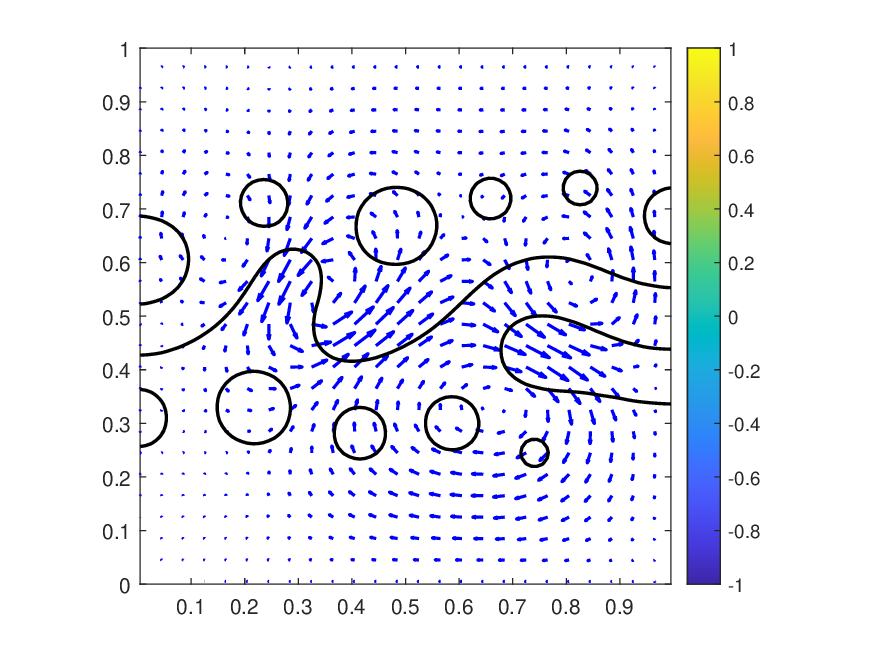}
	\includegraphics[width=0.24\textwidth]{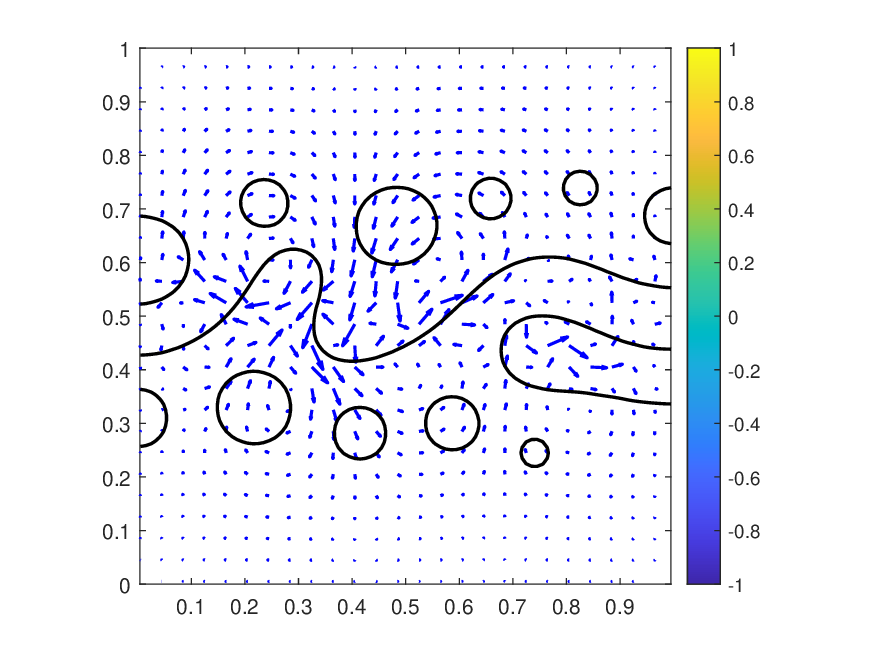}
	\includegraphics[width=0.24\textwidth]{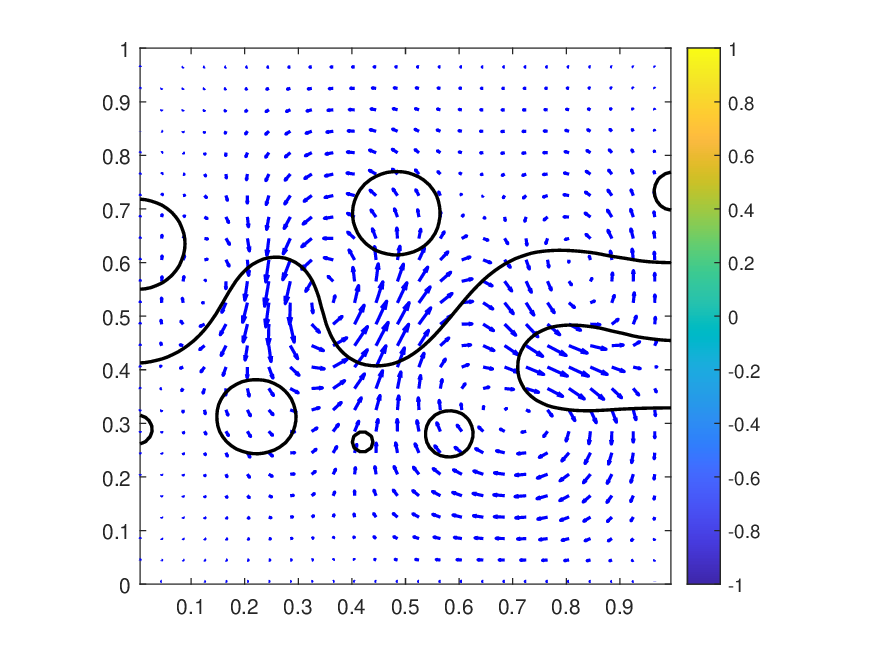}
	\includegraphics[width=0.24\textwidth]{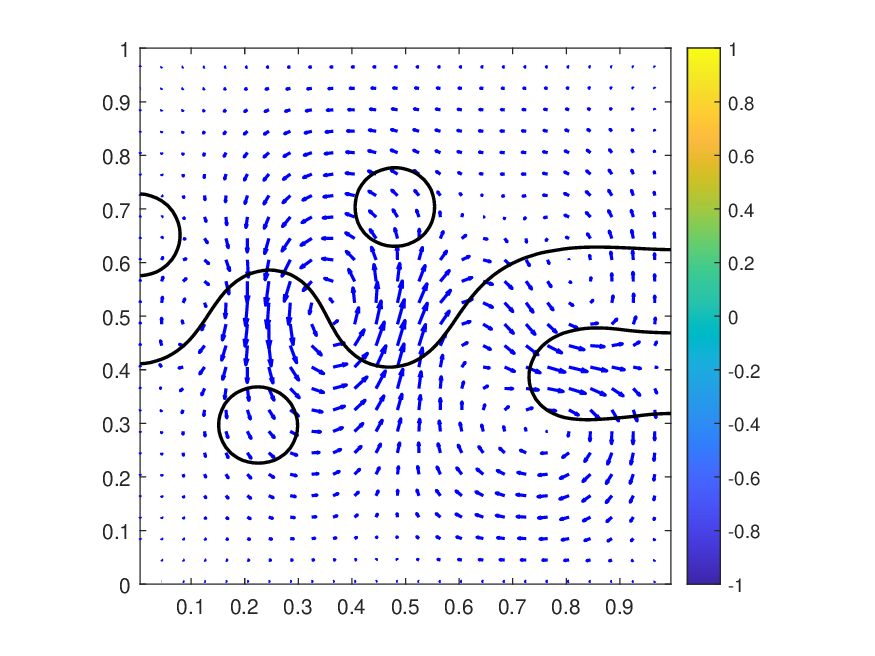}
	\caption{\small  Snapshots of the velocity field  for the flow-coupled nucleation process with $\nu=1$ at $t= 0.2, 0.5, 1, 3, 5, 10, 15$ and $20$ respectively.}
	\label{fig:rand:u:nu1}
\end{figure}
\begin{figure}[!htbp] 
	\centering
	\includegraphics[width=0.24\textwidth]{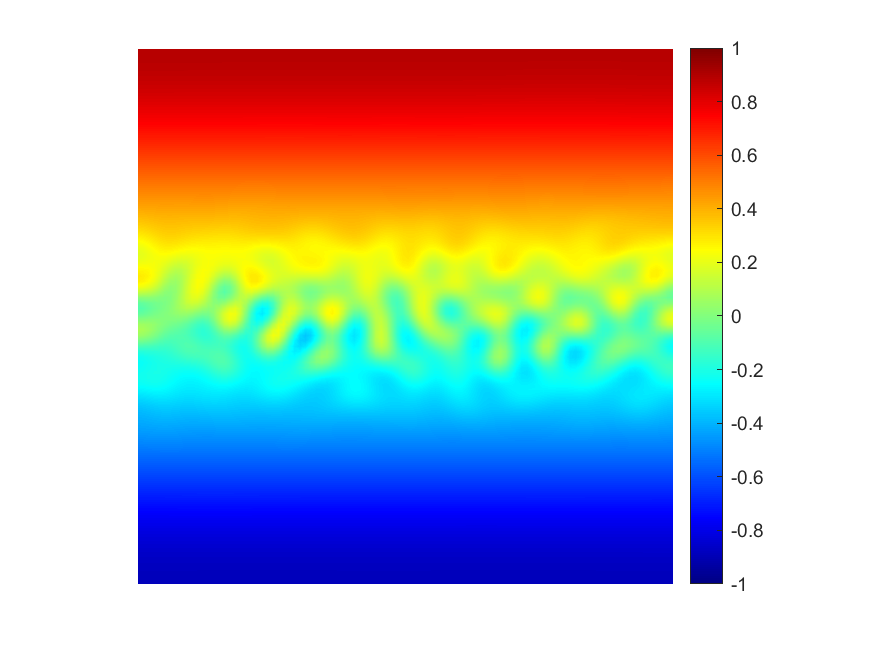}
	\includegraphics[width=0.24\textwidth]{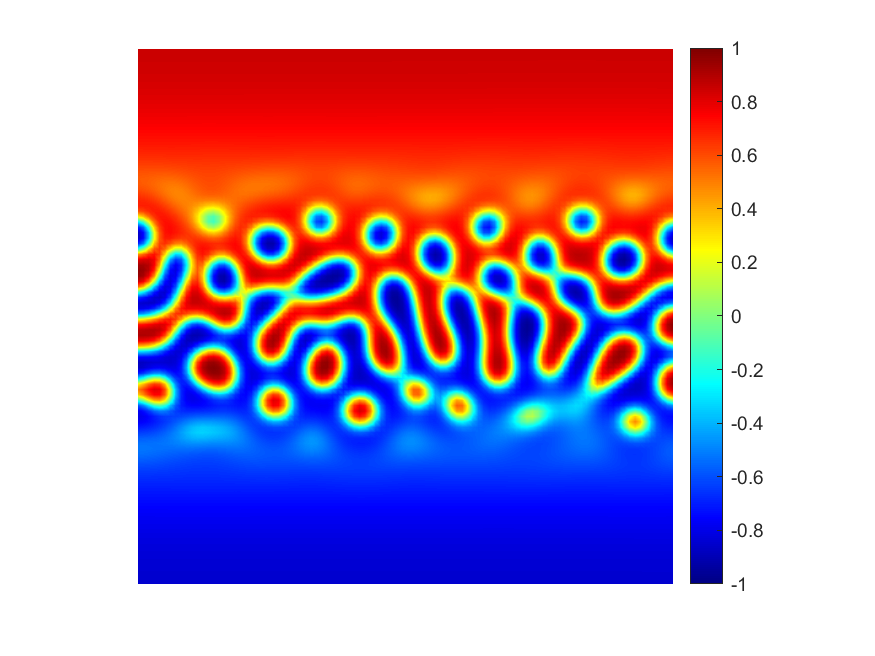}
	\includegraphics[width=0.24\textwidth]{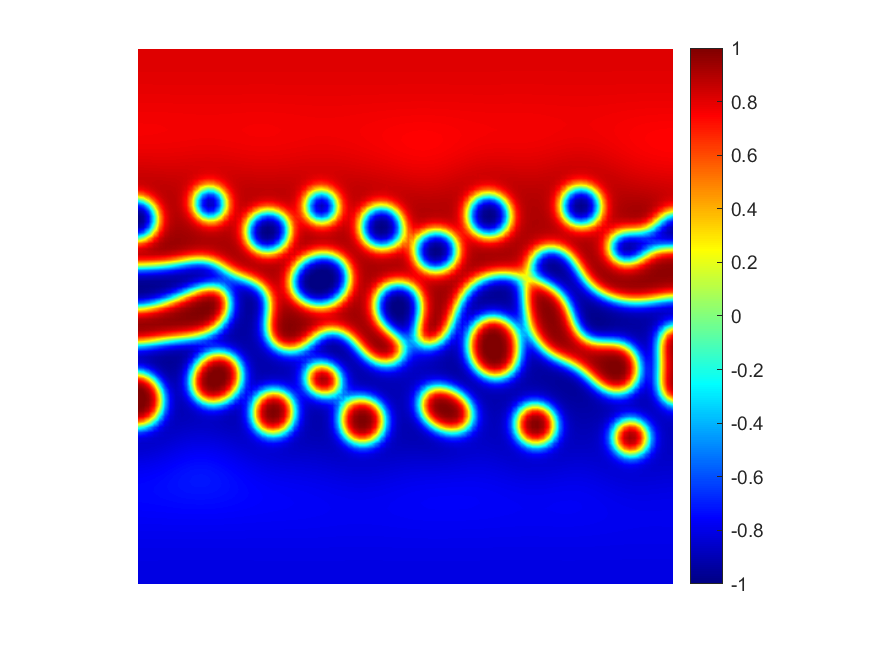}
	\includegraphics[width=0.24\textwidth]{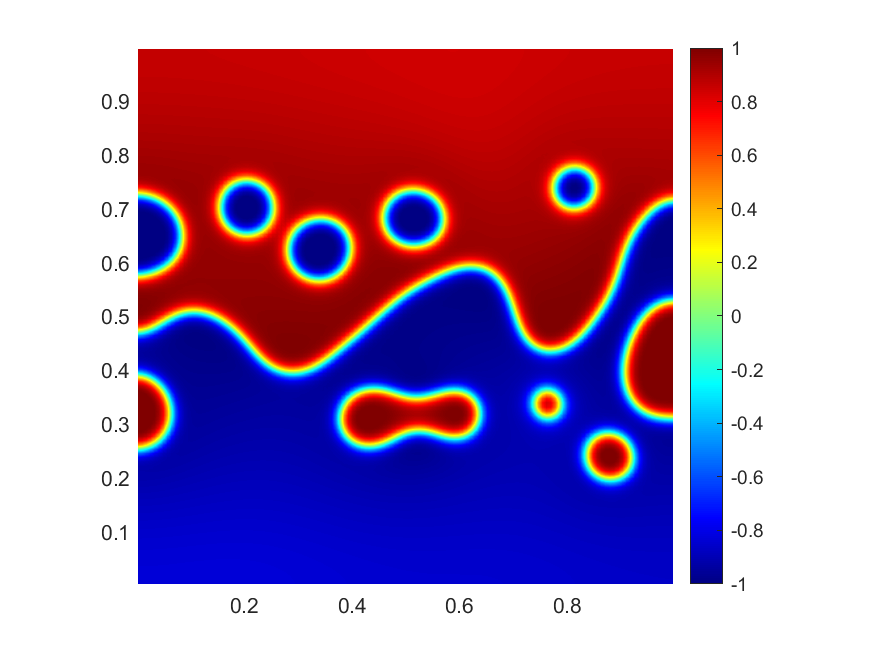}
	\includegraphics[width=0.24\textwidth]{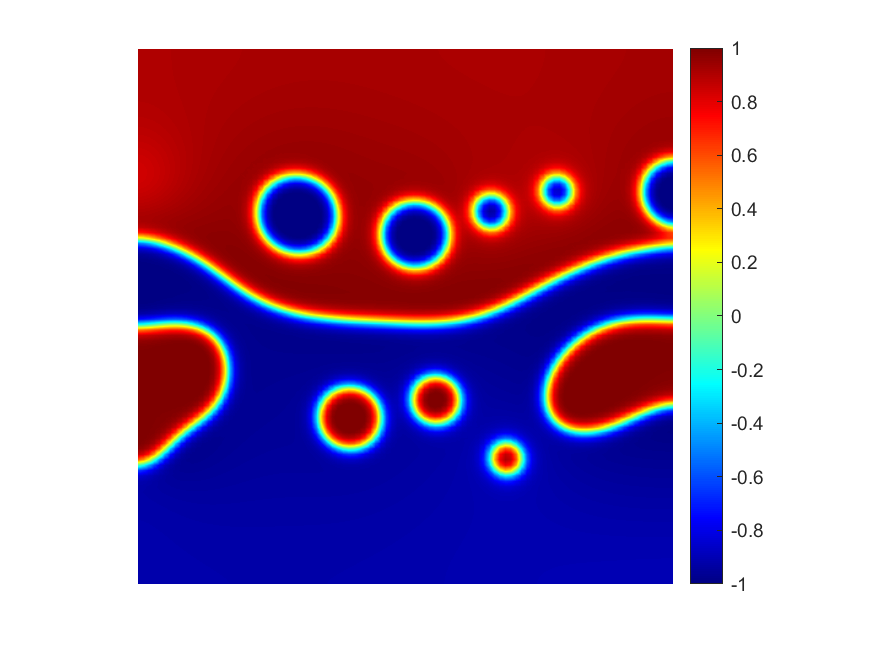}
	\includegraphics[width=0.24\textwidth]{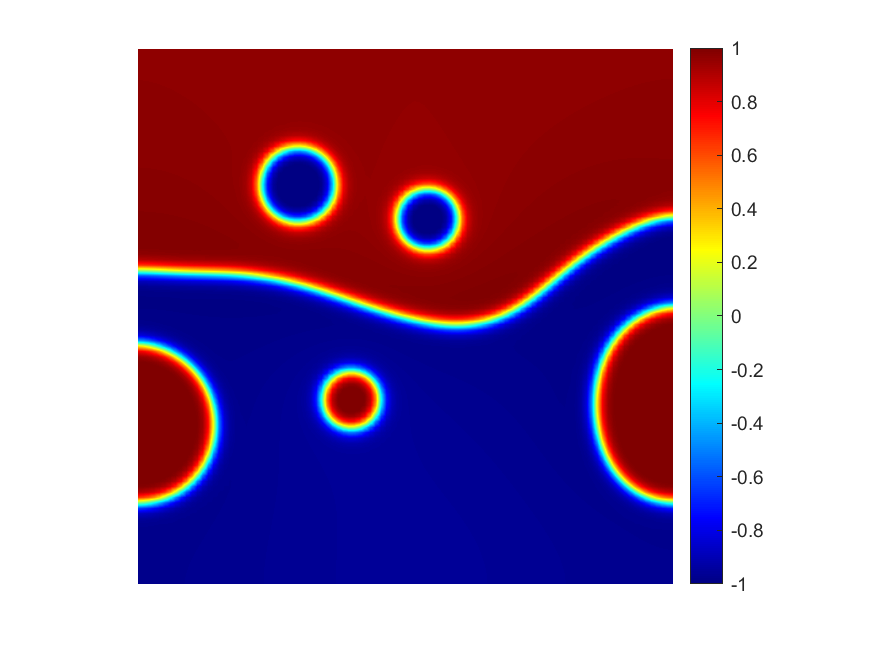}
	\includegraphics[width=0.24\textwidth]{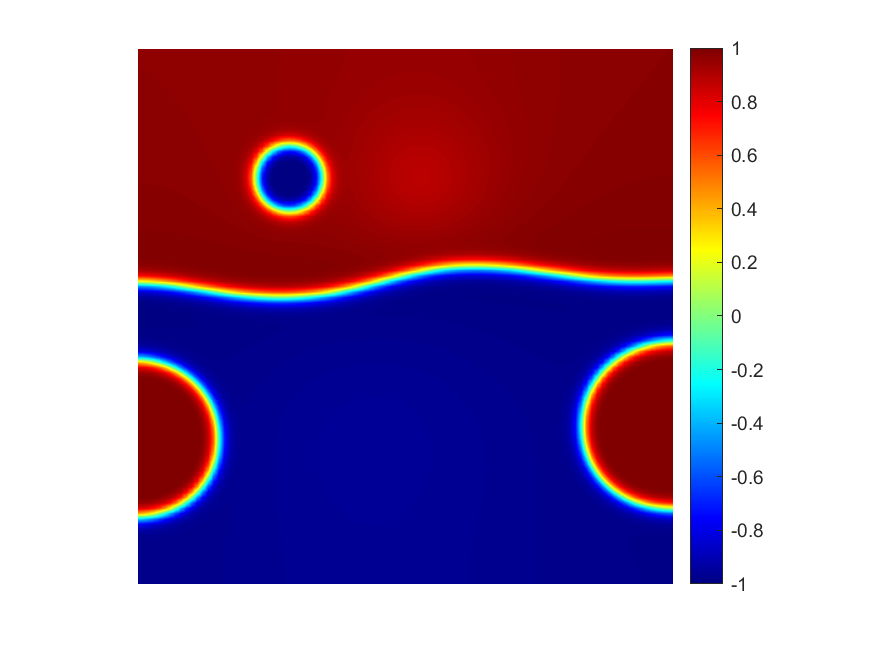}
	\includegraphics[width=0.24\textwidth]{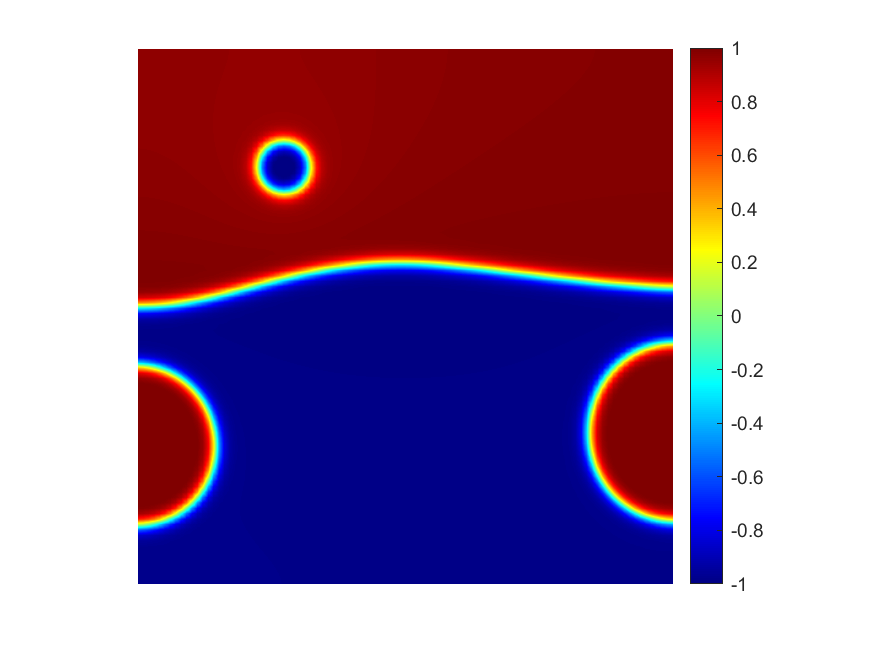}
	\caption{\small  Snapshots of the phase-field variable for the flow-coupled nucleation process with $\nu=10^{-3}$ at $t= 0.2, 0.5, 1, 3, 5,10, 15$ and $20$, respectively.}
	\label{fig:rand:phi:nu1e3}
\end{figure}
Numerical results for the viscosity parameter $\nu =1$ are shown in Figs. \ref{fig:rand:phi:nu1} and \ref{fig:rand:u:nu1}, which display the evolution of the phase variable $\phi$ and the velocity field at various time instants. We observe that the spinodal decomposition is more pronounced in the middle of the domain. Over time, the droplets gradually merge and decrease in number as the interfacial length shrinks due to energy dissipation \cite{li2023global}. For comparison, we also perform the simulation with identical parameters but a smaller viscosity $\nu=10^{-3}$. The profiles of the phase variable and the velocity field at the same time instants are plotted in Figs. \ref{fig:rand:phi:nu1e3} and \ref{fig:rand:u:nu1e3}. Although the droplets eventually disappear in both scenarios, the process of flow-coupled nucleation occurs more rapidly with smaller viscosity. 


\begin{figure}[!htbp] 
	\centering
	\includegraphics[width=0.24\textwidth]{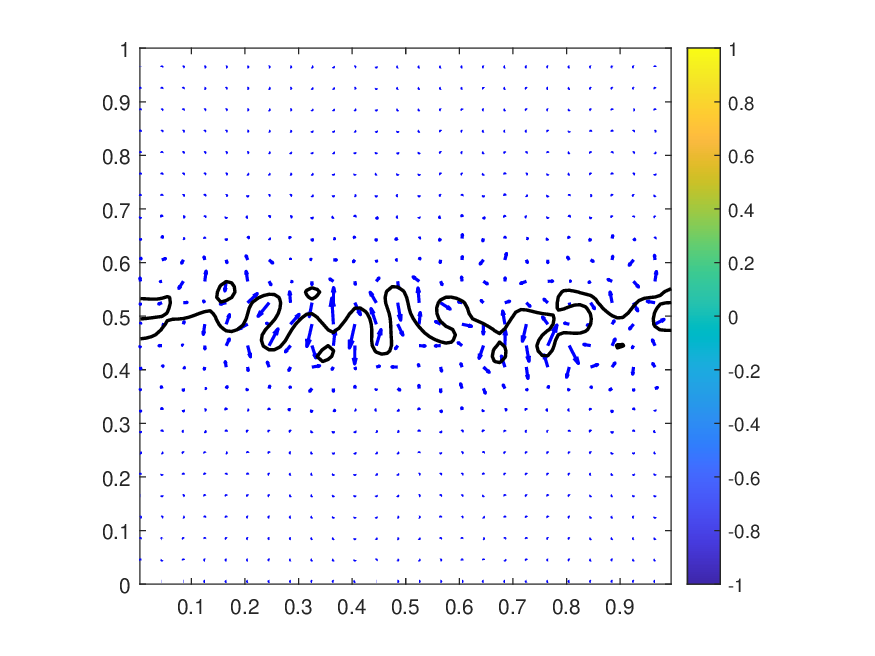}
	\includegraphics[width=0.24\textwidth]{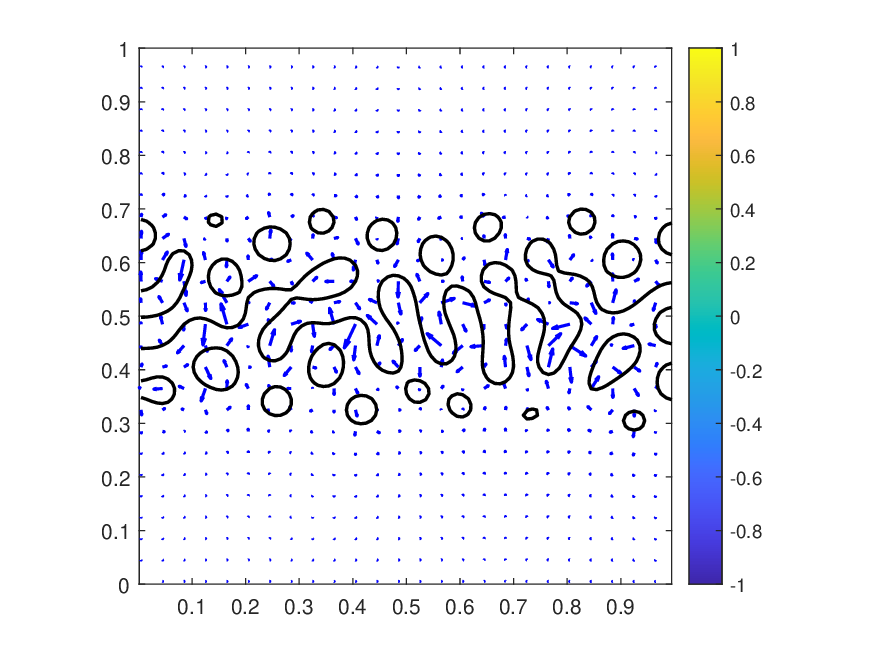}
	\includegraphics[width=0.24\textwidth]{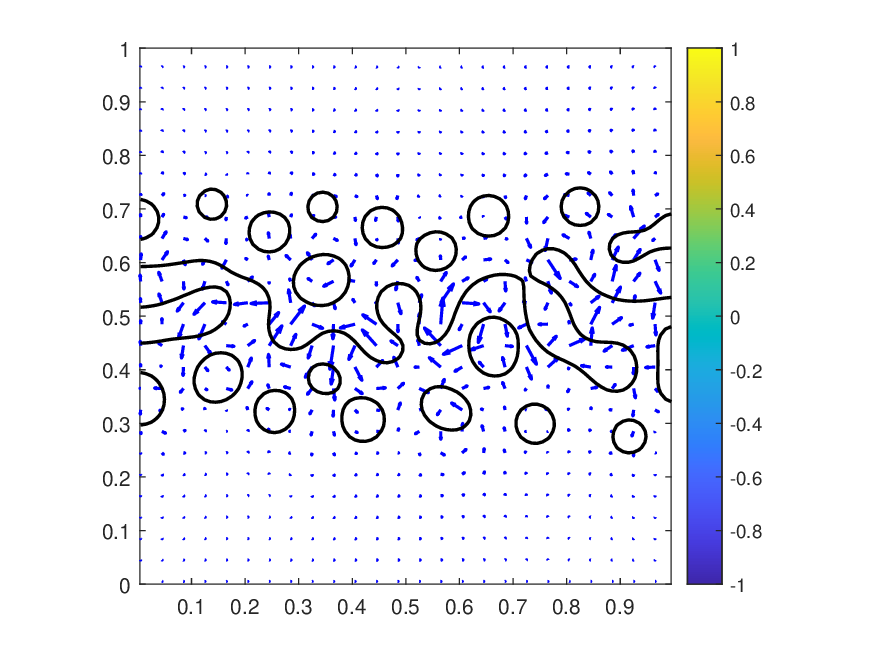}
	\includegraphics[width=0.24\textwidth]{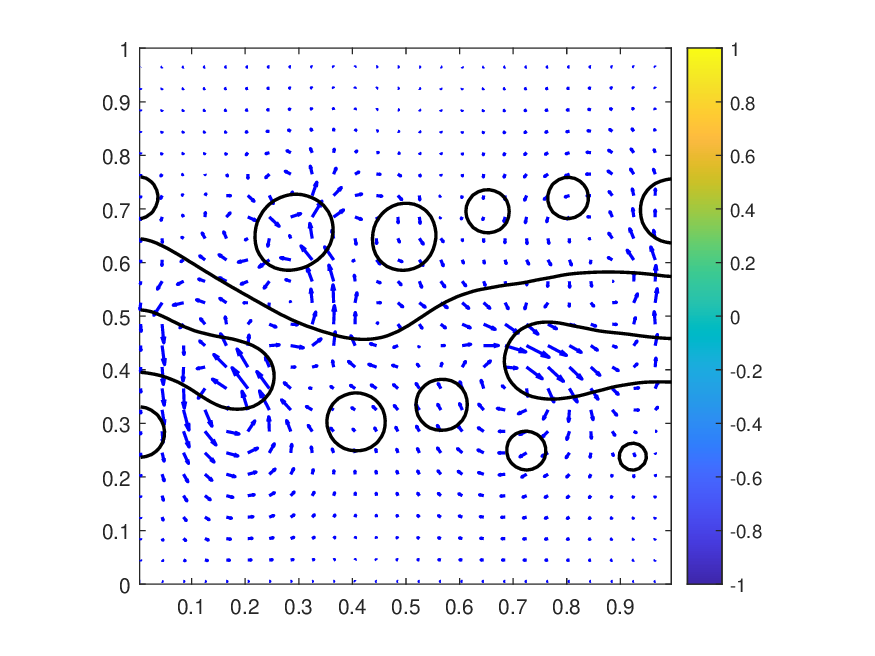}
	\includegraphics[width=0.24\textwidth]{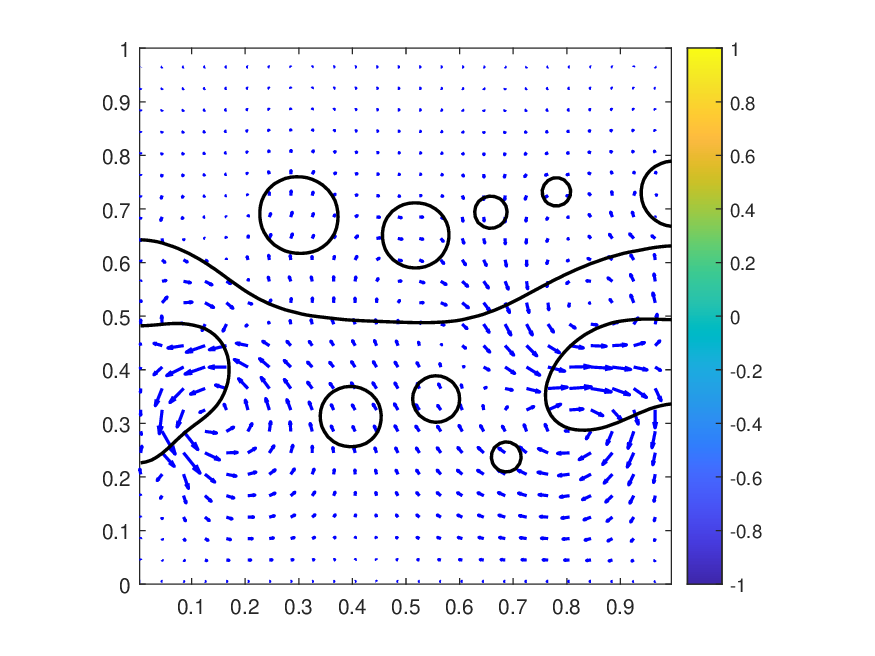}
	\includegraphics[width=0.24\textwidth]{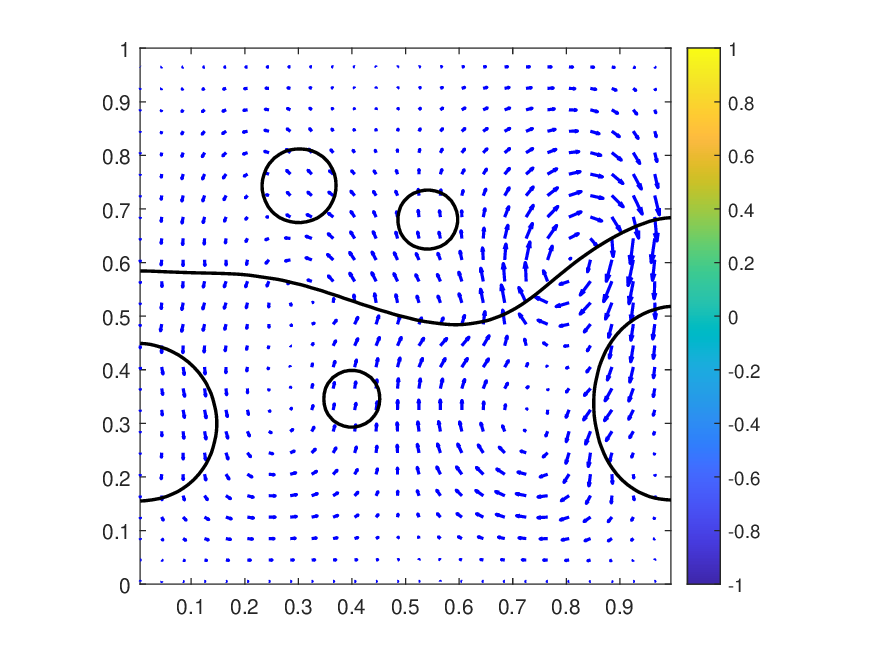}
	\includegraphics[width=0.24\textwidth]{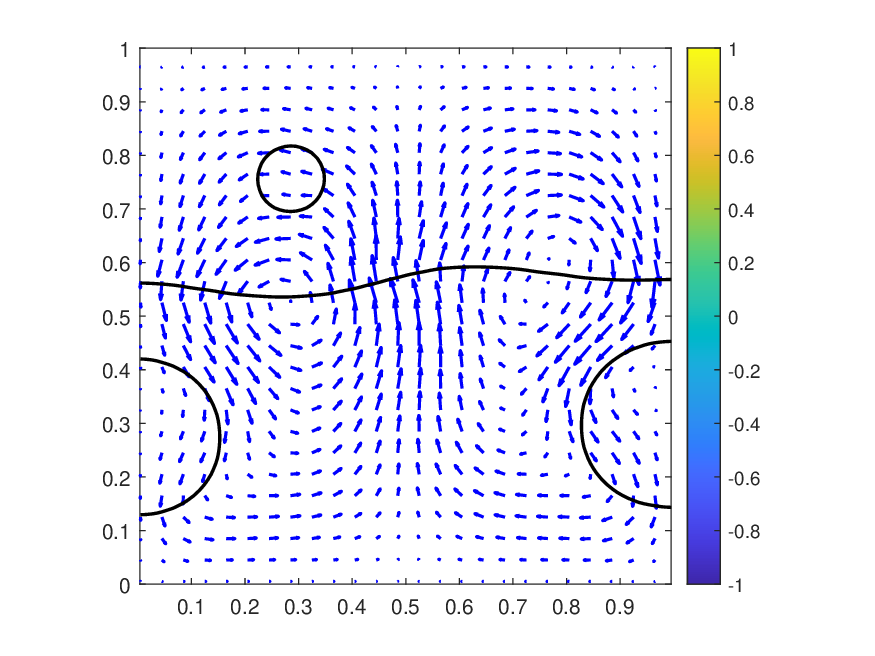}
	\includegraphics[width=0.24\textwidth]{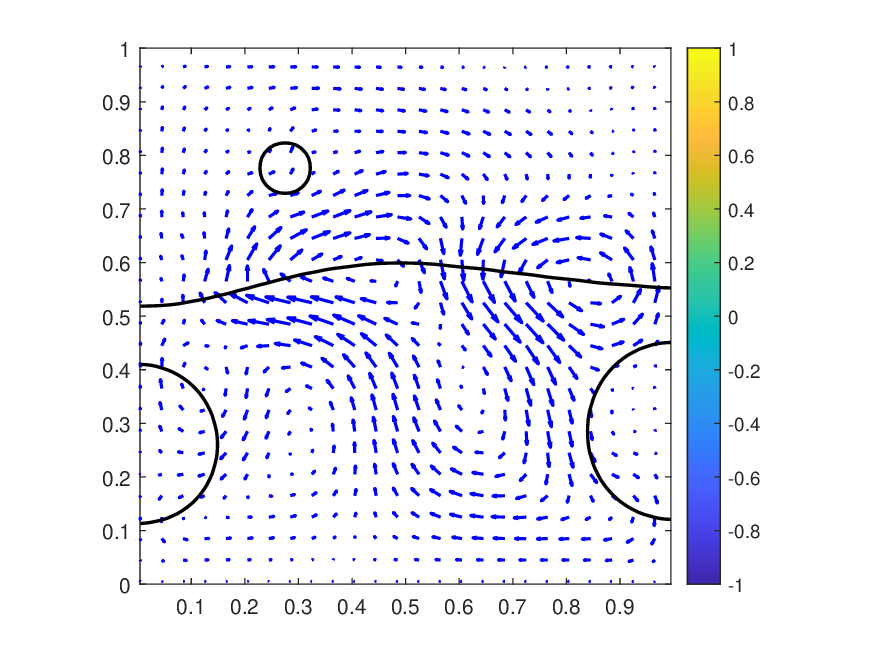}
	\caption{\small  Snapshots of the velocity field  for the flow-coupled nucleation process with $\nu=10^{-3}$ at $t= 0.2, 0.5, 1, 3, 5, 10, 15$ and $20$, respectively.}
	\label{fig:rand:u:nu1e3}
\end{figure}
\begin{figure}  [!htbp] \small
	\centering 
	\subfigure[$\nu = 1$]
	{
		\includegraphics[width=0.23\textwidth]{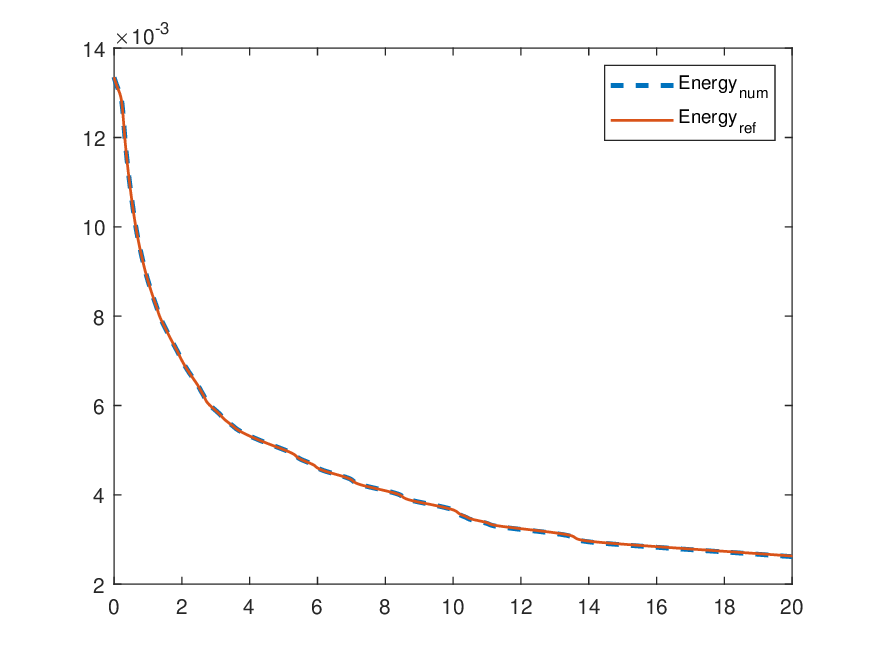}
		\includegraphics[width=0.23\textwidth]{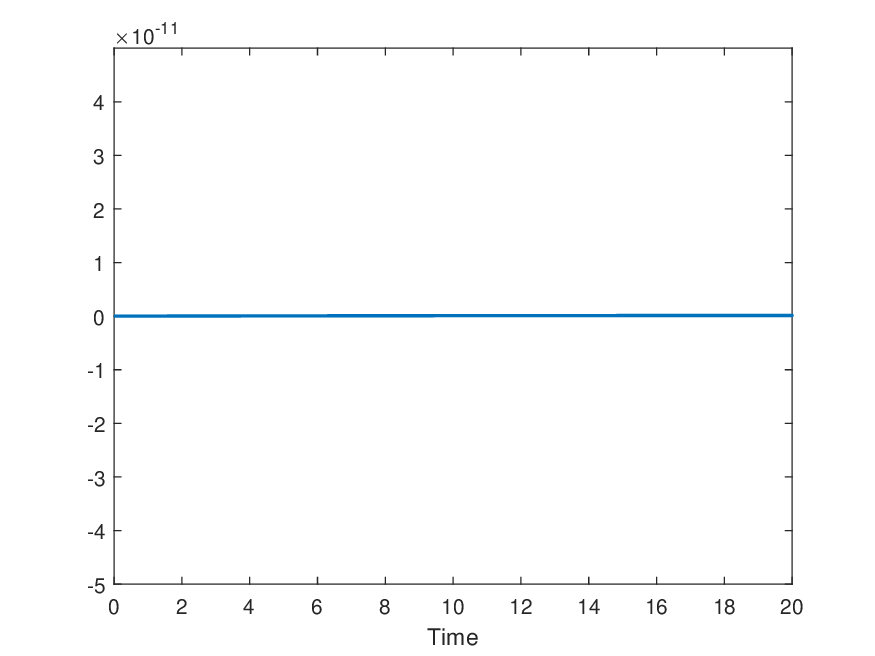}
	}
	\subfigure[$\nu=10^{-3}$]
	{
		\includegraphics[width=0.23\textwidth]{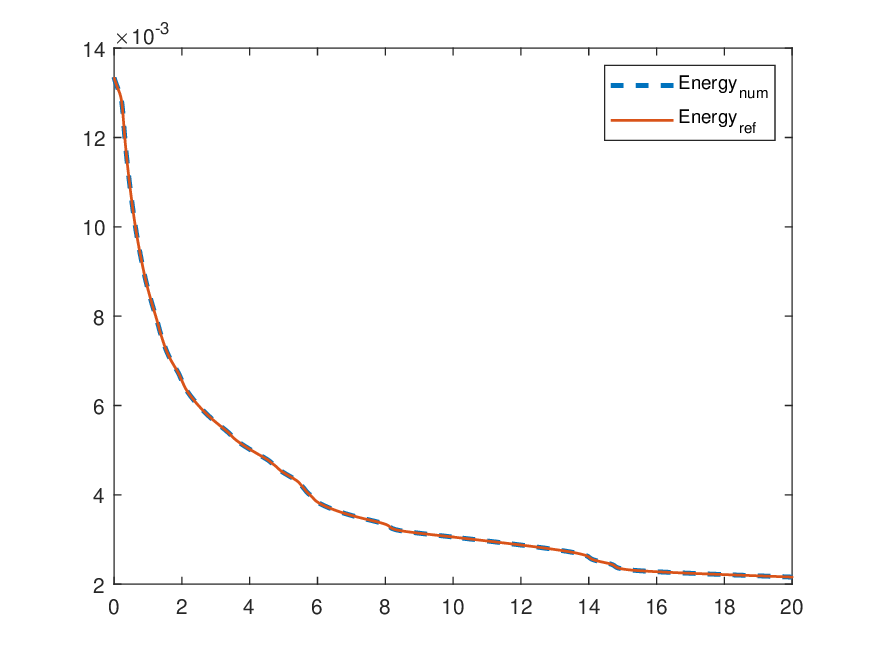}
		\includegraphics[width=0.23\textwidth]{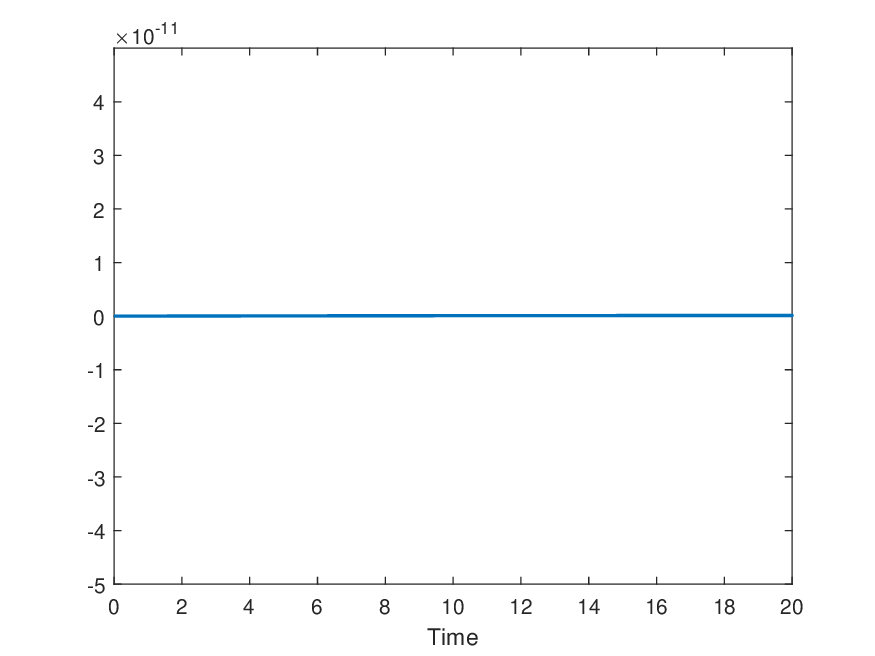}
	}
	\setlength{\abovecaptionskip}{0.01cm} 
	\setlength{\belowcaptionskip}{0.0cm}
	\caption{\small  Evolutions of energy and mass.}
	\label{fig:rand:E_M}
\end{figure}
We also plot the energy decay curves and mass errors for $\nu =1$ and $\nu = 10^{-3}$ in Fig. \ref{fig:rand:E_M}, respectively. As shown, the modified energy \eqref{energy:mod:2th}, obtained via the second-order IVS scheme combined with the relaxation method, closely matches the original energy \eqref{model:McEc:e2} in both cases. Moreover, both energies exhibit monotonic dissipation over time, confirming that the system evolves toward a dynamically steady state. Additionally, the mass remains conserved with at least 11-digit precision in both cases, verifying that the proposed scheme adheres to the mass conservation law, as established in Theorem \ref{thm:energy}.

\subsection{Buoyancy-driven flow} 
When the density difference between the surrounding fluid and the droplet is small,  we can reformulate the variable density momentum equation using the Boussinesq approximation \cite{lee2012comparison} as follows:
\[
\bm u_t+\bm u \cdot \nabla \bm u -\nu \Delta \bm u+\nabla p =- \phi \nabla \mu + \rho(\phi)\bm g,
\]    
where $\rho(\phi) \bm g$ is a buoyancy term with $\rho(\phi)=\chi(\phi - \bar{\phi})$. 
The proposed structure-preserving IVS method remains applicable in this situation. In what follows, we employ the second-order IVS scheme \eqref{scheme:phi:2th}--\eqref{scheme:div:2th} to test two distinct examples with the initial conditions:
\[
\bm u(x, y,0)=\mathbf{0}, ~ p(x, y, 0)={0}, ~ \phi(x,y,0) = \tanh \big(\frac{r-\sqrt{(x-x_a)^2+(y-y_b)^2}}{\epsilon} \big) 
\]
and other parameters will be illustrated in detail in each example.

\subsubsection{Bubble rising} 
We set $\Omega=[0,1]^2$ and let $\bar{\phi}$ represent the spatially averaged order parameter. The parameters used in our simulation are as follows:
\[
r =0.15, ~ x_a = 0.5, ~ y_b = 0.25, ~ \chi = 5, ~ \bm g =(0, 10)^{T}, ~ M=10^{-2},  \]
\[ 
\lambda=10^{-3}, ~ \epsilon =10^{-2}, ~ \beta = \delta_0 = 0, ~ \nu =1, ~\tau=5 \times 10^{-4}, ~ N=200. 
\]

\begin{figure}[!htbp] \small
	\centering
	\includegraphics[width=0.24\textwidth]{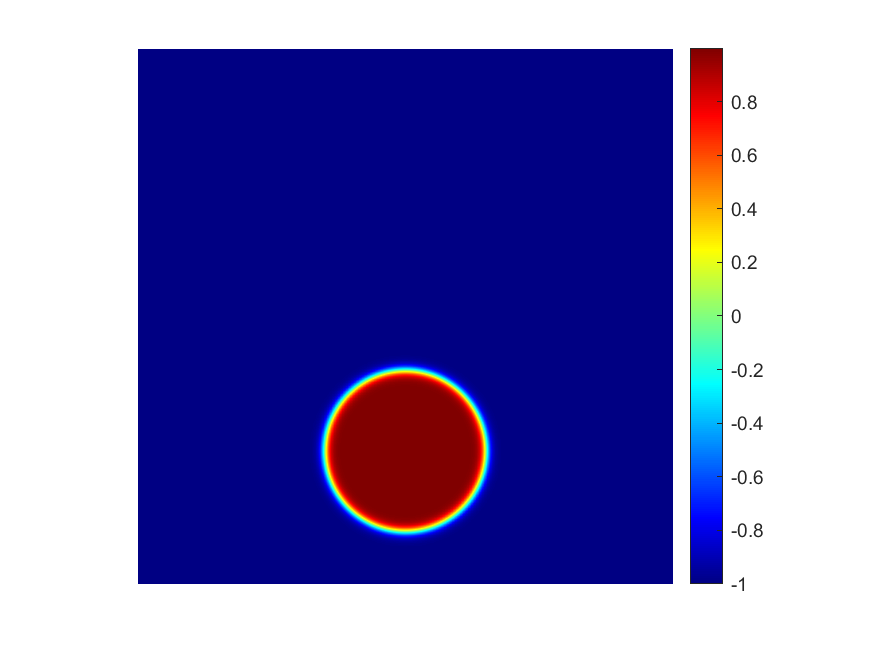}
	\includegraphics[width=0.24\textwidth]{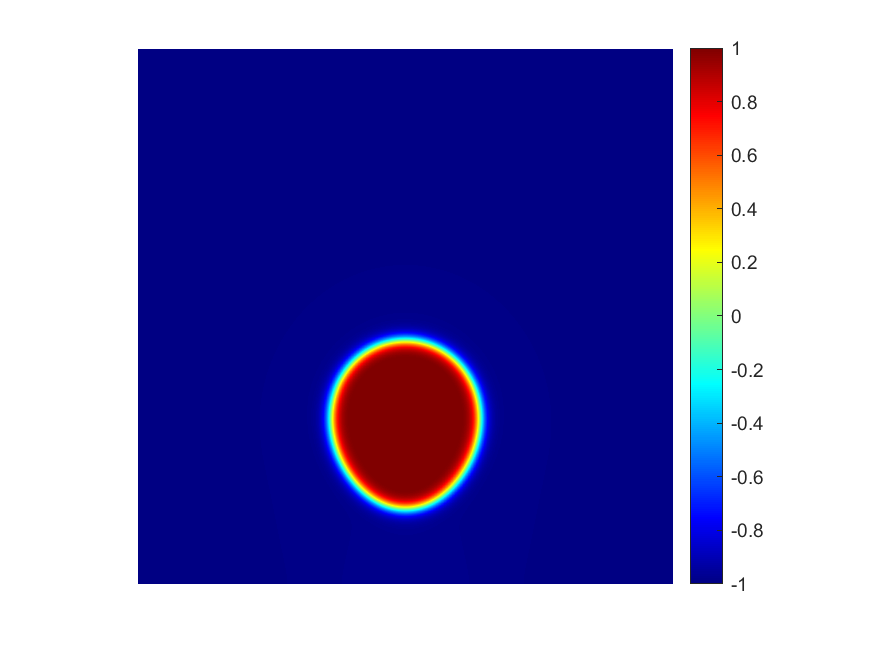}
	\includegraphics[width=0.24\textwidth]{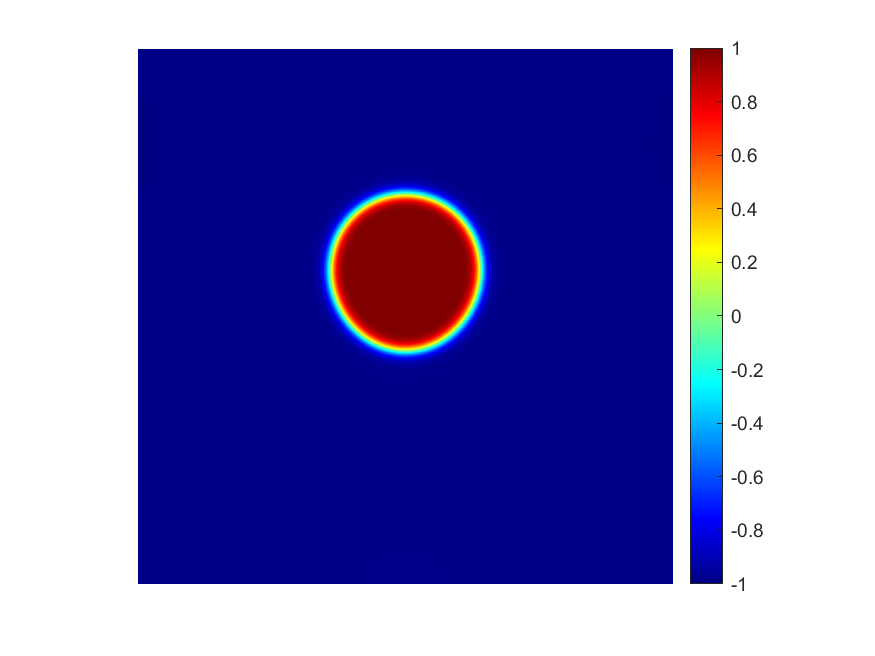}
	\includegraphics[width=0.24\textwidth]{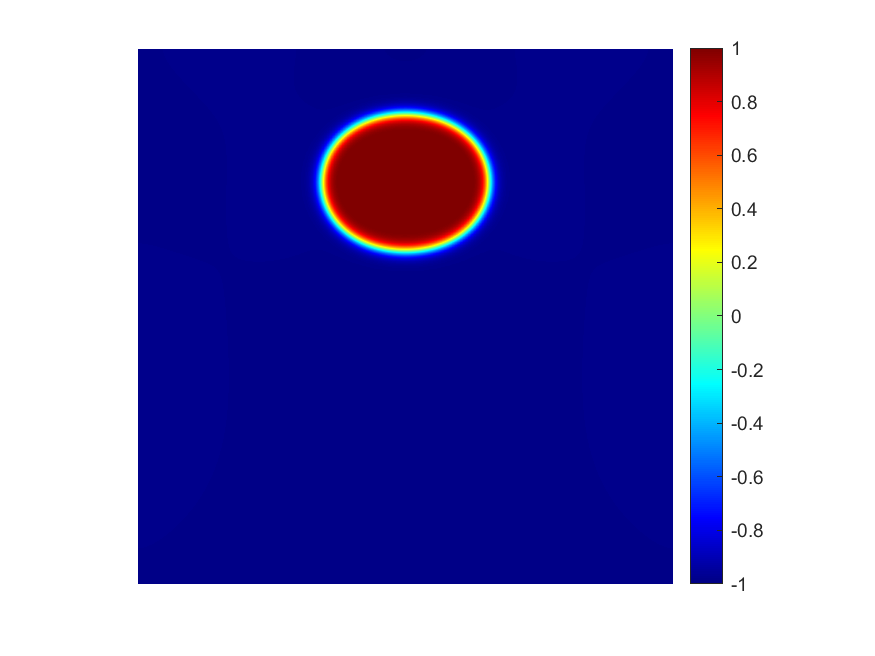}
	\includegraphics[width=0.24\textwidth]{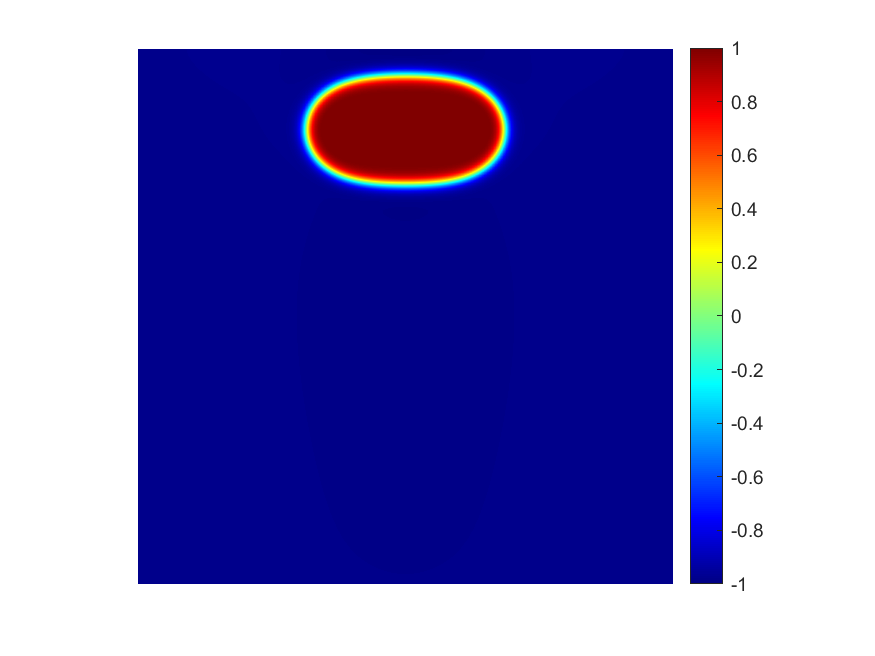}
	\includegraphics[width=0.24\textwidth]{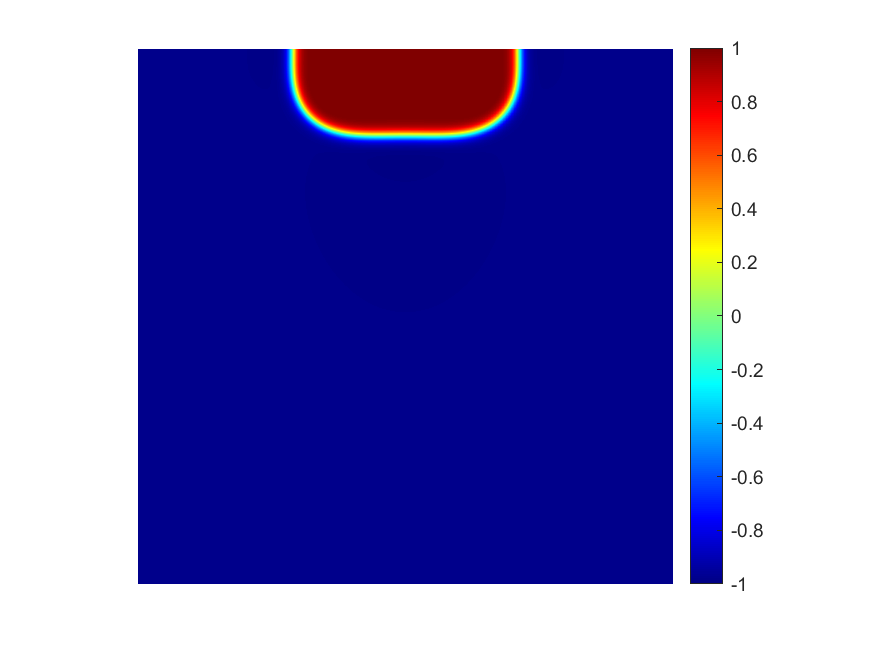}
	\includegraphics[width=0.24\textwidth]{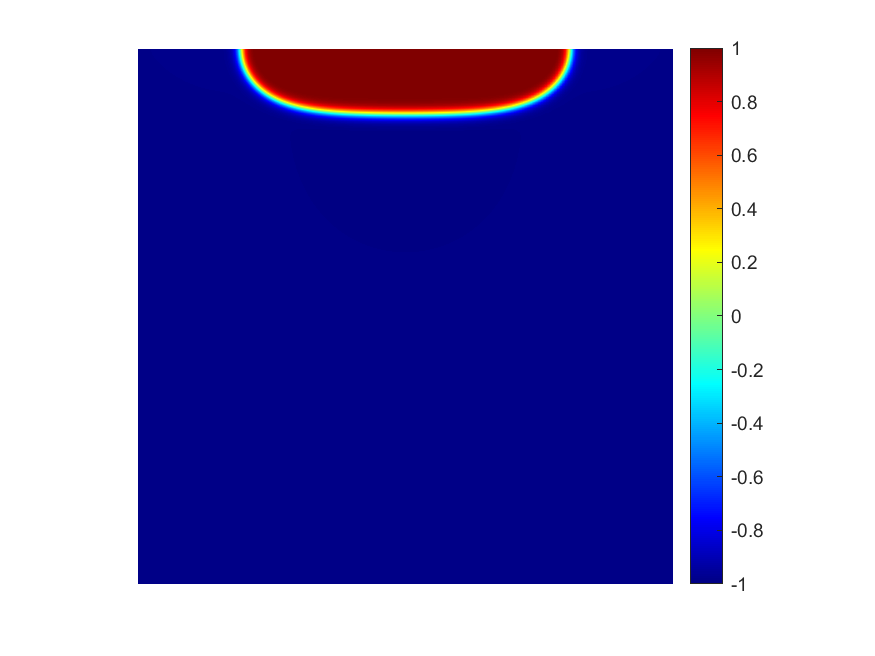}
	\includegraphics[width=0.24\textwidth]{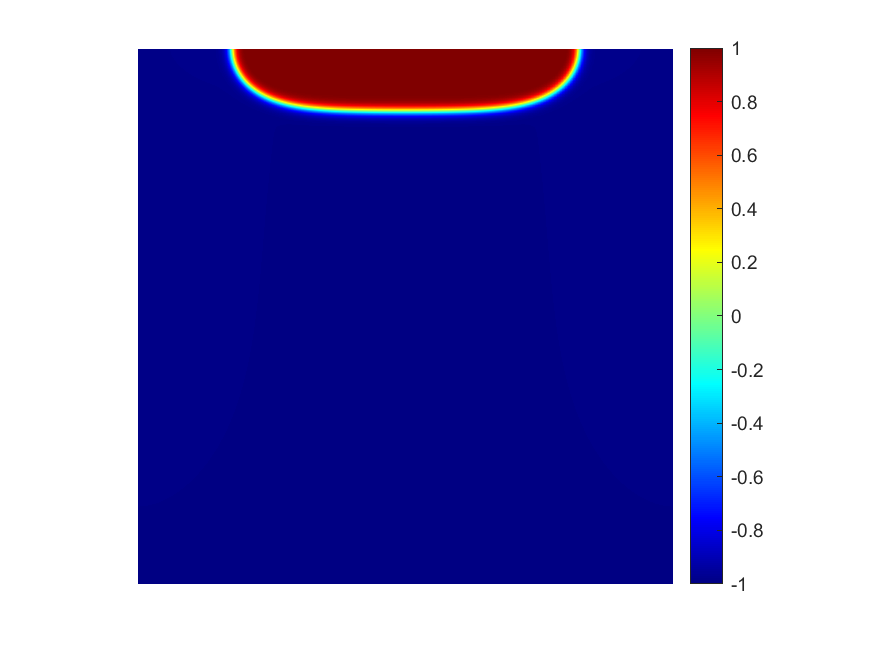}
	\caption{\small  Snapshots of the phase-field variable for the bubble rising process  at $t=0, 0.2,  1, 1.5, 2, 3, 5$ and $12$,  respectively.}
	\label{fig:rising:phi}
\end{figure}
\begin{figure}[!htbp] \small
	\centering
	\includegraphics[width=0.24\textwidth]{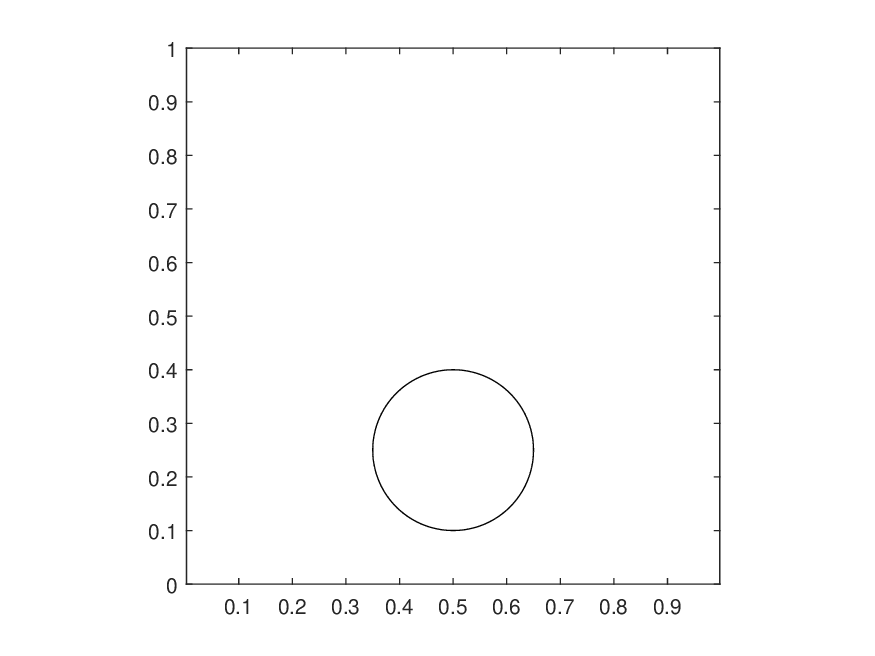}
	\includegraphics[width=0.24\textwidth]{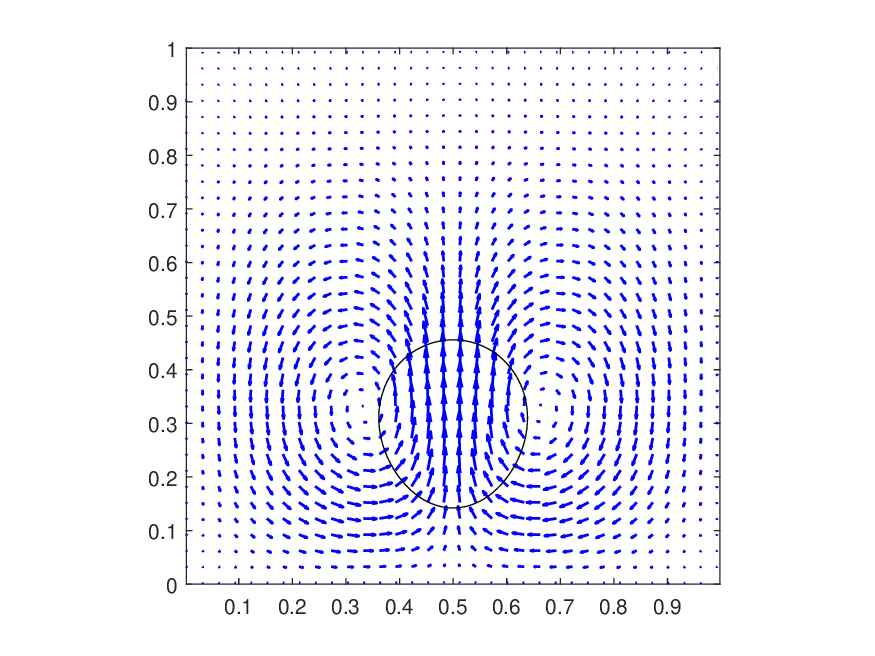}
	\includegraphics[width=0.24\textwidth]{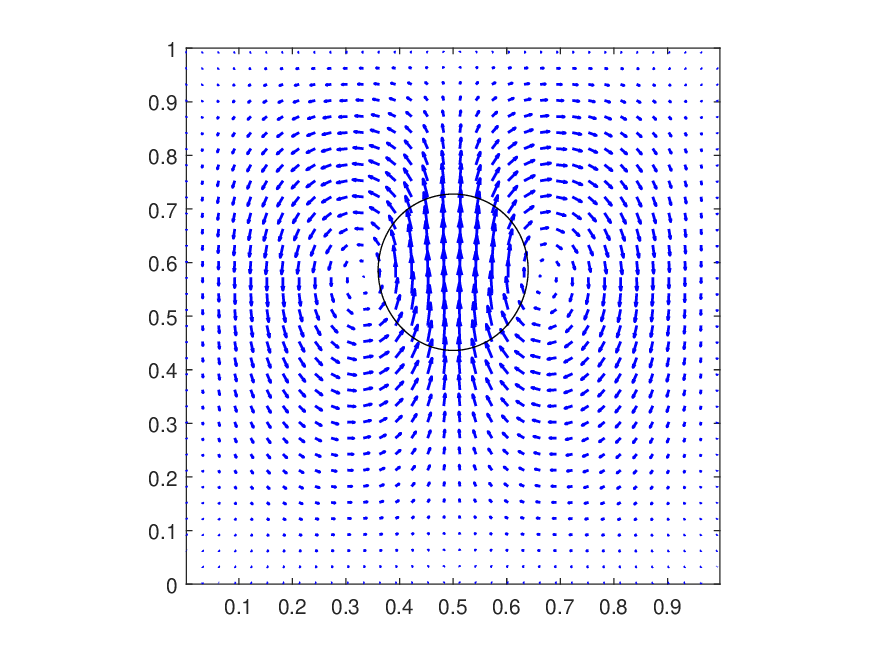}
	\includegraphics[width=0.24\textwidth]{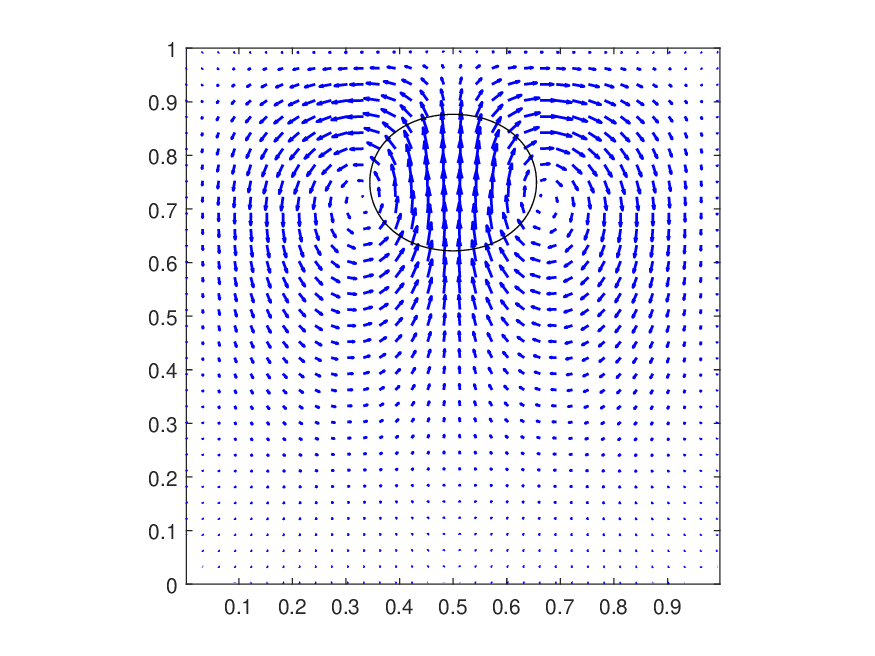}
	\includegraphics[width=0.24\textwidth]{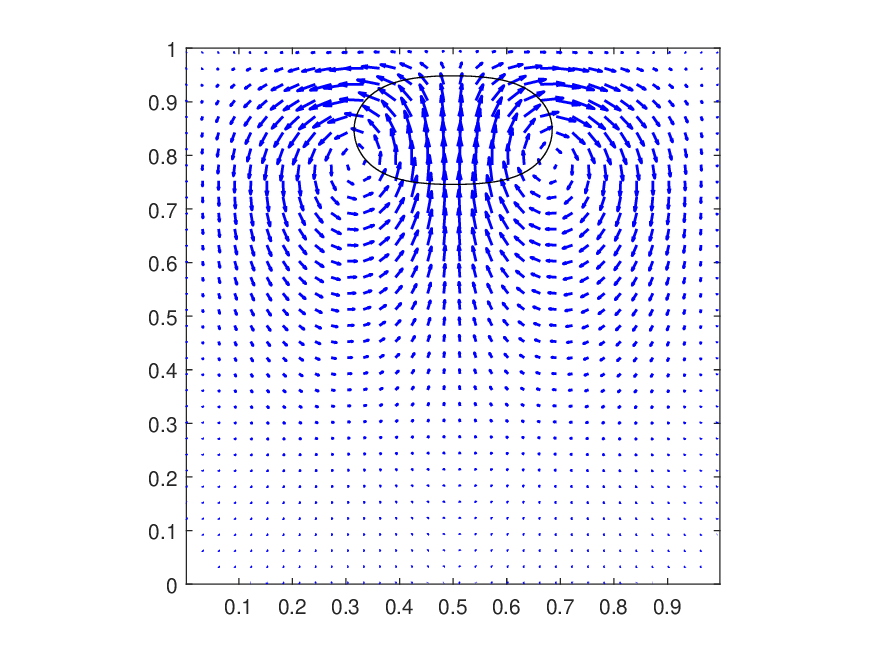}
	\includegraphics[width=0.24\textwidth]{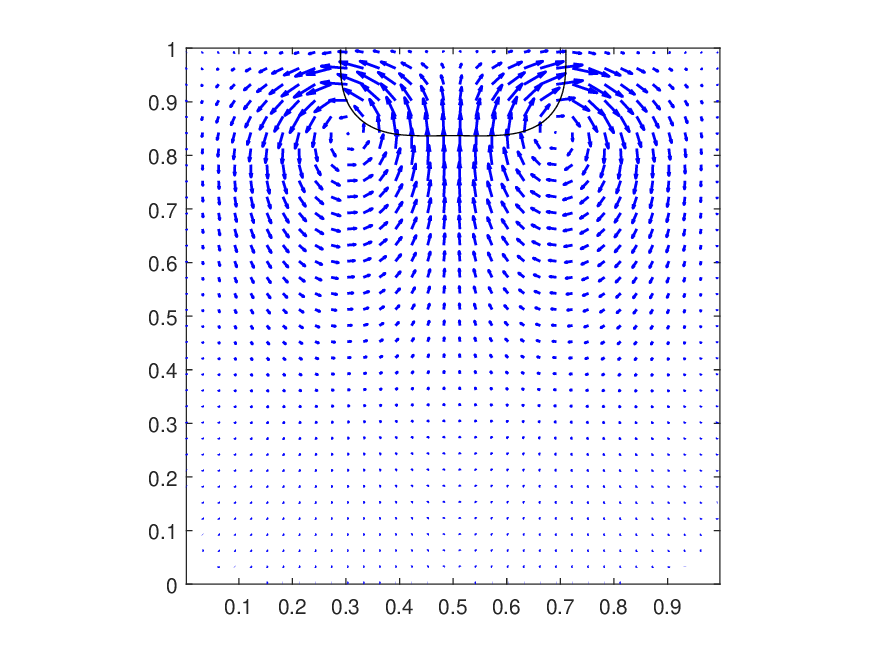}
	\includegraphics[width=0.24\textwidth]{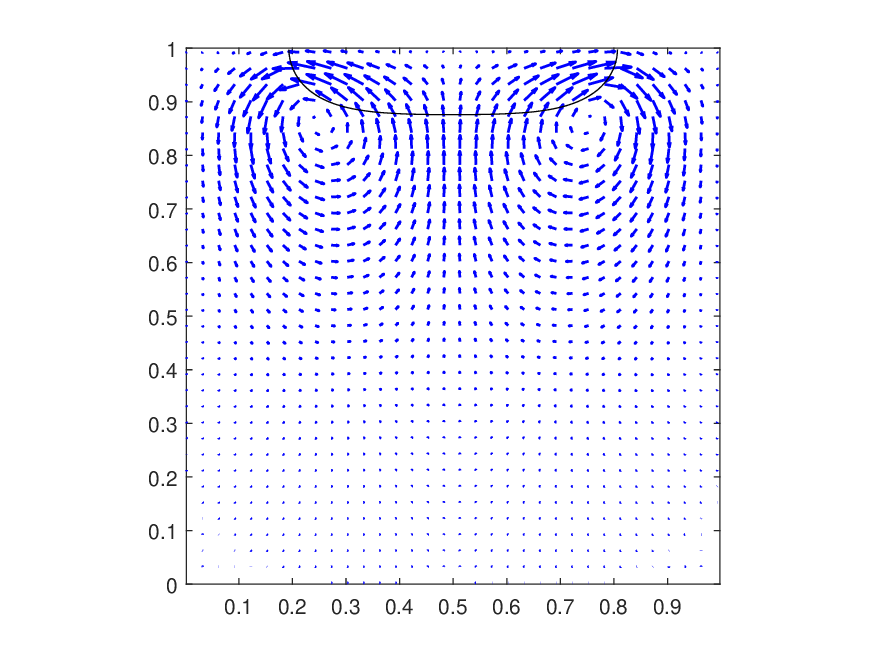}
	\includegraphics[width=0.24\textwidth]{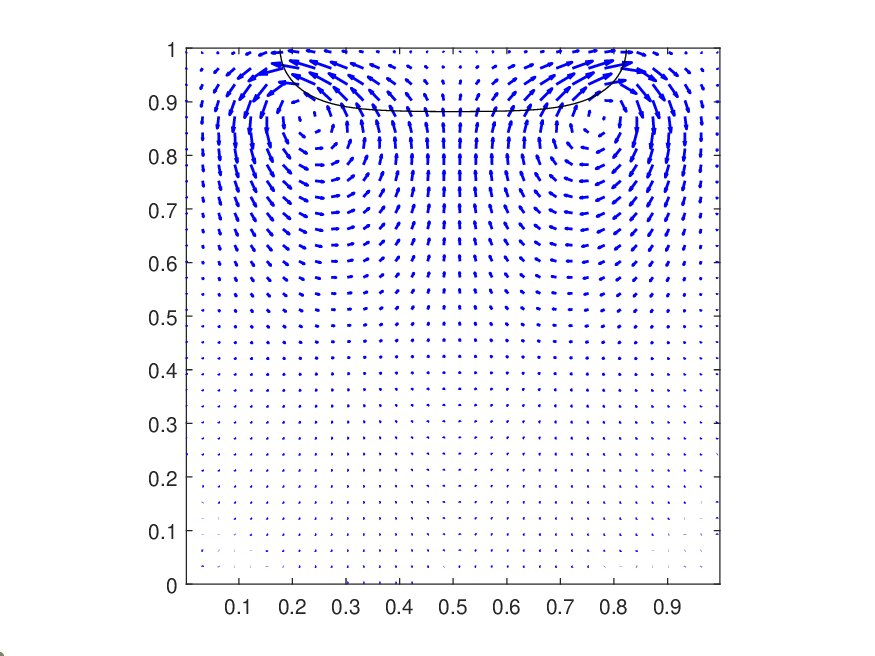}
	\caption{\small  Snapshots of the the velocity field  for the bubble rising process  at $t=0, 0.2,  1, 1.5, 2, 3, 5$ and $12$, respectively.}
	\label{fig:rising:u}
\end{figure}
In Figs. \ref{fig:rising:phi}--\ref{fig:rising:u}, we present snapshots of the phase-field variable and the corresponding velocity field at different time instants. Initially, the bubble appears as a circular shape near the bottom of the domain. During the ascent, the shape of the bubble transitions from a circle to an ellipse due to its lower density compared to the surrounding fluid, allowing it to rise further. Upon reaching the upper wall, the bubble gradually stretches horizontally until it attains a stable state.

\subsubsection{Dripping droplet} 
We also simulate the dripping dynamics of a liquid drop that falls from the top wall under gravity force. We set the computational domain $\Omega=[0,1]\times[0,2]$ with the parameters:
\[
r =0.32, ~ x_a = 0.5, ~ y_b = 2.1, ~\chi=1, ~  \bm g =(0, -10)^{T}, ~ \bar{\phi}=0,  \]
\[ 
M=10^{-2}, ~ \lambda=10^{-3}, ~ \epsilon =10^{-2}, ~ \beta = \delta_0 = 0,   ~\tau=4 \times 10^{-4}, ~ h=4 \times 10^{-3}. 
\]

\begin{figure}  [!htbp] \small
	\vspace{-0.4cm}
	\centering 
	\subfigure
	{
		\includegraphics[width=0.38\textwidth]{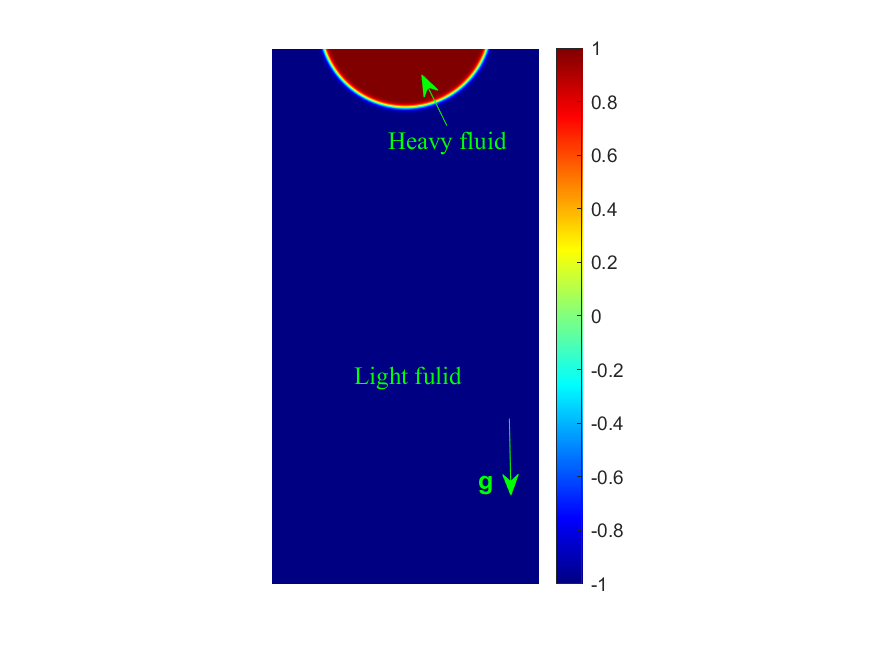}
	}
	\setlength{\abovecaptionskip}{0.01cm} 
	\setlength{\belowcaptionskip}{0.0cm}
	\caption{\small Snapshot of the initial phase-field variable.}
	\label{fig:dripping:phi0}
\end{figure}
Initially, the droplet with heavier density is attached to the upper wall, see Fig. \ref{fig:dripping:phi0}. In Figs. \ref{fig:dripping:Re10}--\ref{fig:dripping:Re50}, we plot the snapshots of the phase-field variable with different viscosities  $\nu =10^{-1}$ and $2 \times 10^{-2}$, respectively.
In both simulations, the droplet initially elongates under the influence of gravity, then pinches off and descends to the bottom of the domain. Notably, as the Reynolds number increases, the pinch-off behavior occurs earlier, and the droplet descends more quickly. 

\begin{figure}  [!htbp] \small
	\centering 
	\includegraphics[width=0.28\textwidth]{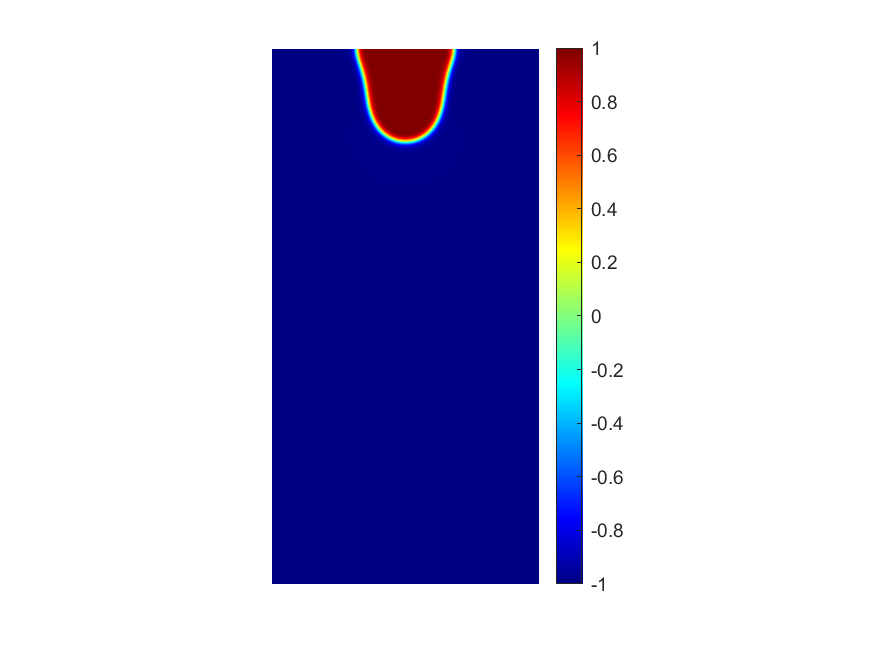} \hspace{-0.08\textwidth}
	\includegraphics[width=0.28\textwidth]{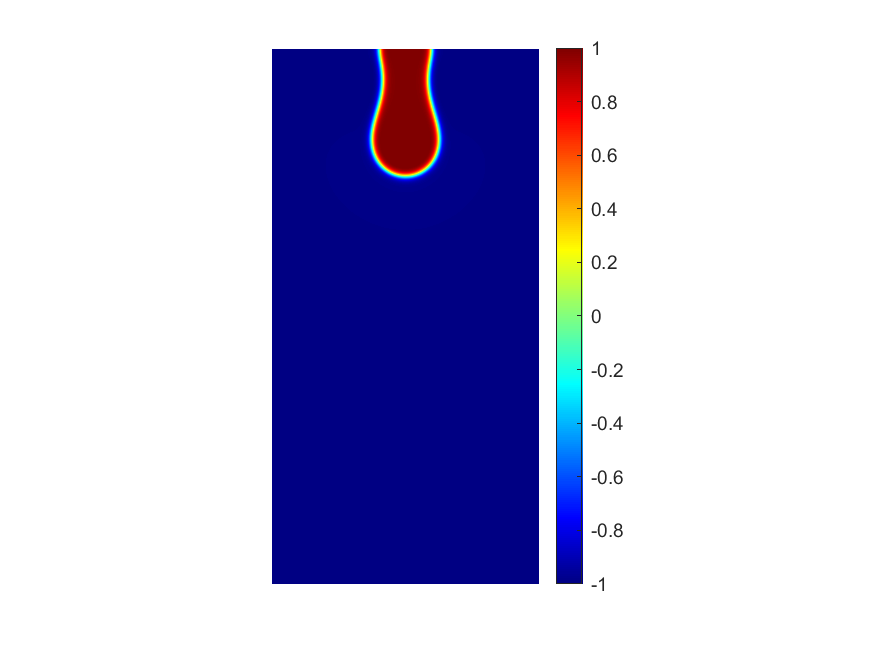} \hspace{-0.08\textwidth}
	\includegraphics[width=0.28\textwidth]{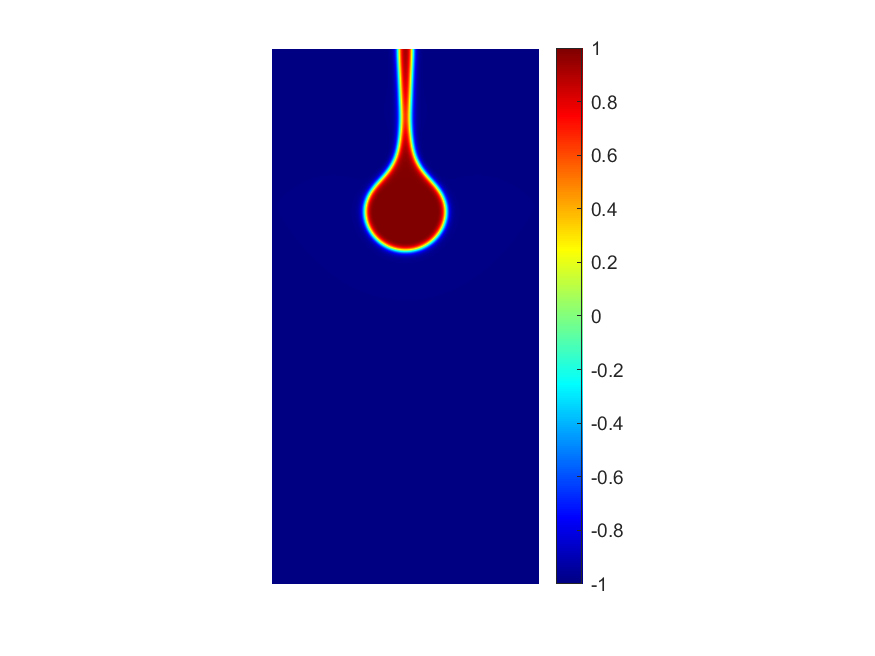}\hspace{-0.08\textwidth}
	\includegraphics[width=0.28\textwidth]{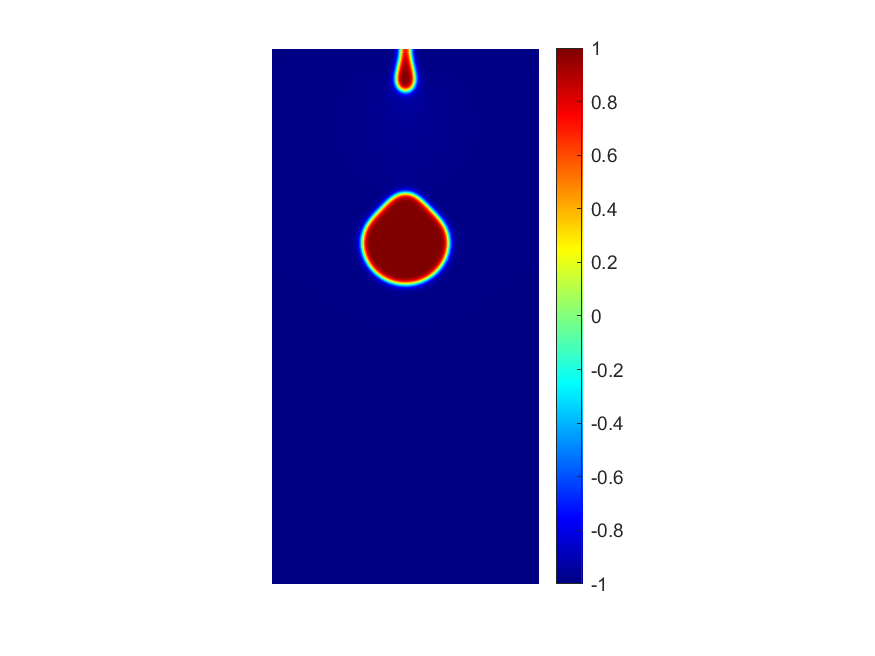}
	\setlength{\abovecaptionskip}{0.01cm} 
	\setlength{\belowcaptionskip}{0.0cm}
	\caption{\small Snapshots of the phase-field variable with $\nu=10^{-1}$ for the droplet falling at $t=0.5, 0.8, 1.3$ and 1.5, respectively.}
	\label{fig:dripping:Re10}
\end{figure}
\begin{figure}  [!thbp] \small
	\centering 
	\includegraphics[width=0.28\textwidth]{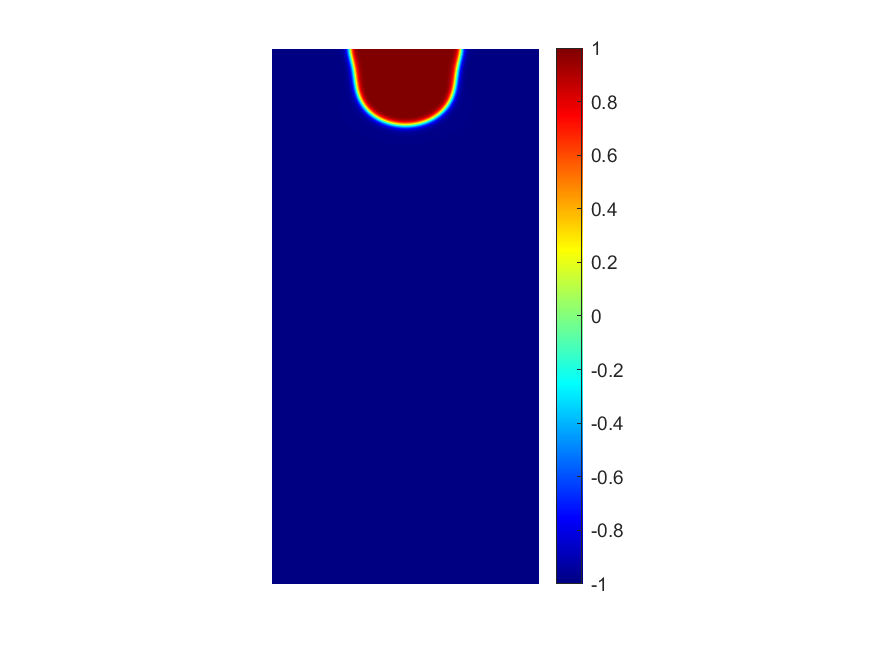} \hspace{-0.08\textwidth}
	\includegraphics[width=0.28\textwidth]{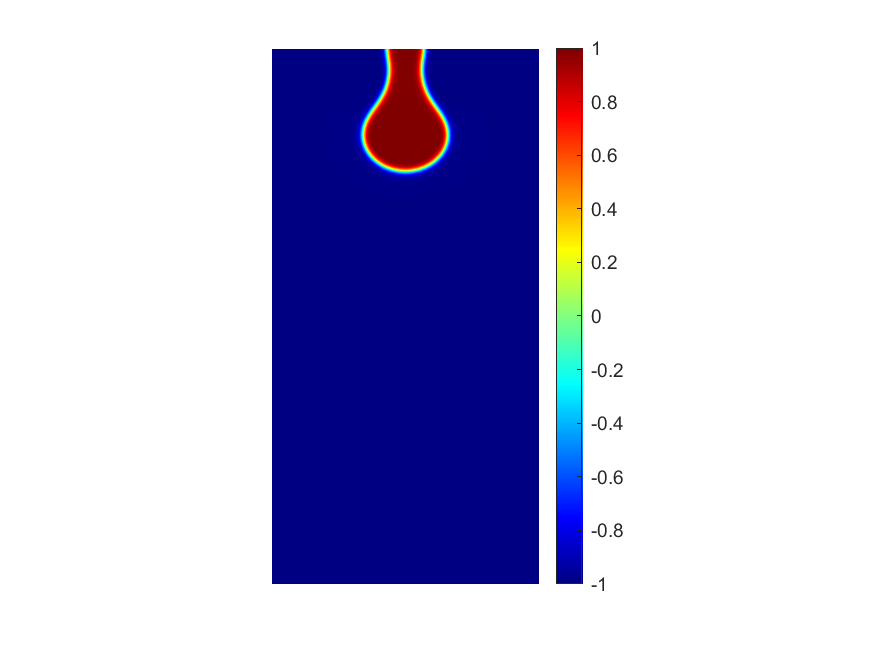} \hspace{-0.08\textwidth}
	\includegraphics[width=0.28\textwidth]{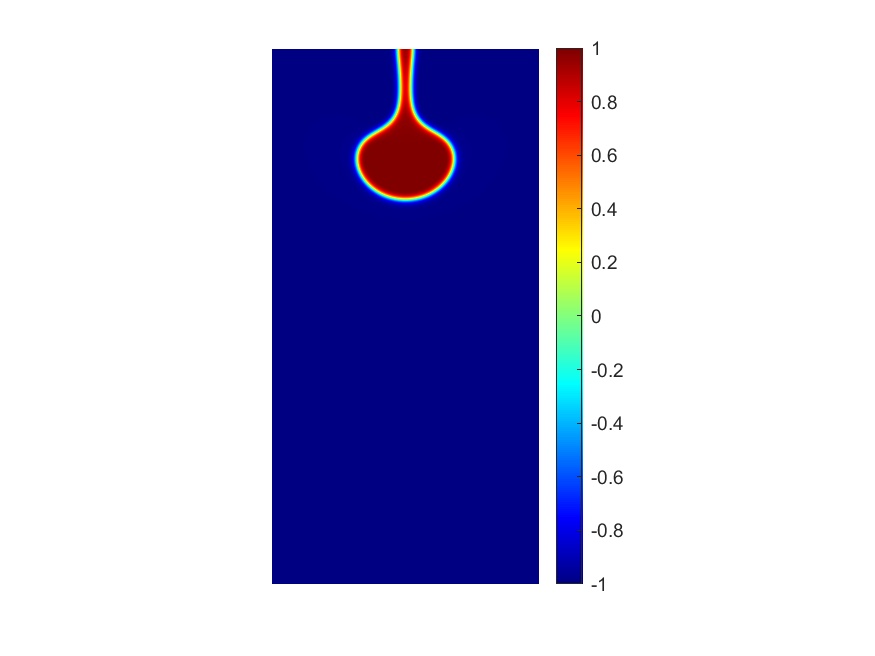} \hspace{-0.08\textwidth}
	\includegraphics[width=0.28\textwidth]{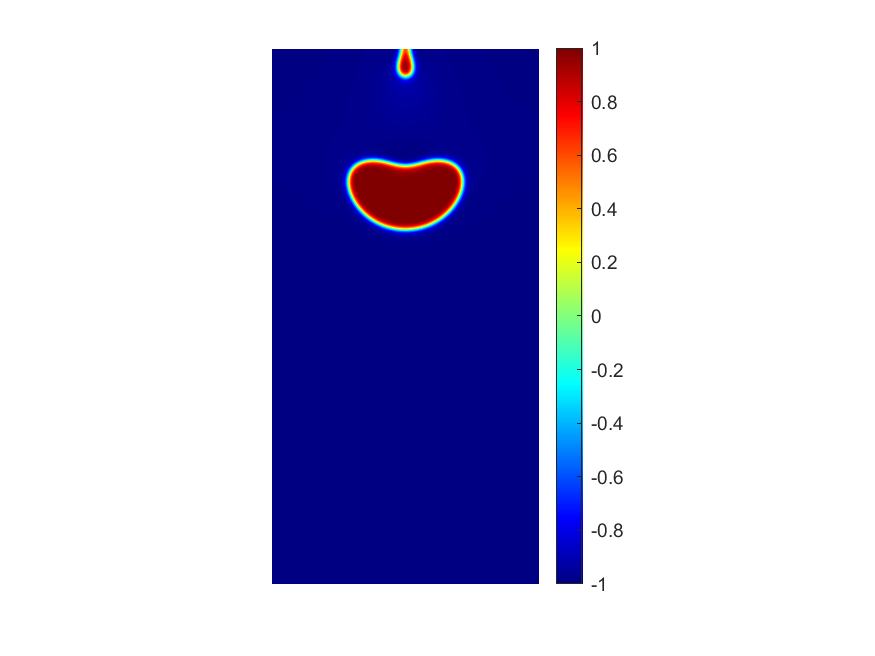} 
	\setlength{\abovecaptionskip}{0.01cm} 
	\setlength{\belowcaptionskip}{0.0cm}
	\caption{\small Snapshots of the phase-field variable with $\nu=2 \times 10^{-2}$  for the droplet falling process at $t=0.2, 0.4, 0.5$ and 0.6, respectively.}
	\label{fig:dripping:Re50}
\end{figure}

\section{Conclusions}\label{conclusions}
In this paper, we have developed two structure-preserving IVS schemes for the CHNS system, which are linear, decoupled, mass-conserving, and unconditionally energy stable. Moreover, the proposed schemes require only solving a few constant-coefficient linear equations, along with a single linear equation with one unknown,  making these algorithms efficient and easy to implement. The nonlinearities are handled by combining the SAV method with the ZEC approach, and the incremental viscous splitting technique is to address the coupling of pressure and velocity. Meanwhile, a well user-defined, time-dependent parameter is introduced to enhance the accuracy of the velocity and pressure approximations. 
Specifically, we have established first-order error estimates for the phase-field variable in the $\ell^{\infty}(0,T;H^1)$-norm, for the chemical potential in the $\ell^{2}(0,T;H^1)$-norm, for the velocity in both the $\ell^{\infty}(0,T;\bm H^1)$- and $\ell^2(0,T;\bm H^2)$-norms, and for the pressure in the $\ell^2(0,T;H^1)$-norm. 
Ample numerical experiments are provided to show the reliability and efficiency of the developed schemes in modeling the phase separation, bubbles merging, bubble rising and dripping dynamics of a liquid drop.  

Due to the complexity, this work only focuses on the time semi-discretization scheme, leaving the fully discrete error analysis under suitable spatial discretizations for future research. Furthermore, a rigorous error analysis of the second-order IVS scheme remains an open problem, and we also would like to address it in future work.

%


\begin{acknowledgement} 
The work was partially supported by the National Natural Science Foundation of China (Nos. 12131014, 12271302), by the Natural Science Foundation of Shandong Province (Nos. ZR2024MA023, ZR2024JQ030), by the Fundamental Research Funds for the Central Universities (Nos. 202264006, 202561097), and by the OUC Scientific Research Program for Young Talented Professionals.
\end{acknowledgement}
\bibliographystyle{spmpsci}   
\bibliography{Ref_AVVS}

\end{document}